\documentclass[10pt]{amsart}

\usepackage{graphicx}
\usepackage{mathrsfs}
\usepackage{oubraces}
\usepackage{float}
\usepackage{array,booktabs}
\usepackage{enumerate}
\usepackage{amsfonts}
\usepackage{amsmath}
\usepackage{amssymb}
\usepackage{hyperref}

\usepackage{latexsym}

\newcommand\s{\sigma}
\newcommand\dv{\mathfrak{v}}

\newcommand\hv{\hat{\varpi}}
\newcommand\bn{\bar{n}}

\newcommand\h{\upsilon}

\newcommand\f{{\mathfrak{f}}}
\newcommand\ep{\varepsilon}
\newcommand\vp{\varphi}
\newcommand\Th{\Theta}
\newcommand\df{\mathfrak{d}}

\newcommand\A{{\mathbb{A}}}
\newcommand\Q{{\mathbb{Q}}}
\newcommand\C{{\mathbb{C}}}
\newcommand\F{{\mathbb{F}}}
\newcommand\Z{{\mathbb{Z}}}
\newcommand\E{{\mathbb{E}}}
\newcommand\G{{\mathbb{G}}}
\newcommand\Rb{{\mathbb{R}}}

\newcommand\B{{\mathscr{B}}}

\newcommand\Ts{{\mathscr{T}}}
\newcommand\Ss{{\mathscr{S}}}

\newcommand\Rc{{\mathcal{R}}}
\newcommand\Sc{{\mathcal{S}}}

\newcommand\p{{\mathfrak{p}}}
\newcommand\OO{\mathcal{O}}
\newcommand\Gr{{\mathrm{G}}}
\newcommand\K{{\mathbb{K}}}
\newcommand\Fb{\mathbf{F}}

\newcommand\1{\mathbf{1}}%

\newcommand\GL{{\mathrm{GL}}}
\newcommand\PGL{{\mathrm{PGL}}}

\newcommand\GSp{{\mathrm{GSp}}}
\newcommand\PGSp{{\mathrm{PGSp}}}
\newcommand\Sp{{\mathrm{Sp}}}

\newcommand\GO{{\mathrm{GO}}}

\newcommand\nid{\noindent}
\newcommand\app{\approx}
\newcommand\ol{\overline}
\newcommand\ot{\otimes}
\newcommand\bt{\boxtimes}
\newcommand\op{\oplus}
\newcommand\rt{\rtimes}
\newcommand\bs{\backslash}
\newcommand\wt{\widetilde}
\newcommand\ra{\rangle}
\newcommand\la{\langle}

\newcommand\Ir{{\mathrm{Irr}}}
\newcommand\Hom{{\mathrm{Hom}}}
\newcommand\Alg{{\mathrm{Alg}}}
\newcommand\Ch{{\mathrm{Ch}}}
 
\newcommand\vol{{\mathrm{vol}}}

\numberwithin{equation}{section}

\newtheorem{thm}{Theorem}

\newtheorem{prop}[thm]{Proposition}
\newtheorem{Cor}[thm]{Corollary}
\newtheorem{lem}[thm]{Lemma}
\keywords{Siegel modular form, Newform, Saito-Kurokawa lift, Bessel period.}
\thanks{
The author is supported in part by JSPS Grant-in-Aid for Scientific Research No. 24740017.}
\subjclass[2010]{11F70, 11F46, 11F27}
\address{Department of Mathematics, Faculty of Science, Nara Woman University,
Kitauoyahigashi-machi, Nara 630-8506, Japan. }
\email{okazaki@cc.nara-wu.ac.jp}
\begin{document}

\title{Newforms of Saito-Kurokawa lifts}

\author{Takeo Okazaki}


\maketitle

\begin{abstract}
A new- and old-form theory for Bessel periods of (cuspidal automorphic) Saito-Kurokawa representations $\pi = \ot_v \pi_v$ of $\PGSp_4$ is given.
We introduce arithmetic subgroups so that a local Bessel vector fixed by the subgroup indexed by the conductor of $\pi_v$ is unique up to scalars.
This vector is called the local newform of $\pi_v$.
The global Langlands $L$-function of a holomorphic Saito-Kurokawa representation coincides with a canonically settled Piatetski-Shapiro zeta integral of the global newform.
\end{abstract}
\tableofcontents
\section{Introduction}\label{intro}
New- and old-form theory for elliptic modular cuspforms has various applications to number theory.
In particular, the modular curve or the Shimura curve defined by the arithmetic subgroup fixing the newform can be viewed as a geometric realization of the Galois representation associated to the curve or to the Hecke eigen cuspforms fixed by the arithmetic subgroup.
This is the $\GL_2$-version of the coincidence of conductors of an ideal class character over a number field and the corresponding class field.
A next step is a newform theory for Siegel modular forms of degree two.
Roberts and Schmidt \cite{R-S} completed the local newform theory for generic representations and some other ones of $\PGSp_4$ over nonarchimedean field; the (local) arithmetic subgroup fixing the local newvector is a {\it paramodular group}.
In particular, in the generic case, the Whittaker vector fixed by the paramodular group of the representation level is unique up to scalars, and its Novodvorsky zeta integral coincides with the Langlands $L$-function (see also \cite{O}).
This is a desired result, from the view of Casselman's local Whittaker newform theory \cite{C} for $\GL_2$.

But, different from the elliptic modular case, Whittaker periods of Siegel modular forms of degree $2$ vanish, and paramodular vectors do not appear in some local representations (e.g., nongeneric supercuspidal ones), although nonarchimedean local components of automorphic holomorphic representation may be generic, and some globally generic automorphic (nonholomorphic) representations contribute to $H^{1,2}(X,\C)$ of some Siegel modular threefolds $X$.
However, any automorphic form on $\GSp_4$ always has some Bessel periods, and Piatetski-Shapiro \cite{PS2} defined zeta integrals and $L$-functions for Bessel models of local representations.
Considering these aspects of automorphic forms, and viewing recent progress in the Gross-Prasad conjecture, contributions of $\GL_2$-newform theories (including those for non-Whittaker models) to the Gross-Zagier formula, we think a more convenient model for $\GSp_4$-newform theory is the Bessel one.

As a first step of this perspective, this article treats the so-called Saito-Kurokawa lifts (or representations) $\pi = \ot_v \pi_v$ of irreducible cuspidal automorphic representation $\tau = \ot_v \tau_v$ of $\GL_2$ over a totally real field $\Fb$.
All local components $\pi_v$ are nongeneric, and possibly supercuspidal.
Although it was known by \cite{R-S} that $\pi_v$ has a one-dimensional paramodular vector space if $\pi_v$ is given by the local $\theta$-lift from the split orthogonal group of rank four, we need complement other cases, and study Piateteski-Shapiro zeta integrals of Bessel periods fixed by concrete arithmetic subgroups.
To seek convenient Bessel vectors for the zeta integrals, we need other arithmetic subgroups, indeed. 

Now we describe our main results.
Let $F = \Fb_v$ for a nonarchimedean place $v$, and abbreviate $\pi_v, \tau_v$ to $\pi, \tau$.
Let $\mathfrak{o}$ denote the ring of integers of $F$.
Let $\varpi$ and $\p = \varpi \mathfrak{o}$ denote an uniformizer and the prime ideal of $\mathfrak{o}$ respectively.
Let $q = |\mathfrak{o}/\p|$.
Let $B$ and $R$ denote $\mathrm{M}_2(F)$ and $\mathrm{M}_2(\mathfrak{o})$ respectively.
Let $\s' \in B$ be regular symmetric.
The identity component of generalized orthogonal group relevant to $\s'$ is isomorphic to the multiplicative group of the quadratic field $E = F(\sqrt{-\det(\s')})$ if $-\det(\s')$ is nonsquare (this is called the nonsplit case), and to that of $E= F + F$ otherwise (this is called the split case).
Let 
\begin{align}
J = \begin{bmatrix}
& & & -1 \\
& & -1& \\
& 1& &  \\
1& & &  \\
\end{bmatrix} \label{eq:defJ} 
\end{align} 
be the defining matrix of $\GSp_4$.
Let $\psi$ be a nontrivial additive character of $F$ such that $\psi(\mathfrak{o}) = \{1\} \neq \psi(\p^{-1})$.
Bessel vectors relevant to $\s$ and $\psi$ are $\C$-valued functions with the property 
\begin{align*}
\beta(\begin{bmatrix}
1_2&x  \\
&1_2
\end{bmatrix}g) = \psi(Tr(\s x))\beta(g), 
\end{align*}
and let $\B_\s(\pi_v)$ denote the realization of $\pi_v$ in the space of such functions (c.f. sect. \ref{sec:prep} for the precise definition of $\B_\s(\pi_v)$), where 
\begin{align*}
\s = \s'\begin{bmatrix}
 & 1\\
 1&
\end{bmatrix}. 
\end{align*} 
For $\beta \in \B_\s(\pi_v)$, define a zeta integral
\begin{align*}
Z(s,\beta) = \int_{F^\times} \beta(\begin{bmatrix}
u 1_2&  \\
&1_2
\end{bmatrix})|u|^{s-3/2} d^\times u.
\end{align*} 
From now, for the sake of simplicity, we assume that $F$ is odd residual.
Further assume that $\det(\s)$ is an invertible element of $\mathfrak{o}$, and 
\begin{align*}
\s = \begin{bmatrix}
& -\det(\s)  \\
1&
\end{bmatrix}.
\end{align*}  
Then, the subalgebra $E_\s:= F + F \s \subset B$ is isomorphic to $F+ F$ or $F(\sqrt{\det(\s)})$.
Set 
\begin{align*}
\f = \begin{cases}
1 & \mbox{in the split case}, \\
2 & \mbox{in the nonsplit case.} 
\end{cases}
\end{align*} 
Let $m$ be a nonnegative integer.
Let $\varrho$ be an element of reduced norm $\varpi^\f$ and set an order  
\begin{align*}
R_m = \mathfrak{O} + \varrho^m R,
\end{align*} 
where $\mathfrak{O}$ indicates $\mathfrak{o} \op \mathfrak{o}$(resp. the ring of integers of $E$) in the split (resp. nonsplit) case.
Set a lattice 
\begin{align*}
L_m = \varpi^{\f m} R_m^\sharp \op R_m,
\end{align*}
where $R_m^\sharp$ indicates the dual lattice of $R_m$.
Our arithmetic subgroup is the stabilizer subgroup of $L_m$ in the group $\GSp_4(F)$, and denoted by $K_{\f m}$.
Here we recognize $\GSp_4(F)$ as the subgroup of $\GL_2(B)$.
Observe that $K_0$ is $\GSp_4(\mathfrak{o})$, and that, in the split case, $K_{m}$ is conjugate to the paramodular group of level $\p^m$ defined in \cite{R-S}.
Let $\B_{m}$ denote the subspace of $K_{m}$-invariant vectors in $\B_\s(\pi)$.
Then, the idempotent $e_{K_{\f (m+1)}}$ of the Hecke algebra of $K_{\f(m+1)}$ defines a mapping $\B_{\f m} \to \B_{\f (m+1)}$, and we can consider a sequence 
\begin{align}
\B_0 \to \B_{\f} \to \B_{2\f} \to \cdots  \label{eq:seqintro}
\end{align} 
Assume that $\B_\s(\pi) \neq \{0\}$.
Then this sequence is nontrivial.
In this case, we call the first nontrivial subspace $\B_{\f m}$ and the $\f^{-1}$ multiple of the index, denoted by $M_\pi$, the minimal space and minimal level of $\B_\s(\pi)$ respectively.
Write the $\ep$-factor of the Langlands parameter $\phi_\pi$ attached to $\pi$ as 
\begin{align*}
\ep(s,\phi_\pi,\psi) = E_\pi q^{N_\pi(-s+1/2)}.
\end{align*} 
Our first main result is as follows.
\begin{thm}\label{thm:intro1}
With notations and assumptions as above, it holds that 
\begin{align*}
\dim \B_{\f M_\pi} = 1, \ \ M_\pi = \frac{N_\pi}{\f}.
\end{align*} 
If $\beta \in \B_{\f M_\pi}$ is not identically zero, then $Z(s,\beta)$ is a nonzero constant multiple of  
\begin{align}
\frac{L(s,\tau)}{1- q^{-s+1/2}} \times \begin{cases}
(1 - q^{-s-1/2} )& \mbox{in the split case}, \\
(1 + q^{-s-1/2} )& \mbox{in the nonsplit case with $\tau$ unramified}, \\
1& \mbox{in the nonsplit case with $\tau$ ramified.} 
\end{cases}
\label{eq:zetaintro}
\end{align}
\end{thm}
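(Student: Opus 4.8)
The plan is to establish the three assertions --- one-dimensionality of the minimal space, the level identity $M_\pi=N_\pi/\f$, and the value of $Z(s,\beta)$ --- by reducing everything to an explicit model of $\B_\s(\pi)$ adapted to the orders $R_m$ and the lattices $L_m$. Since $\pi$ is a Saito--Kurokawa representation, it is a constituent of a representation parabolically induced from $\tau$, with Langlands parameter $\phi_\pi=\phi_\tau\oplus\nu^{1/2}\oplus\nu^{-1/2}$; in particular $N_\pi=a(\phi_\tau)$, the two extra characters being unramified for a level-zero $\psi$. I would first write $\B_\s(\pi)$ concretely, either through the realization of $\pi$ as a $\theta$-lift from a four-dimensional orthogonal group or through the restriction of the induced model to the Bessel subgroup. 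In either description the torus $E_\s^\times=F+F\s$ acts through a character determined by $\tau$, and the arithmetic data enter only via how the lattice $L_m=\varpi^{\f m}R_m^\sharp\op R_m$, with $R_m=\mathfrak{O}+\varrho^m R$, sits inside $B\op B$. This turns the determination of $K_{\f m}$-invariant Bessel vectors into an orbit computation for $K_{\f m}$ on the relevant $p$-adic homogeneous space.

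\textbf{Minimal level and one-dimensionality.} In the split case $\f=1$ the group $K_m$ is $\GSp_4(\mathfrak{o})$-conjugate to the paramodular group of level $\p^m$, so the existence and uniqueness of a paramodular newvector \cite{R-S} applies, and one only needs to match the paramodular conductor of $\pi$ with $N_\pi=a(\phi_\tau)$, which is routine for the Saito--Kurokawa parameter. The nonsplit case $\f=2$ is genuinely new: here I would analyze the $K_{2m}$-action on the model above directly, using the filtration $\mathfrak{O}\subset R_{m-1}\subset R_m$ of orders in $B$ together with the dual-lattice shift to show that the transition maps in $\B_0\to\B_2\to\B_4\to\cdots$ become injective beyond a certain point and that the first nonzero space is a line; the index at which this happens, read off from the conductor of the torus character and the ramification of $E/F$, should equal $N_\pi$, giving $M_\pi=N_\pi/2$. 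A useful bookkeeping device is the identity $\ep(s,\phi_\pi,\psi)=\ep(s,\phi_\tau,\psi)$ combined with a local functional equation for $Z(s,\beta)$, which constrains the vanishing order of $\dim\B_{\f m}$ to track $N_\pi$.

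\textbf{The zeta integral.} With the essentially unique newform $\beta\in\B_{\f M_\pi}$ fixed, I would evaluate $Z(s,\beta)$ by computing the values $\beta(\mathrm{diag}(\varpi^k,\varpi^k,1,1))$ for all $k$. These are pinned down by three inputs: the transformation rule under the Bessel unipotent (which forces $\beta$ to vanish on part of the torus), the invariance under $K_{\f M_\pi}$ (which produces linear recursions among the surviving values --- the usual $p$-adic difference equation governed by Hecke and Atkin--Lehner operators), and the explicit model of the first step to fix the finitely many remaining constants and the overall nonzero scalar. Summing the resulting series gives $L(s,\tau)/(1-q^{-s+1/2})$ --- the factor $(1-q^{-s+1/2})^{-1}$ reflecting the non-tempered part of $\phi_\pi$ --- times the correction factor $1-q^{-s-1/2}$, $1+q^{-s-1/2}$, or $1$, which encodes the split/nonsplit dichotomy and the ramification of $\tau$ through the local behaviour of the $\nu^{\pm 1/2}$ pieces against $\s$. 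Nonvanishing of the constant follows by evaluating at a convenient value of $s$ in the explicit model.

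\textbf{Main obstacle.} The crux is the nonsplit case of the previous two steps, in particular the level identity $M_\pi=N_\pi/\f$. There is no paramodular theory to invoke, so the orbit analysis of $K_{2m}$ on the Bessel model must be carried out by hand, and matching the lattice filtration $\{L_m\}$ with the Artin conductor of $\phi_\pi$ is delicate --- especially when $\tau$ is ramified and $E/F$ is itself ramified, where the two sources of ramification interact and a naive count of invariant vectors can be off. I expect most of the technical weight of the proof to fall here.
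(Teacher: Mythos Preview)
Your split case is essentially the paper's: conjugate $K_m$ to the paramodular group, invoke Roberts--Schmidt for uniqueness, and construct the Bessel newform explicitly inside the Siegel induction $I_-(\tau)$ from the Whittaker newform of $\tau$, computing $Z(s,\beta)$ by a direct sum over $\hat{\varpi}^i$.

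In the nonsplit case your plan has a real gap. You propose a direct orbit analysis of $K_{2m}$ on the Bessel model and a lattice-filtration count of invariants, but the paper states outright that computing a Hecke operator for $K_{2m}$-fixed vectors ``in general'' is difficult, and does \emph{not} proceed this way. Instead it refines the sequence $\B_0\to\B_\f\to\B_{2\f}\to\cdots$ by inserting noncomplete paramodular subspaces $\B_{2m+1}^\flat\subset\B_{2m+1}\subset\B_{2m+1}^\sharp$ between the complete ones, and works with the first nonzero term of this \emph{finer} sequence (the ``strict minimal space''). The Hecke computation is tractable only there: the coset decompositions for $T_K^\pm$ are computed explicitly on these intermediate groups and produce recursion formulas for $c_i=\beta(\hat\varpi^i)$, after which one lifts back to $\B_{2M_\pi}$ via the idempotent $e_m$.

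Three further ingredients your outline omits are essential and not recoverable from an orbit count. First, a $P_2$-module analysis of the twisted Jacquet module $V_{N_\G,\Lambda}$ gives a vanishing criterion: a paramodular form of a fixed Atkin--Lehner sign with $Z(s,\beta^*)=0$ is identically zero. This is what forces one-dimensionality once the zeta is pinned down. Second, the regular $L$-factor $L^{reg}(s,\pi)=L(s,\tau)/(1-X)$ is taken as input from Schmidt--Tran, not derived; it supplies the pole at $X=1$ and constrains $g_\beta(X)=Z(s,\beta)/L^{reg}(s,\pi)$ to be a polynomial of small degree, which the Hecke recursion then forces to be constant. Third, the functional equation is used not just as a bookkeeping device but systematically: the ``zeta polynomial'' $P_m(X,\beta)=Z_m(s,\beta)/L(s,\pi)$ must have a prescribed sign and diameter $\f m-n_\pi$, and this, combined with the explicit shape from the Hecke tables, is what identifies $m_\pi^\ep$ with $N_\pi/\f$ and rules out the wrong sign $\ep=-\ep_\pi$ case by case (unramified vs.\ ramified $E/F$, ramified vs.\ unramified $\tau$). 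Your proposed ``match the lattice filtration with the Artin conductor'' does not supply any of these mechanisms.
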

The integrality of $M_\pi$ means that $N_\pi$ is always even if $\B_\s(\pi) \neq \{ 0\}$ and $E_\s$ is an unramified field extension of $F$, see sect. \ref{sec:NFN} for the detail.
The unique local Bessel vector $\beta \in \B_{\f M_\pi }$ such that $\beta(1) = 1$ is called the normalized newform of $\B_\s(\pi)$, and denoted by $\beta^{new}$.
This newform has eigenvalue $E_\pi$ for an Atkin-Lehner type involution.
Further, by (\ref{eq:zetaintro}) and setting a characteristic function $\vp_{M_\pi}$ of a lattice in $\mathfrak{O}^2$ corresponding to $K_{\f M_\pi}$, we have an identity 
\begin{align*}
Z(s,\beta^{new},\vp_{M_\pi}) = L(s,\phi_\pi).
\end{align*} 
Here $Z(s,\beta,\vp)$, for various $\beta \in \B_\s(\pi)$ and Schwartz functions $\vp$ of $E^2$, indicate the Piatetski-Shapiro zeta integrals, and define the Piatetski-Shapiro $L$-factor $L(s,\pi)$ and $\ep$-factor $\ep(s,\pi,\psi)$, see sect. \ref{sec:prep}.
Our second result is 
\begin{thm}\label{thm:intro2}
For any nonarchimedean local component $\pi_v$ of a Saito-Kurokwa lift, we have identities: 
\begin{align*}
L(s,\pi_v) = L(s,\phi_{\pi_v}), \ \ \ep(s,\pi_v,\psi) = \ep(s,\phi_{\pi_v},\psi). 
\end{align*} 
\end{thm}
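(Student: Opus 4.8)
The plan is to derive Theorem~\ref{thm:intro2} from Theorem~\ref{thm:intro1} together with the analytic formalism of the Piatetski--Shapiro zeta integrals of Section~\ref{sec:prep}. Recall that $L(s,\pi_v)$ is the normalized generator of the fractional $\C[q^{s},q^{-s}]$-ideal spanned by the $Z(s,\beta,\vp)$, with $\beta\in\B_\s(\pi_v)$ and $\vp$ a Schwartz function on $E^2$, and that $\ep(s,\pi_v,\psi)$ is read off from the local functional equation $Z(1-s,\beta^\vee,\wh{\vp})=\gamma(s,\pi_v,\psi)Z(s,\beta,\vp)$ via $\gamma(s,\pi_v,\psi)=\ep(s,\pi_v,\psi)L(1-s,\pi_v^\vee)/L(s,\pi_v)$. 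On the Galois side $\phi_{\pi_v}=\phi_{\tau_v}\op\nu^{1/2}\op\nu^{-1/2}$ as a spin parameter, so $L(s,\phi_{\pi_v})=L(s,\tau_v)\big((1-q^{-s-1/2})(1-q^{-s+1/2})\big)^{-1}$ and $\ep(s,\phi_{\pi_v},\psi)=\ep(s,\tau_v,\psi)=E_\pi q^{N_\pi(-s+1/2)}$, the summands $\nu^{\pm1/2}$ contributing trivial $\ep$-factors since $\psi$ has conductor $\mathfrak o$. Finally, $\pi_v$ is a representation of $\PGSp_4\cong\mathrm{SO}_5$, hence self-dual, so $\pi_v^\vee\cong\pi_v$ and $\phi_{\pi_v}$ is self-dual.

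First I would establish $L(s,\pi_v)=L(s,\phi_{\pi_v})$. Choose $\s$ (split or nonsplit) with $\B_\s(\pi_v)\ne\{0\}$, which is possible by the known structure of Bessel models of Saito--Kurokawa representations. Theorem~\ref{thm:intro1} then gives the identity $Z(s,\beta^{new},\vp_{M_\pi})=L(s,\phi_{\pi_v})$, so $L(s,\phi_{\pi_v})$ lies in the defining ideal and hence $L(s,\phi_{\pi_v})/L(s,\pi_v)\in\C[q^{s},q^{-s}]$; in particular every pole of $L(s,\phi_{\pi_v})$ is a pole of $L(s,\pi_v)$. For the opposite inclusion I would show that \emph{no} $Z(s,\beta,\vp)$ has a pole outside the poles of $L(s,\tau_v)$ and the points $q^{-s}\in\{q^{1/2},q^{-1/2}\}$, i.e.\ $Z(s,\beta,\vp)/L(s,\phi_{\pi_v})\in\C[q^{s},q^{-s}]$ for all $\beta,\vp$. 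This I would deduce from the explicit model of $\B_\s(\pi_v)$ underlying the proof of Theorem~\ref{thm:intro1}: the restriction of a Bessel function of $\pi_v$ to the relevant copy of $\GL_2$ (respectively to the torus $E^\times$) is governed by the Kirillov/Whittaker data of $\tau_v$ together with the degenerate part coming from $\nu^{\pm1/2}$, so that after unfolding $Z(s,\beta,\vp)$ factors as a $\GL_2$-zeta integral for $\tau_v$ --- whose poles lie among those of $L(s,\tau_v)$ --- times Tate integrals of components of $\vp$ against $\nu^{\pm1/2}$, with poles at $q^{-s}=q^{\pm1/2}$. Combining the two inclusions (both $L$-denominators being normalized) gives $L(s,\pi_v)=L(s,\phi_{\pi_v})$.

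Next I would compute the $\ep$-factor by evaluating the functional equation at the newform data $(\beta^{new},\vp_{M_\pi})$. Since $\pi_v^\vee\cong\pi_v$, the contragredient identifies the $\psi$-Bessel newform with the $\psi^{-1}$-Bessel newform, which the Atkin--Lehner operator carries to $E_\pi\beta^{new}$ (the eigenvalue recorded after Theorem~\ref{thm:intro1}), while $\wh{\vp_{M_\pi}}$ is a power of $q$ times a dilate of $\vp_{M_\pi}$, since $\vp_{M_\pi}$ is the characteristic function of the lattice attached to $K_{\f M_\pi}$ whose Fourier-dual is a scaling of a lattice of the same type. Applying Theorem~\ref{thm:intro1} at both $s$ and $1-s$, the functional equation becomes $\gamma(s,\pi_v,\psi)L(s,\phi_{\pi_v})=c(s)\,L(1-s,\phi_{\pi_v})$ with $c(s)$ equal to $E_\pi$ times a monomial in $q^{\pm s}$; comparing with the defining relation for $\gamma(s,\pi_v,\psi)$ and using $L(s,\pi_v)=L(s,\phi_{\pi_v})$ yields $\ep(s,\pi_v,\psi)=c(s)$, and the monomial is forced to be $q^{N_\pi(-s+1/2)}$ by the level of the lattice, so $\ep(s,\pi_v,\psi)=E_\pi q^{N_\pi(-s+1/2)}=\ep(s,\phi_{\pi_v},\psi)$.

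The main obstacle is the opposite inclusion in the $L$-factor step: passing from ``$L(s,\phi_{\pi_v})$ is realized by one zeta integral'' to ``$L(s,\phi_{\pi_v})$ bounds the poles of every zeta integral'' requires a workable description of the \emph{whole} Bessel space $\B_\s(\pi_v)$ and of $Z(s,\beta,\vp)$ for every Saito--Kurokawa $\pi_v$, including supercuspidal ones and the cases with ramified $E_\s$ or even residue characteristic lying outside the running hypotheses of Theorem~\ref{thm:intro1}; for those I would either pick a Bessel datum for which the computation of Theorem~\ref{thm:intro1} still applies, or, if that fails, argue globally by embedding $\pi_v$ in a cuspidal Saito--Kurokawa $\pi$ over $\Fb$ with $\tau$ varying and $\tau_v$ fixed, and comparing the global functional equation of the Piatetski--Shapiro integral with the unramified computation at all other places to isolate the local factors at $v$. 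A secondary, purely bookkeeping, difficulty is tracking the lattice volumes and the Atkin--Lehner normalization precisely enough to pin the monomial in $c(s)$ down to $q^{N_\pi(-s+1/2)}$.
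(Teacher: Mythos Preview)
Your plan reverses the paper's logical order: in this article Theorem~\ref{thm:intro2} is proved \emph{first} (as Theorem~\ref{thm:LSK} for $SK(\tau)$, and via (\ref{eq:gammaprod})--(\ref{eq:gLep}) for $SK(\tau^{JL})$), and then this knowledge of $L(s,\pi)$, $n_\pi$, $\ep_\pi$ is used throughout the proof of Theorem~\ref{thm:intro1} (see e.g.\ Lemma~\ref{lem:m>nSK}, the definition of $P_m(X,\beta)=Z_m(s,\beta)/L(s,\pi)$, and the repeated appeals to Lemma~\ref{lem:sdzp}). So ``derive Theorem~\ref{thm:intro2} from Theorem~\ref{thm:intro1}'' is circular unless you have an independent proof of Theorem~\ref{thm:intro1}, which you do not sketch.

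More seriously, your argument for the ``opposite inclusion'' --- that every $Z(s,\beta,\vp)$ has poles only among those of $L(s,\phi_{\pi_v})$ --- does not work as stated. The claimed factorization of $Z(s,\beta,\vp)$ into a $\GL_2$-zeta integral for $\tau_v$ times Tate integrals for $\nu^{\pm1/2}$ is not available for an arbitrary Bessel vector $\beta$; such a factorization is essentially the Piatetski-Shapiro--Soudry computation, which applies to the specific vectors $\xi_f$ coming from the theta lift, not to the whole Bessel space. The paper's route to the $L$-factor avoids analyzing general $\beta$: it first establishes the $\gamma$-factor identity $\gamma(s,\pi,\psi)=\gamma(s,\tau,\psi)\gamma(s,\1_{\GL_2},\psi)$ via the theta realization (Lemma~\ref{lem:gammacoinSK}, \cite{PS-S}), then combines this with the known regular part $L^{reg}(s,\pi)$ from \cite{Sc-T} and the divisibility constraint (\ref{eq:divLL}) on $L^{reg}(s,\pi)/L(s,\pi)$; locating the poles on both sides of the resulting identity pins down $P(X)=L^{reg}/L$ and $\ep(s,\pi,\psi)$ simultaneously. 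For $SK(\tau^{JL})$ the analogous $\gamma$-identity is (\ref{eq:gammaprod}), taken from \cite{D} in odd residue characteristic and proved by a global argument (Section~\ref{sec:real}) otherwise --- this last step is the global input you anticipated. Note also that your description $\phi_{\pi_v}=\phi_{\tau_v}\oplus\nu^{1/2}\oplus\nu^{-1/2}$ is correct only for $SK(\tau_v)$; for $SK(\tau_v^{JL})$ the $L$- and $\ep$-factors are different (see the formulas at the end of Section~\ref{sec:packet}), and your proposal does not address that case.
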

This article is organized as follows.
In sect. \ref{sec:prep}, we recall some known results on Saito-Kurokawa lifts briefly, and review Bessel vectors from the view point of $P_2$-theory, where $P_2$ is the mirabolic subgroup of $\GL_2$.
In sect. \ref{sec:NFsp}, we complement the paramodular newform theory of \cite{R-S} for the Bessel vectors, and, applying this, show Theorem \ref{thm:intro1} in the split case.
The novelty of this article is the nonsplit case.
For this case,  in sect. \ref{sec:NPG} we introduce nonsplit paramodular groups, and study a Hecke theory for Bessel vectors.
But, since it seems to be difficult to compute a Hecke operator for Bessel vectors fixed by the above $K_{2m}$ in general, we introduce other arithmetic subgroups, and consider a refinement (\ref{eq:secpfm}) of the sequence (\ref{eq:seqintro}).
We can compute a Hecke operator for the first nontrivial subspace of (\ref{eq:secpfm}) and lift to the subspace fixed by $K_{2m}$.
In sect. \ref{sec:NFN}, applying the Hecke theory, we give a newform theory for the nonsplit case.
An oldform theory is given in sect. \ref{sec:old}, and the injectivity of $e_{K_{2m}}$ is showed. 
Applying these nonarchimedean results,  in sect. \ref{sec:real}, we give a functional equation for special Bessel models of holomorphic discrete series representations of $\PGSp_4(\Rb)$.
By this we can complete the proof of Theorem \ref{thm:intro2} in the nonsplit case.
In sect. \ref{sec:Siegel}, we describe our results in classical terms. 

{\bf Acknowledement:}
The author would like to thank to Ralf Schmidt for suggesting the topic treated in this article.

{\bf Notation:}
\begin{itemize}
\item Throughout this article, $F$ denotes a local field of characteristic zero, with $F = \Rb$ if $F$ is archimedean.
\item 
For a nonarchimedean $F$, let $\mathfrak{o}$ and $\p = \varpi \mathfrak{o}$ denote the ring of integers and  the prime ideal respectively, where $\varpi$ is a fixed uniformizer.
In this case, let $q = |\mathfrak{o}/\p|$, and  
\begin{align*}
X = q^{-s+1/2}, X' = q^{-s-1/2}
\end{align*} 
for a complex number $s \in \C$.
\item For elements $g,h$ of a group $\mathrm{G}$, let $g\langle h \rangle = h g h^{-1}$.
\item 
If $\mathrm{G}$ is an algebraic group defined over a nonarchimedean $F$, then $\mathrm{G}(F)$ indicates  the $F$-rational points in $\mathrm{G}$, and $\Ir(\mathrm{G}(F))$ the category of irreducible admissible representations of $\mathrm{G}(F)$ up to isomorphisms.
For $\pi \in \Ir(\mathrm{G}(F))$, $w_\pi$ indicates the central character of $\pi$.
\end{itemize}
\section{Preparations}\label{sec:prep}
\subsection{Bessel vectors}\label{subsec:Bv}
Let $J$ as in (\ref{eq:defJ}) be the defining matrix for the generalized symplectic group $\GSp_4 \subset \GL_4$, and let $G= \GSp_4(F), PG = \PGSp_4(F)$.
Let $\mu$ denote the similitude factor of $G$.
We denote by $H_2$ the set of $F$-rational $2 \times 2$ Hankel matrices.
Let
\begin{align*}
N &= \{n_y := \begin{bmatrix}
1_2& y  \\
& 1_2
\end{bmatrix} \mid y \in H_2 \}, \\
\bar{N} &= \{\bn_y := \begin{bmatrix}
1_2&   \\
y & 1_2
\end{bmatrix} \mid y \in H_2 \}, \\
\hat{F}^\times &= \{\hat{u}: = \begin{bmatrix}
u 1_2 & \\
 & 1_2
\end{bmatrix} \mid u \in F^\times \}, \\
A &= \{ a_h :=
\begin{bmatrix}
 h & \\
 & h^\dag \end{bmatrix} \mid h \in \GL_2(F) \}, \\
 P &= \{\begin{bmatrix}
* &* &* &*  \\
*& *& * &* \\
& & * &*  \\
& & * &*  \\
\end{bmatrix}\} =  \hat{F}^\times A N,
\end{align*}
where 
\begin{align*}
h^\dag = \det (h) {}^th^{-1}
\langle \begin{bmatrix}
& 1 \\
1 &
\end{bmatrix}\rangle \in \GL_2(F).
\end{align*} 
If $H$ is a subgroup of $G$, then $N_H, \bar{N}_H$ and $A_H$ indicate $N \cap H, \bar{N} \cap H$ and $A \cap H$, respectively.
Let $\s \in H_2$.
Define a linear functional $l_\s$ on $H_2$ by 
\begin{align*}
y \longmapsto tr(\s y).
\end{align*} 
Let $T = T_\s$ denote the (algebraically connected) identity component of the stabilizer subgroup in the Levi part $ \hat{F}^\times A\subset P$ relevant to $l_\s$.
For a regular $\s$, set a semi-simple algebra 
\begin{align*}
E_\s = E= \begin{cases}
F + F & \mbox{ if $\det(\s) \in (F^\times)^2$ (this is the {\it split case}),} \\
F(\sqrt{\det(\s)}) & \mbox{otherwise (the {\it nonsplit case}),}
\end{cases}
\end{align*}
whose multiplicative group is isomorphic to $T$. 
Let $W$ denote the space of row vectors $E^2$.
Let 
\begin{align*}
\G_\s = \G := \{g \in \GL_2(E) \mid \det(g) \in F^\times \}
\end{align*}
act on $W$ from the right so that the symplectic form $tr_{E/F} (w_1 w'_2 -w_2w'_1), \ w,w' \in W$ is preserved up to scalars.
Therefore $\G$ is embeddable into $\GSp_{W}(F)$.
Since $\GSp_{W}(F) \simeq G$, there are embeddings $\phi_\s: \G \to G$ such that 
\begin{align}
\begin{split}
\G \cap \phi_\s^{-1}(T) &= \{\begin{bmatrix}
a & \\
& a^c
\end{bmatrix} \mid a \in E^\times \} \\
\G \cap \phi_\s^{-1}(N(H_2(\p))) & \subset \{\begin{bmatrix}
1& x\\
&1
\end{bmatrix} \mid x \in \delta_{E/F}^{-1}\mathfrak{O} \} \subset \G \cap \phi_\s^{-1}(N(H_2(\mathfrak{o}))),
\end{split}
\label{eq:embphi}
\end{align}
where $\delta_{E/F}$ indicates $1$ (resp. the the relative different of $E/F$), and $\mathfrak{O}$ indicates $\mathfrak{o} \op \mathfrak{o}$ (resp. the ring of integers) in the split (resp. nonsplit) case.
Fixing such a $\phi_\s$, we will identify $\G$ with $\phi_\s(\G) \subset G$.
There is a unique element $\imath \in G$ up to scalars such that 
\begin{align}
g \langle \imath \rangle = g^c, g \in \G \label{eq:ALel}
\end{align} 
where $c$ indicates the standard involution of $E$ over $F$.
We call $\imath$ the {\it Atkin-Lehner element}.
Let $\Lambda$ be a smooth character of $T (\simeq E^\times)$ such that $\Lambda|_{F^\times} = w_\pi$.
Let $\psi$ be a nontrivial additive character of $F$.
Define a character 
\begin{align*}
\Lambda_\s^\psi: t n_y \longmapsto \Lambda(t) \psi(l_\s(y))
\end{align*} 
of the semidirect product $TN$.
{\it Bessel functions} relevant to $\Lambda_\s^\psi$ are $\C$-valued functions $\beta$ on $G$ with the following properties.
\begin{enumerate}[i)]
\item $\beta(t n g) = \Lambda_\s^\psi(tn) \beta(g)$.
\item $\beta$ is slowly increasing if $F = \Rb$.
\item $\beta$ is smooth and $K$-finite. 
\end{enumerate}
Here $K$ is the standard maximal compact subgroup of $G$.
Now, let $F$ be nonarchimedean, and let $(\pi,V) \in \Ir(G)$.
According to Piatetski-Shapiro and Novodovorsky \cite{PS-N}, the space $\Hom(V,\Lambda_\s^\psi)$ of Bessel functionals is at most one-dimensional. 
Roberts and Schmidt \cite{R-S2} also showed it for some representations.
Letting $\hat{F}^\times TN$ act on 
\begin{align}
V(N_\G,\Lambda) := \la \pi(tn)v - \Lambda(t)v \mid v \in V, t \in T, n \in N_\G \ra \label{eqn:defVNGL}
\end{align}
naturally, we can define the twisted Jacquet module 
\begin{align*}
V_{N_\G,\Lambda} := V/ V(N_\G,\Lambda)
\end{align*}
on which $\hat{F}^\times TN/N_\G$ acts.
Let $P_2$ denote the mirabolic subgroup of $\GL_2(F)$.
The group $\hat{F}^\times TN/N_\G$ is isomorphic to $E^\times \times P_2$ via the mapping 
\begin{align*}
\hat{u} t n_y  \longmapsto (t, \begin{bmatrix}
u & l_\s(y)\\
 & 1
\end{bmatrix}).
\end{align*} 
We can regard $V_{N_\G, \Lambda}$ as a $P_2$-module.
For Whittaker models, Roberts and Schmidt \cite{R-S} used $P_3$-structure of $V$ in order to give a proof for (and modify c.f. p.82. of loc. cit) Novodovorsky's local functional equation, and to construct the newform theory.
In this article, the $P_2$-structure of $V_{N_\G,\Lambda}$ will play similar roles for Bessel models.
Every irreducible smooth $P_2$-representation is isomorphic to a representation:
\begin{align*}
\begin{bmatrix}
u & * \\
&1
\end{bmatrix} \longmapsto \chi(u),
\end{align*}
or the compactly induced representation from the representation: 
\begin{align*}
\begin{bmatrix}
 1& x  \\
& 1
\end{bmatrix} \longmapsto \psi(x) 
\end{align*} 
(c.f. \cite{B-Z}). 
Here $\chi$ is a character of $F^\times$.
For a moment we denote these $P_2$-representations by $ext(\chi)$ and $ind(\psi)$ respectively.
Let $A_2$ and $N_2$ denote the diagonals and the unipotent radicals of $P_2$ respectively.
The Jacquet-Waldspurger module $V_{N,\Lambda}$ of $V$ defined in \cite{Sc-T} is obtained by replacing $N_\G$ with $N$ in (\ref{eqn:defVNGL}).
Observe that $V_{N,\Lambda}$ is isomorphic to the Jacquet module $(V_{N_\G, \Lambda})_{N_2}$.
\begin{lem}\label{lem:N2one}
With notations as above, 
\begin{enumerate}[i)]
\item $\Hom_{N_2}(V_{N_\G,\Lambda}, \1)$ is finite dimensional. 
\item $\Hom_{N_2}(V_{N_\G,\Lambda}, \psi)$ is at most one-dimensional.
\end{enumerate}
\end{lem}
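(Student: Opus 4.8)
The plan is to translate both parts into statements about $V$ itself through the $P_2$-dictionary recorded above. Throughout write $M=V_{N_\G,\Lambda}$, regarded as a smooth $P_2$-module via the isomorphism $\hat F^\times TN/N_\G\cong E^\times\times P_2$. Under it, $N_2\subset P_2$ is the image of the full Siegel radical $N$, the projection $N\to N_2$ carrying $n_y$ to the upper unipotent element of $P_2$ with off-diagonal entry $l_\s(y)$; its kernel is $N_\G$, and correspondingly $\Lambda_\s^\psi$ is trivial on $N_\G$, so that $M$ is genuinely a twisted Jacquet module in the sense of (\ref{eqn:defVNGL}).

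For (ii): here $\psi$ is the character $n_y\mapsto\psi(l_\s(y))$ of $N_2$. A linear functional on $M$ that is $N_2$-equivariant by $\psi$ pulls back along the surjection $V\twoheadrightarrow M$ to a functional $\tilde\lambda$ on $V$ that is $N_\G$-invariant and $T$-equivariant by $\Lambda$ (both inherited from the definition of $M$) and satisfies $\tilde\lambda(\pi(n_y)v)=\psi(l_\s(y))\tilde\lambda(v)$ for all $n_y\in N$ (the $N_2$-equivariance, of which $N_\G$-invariance is the case $l_\s(y)=0$). Hence $\tilde\lambda(\pi(tn)v)=\Lambda_\s^\psi(tn)\tilde\lambda(v)$, i.e.\ $\tilde\lambda$ is a Bessel functional; conversely every element of $\Hom(V,\Lambda_\s^\psi)$ annihilates $V(N_\G,\Lambda)$ and descends to an $N_2$-equivariant functional on $M$. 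This gives a natural isomorphism $\Hom_{N_2}(M,\psi)\cong\Hom(V,\Lambda_\s^\psi)$, and the latter is at most one-dimensional by Piatetski-Shapiro and Novodovorsky \cite{PS-N} (see also \cite{R-S2}).

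For (i): since $\1$ is the trivial character, $\Hom_{N_2}(M,\1)$ is the full linear dual $(M_{N_2})^*$ of the space of $N_2$-coinvariants, and by the identification already recorded $M_{N_2}=(V_{N_\G,\Lambda})_{N_2}$ is the Jacquet--Waldspurger module $V_{N,\Lambda}$ of \cite{Sc-T}; so (i) reduces to the finite-dimensionality of $V_{N,\Lambda}$. I would obtain this from the structure of the ordinary Jacquet module along the Siegel parabolic $P$: by Jacquet's theorem $V_N$ is an admissible, finite-length representation of the Levi $\hat F^\times A\cong\GL_1(F)\times\GL_2(F)$, and $V_{N,\Lambda}=(V_N)_{T,\Lambda}$ is its $(T,\Lambda)$-coinvariant space for the torus $T\cong E^\times$. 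For each irreducible constituent $\sigma$ of $V_N$, $\Hom_T(\sigma,\Lambda)=(\sigma_{T,\Lambda})^*$ is finite-dimensional --- the $\GL_1(F)$-factor merely twisting, this comes down to finiteness of toric multiplicities for $\GL_2(F)$, bounded by $1$ in the nonsplit case by Saito--Tunnell and by an absolute constant in the split case via the Kirillov model --- so summing over the finitely many constituents yields $\dim V_{N,\Lambda}<\infty$. (Alternatively this finiteness can be quoted directly from \cite{Sc-T}.)

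I expect the finite-dimensionality input in (i) to be the only substantive point; the remainder is bookkeeping through the $P_2$-dictionary. A clean way to package things --- and a rehearsal for the later sections --- is the Bernstein--Zelevinsky filtration $0\to M'\to M\to M''\to 0$ of the $P_2$-module $M$: $M'$ is a direct sum of copies of $ind(\psi)$, finite in number since (ii) already bounds the relevant multiplicity space, and $M''$ is the inflation to $P_2$ of the $F^\times$-module $M_{N_2}$. Applying $\Hom_{N_2}(-,\psi)$ to this sequence re-proves (ii) ($M''$ contributes nothing and each copy of $ind(\psi)$ contributes one line), while applying $\Hom_{N_2}(-,\1)$ isolates $(M_{N_2})^*$, so that (i) rests solely on the finite-dimensionality of $M_{N_2}=V_{N,\Lambda}$.
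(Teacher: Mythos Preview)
Your proof is correct and follows essentially the same route as the paper: for (ii) you identify $\Hom_{N_2}(V_{N_\G,\Lambda},\psi)\cong\Hom_{TN}(V,\Lambda_\s^\psi)$ and invoke uniqueness of Bessel functionals, and for (i) you identify $\Hom_{N_2}(V_{N_\G,\Lambda},\1)$ with the dual of $V_{N,\Lambda}$ and use its finite-dimensionality. The paper's proof is a two-line citation (to \cite{PS-N}, \cite{R-S2} for (ii) and to Lemma~3.3.2 of \cite{Sc-T} for (i)); you add the explicit unwinding of the $P_2$-dictionary and supply an independent argument for $\dim V_{N,\Lambda}<\infty$ via the Siegel Jacquet module, which the paper simply quotes.
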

\begin{proof}
i) follows from the isomorphism $\Hom_{N_2}(V_{N_\G,\Lambda}, \1) \simeq (V_{N_\G,\Lambda})_{N_2} \simeq V_{N,\Lambda}$ and Lemma 3.3.2. of loc. cit.
ii) follows from $\Hom_{N_2}(V_{N_\G,\Lambda}, \psi) \simeq \Hom_{TN} (V,\Lambda_\s^\psi)$. 
\end{proof}
\begin{lem}\label{lem:A_2one}
Let $\chi$ be a character of $F^\times$.
\begin{enumerate}[i)]
\item $\Hom_{A_2}(ind(\psi), \chi)$ is one-dimensional.
\item If $\xi \neq \chi$, then $\Hom_{A_2}(ext(\chi), \xi)$ is zero.
If $\xi = \chi$, then $\Hom_{A_2}(ext(\chi), \xi)$ is one-dimensional space spanned by 
\begin{align*}
f \longmapsto f(1).
\end{align*}
\end{enumerate}
\end{lem}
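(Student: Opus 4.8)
\emph{Sketch of the intended argument.} Part ii) is immediate. Since $P_2=\{\operatorname{diag}(u,1)\,n\mid u\in F^\times,\ n\in N_2\}$, the restriction of $ext(\chi)$ to $A_2\cong F^\times$ is the one-dimensional representation $\operatorname{diag}(u,1)\mapsto\chi(u)$, so $\Hom_{A_2}(ext(\chi),\xi)\cong\Hom_{F^\times}(\chi,\xi)$, which vanishes for $\xi\ne\chi$ and is one-dimensional for $\xi=\chi$. In the latter case, realizing $ext(\chi)$ as functions $f$ on $P_2$ with $f(\operatorname{diag}(u,1)\,ng)=\chi(u)f(g)$, the evaluation $f\mapsto f(1)$ is a nonzero element of this space, because $(\rho(\operatorname{diag}(u,1))f)(1)=f(\operatorname{diag}(u,1))=\chi(u)f(1)$; nothing subtle occurs here.

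For i), the plan is to make the Kirillov-type model of $ind(\psi)$ explicit and restrict it to $A_2$ (equivalently, apply Mackey theory to the decomposition $P_2=N_2A_2$). Realize $ind(\psi)$ as the space of locally constant $f\colon P_2\to\C$, compactly supported modulo $N_2$, satisfying $f(\begin{bmatrix}1&x\\&1\end{bmatrix}g)=\psi(x)f(g)$, with $P_2$ acting by right translation $\rho$. Writing $g\in P_2$ uniquely as $n(g)\,a(g)$ with $n(g)\in N_2$ and $a(g)\in A_2$, the map $f\mapsto\phi_f$ with $\phi_f(a):=f(a)$ is a linear isomorphism of $ind(\psi)$ onto $C_c^\infty(A_2)$; a direct computation, using $n(gb)=n(g)$ and $a(gb)=a(g)b$ for $b\in A_2$, shows it carries $\rho|_{A_2}$ to the right regular representation of $A_2\cong F^\times$ on $C_c^\infty(F^\times)$. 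Hence $\Hom_{A_2}(ind(\psi),\chi)$ is identified with the space of linear functionals $\ell$ on $C_c^\infty(F^\times)$ satisfying $\ell(\rho(b)\phi)=\chi(b)\ell(\phi)$, that is, the $\chi$-eigendistributions on $F^\times$ for right translation.

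It then remains to see this last space is exactly one-dimensional. Existence is furnished by the Mellin functional $\phi\mapsto\int_{F^\times}\phi(x)\chi(x)^{-1}\,d^\times x$, which is at once $\chi$-equivariant and nonzero. For uniqueness, given any such $\ell$, I would observe that $\phi\mapsto\ell(\chi\phi)$ is a right-translation-invariant distribution on $F^\times$, using the identity $\chi\cdot\rho(b)\phi=\chi(b)^{-1}\rho(b)(\chi\phi)$; hence it is a scalar multiple of Haar measure, which forces $\ell$ to be a scalar multiple of the Mellin functional. Alternatively one can quote the Bernstein--Zelevinsky description of smooth $P_2$-modules from \cite{B-Z}. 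Transporting the Mellin functional back through the isomorphism of the previous paragraph produces the spanning element of $\Hom_{A_2}(ind(\psi),\chi)$. The only genuine input is the uniqueness of invariant (resp. eigen-) distributions on $F^\times$, which is entirely standard — it is precisely the fact underlying the construction of Kirillov models — so I anticipate no real obstacle; everything else is bookkeeping with the decomposition $P_2=N_2A_2$.
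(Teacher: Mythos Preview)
Your argument is correct; the paper's own proof is simply a one-line citation to ``the standard distributional technique'' (Warner, or \S2.5 of \cite{R-S}), and what you have written is exactly that technique made explicit in this elementary setting: one double coset $N_2\backslash P_2/A_2$, so the restriction of $ind(\psi)$ to $A_2$ is the regular representation on $C_c^\infty(F^\times)$, and $\chi$-eigendistributions there are unique by Haar measure. There is no meaningful difference in approach, only in the level of detail.
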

\begin{proof}
This is by the standard distributional technique (c.f. \cite{Wa} or 2.5 of \cite{R-S}).
\end{proof}
\nid
From Lemma \ref{lem:N2one}, it follows immediately 
\begin{prop}\label{prop:JHP2}
There exists a Jordan-H\"older sequence of $P_2$-modules $0 \subset V_0 \subset \cdots \subset V_n = V_{N_\G,\Lambda}$ such that: 
\begin{align*}
V_{i} \bs V_{i+1} \simeq ext(\chi_i), \ 
V_0 \simeq
\begin{cases}
ind(\psi) & \mbox{if $\Hom(V, \Lambda_\s^\psi) \neq \{0\}$,} \\
\{0\} & \mbox{otherwise.}
\end{cases}
\end{align*}
\end{prop}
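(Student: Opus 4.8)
The plan is to read the statement off from the Bernstein--Zelevinsky theory of smooth $P_2$-modules, feeding in the two halves of Lemma~\ref{lem:N2one}. Recall from \cite{B-Z} that every smooth representation $\rho$ of $P_2$ sits in a functorial short exact sequence
\begin{align*}
0 \to \Phi^+\Phi^-\rho \to \rho \to \Psi^+\Psi^-\rho \to 0,
\end{align*}
in which $\Psi^-\rho \simeq \rho_{N_2}$ is the module of $N_2$-coinvariants (an $F^\times$-module, up to the normalizing twist), $\Phi^-\rho \simeq \rho_{N_2,\psi}$ is the space of $\psi$-twisted $N_2$-coinvariants, $\Phi^+\Phi^-\rho$ is a direct sum of $\dim\Phi^-\rho$ copies of $ind(\psi)$, and every irreducible subquotient of $\Psi^+\Psi^-\rho$ is of the form $ext(\chi)$. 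First I would apply this to $\rho = V_{N_\G,\Lambda}$, which is a smooth $P_2$-module because $\pi$ is smooth.

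For the bottom piece I would argue that $\dim\Phi^- V_{N_\G,\Lambda} = \dim (V_{N_\G,\Lambda})_{N_2,\psi} \le \dim\Hom_{N_2}(V_{N_\G,\Lambda},\psi) \le 1$ by Lemma~\ref{lem:N2one}(ii), and that this space is nonzero precisely when $\Hom_{N_2}(V_{N_\G,\Lambda},\psi)\neq 0$, hence, via the identification $\Hom_{N_2}(V_{N_\G,\Lambda},\psi)\simeq\Hom_{TN}(V,\Lambda_\s^\psi)$ used in the proof of that lemma, precisely when $\Hom(V,\Lambda_\s^\psi)\neq\{0\}$. Thus $V_0 := \Phi^+\Phi^- V_{N_\G,\Lambda}$ is either a single copy of $ind(\psi)$ or $\{0\}$, according to the stated dichotomy. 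For the top piece I would use $\Psi^- V_{N_\G,\Lambda}\simeq (V_{N_\G,\Lambda})_{N_2}\simeq V_{N,\Lambda}$, which is finite-dimensional by Lemma~\ref{lem:N2one}(i); hence $\Psi^+\Psi^- V_{N_\G,\Lambda}$ is finite-dimensional and admits a finite composition series with one-dimensional $F^\times$-subquotients, which after applying $\Psi^+$ become $ext(\chi_0),\dots,ext(\chi_{n-1})$ for suitable characters $\chi_i$ of $F^\times$.

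It then remains to glue: take $V_0$ as above and let $V_0\subset V_1\subset\cdots\subset V_n=V_{N_\G,\Lambda}$ be the preimages in $V_{N_\G,\Lambda}$ of a composition series of the quotient $\Psi^+\Psi^- V_{N_\G,\Lambda}$. This gives $V_i\bs V_{i+1}\simeq ext(\chi_i)$ with $V_0$ as asserted, and in particular shows $V_{N_\G,\Lambda}$ has finite length. I expect the only genuinely delicate point to be the bookkeeping that matches $\Phi^-$ and $\Psi^-$ of $V_{N_\G,\Lambda}$ with the (co)invariant spaces occurring in Lemma~\ref{lem:N2one} --- in particular that the $ind(\psi)$-part appears with multiplicity at most one and sits at the bottom of the filtration rather than as some other subquotient --- which is exactly what Lemma~\ref{lem:N2one} together with the $P_2$-theory of \cite{B-Z} provides; everything else is formal.
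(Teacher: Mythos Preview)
Your argument is correct and is essentially what the paper has in mind: immediately before the proposition the paper recalls the Bernstein--Zelevinsky classification of irreducible $P_2$-modules, and then states that the proposition ``follows immediately'' from Lemma~\ref{lem:N2one}. You have simply unpacked this, invoking the $\Phi^\pm,\Psi^\pm$ filtration explicitly and feeding in parts (i) and (ii) of Lemma~\ref{lem:N2one} to bound the multiplicities; this is exactly the intended mechanism.
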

\nid 
If $\Hom(V,\Lambda_\s^\psi)$ has a nontrivial element $\lambda$, then the space of functions on $G$
\begin{align*}
\B(\pi, \Lambda_\s^\psi) := \{\beta: g \mapsto \lambda(\pi(g)v) \mid v \in V\}
\end{align*}
endowed with the actions of $G$ given by right translations, is called the {\it Bessel model} of $\pi$ relevant to $\Lambda_\s^\psi$, or to $E$ roughly.
We call $\beta \in \B(\pi, \Lambda_\s^\psi)$ {\it Bessel vectors} of $\pi$.

Now, fix a $\Lambda_\s^\psi$.
For $\beta \in \B(\pi, \Lambda_\s^\psi)$ and $\vp \in \Ss(W)$, Piatetski-Shapiro defined 
\begin{align*}
Z(s,\beta,\vp) = \int_{N_{\G}\bs \G} \beta(g) \vp([0,1] g) |\det (g)|^{s + \frac{1}{2}} dg \in \C(X).
\end{align*}
These zeta integrals consist a fractional ideal of $\C[X^\pm] = \C[X,X^{-1}]$ admitting a generator in the form of $1/P(X)$ where $P$ is a polynomial in $X$ with constant term $1$.
This generator is called Piatetski-Shapiro's $L$-function of $\pi$ (of $\B(\pi,\Lambda_\s^\psi)$ rather than $\pi$ precisely), and denoted by $L(s,\pi)$.
Similarly, the set of 
\begin{align*}
Z(s,\beta) := \int_{F^\times} \beta(\hat{u})|u|^{s-\frac{3}{2}} d^\times u 
\end{align*} 
also defines a $L$-function, which is called the regular part of $L(s,\pi)$, and denoted by $L^{reg}(s,\pi)$.
By the argument in p. 466 of \cite{Sc-T}, the ratio $L^{reg}(s,\pi)/L(s,\pi)$ is a polynomial in $X$ dividing 
\begin{align}
\begin{cases}
(1-q^{-1}X)^2 & \mbox{if $E$ is split}, \\
1-(q^{-1}X)^2& \mbox{if $E/F$ is an unramified quadratic field extension,} \\
1-(q^{-1}X) & \mbox{if $E/F$ is a ramified quadratic field extension.}
\end{cases}
\label{eq:divLL}
\end{align} 
By the work of loc. cit, $L^{reg}(s,\pi)$ in the nonspilt case  for all $\pi \in \Ir(G)$ are determined, and do not vary with the choice of the Bessel models.
Let $\pi^\vee$ denote the contragredient to $\pi$.
Define $\beta^\imath$ by 
\begin{align*}
\beta^\imath(g) =\beta(\imath g)w_\pi(\mu(g))^{-1}
\end{align*} 
so that $\beta^\imath \in \B(\pi^\vee, (\Lambda^{-1})_\s^{\psi^{-1}})$.
Define $\vp^\sharp$ by 
\begin{align*}
\vp^\sharp(z) = \int_{W} \psi(tr_{E/F}(z_1w_2 -z_2 w_1) ) \vp(w) dw
\end{align*}
where $dw$ is a self-dual measure.
The following local functional equation is given by Piatetski-Shapiro \cite{PS2} (but, the definition for $\beta^\imath$ is modified).
\begin{prop}\label{prop:FE}
Let $\pi \in \Ir(G)$ with $\B(\pi, \Lambda_\s^\psi) \neq \{0\}$.
Then, there exists a monomial $\ep(s,\pi,\psi)$ in $q^{-s}$ such that 
\begin{align}
\frac{Z(1-s,\beta^\imath,\vp^\sharp)}{L(1-s,\pi^\vee)} = \ep(s,\pi,\psi)\frac{Z(s,\beta,\vp)}{L(s,\pi)} \label{eqn:LFE}
\end{align}
for any $\beta \in \B(\pi, \Lambda_\s^\psi)$ and $\vp \in \Ss(W)$.
\end{prop}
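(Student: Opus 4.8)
The plan is to establish the local functional equation by the familiar ``uniqueness of the invariant functional'' argument --- following Piatetski-Shapiro \cite{PS2} --- with the combinatorial bookkeeping run through the $P_2$-theory of the twisted Jacquet module $V_{N_\G,\Lambda}$ developed above (Proposition~\ref{prop:JHP2}, Lemmas~\ref{lem:N2one} and \ref{lem:A_2one}), in precise analogy with the way Roberts and Schmidt use the $P_3$-theory of Whittaker models in the (corrected) treatment of the Novodvorsky functional equation on p.~82 of \cite{R-S}.

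First I would unfold $Z(s,\beta,\vp)$. Since $n_y\mapsto l_\s(y)$ realizes $N/N_\G\simeq N_2$, the group $N_\G$ lies in the kernel of $l_\s$, so a Bessel vector $\beta$ is genuinely left $N_\G$-invariant and the integrand descends to $N_\G\bs\G$. Writing out $\G$ by its Iwasawa decomposition and using the $K$-finiteness of $\beta$ together with the transformation law $\beta(tng)=\Lambda_\s^\psi(tn)\beta(g)$, the part of the diagonal torus of $\G$ identified with $T_\s\simeq E^\times$ contributes a Tate integral in $\Lambda$, while all that is left --- the complementary $\hat F^\times$-direction, the variable $[0,1]g$, and a finite sum over translates by a maximal compact of $\G$ --- involves $\beta$ and $\vp$ only through the structure of $V_{N_\G,\Lambda}$ as a module over $E^\times\times P_2$, with $\hat F^\times\leftrightarrow A_2$ and $N/N_\G\leftrightarrow N_2$. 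After meromorphic continuation --- rationality in $X$ is recalled just before the statement --- the outcome is a nonzero element of a fixed $\C(X)$-space of ``invariant functionals'' attached to the pair $\bigl(V_{N_\G,\Lambda},\,\Ss(W)\otimes|\det|^{s+1/2}\bigr)$; and by the definition of $L(s,\pi)$ as a generator, some finite combination satisfies $\sum_i c_i\,Z(s,\beta_i,\vp_i)=L(s,\pi)$, so that $Z(s,\beta,\vp)/L(s,\pi)\in\C[X^{\pm}]$ for all $\beta$ and $\vp$.

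The crux is to show that this $\C(X)$-space is at most one-dimensional once $s$ avoids a discrete set. Here I would filter $V_{N_\G,\Lambda}$ by the Jordan--H\"older sequence $0\subset V_0\subset\cdots\subset V_n=V_{N_\G,\Lambda}$ of Proposition~\ref{prop:JHP2}, restrict a candidate functional to the layers, and analyse each by the $P_2$-structure: on every one-dimensional quotient $V_i\bs V_{i+1}\simeq ext(\chi_i)$ the contribution is an elementary (Tate-type) integral whose only possible poles are of the shapes listed in \eqref{eq:divLL} and are therefore absorbed into $L(s,\pi)$ --- this uses Lemma~\ref{lem:A_2one}(ii), which identifies the relevant $A_2$-functional with $f\mapsto f(1)$ --- while on the bottom layer $V_0\simeq ind(\psi)$ the functional is forced to be a single scalar multiple of a fixed one, by Lemma~\ref{lem:N2one}(ii) together with Lemma~\ref{lem:A_2one}(i). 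Hence any two nonzero meromorphic families of such functionals are proportional over $\C(X)$.

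Finally I would match the two sides of \eqref{eqn:LFE}. The symplectic Fourier transform $\vp\mapsto\vp^{\sharp}$ is a $\G$-equivariant isomorphism of $\Ss(W)\otimes|\det|^{s+1/2}$ onto $\Ss(W)\otimes|\det|^{(1-s)+1/2}$ up to the twist built into the definitions of $\beta^{\imath}$ and $\vp^{\sharp}$, and conjugation by the Atkin--Lehner element $\imath$, which induces $g\mapsto g^{c}$ on $\G$, carries $\B(\pi,\Lambda_\s^\psi)$ onto $\B(\pi^\vee,(\Lambda^{-1})_\s^{\psi^{-1}})$; composing these, $(\beta,\vp)\mapsto Z(1-s,\beta^{\imath},\vp^{\sharp})/L(1-s,\pi^\vee)$ becomes, after the evident identifications, a second nonzero element of the same $\C(X)$-space of invariant functionals. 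By the previous paragraph it therefore equals $\ep(s,\pi,\psi)$ times $(\beta,\vp)\mapsto Z(s,\beta,\vp)/L(s,\pi)$ for a unique $\ep(s,\pi,\psi)\in\C(X)$, and evaluating on the combination $\sum_i c_i(\beta_i,\vp_i)$ from above gives $\ep(s,\pi,\psi)=\sum_i c_i\,Z(1-s,\beta_i^{\imath},\vp_i^{\sharp})/L(1-s,\pi^\vee)$; since the $Z(1-s,\beta_i^{\imath},\vp_i^{\sharp})$ lie in the fractional ideal generated by $L(1-s,\pi^\vee)$, this puts $\ep(s,\pi,\psi)$ in $\C[X^{\pm}]$. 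Running the same identity for $(\pi^\vee,\psi^{-1})$ and composing, with $\vp^{\sharp\sharp}(z)=\vp(-z)$, makes $\ep(s,\pi,\psi)\,\ep(1-s,\pi^\vee,\psi^{-1})$ a nonzero constant, so $\ep(s,\pi,\psi)$ is a unit of $\C[X^{\pm}]$, i.e.\ a monomial in $q^{-s}$. I expect the \emph{main obstacle} to be the multiplicity bound: one must check that the Iwasawa-unfolded integral genuinely respects the $P_2$-filtration term by term and that the poles supplied by the $ext(\chi_i)$-layers are exactly those absorbed into $L(s,\pi)$ --- the Bessel-model analogue of the delicate bookkeeping behind p.~82 of \cite{R-S}, now with $V_{N_\G,\Lambda}$ and the pair $(\Ss(W),|\det|^{s+1/2})$ in the roles of the $P_3$-module and the Novodvorsky data.
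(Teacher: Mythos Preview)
Your proposal is essentially correct and follows the same strategy as the paper: prove that the space of $\G$-invariant bilinear functionals on $V\otimes\Ss(W)$ twisted by $|\det|^{-s-1/2}$ is at most one-dimensional for all but finitely many $s$, using the $P_2$-filtration of $V_{N_\G,\Lambda}$ (Proposition~\ref{prop:JHP2} and Lemma~\ref{lem:A_2one}), and then observe that both $(\beta,\vp)\mapsto Z(s,\beta,\vp)$ and $(\beta,\vp)\mapsto Z(1-s,\beta^\imath,\vp^\sharp)$ live in this space.

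The one place where the paper is sharper is precisely the step you flag as the ``main obstacle.'' Rather than unfolding $Z(s,\beta,\vp)$ via an Iwasawa decomposition of $\G$ and tracking the integral through the filtration layer by layer, the paper invokes Matringe's argument (Proposition~4.2 of \cite{M}) to obtain, for generic $s$, an embedding
\[
\Hom_{\G}\bigl(V\otimes\Ss(W),\,|\det|^s\bigr)\;\hookrightarrow\;\Hom_{\G}\bigl(\pi\otimes_\C\mathrm{ind}_{\hat F^\times N_\G}^{\G}(\1),\,|\det|^s\bigr),
\]
and then Frobenius reciprocity (\cite{B-Z}, 2.29) identifies the right-hand side with $\Hom_{A_2}(V_{N_\G,\Lambda},|\det|^s)$ directly. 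From there Lemma~\ref{lem:A_2one}(ii) kills the $ext(\chi_i)$ layers for generic $s$ and Lemma~\ref{lem:A_2one}(i) gives one-dimensionality on $V_0$. This replaces your explicit Tate-integral bookkeeping on each layer with a single clean functorial reduction, and in particular sidesteps the need to verify that the unfolded integral ``respects the $P_2$-filtration term by term.'' Your route would also work, but you would have to reproduce Matringe's analysis by hand; citing it is more economical.
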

\nid
The above $\ep$-factor satisfies $\ep(s,\pi,\psi) \ep(1-s,\pi^\vee,\psi^{-1}) = 1$.
As far as (\ref{eq:embphi}) is satisfied, $\ep(s,\pi,\psi)$ does not depend on the choice of $\imath$ if $w_\pi$ is trivial.
For $a \in F^\times$, let $\psi^a$ denote the additive character defined by $\psi^a(z) = \psi(az)$.
If $\psi$ is replaced with $\psi^a$, then $\ep(s,\pi,\psi^a) = |a|^{4s-2} \ep(s,\pi,\psi)$, but $L(s,\pi)$ does not change. 
If $w_\pi$ is trivial, then $\pi^\vee \simeq \pi$.
In this case, for $\psi$ such that $\psi(\mathfrak{o}) = \{1\} \neq \psi(\p^{-1})$, the $\ep$-factor is in the form of $\ep_\pi X^{n_\pi}$ with 
\begin{align*}
\ep_\pi \in \{ \pm 1\},\ \ n_\pi \in \Z.
\end{align*} 
These quantities $\ep_\pi$ and $n_\pi$ are called the {\it analytic root number and analytic conductor of $\pi$} respectively.
 
A proof of (\ref{eqn:LFE}) is done by showing that, except for finitely many $s \in \C$, the space $\Hom_{\G}(V \ot \Ss(W), |\det|^{-s-1/2})$ is at most one-dimensional since it contains the functionals $(\beta,\vp) \mapsto Z(s,\beta,\vp)$ and $(\beta,\vp) \mapsto Z(1-s,\beta^\imath,\vp^\sharp)$. 
By Matrigne's argument at Proposition 4.2. of \cite{M}, except for finitely many $s \in \C$, we have
\begin{align*}
\Hom_{\G}(V \ot \Ss(W), |\det|^s) \hookrightarrow \Hom_\G(\pi \ot_\C {\rm ind}_{\hat{F}^\times N_\G}^\G(\1), |\det|^s).
\end{align*} 
The last space is isomorphic to $\Hom_{A_2}(V_{N_\G,\Lambda}, |\det|^{s})$ by the Frobenius reciprocity law 2.29 of \cite{B-Z}, and to $\Hom_{A_2}(V_0, |\det|^{s})$, except for finitely many $s \in \C$ by Lemma \ref{lem:A_2one} ii).
The last space is one-dimensional by Lemma \ref{lem:A_2one} i).

The following proposition will be used in sect. \ref{sec:NPG}.
Let $pr$ denote the projection $V= \B(\pi,\Lambda_\s^\psi) \to V_{N_\G,\Lambda}$.
Consider the functional 
\begin{align*}
\lambda_\chi: \beta \longmapsto Z(s,\beta \ot \chi) := \int_{F^\times} \beta(\hat{u})|u|^{s-\frac{3}{2}} \chi(u) d^\times u
\end{align*} 
where $\chi$ is a character of $F^\times$.
Since $\lambda_\chi(\pi(tn)\beta) = \Lambda(t) \lambda(\beta), tn \in TN_\G$, there is a functional $\lambda_\chi': V_{N_\G,\Lambda} \to \C$ such that $\lambda_\chi = \lambda_\chi' \circ pr$.
\begin{prop}\label{prop:prvan}
With notations as above, assume $\B(\pi, \Lambda_\s^\psi) \neq \{0\}$.
Then we have 
\begin{enumerate}[i)]
\item Let $\beta \in pr^{-1}(V_0)$.
Then $pr(\beta) = 0$ if and only if $\beta$ vanishes on $\hat{F^\times}$. 
\item There exists an $\hat{\mathfrak{o}}^\times$-invariant $\beta \in \B(\pi, \Lambda_\s^\psi)$ such that $\beta(1) \neq 0$.
\item Assume $Z(s,\beta,\chi) = 0$ for any $\chi$. Then $pr(\beta) = 0$.
\end{enumerate}
\end{prop}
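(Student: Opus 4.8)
\emph{The plan is to derive all three parts from one device: a Kirillov-type realization of the $P_2$-module $V_{N_\G,\Lambda}$.} First I would introduce $\kappa\colon V_{N_\G,\Lambda}\to C^\infty(F^\times)$ by $\kappa(pr(\beta))(u)=\beta(\hat u)$. This is well defined: using that $\hat u$ centralizes $T$, that $\hat u\,n_y\hat u^{-1}=n_{uy}$, that $l_\s$ is linear, and that $l_\s$ vanishes on $N_\G$ (this is precisely why the definition (\ref{eqn:defVNGL}) carries the character $\Lambda|_T\otimes\1$), a short computation shows every generator $\pi(tn)\beta'-\Lambda(t)\beta'$ of $V(N_\G,\Lambda)$, and hence all of $V(N_\G,\Lambda)$, vanishes identically on $\hat F^\times$. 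The same computation shows $\kappa$ carries the $A_2$-action to right translation and the $N_2$-action to the modulation $f(u)\mapsto\psi(ux)f(u)$, and that $\kappa(w)(1)=\bar\lambda(w)$, where $\bar\lambda(pr(\beta))=\beta(1)$ is the descended Bessel functional. Since $\bar\lambda$ spans $\Hom_{N_2}(V_{N_\G,\Lambda},\psi)$ (Lemma \ref{lem:N2one}) and $\Hom_{N_2}(ext(\chi),\psi)=0$, Proposition \ref{prop:JHP2} forces $\bar\lambda$ to be nonzero already on $V_0$; as $V_0\cong ind(\psi)$ is an irreducible $P_2$-module, $\kappa|_{V_0}$ is a nonzero map out of an irreducible module, hence \emph{injective}. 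Finally, identifying $V_0\cong C_c^\infty(F^\times)$ by restriction to $A_2$, the $A_2$-equivariance together with $\kappa(w)(a)=\kappa(\hat a\cdot w)(1)=\bar\lambda(\hat a\cdot w)$ pins $\kappa|_{V_0}$ down, up to a nonzero scalar, to the tautological inclusion; thus $\{\beta|_{\hat F^\times}:pr(\beta)\in V_0\}=C_c^\infty(F^\times)$.

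Granting this, i) is immediate: for $\beta\in pr^{-1}(V_0)$ the function $u\mapsto\beta(\hat u)$ is $\kappa(pr(\beta))$, and $\kappa|_{V_0}$ is injective, so $pr(\beta)=0$ iff $\beta$ vanishes on $\hat F^\times$ (the forward implication holding for every $\beta$ by well-definedness of $\kappa$). For ii), by the displayed identity I would pick $\beta_0\in pr^{-1}(V_0)$ with $\beta_0(\hat u)=\1_{\mathfrak{o}^\times}(u)$ and set $\beta=\int_{\mathfrak{o}^\times}\pi(\hat v)\beta_0\,d^\times v$ with Haar measure of total mass $1$ (a finite sum since $V$ is smooth); re-indexing shows $\beta$ is $\hat{\mathfrak{o}}^\times$-invariant, while $\beta(1)=\int_{\mathfrak{o}^\times}\beta_0(\hat v)\,d^\times v=\vol(\mathfrak{o}^\times)\neq 0$.

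For iii), the first step is Mellin--Fourier inversion. If $n_y$ right-fixes $\beta$ for all $y\in\p^{M}H_2$, then $\beta(\hat u)=\beta(\hat u\,n_y)=\psi(u\,l_\s(y))\beta(\hat u)$, so $\beta(\hat u)\neq 0$ forces $u\cdot l_\s(\p^{M}H_2)\subset\mathfrak{o}$; hence $u\mapsto\beta(\hat u)$ has support bounded away from $\infty$ and each $Z(s,\beta,\chi)$ is a power series in $q^{-s}$ with finitely many negative powers. If it vanishes as a rational function all its coefficients vanish, i.e. $\int_{\varpi^{n}\mathfrak{o}^\times}\beta(\hat u)\chi(u)\,d^\times u=0$ for all $n$; letting $\chi$ run over all characters of $F^\times$ (hence all characters of $\mathfrak{o}^\times$) and applying Fourier inversion on each compact group $\varpi^{n}\mathfrak{o}^\times$ gives $\beta(\hat u)=0$ for every $u$, i.e. $\kappa(pr(\beta))=0$. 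It then remains to see that $\kappa$ is injective on \emph{all} of $V_{N_\G,\Lambda}$: its kernel is a finite-length $P_2$-submodule meeting $V_0$ trivially, so if nonzero it contains an irreducible submodule, which (the only $ind(\psi)$-submodule being $V_0$, on which $\kappa$ is injective) must be of type $ext(\chi)$, i.e. a nonzero $N_2$-fixed vector of $V_{N_\G,\Lambda}$. \textbf{The main obstacle is exactly here:} one must show $V_{N_\G,\Lambda}$ has no nonzero $N_2$-invariant vector, equivalently that its $P_2$-socle is exactly $ind(\psi)$ --- the analogue of the faithfulness of the $\GL_2$ Kirillov model; for the Saito--Kurokawa $\pi$ in question I would read this off the explicit structure of the Bessel and Jacquet--Waldspurger modules (cf. \cite{Sc-T}). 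Granting it, $\ker\kappa=0$ and $pr(\beta)=0$. Everything else is $P_2$-representation theory, an averaging argument, and Mellin--Fourier inversion.
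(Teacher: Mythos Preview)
Your handling of i) and ii) via the Kirillov-type map $\kappa$ is correct and is essentially what the paper does: it too cites the analogous Roberts--Schmidt arguments and constructs the same $f_0\in V_0$ supported on $\mathfrak{o}^\times$.

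For iii), your Mellin-inversion step is fine, but you then attempt to prove that $\kappa$ is injective on \emph{all} of $V_{N_\G,\Lambda}$, which you correctly flag as the main obstacle and do not resolve. Your proposed fix---reading the absence of $N_2$-invariants off the explicit Jacquet--Waldspurger module for Saito--Kurokawa $\pi$---is only a gesture, and in any case the proposition is stated for arbitrary $\pi\in\Ir(G)$, so this would not suffice. Nothing in the $P_2$-module axioms alone forbids an $ext(\chi)$-\emph{submodule}; your filtration-and-socle argument stalls precisely there.

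The paper's route is genuinely different and sidesteps this obstacle. Instead of proving global injectivity of $\kappa$, it descends the Jordan--H\"older filtration of Proposition~\ref{prop:JHP2} one layer at a time. The $A_2$-equivariant functional $\lambda_{\chi_n}'$ induces, on the top quotient $V_n/V_{n-1}\simeq ext(\chi_n)$, a nonzero multiple of the evaluation-at-$1$ functional from Lemma~\ref{lem:A_2one}~ii). Since every $Z(s,\beta,\chi)$ vanishes, the image of $pr(\beta)$ in that quotient satisfies $f(1)=0$, hence $f=0$ by the accompanying one-line lemma that a vector in $ext(\chi)$ is zero iff it vanishes at $1$. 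Iterating pushes $pr(\beta)$ all the way down to $V_0$, and then part~i)---which is exactly the injectivity of $\kappa|_{V_0}$---finishes. Thus the paper never needs to rule out $ext(\chi)$-submodules; it only needs the $A_2$-uniqueness of Lemma~\ref{lem:A_2one} and the injectivity on the bottom piece, making the argument uniform in $\pi$.
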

\begin{proof}
i) 
Similar to the proof of Theorem 4.3.5. of \cite{R-S}.
ii) 
Similar to the proof of Proposition 2.6.4. of loc. cit.
Consider the Bessel functional $\beta \to \beta(1)$, and define $f_0 \in V_0$ by 
\begin{align*}
f_0(\begin{bmatrix}
x & y\\
 &1
\end{bmatrix}) = \psi(x^{-1}y) \Ch(x;\mathfrak{o}^\times).
\end{align*}
There exists a desired $\beta$ in $pr^{-1}(f_0)$.
iii) 
Let $\chi_n$ be the character such that $ext(\chi_n) \simeq V_n/V_{n-1}$ in Proposition \ref{prop:JHP2}.
The functional $\lambda_{\chi_n}'$ can be regarded as a nonzero constant multiple of the functional of $V_n/V_{n-1}$ given in Lemma \ref{lem:A_2one} ii).
The assumption implies $pr(\beta) \in V_{n-1}$ by the lemma below.
Iterating such arguments, we obtain $pr(\beta) \in V_0$.
Now the assertion follows from i).
\end{proof}
\begin{lem}
Let $\chi$ be a character of $F^\times$.
Let $f \in ext(\chi)$.
Then $f$ is identically zero if and only if $f(1) = 0$.
\end{lem}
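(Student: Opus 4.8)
The plan is to observe that $ext(\chi)$ is one-dimensional and that evaluation at the identity, $f\mapsto f(1)$, is a nonzero linear functional on it; the lemma then follows formally, since a nonzero linear functional on a one-dimensional space has trivial kernel, and the ``only if'' direction is trivial.

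Concretely, I would realize $ext(\chi)$ as the space of functions $f$ on $P_2$ satisfying $f(\begin{bmatrix} u & x \\ & 1 \end{bmatrix} g) = \chi(u) f(g)$, equipped with the right-translation action; a direct check shows this is the representation $\begin{bmatrix} u & * \\ & 1 \end{bmatrix} \mapsto \chi(u)$ of the statement, spanned by the single function $\begin{bmatrix} u & x \\ & 1 \end{bmatrix} \mapsto \chi(u)$. Putting $g = 1$ in the transformation law gives $f(\begin{bmatrix} u & x \\ & 1 \end{bmatrix}) = \chi(u) f(1)$ for every $f$, so $f\mapsto f(1)$ is an isomorphism $ext(\chi)\to\C$, and in particular $f(1)=0$ forces $f$ to vanish on all of $P_2$. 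Alternatively one can simply quote Lemma \ref{lem:A_2one} ii) with $\xi=\chi$: the functional $f\mapsto f(1)$ spans the one-dimensional space $\Hom_{A_2}(ext(\chi),\chi)$, hence is a nonzero linear form on the one-dimensional space $ext(\chi)$ and therefore has trivial kernel.

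There is essentially no obstacle here. The one point worth a remark is that when $ext(\chi_i)$ occurs as a Jordan--H\"older quotient $V_i\backslash V_{i+1}$ of $V_{N_\G,\Lambda}$ in Proposition \ref{prop:JHP2}, the symbol $f(1)$ is to be read through an isomorphism of $V_i\backslash V_{i+1}$ with the model above; since $ext(\chi)$ is one-dimensional this isomorphism is canonical up to a nonzero scalar, so the equivalence ``$f$ identically zero $\iff f(1)=0$'' is well posed, and it is exactly this form of the statement that is used in the proof of Proposition \ref{prop:prvan} iii).
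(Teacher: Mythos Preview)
Your proof is correct and matches the paper's approach: the paper's proof is the single word ``Obvious,'' and your argument simply unpacks why --- $ext(\chi)$ is one-dimensional and $f\mapsto f(1)$ is a nonzero functional on it, so its kernel is trivial. Your remark about how the statement is read when $ext(\chi)$ arises as a subquotient in Proposition~\ref{prop:JHP2} is also on point and is exactly how the lemma is applied in Proposition~\ref{prop:prvan}~iii).
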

\begin{proof}
Obvious.
\end{proof}
\subsection{$\theta$-lifts}\label{sec:th}
In this section, let $B$ denote $M_{2}(F)$, or the unique (up to isomorphism) division quaternion algebra over $F$, equipped with the nondegenerate symmetric form $Tr (b_1^* b_2/2)$, where $*$ indicates the main involution.
Let $GO_B = \GO_B(F)$ denote the generalized orthogonal group of $B$, and $\mu_B$ the similitude factor on $GO_B$.
Let $H = GSO_B := \ker(\mu_B^{-2} \det) \subset GO_B$.
Letting $B^\times \times B^\times$ act on $B$ by $(g_1,g_2) \cdot b = g_1 b g_2^{*}$, we have an isomorphism $H \simeq B^\times \times B^\times/\{(z, z^{-1}) \mid z \in F^\times \}$.
This isomorphism enable us to identify any object in $\Ir(H)$ with a outer product of two certain objects in $\Ir(B^\times)$, where these objects share a same central character.
Let $Z = B^2$.
For $f \in \Ss(Z)$, let $f^\sharp$ denote the Fourier transform defined by $f^\sharp(z) = \int_{Z}\psi(Tr(z ,y)) f(y) d y$ where $d y$ is a self-dual measure.
The Weil representation $w_\psi$ of the dual pair $\mathcal{S}:= Sp_4 \times O_B$ can be realized on $\Ss(Z)$ with the following formulas: 
\begin{align}
\begin{split}
w_\psi(1,h) f(z) &= f(h^{-1}\cdot z), \ \ h \in O_B, \\
w_\psi(a_g,1) f(z) &= |\det(g)|^{-2}f(z g), \ \ g \in \GL_2(F) \\
w_\psi(n_y,1) f(z) &= \psi(Tr (\frac{1}{2} \begin{bmatrix}
& 1 \\
1&
\end{bmatrix}(z,z)y)) f(z), \ \ y \in H_2\\
w_\psi(J,1) f(z) &= f^\sharp(-z).
\end{split} \label{eq:Weil}
\end{align}
Let $\Rc= G \times H$, and $\Rc_0= \ker(\mu^{-1}\mu_B) \subset \Rc$.
Following to \cite{R2}, we extend $w_\psi$ to $\Rc_0$ by
\begin{eqnarray*}
w_\psi(g,h)f(z) = |\mu_B(h)|^{-2} w_\psi(g_1,1)f(h^{-1} \cdot z), 
\end{eqnarray*}
where 
\begin{eqnarray*}
g_1 = g\begin{bmatrix}
1_2 & \\
 & \mu(g)^{-1}1_2
\end{bmatrix}.
\end{eqnarray*}
Observe that the central elements $(u,u) \in \Rc_0$ act on $\Ss(Z)$ trivially.

Now let $F$ be nonarchimedean.
Let $\Omega = \mathrm{ind}_{\Rc_0}^\Rc w_\psi$ be the compact induction.
Define $w_\psi(\rho) = w_\psi/ \displaystyle \cap_{\lambda \in \Hom_{SO_B}(w_\psi,\rho)} \ker(\lambda)$ for $\rho \in \Ir(SO_B)$, and define $\Omega(\rho)$ for $\rho \in \Ir(H)$ similarly.
By Lemme 2. III. 4. of \cite{MVW}, there exist $\Th_\psi(\rho)$ of the category $\Alg(\Sp_4(F))$ of smooth representations of $\Sp_4(F)$, and $\Th(\rho) \in \Alg(G)$, such that 
\begin{eqnarray}
w_\psi(\rho) \simeq \Th_\psi(\rho) \bt \rho, \ \ 
\Omega(\rho) \simeq \Th(\rho) \ot \rho. \label{eqn: defThtau}
\end{eqnarray}
It is known that these big thetas $\Th_\psi(\rho)$ and $\Th(\rho)$ are admissible of finite length.
The maximal semi-simple quotients of $\Th_\psi(\rho)$ and $\Th(\rho)$ are denoted by small thetas $\theta_\psi(\rho)$ and $\theta(\rho)$ respectively.
Let $e \in F$ be a nonsquare element, and $E = F(\sqrt{e})$.
Fixing an embedding $E \to B$, we identify elements of $E$ with those of $ B$. 
Let $z_ 0 = (1,\sqrt{e}) \in Z$.
The stabilizer subgroup of $z_0$ by $SO_B$ is isomorphic to $E^\times$.
Let $\Lambda$ be a continuous character of $E^\times$ such that $\Lambda|_{F^\times} = w_\pi$.
For $\pi \in \Ir(B^\times)$, let $\Ts_\Lambda(\pi)$ denote the Waldspurger model of $\pi$ relevant to $\Lambda$, i.e., the model of $\pi$ consisting of functions $\xi$ such that 
\begin{align*}
\xi(tg) = \Lambda(t) \xi(g), t \in E^\times
\end{align*} 
endowed with the actions of $\GL_2(F)$ given by right translations.
Let $\tau^1,\tau^2 \in \Ir(B^\times)$.
Assume that they have Waldspurger models relevant to $\Lambda$.
Take $\xi_i \in \Ts_\Lambda(\tau^i)$, and set a function $\xi(h_1,h_2) = \xi_1(h_1) \xi_2(h_2)$ on $GSO_B$.
For $f \in \Ss(Z)$, define a function $\xi_f$ on $G$ by 
\begin{align}
\xi_f(g) = \int_{E^\times \bs SO_B} w_{\psi}(g, h h_g)f(z_0)\xi(hh_g) d \dot{h}, \label{eq:xivp}
\end{align} 
where $h_g \in H$ is chosen so that $\mu(g) = \mu_B(h_g)$.
This integral is independent from the choice of $h_g$, and converges since the function $h \to f(h^{-1}\cdot z_0)$ has a compact support modulo $E^\times$.
By (\ref{eq:Weil}), one can see that $\xi_f$ is a Bessel function relevant to $\Lambda^\psi_\s$, where 
\begin{align*}
\s = \begin{bmatrix}
& -e  \\
1 &
\end{bmatrix}.
\end{align*} 
Let $\Xi(\Lambda_\s^\psi)$ denote the $G$-module generated by these $\xi_f$.
\begin{lem}\label{lem:G-surj}
With notations as above, there is a surjective $G$-homomorphism
\begin{align*}
\Th((\tau^1 \bt \tau^2)^\vee) \longrightarrow \Xi(\Lambda_\s^\psi).
\end{align*} 
\end{lem}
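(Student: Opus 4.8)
The plan is to exhibit a surjective $G$-map from the big theta lift $\Th((\tau^1\bt\tau^2)^\vee)$ onto $\Xi(\Lambda_\s^\psi)$ by factoring the integral $(\ref{eq:xivp})$ through the theta correspondence. First I would recall from $(\ref{eqn: defThtau})$ that, writing $\rho = (\tau^1\bt\tau^2)^\vee \in \Ir(H)$, one has $\Omega(\rho) \simeq \Th(\rho)\ot\rho$, so there is a canonical $G\times H$-equivariant surjection $\Omega = \mathrm{ind}_{\Rc_0}^\Rc w_\psi \twoheadrightarrow \Th(\rho)\ot\rho$. Dualizing/pairing against $\rho$, this yields for each vector in (the space underlying) $\rho^\vee \simeq \tau^1\bt\tau^2$ a $G$-map from $\Th(\rho)$ into a space of functions on $G$ built from $w_\psi$. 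Concretely, the functional on $\Omega$ defined by integrating $w_\psi(g,hh_g)f(z_0)$ against $\xi_1(hh_g)\xi_2(hh_g)$ over $E^\times\bs SO_B$ — which is exactly the recipe producing $\xi_f$ — is $H$-equivariant for the character of $H$ determined by $\xi = \xi_1\boxtimes\xi_2$ and by the central twist in the definition of $w_\psi$ on $\Rc_0$; hence by the universal property defining $\Th(\rho)$ it descends to a $G$-map $\Th(\rho)\to \Xi(\Lambda_\s^\psi)$.

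The key steps, in order, are: (i) check that $(\ref{eq:xivp})$ is well-defined — independence of the choice of $h_g$ and convergence — which is already asserted in the text, the support of $h\mapsto f(h^{-1}\cdot z_0)$ being compact modulo $E^\times = \mathrm{Stab}_{SO_B}(z_0)$; (ii) verify that $f\mapsto \xi_f$ intertwines the $G$-action on $\Ss(Z)$ coming from $w_\psi$ (extended to $\Rc_0$ as in the displayed formula) with right translation on functions on $G$, using the formulas $(\ref{eq:Weil})$; (iii) identify the $H$-isotypic behaviour: the map $f\mapsto \xi_f$ kills precisely the vectors that pair trivially with $\rho = (\tau^1\bt\tau^2)^\vee$ against all $\xi_1,\xi_2$ in the Waldspurger models, so it factors through $\Omega(\rho)$ and then, after taking the $\rho$-coinvariants, through $\Th(\rho)$; (iv) surjectivity onto $\Xi(\Lambda_\s^\psi)$ is then immediate since $\Xi(\Lambda_\s^\psi)$ is by definition the $G$-module generated by the image $\{\xi_f\}$.

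The main obstacle I expect is step (iii): matching the central/similitude bookkeeping so that the functional $f\mapsto \big(h\mapsto \int w_\psi(1,hh_g)f(z_0)\big)$ really lands in the $\rho$-isotypic part with the correct character, and in particular checking that the twist $|\mu_B(h)|^{-2}$ in the definition of $w_\psi$ on $\Rc_0$ together with the choice $\mu(g)=\mu_B(h_g)$ produces exactly the central character $w_\pi$ on $F^\times$ that is required of $\Lambda$ (so that $\xi_f$ is a genuine Bessel function for $\Lambda_\s^\psi$, as $(\ref{eq:Weil})$ forces). Once the characters are aligned, the surjectivity and $G$-equivariance are formal consequences of the universal property $(\ref{eqn: defThtau})$ of $\Th(\rho)$ and of the Howe duality packaging from \cite{MVW}; so I would devote most of the written proof to the equivariance computation in step (ii) and the character identification in step (iii), citing \cite{R2} for the extension of $w_\psi$ to $\Rc_0$ and \cite{Wa} for the properties of Waldspurger models being used.
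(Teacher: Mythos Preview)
Your overall strategy is the right one and matches the paper's approach, but there is a genuine gap in steps (iii)--(iv) that you are treating as ``formal'' when it is not. The map $f\mapsto\xi_f$ is defined on $w_\psi=\Ss(Z)$, which is only an $\Rc_0$-module; it is \emph{not} a priori a map out of $\Omega=\mathrm{ind}_{\Rc_0}^{\Rc}w_\psi$. The universal property in $(\ref{eqn: defThtau})$ that produces $\Th(\rho)$ is a statement about quotients of $\Omega$, so to invoke it you must first lift $f\mapsto\xi_f$ to an $\Rc$-equivariant map out of $\Omega$ whose image still lands in (something built from) $\Xi$. Frobenius reciprocity does give such a lift, but only into the smooth dual of a target that one already knows to be admissible; and even then one must check that the image of the lifted map contains $\mathrm{Im}(\lambda')$, which is the content of Lemma~\ref{lem:Gref}~ii). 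Neither of these points is automatic.

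Concretely, the paper fills this gap in two stages you have omitted. First, it restricts to the isometry pair $\Sp_4\times SO_B$, decomposes $\tau|_{SO_B}=\oplus_i\tau_i$ and $\Xi|_{\Sp_4}=\oplus_i\Xi_i$, and uses the known admissibility of $\Th_\psi(\tau_i^\vee)$ to produce surjections $\Th_\psi(\tau_i^\vee)\twoheadrightarrow\Xi_i$; this is what proves $\Xi$ is admissible. Second, with admissibility in hand, it applies Lemma~\ref{lem:Gref} (for $\Gr=\Rc$, $\Gr_0=\Rc_0$) to pass from the $\Rc_0$-map $\lambda':w_\psi\to\tau^\vee\ot\Xi$ to an $\Rc$-map $\wt\lambda:\Omega\to\tau^\vee\ot\Xi$ with $\mathrm{Im}(\lambda')\subset\mathrm{Im}(\wt\lambda)$; only then does the factorization through $\Omega(\tau^\vee)\simeq\Th(\tau^\vee)\ot\tau^\vee$ give the desired surjection. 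Your diagnosis that the main obstacle is the similitude/central character bookkeeping is a misreading: that part is indeed routine, whereas the admissibility of $\Xi$ and the $\Rc_0\to\Rc$ lift are the substantive steps. You should replace your step (iii) with these two arguments.
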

\begin{proof}
An modification of the proof of Lemme III.4. of \cite{MVW}.
Denote $\tau = \tau^1 \bt \tau^2$, and $\Xi = \Xi(\Lambda_\s^\psi)$.
Let $Z_B (\simeq F^\times)$ denote the center of $GSO_B$.
Since the central elements $(u,u) \in \Rc_0$ act on $\Ss(Z
)$ trivially, $\xi_f$ and $\tau$ have the same central character.
By this character, and Lemma 2.9 of \cite{B-Z}, there is an irreducible admissible $SO_B$-submodule $\tau_0$ of $\tau$ and a finite set $h_0 = 1, h_1,\ldots, h_r$ of representatives for $H/ Z_B SO_B$ such that 
\begin{align*}
\tau|_{SO_B} = \op_{i=0}^r \tau_i
\end{align*} 
where $\tau_i$ denotes the right translation of $\tau_0$ by $h_i$.
For $0 \le i \le r$, let $\Xi_i$ denote the $\Sp_4$-module generated by $\xi_f$ for $\xi \in \tau_i$ and $f \in \Ss(Z)$.
Choose $g_i \in G$ so that $\mu(g_i) = \mu_B(h_i)$.
By definition, 
\begin{align*}
\Xi|_{\Sp_4} = \op_{i=0}^r \Xi_i
\end{align*} 
where $\Xi_i$ denotes the $g_i$-translation of $\Xi_0$.
Denote by $\lambda_i \in \Hom_{\Sc}(w_\psi, \Hom_\C(\tau_i,\Xi_i))$ the mapping $f \mapsto (\xi \mapsto \xi_f)$.
Take an open subgroup $K \subset SO_B$ under which $f$ is invariant.
Obviously $\lambda_i(f)(\xi) = \lambda_i(f)(e_K \xi)$.
Since $\tau_i$ is admissible, $(\tau_i^\vee)^K \simeq (\tau_i^K)^\vee$ by Lemma 2.14. of loc. cit., and we may take a finite basis of $\tau_i^K$ and its dual of $(\tau_i^K)^\vee$, say $\{ \xi^{ij} \}$ and $\{ \xi_{ij}^{*}\}$.
Set $\lambda_i' \in \Hom_{\Sc}(w_\psi, \tau_i^\vee \bt \Xi_i)$ by 
\begin{align*}
\lambda_i'(f) = \sum_j \xi_{ij}^* \ot \xi^{ij}_f.
\end{align*}
Via the natural homomorphism $\tau_i^\vee \bt \Xi_i \to \Hom_\C(\tau_i, \Xi_i)$, $\lambda_i$ factors through $\lambda_i'$.
Obviously $\lambda_i'$ factors through the surjection $w_\psi \to w_\psi(\tau_i^\vee)$.
Now, from the former isomorphism at (\ref{eqn: defThtau}), we obtain a homomorphism $\tau_i^\vee \bt \Th_\psi(\tau_i^\vee) \to \tau_i^\vee \bt \Xi_i$, and 
\begin{align}
\Th_\psi(\tau_i^\vee) \longrightarrow \Xi_i \label{eq:surTh1}
\end{align}
naturally.
Let $\xi \in \tau_i$ and $f$ be arbitrary.
If $\xi_f \neq 0$, then we may assume that $\xi$ is $K$-invariant, and write $\xi = \sum_j c_j \xi^{ij}, c_j \in \C$.
By the former isomorphism at (\ref{eqn: defThtau}), $f$ corresponds to $\sum_j \xi_{ij}^* \ot v^{ij}$ for some $v_{ij} \in \Th_\psi(\tau_i^\vee)$.
The homomorphism (\ref{eq:surTh1}) sends $\sum_j c_j v^{ij}$ to $\xi_f$, and is surjective.
Since $\Th_\psi(\tau_i^\vee)$ is admissible, each $\Xi_i$ and $\Xi$ are admissible.
Let $\lambda \in \Hom_{\Rc_0}(w_\psi, \Hom_\C(\tau,\Xi))$ denote the mapping $f \mapsto (\xi \mapsto \xi_f)$.
Similar to $\lambda_i$, $\lambda$ factors through the $\lambda' \in \Hom_{\Rc_0}(w_\psi, \tau^\vee \ot \Xi)$ defined by 
\begin{align*}
\lambda'(f) = \sum_{i,j} \xi_{ij}^* \ot \xi^{ij}_f.
\end{align*} 
Since $\tau^\vee \ot \Xi$ is $\Rc$-admissible, by Lemma \ref{lem:Gref} i) below, $
((\tau^\vee \ot \Xi)^\vee|_{\Rc_0})^\vee \simeq \tau^\vee \ot \Xi$.
By the Frobenius reciprocity, 
\begin{eqnarray*}
\Hom_{\Rc_0}(w_\psi, \tau^\vee \ot \Xi) 
&\simeq& \Hom_{\Rc_0}(w_\psi, (\tau^\vee \ot \Xi)^\vee|_{\Rc_0})^\vee) \\
&\simeq& \Hom_{\Rc}(\Omega, \tau^\vee \ot \Xi).
\end{eqnarray*}
Let $\wt{\lambda} \in \Hom_{\Rc}(\Omega, \tau^\vee \ot \Xi)$ correspond to $\lambda'$.
By Lemma \ref{lem:Gref} ii), $\lambda'(f) \in \mathrm{Im}(\wt{\lambda})$.
Similar to (\ref{eq:surTh1}), we get the desired surjection by (\ref{eqn: defThtau}) again.
\end{proof}
\begin{lem}\label{lem:Gref}
Let $\Gr$ be an $l$-group in the sense of \cite{B-Z}, and $\Gr_0$ a closed subgroup of $\Gr$.
Let $(\pi,V) \in \Alg(\Gr)$.
Assume that $\Gr$ has a system of neighbourhoods $\mathscr{N} = \{K \}$ of the identity consisting of open compact subgroups such that $V^K = V^{K \cap \Gr_0}$. 
Then 
\begin{enumerate}[i)]
\item $(\pi|_{\Gr_0})^\vee = \pi^\vee$. 
\item Let $\rho \in \Alg(\Gr_0)$ and $\lambda \in \Hom_{\Gr_0}((\Delta_{\Gr_0}/\Delta_\Gr) \rho,(\pi|_{\Gr_0})^\vee)$, where $\Delta_\Gr$ denotes the modulus of $\Gr$.
Let $\wt{\lambda} \in \Hom_\Gr(\mathrm{ind}_{\Gr_0}^\Gr \rho,\pi^\vee)$ induced by the Frobenius reciprocity.
Then $\mathrm{Im}(\lambda) \subset \mathrm{Im}(\wt{\lambda})$.
\end{enumerate}
If $\Gr_0$ is a normal subgroup of $\Gr$, then for any $\Gr_0$-admissible $(\pi,V) \in \Alg(\Gr)$, there is a system of neighborhoods as above. 
\end{lem}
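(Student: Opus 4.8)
\emph{Proof plan.} The whole lemma rests on one observation: under the hypothesis, a linear functional on $V$ invariant under an open compact subgroup of $\Gr_0$ is already invariant under one of $\Gr$. Write $V(L)=\la \pi(l)v-v\mid v\in V,\ l\in L\ra$ for a subgroup $L$, so that for $L$ open compact the idempotent $e_L$ is the projection of $V$ onto $V^L$ with kernel $V(L)$. I would first record two elementary facts: (a) if $A\subseteq B$ are open compact then $e_Be_A=e_B$, hence $V(A)\subseteq V(B)$; and (b) if $V^K=V^{K\cap\Gr_0}$ then $e_K$ is the projection onto $V^{K\cap\Gr_0}$ along $V(K\cap\Gr_0)$ (it is the identity on $V^{K\cap\Gr_0}=V^K$ and kills each $\pi(k_0)v-v$ with $k_0\in K\cap\Gr_0\subseteq K$), so $V(K)=V(K\cap\Gr_0)$. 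Since $\{K\cap\Gr_0\mid K\in\mathscr N\}$ is cofinal among the open compact subgroups of $\Gr_0$, (a) and (b) give: a functional killed by $V(K_0)$ for some open compact $K_0\subseteq\Gr_0$ is killed by $V(K)$ for a suitable $K\in\mathscr N$ (choose $K\cap\Gr_0\subseteq K_0$, so $V(K)=V(K\cap\Gr_0)\subseteq V(K_0)$), i.e. it is $\Gr$-smooth. Together with the obvious inclusion $(\pi^\vee)|_{\Gr_0}\subseteq(\pi|_{\Gr_0})^\vee$ of subspaces of $V^*$ carrying the same contragredient $\Gr_0$-action, this proves i).

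For ii) I would invoke the explicit form of the Frobenius reciprocity law 2.29 of \cite{B-Z} for compact induction: the isomorphism $\Hom_\Gr(\mathrm{ind}_{\Gr_0}^\Gr\rho,\pi^\vee)\simeq\Hom_{\Gr_0}((\Delta_{\Gr_0}/\Delta_\Gr)\rho,(\pi|_{\Gr_0})^\vee)$ sends $\wt\lambda$ to $v\mapsto\lambda(v)$, with $\wt\lambda(f)=\int_{\Gr_0\backslash\Gr}\pi^\vee(g)^{-1}\lambda(f(g))\,d\dot{g}$, the twist by $\Delta_{\Gr_0}/\Delta_\Gr$ being exactly what renders this well defined on $\Gr_0\backslash\Gr$. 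Given $v$ in the space of $\rho$, it then suffices to produce $f\in\mathrm{ind}_{\Gr_0}^\Gr\rho$ with $\wt\lambda(f)$ a nonzero scalar multiple of $\lambda(v)$. The vector $v$ is fixed by some open compact $K_0\subseteq\Gr_0$ and, by i), $\lambda(v)\in(\pi|_{\Gr_0})^\vee=(\pi^\vee)|_{\Gr_0}$ is fixed by some open compact $K'\subseteq\Gr$; I would pick $K\in\mathscr N$ with $K\subseteq K'$ and $K\cap\Gr_0\subseteq K_0$, and let $f$ be supported on $\Gr_0K$ with $f(hk)=\rho(h)v$ for $h\in\Gr_0$, $k\in K$ (well defined because $v$ is $K\cap\Gr_0$-fixed). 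Evaluating the integral over the single coset space, parametrised by $(K\cap\Gr_0)\backslash K$, and using $\pi^\vee(k)^{-1}\lambda(v)=\lambda(v)$ for $k\in K\subseteq K'$, gives $\wt\lambda(f)=\vol((K\cap\Gr_0)\backslash K)\,\lambda(v)$; hence $\lambda(v)\in\mathrm{Im}(\wt\lambda)$, so $\mathrm{Im}(\lambda)\subseteq\mathrm{Im}(\wt\lambda)$.

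For the existence of $\mathscr N$ when $\Gr_0\trianglelefteq\Gr$ and $(\pi,V)$ is $\Gr_0$-admissible: given an open neighbourhood $U$ of the identity, take any open compact $K_1\subseteq U$ and put $L=K_1\cap\Gr_0$. Normality of $\Gr_0$ makes $L$ normal in $K_1$, so $K_1$ preserves $V^L$, which is finite dimensional by $\Gr_0$-admissibility; hence the kernel $K_1^\circ$ of $K_1\to\GL(V^L)$ is an open compact subgroup of $\Gr$ acting trivially on $V^L$. The crucial identity is $K_1^\circ\cap\Gr_0=L$: on one hand $L\subseteq K_1^\circ$ since $L$ fixes $V^L$, on the other $K_1^\circ\cap\Gr_0\subseteq K_1\cap\Gr_0=L$. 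Therefore $V^{K_1^\circ\cap\Gr_0}=V^L\subseteq V^{K_1^\circ}\subseteq V^{K_1^\circ\cap\Gr_0}$, so all three coincide; letting $U$ run over a neighbourhood basis of the identity produces the required $\mathscr N$.

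The routine parts are the bookkeeping with idempotents in i) and the modulus-twist conventions in ii); the step that genuinely uses the hypotheses — and the one I expect to need the most care — is the last paragraph, where \emph{normality} is what turns $V^L$ into a $K_1$-module and \emph{$\Gr_0$-admissibility} is what keeps it finite dimensional, the two together forcing an open subgroup $K_1^\circ$ with $K_1^\circ\cap\Gr_0=K_1\cap\Gr_0$ to exist. The other point requiring attention is matching conventions in ii) — the direction of the compact-induction adjunction, the placement of the modulus character, and the explicit formula for $\wt\lambda$ — so that the twist comes out as $\Delta_{\Gr_0}/\Delta_\Gr$ as stated.
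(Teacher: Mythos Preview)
Your proposal is correct and follows essentially the same route as the paper. Part i) is just the dual phrasing of the paper's argument (you pass through $V(K)=V(K\cap\Gr_0)$ where the paper passes through $(V^*)^K=(V^*)^{K\cap\Gr_0}$ via $(V^K)^*$); part ii) is the same ``extend $v$ to a function supported on $\Gr_0K$ and integrate'' construction, and your final paragraph is a mild repackaging of the paper's quotient-group argument (kernel of $K_1\to\GL(V^L)$ versus pulling back an open subgroup of $K_1/(K_1\cap\Gr_0)$), both hinging on exactly the two hypotheses you flag.
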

\begin{proof}
i) Let $V^*$ denote the full dual of $V$.
The restriction $\pi|_{\Gr_0}$ and $\pi$ have the same dual $V^*$.
By Lemma 2.14 of loc. cit., $(V^*)^{K \cap \Gr_0} = (V^{K \cap \Gr_0})^* = (V^K)^* = (V^*)^K$ for any $K \in \mathscr{N}$.
Therefore, 
\begin{align*}
 (\pi|_{\Gr_0})^\vee = \cup_{K \in \mathscr{N}} (V^*)^{K \cap \Gr_0} = \cup_{K \in N} (V^*)^{K} = \pi^\vee.
\end{align*}
ii)
For $\xi \in (\Delta_{\Gr_0}/\Delta_\Gr)\rho$, take a $K \in \mathscr{N}$ so that $\xi$ is $K \cap \Gr_0$-invariant.
Then, $\lambda(\xi) \in (V^*)^{K \cap \Gr_0} = (V^{K \cap \Gr_0})^* = (V^K)^*$. 
By 2.29 of loc. cit., $\wt{\lambda}$ is given by 
 \begin{eqnarray*}
\la \wt{\lambda}(f), v \ra = \int_{\Gr_0 \bs \Gr} \la \lambda(f(g)), \pi(g)v \ra d g, \ \ v \in V, f \in \mathrm{ind}_{\Gr_0}^\Gr \rho
\end{eqnarray*}
where $\la, \ra$ denotes the natural pairing of $V$ and $V^*$.
Since $\xi$ is invariant under $K \cap \Gr_0$, we can define $f_K \in \mathrm{ind}_{\Gr_0}^\Gr \rho$ by $f_K(hk) = \Delta_{\Gr_0}/ \Delta_{\Gr}(h)\xi(h), h \in \Gr_0, k \in K$.
By definition, $f_K$ is $K$-invariant, and therefore $\wt{\lambda}(f_K)$ lies in $(V^*)^K = (V^K)^*$.
For $v \in V^K$, 
\begin{eqnarray*}
\la \wt{\lambda}(f_K), v \ra &=& \int_{\Gr_0 \bs \Gr_0K} \la \lambda(f_K(g)), \pi(g)v \ra d g \\
&=& \int_{\Gr_0 \bs \Gr_0K} \la \lambda(\xi), v \ra d g \\
&=& \vol(\Gr_0 \bs \Gr_0K)\la \lambda(\xi), v \ra.
\end{eqnarray*} 
Hence $\wt{\lambda}(\vol(\Gr_0 \bs \Gr_0K)^{-1}f_K) = \lambda(\xi)$.
This completes the proof of ii).
For the last assertion, let $L \subset \Gr$ be an open compact subgroup.
Fix an isomorphism $\mu: L /L \cap \Gr_0 \simeq A$ for a compact group $A$.
Since $\pi$ is $\Gr_0$-admissible, $V^{L \cap \Gr_0}$ is finite dimensional.
Therefore, there is an open subgroup $B \subset A$ such that $V^{L \cap \Gr_0} \subset V^{L_B}$ for $L_B := \{k \in L \mid \mu(k) \in B \}$.
Then, $L_B \cap \Gr_0 = \{k \in L \mid \mu(k) = 1 \} = L \cap \Gr_0$, and hence $V^{L_B \cap \Gr_0} = V^{L_B}$.
So, $\mathscr{N} := \{L_B\}$ is the desired system of neighbourhoods.
\end{proof}
\subsection{Saito-Kurokawa packet}\label{sec:packet}
Let $\Fb$ be a totally real number field, $\tau = \ot_v \tau_v$ be an irreducible cuspidal automorphic representation of $\PGL_2(\A_\Fb)$.
Let $S_\tau$ denote the set of all places $v$ at which $\tau_v$ is discrete. 
The Saito-Kurokawa packet (we will abbreviate to SK-packet) of $\tau$ is the set of irreducible cuspidal automorphic representations $\pi = \ot_v \pi_v$ of $\PGSp_4(\A_\Fb)$ whose $L$-parameters are $\{\alpha_v^\pm, |*|_v^{\pm 1/2}\}$ for almost all $v$ where $\pi_v$ is unramified.
Here $\{\alpha_v^{\pm} \}$ indicates the $L$-parameter of $\tau_v$.
By \cite{Sc}, \cite{G-T}, if $v$ is nonarchimedean, then
\begin{align*}
\pi_v = \begin{cases}
\theta(\tau_v \bt \1) \ \mbox{or} \ \theta(\tau_v^{JL} \bt \1) & \mbox{$v \in S_\tau$}, \\
\theta(\tau_v \bt \1) & \mbox{otherwise.} 
\end{cases} 
\end{align*} 
where $\tau^{JL}$ indicates the Jacquet-Langlands transfer of $\tau$.
We denote $\theta(\tau_v \bt \1)$ and $\theta(\tau_v^{JL} \bt \1)$ by $SK(\tau_v)$ and $SK(\tau_v^{JL})$, respectively.
There are other descriptions for $SK(\tau_v)$ if $\tau_v$ lies in 
\begin{align*}
\Ir'(\PGL_2(\Fb_v)) := \Ir(\PGL_2(\Fb_v)) \setminus  |*|_v^{3/2} \times |*|_v^{-3/2}. 
\end{align*} 
Let $I(\tau_v) = I_+(\tau_v)$ and $I_-(\tau_v)$ denote the representation of $G$ induced from the representations 
\begin{align*}
\begin{bmatrix}
h & * \\
& u h^\dag
\end{bmatrix} \longrightarrow \left|\frac{\det(h)}{u}\right|_v^{\pm 1/2}\tau_v(h)
\end{align*}  
respectively.
Then, $SK(\tau_v)$ is a unique irreducible nongeneric quotient of $I(\tau_v)$ and a unique nongeneric subrepresentation of $I_-(\tau_v)$.
Let $St$ denote the Steinberg representation of $\PGL_2(\Fb_v)$, and let $G(\tau_v) = \theta(\tau_v \bt St)$.
Then the following sequences are exact.
\begin{align}
\begin{split}
0 \to G(\tau_v) \to I(\tau_v)  \to SK(\tau_v) \to 0, \\
0 \to SK(\tau_v) \to I_-(\tau_v)  \to G(\tau_v) \to 0. 
\end{split}
\label{eq:exsSK}
\end{align}
If $v$ is a real archimedean place and $\tau_v$ is a holomorphic discrete series of minimal weight $2\kappa (\ge 2)$, $\pi_v$ is an irreducible constituent of a degenerate principal series, or the (limit of) holomorphic discrete series of minimal wight $(\kappa+1,\kappa+1)$ (c.f. sect. 4 of \cite{Sc}), which will be also denoted by $SK(\tau_v)$ and $SK(\tau_v^{JL})$ respectively.
The set 
\begin{align*}
\begin{cases}
\{SK(\tau_v), SK(\tau_v^{JL}) \} & \mbox{if $v \in S_\tau$}, \\
\{SK(\tau_v) \} & \mbox{otherwise} 
\end{cases}
\end{align*} 
is called the (local) SK-packet of $\tau_v$.
If all archimedean components of $\tau$ are holomorphic discrete series, then by the main lifting theorem of loc. cit, the SK-packet consists of 
\begin{align*}
\Pi(\tau \bt \pi_S): = \bigotimes_{v \in S} SK(\tau_v^{JL}) \ot \bigotimes_{v \not\in S} SK(\tau_v)
\end{align*} 
for $S \subset S_\tau$ such that $\ep(1/2,\tau) = |-1|^{|S|}$, where $S$ is possibly empty if $L(1/2,\tau) = 0$ and $\ep(1/2,\tau) = 1$.
The $L$- and $\ep$-factors of the Langlands parameter $\phi_{\pi_v}$ attached to $\pi_v = \Pi(\tau \bt \pi_S)_v$ are
\begin{align*}
L(s, \tau_v) \zeta_v(s+1/2) \times \begin{cases}
1& \mbox{if $v \in S$}, \\
\zeta_v(s-1/2) & \mbox{otherwise,} 
\end{cases}
\end{align*} 
and 
\begin{align*}
\ep(s, \tau_v,\psi_v) \times \begin{cases}
-|a_v|^{4s-2}& \mbox{if $v \in S$ is archimedean}, \\
- q_v^{(-4l_v-1)(s-1/2)} & \mbox{if $v \in S$ is nonarchimedean,} \\
1 & \mbox{otherwise.} 
\end{cases}
\end{align*} 
Here $\zeta_v$ indicates the $v$-factor of the complete Dedekind zeta function of $\Fb$, $a_v$ is the real number such that $\psi_v(x) = \exp(2\pi \sqrt{-1}a_v x)$, and $l_v$ is the integer such that $\psi_v(\p^{-l_v}) = \{1\}$ and $\psi_v(\p^{-l_v-1}) \neq \{1\}$.
Defining $L(s,\phi_\pi) = \prod_v L(s,\phi_{\pi_v})$ and $\ep(s,\phi_\pi) = \prod_v \ep(s,\phi_{\pi_v},\psi_v)$, we have a global functional equation:
\begin{align}
\ep(s,\phi_\pi)L(1-s,\phi_\pi) = L(s,\phi_\pi). \label{eq:GlFE}
\end{align}

Now let $F$ be a nonarchimedean local field.
The following result due to Roberts and Schmidt is fundamental.
\begin{thm}[\cite{R-S2}]\label{thm:1dimSK}
Let $\tau \in \Ir'(\PGL_2(F))$, and $\pi$ be in the SK-packet of $\tau$.
\begin{enumerate}[i)]
\item In the nonsplit case, if $\B(\pi,\Lambda_\s^\psi) \neq 0$, then $\Lambda = \1$.
\item In the split case, if $\B(\pi, \Lambda_\s^\psi) \neq \{ 0\}$, then $\Lambda = \1 \ \mbox{and}\ \pi = SK(\tau)$.
\end{enumerate}
\end{thm}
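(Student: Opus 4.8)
The plan is to read the statement off the $\theta$-lift description of the Saito--Kurokawa packet recalled in sect. \ref{sec:packet}, using the construction of sect. \ref{sec:th}. By sect. \ref{sec:packet}, every $\pi$ in the SK-packet of $\tau$ is $\theta(\tau\bt\1)$, the $\theta$-lift from $\GO_B$ with $B=\mathrm{M}_2(F)$, or $\theta(\tau^{JL}\bt\1)$, the lift from $\GO_B$ with $B$ the division quaternion algebra (only the former if $\tau$ is not discrete); in both cases one factor of the datum $\tau^1\bt\tau^2$ on $\GSO_B$ is the trivial representation $\1$ of $B^\times$. First I would establish the equivalence
\[
\B(\pi,\Lambda_\s^\psi)\neq\{0\}\iff \Ts_\Lambda(\tau^1)\neq\{0\}\ \text{and}\ \Ts_\Lambda(\tau^2)\neq\{0\}
\]
for the fixed embedding $E\to B$ of sect. \ref{sec:th}. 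The direction $\Leftarrow$ is Lemma \ref{lem:G-surj}: the $\xi_f$ of (\ref{eq:xivp}) then realize nonzero Bessel functions and generate a nonzero submodule of $\B(\pi,\Lambda_\s^\psi)$. For $\Rightarrow$, since $\pi=\theta(\tau^1\bt\tau^2)$ is irreducible, a nonzero $\B(\pi,\Lambda_\s^\psi)$ is the whole of it, and I would run the unfolding underlying (\ref{eq:xivp}) in reverse: a Bessel functional on $\pi$ pulls back through the Weil representation to a functional on $\tau^1\bt\tau^2$ supported on the $\mathrm{SO}_B$-orbit of $z_0$, and since $w_\psi(g,\cdot)f(z_0)$ is invariant under $\mathrm{Stab}_{\mathrm{SO}_B}(z_0)\cong E^\times$, the required equivariance is carried entirely by $\xi=\xi_1\bt\xi_2$, forcing $\xi_i$ to be an $(E^\times,\Lambda)$-Waldspurger functional on $\tau^i$ for $i=1,2$.

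Granting the equivalence, the theorem follows quickly. Taking $\tau^2=\1$, a Waldspurger model of $\1$ relevant to $\Lambda$ is a nonzero element of $\Hom_{E^\times}(\1,\Lambda)$, which exists exactly when $\Lambda|_{E^\times}=\1$, i.e.\ when $\Lambda=\1$. This proves i) as well as $\Lambda=\1$ in ii). For the assertion $\pi=SK(\tau)$ in ii), suppose $E=F\op F$ is split. The division quaternion algebra has no nontrivial idempotents and hence contains no subalgebra isomorphic to $F\op F$, so there is no embedding $E^\times\hookrightarrow B^\times$ when $B$ is division; then $\Ts_\Lambda(\tau^{JL})$ is vacuous and $\B(SK(\tau^{JL}),\Lambda_\s^\psi)=\{0\}$ by the equivalence. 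Since the packet of $\tau$ consists only of $SK(\tau)$ and $SK(\tau^{JL})$, a member $\pi$ with $\B(\pi,\Lambda_\s^\psi)\neq\{0\}$ must be $SK(\tau)$. Equivalently, $SK(\tau)$ is the unique packet member carrying paramodular (that is, $\Lambda=\1$ split Bessel) vectors, which also follows from the paramodular newform theory of \cite{R-S} together with the exact sequences (\ref{eq:exsSK}).

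I expect the one real obstacle to be the direction $\Rightarrow$ in the displayed equivalence. In the nonsplit case the binary quadratic subspace $E\subset B$ is anisotropic, $\mathrm{Stab}_{\mathrm{SO}_B}(z_0)$ is compact modulo center, and the unfolding is clean, essentially the local shadow of the unfolding of the Saito--Kurokawa theta integral in \cite{PS2}. In the split case ($B=\mathrm{M}_2(F)$) it is more delicate: the notion of a Waldspurger model of $\1$ degenerates into a Jacquet-module condition, and one must separately rule out the characters occurring in the Jacquet module of $\1$; here one invokes the hypothesis $\tau\in\Ir'(\PGL_2(F))$, which removes the degenerate point $|\ast|^{3/2}\times|\ast|^{-3/2}$, together with the finiteness statements of \cite{Sc-T} and the tables of \cite{R-S}. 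Since the full assertion is \cite{R-S2}, an acceptable alternative is to cite that reference directly, the argument above serving as its conceptual explanation within the $\theta$-lift framework of sect. \ref{sec:th}.
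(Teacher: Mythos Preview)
The paper does not prove this theorem; it is stated with attribution to \cite{R-S2} and used as input. So there is no proof in the paper to compare against, and your proposal is an attempt to supply one.

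Your reduction is correct: once the displayed equivalence holds, taking $\tau^2=\1$ forces $\Hom_{E^\times}(\1,\Lambda)\neq 0$, hence $\Lambda=\1$; and in the split case the division $B$ admits no embedding of $F\oplus F$, so $SK(\tau^{JL})$ is excluded. The issue is the equivalence itself. For $\Leftarrow$, Lemma~\ref{lem:G-surj} gives a surjection from the big theta $\Th((\tau^1\bt\tau^2)^\vee)$ onto $\Xi(\Lambda_\s^\psi)$, not from $\pi$; since $\Th$ need not be irreducible (in the paper it is only a quotient of $I(\tau)$, cf.\ (\ref{eq:srjsXiXi'}) and the argument around Lemma~\ref{lem:gammacoinSK}), you must still argue both that some $\xi_f\neq 0$ and that $\Xi$ is a quotient of $\pi$ rather than of the generic constituent $G(\tau)$. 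For $\Rightarrow$, ``running the unfolding in reverse'' is not a technique: a Bessel functional lives on $\pi$, which is a \emph{quotient} of $\Th$, so nothing pulls back naively to the Weil representation. The actual argument computes the twisted Jacquet module $\Omega_{N,\psi_\s}$ of the Weil representation as an $H$-module (it is compactly induced from the stabilizer $E^\times$ of $z_0$), and then Frobenius reciprocity identifies $\Hom_{TN}(\Th(\rho),\Lambda_\s^\psi)$ with a space of $(E^\times,\Lambda)$-equivariant functionals on $\rho$. This is what \cite{R-S2} carries out; it is not long, but it is a genuine computation, not the support-of-distribution heuristic you sketch. Your closing sentence is the honest resolution: cite \cite{R-S2}, as the paper does.
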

In the remainder of this article, we will treat only special Bessel models mainly, and let 
\begin{align*}
\B_{\s}(\Pi) = \B(\Pi, \1_\s^\psi)
\end{align*} 
for a regular $\s \in H_2$, a general $\Pi \in \Ir(PG)$ and a fixed $\psi$.

Now let $\tau \in \Ir'(\PGL_2(F))$, and $\pi = SK(\tau)$.
We want to show the next theorem.
\begin{thm}\label{thm:LSK}
It holds that
\begin{align*}
L(s,\pi) = L(s,\phi_\pi), \ \ \ep(s,\pi,\psi) = \ep(s,\phi_\pi,\psi)
\end{align*} 
for any (split or nonsplit) Bessel model of $\pi = SK(\tau)$.
\end{thm}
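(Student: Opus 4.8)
The plan is to bracket the Piatetski--Shapiro fractional ideal attached to a (split or nonsplit) Bessel model of $\pi = SK(\tau)$ between $L(s,\phi_\pi)$ from below and from above, and then to extract $\ep(s,\pi,\psi)$ from the local functional equation (\ref{eqn:LFE}); here $\phi_\pi = \phi_\tau\oplus(|\cdot|^{1/2}\oplus|\cdot|^{-1/2})$, so $L(s,\phi_\pi) = L(s,\tau)\,\zeta(s+\tfrac{1}{2})\,\zeta(s-\tfrac{1}{2})$ independently of the Bessel datum. For the lower bound I would produce one Bessel vector whose zeta integral equals $L(s,\phi_\pi)$. When $F$ is odd residual and $\det(\s)\in\mathfrak{o}^\times$, this is Theorem \ref{thm:intro1} together with the remark following it: inserting the value (\ref{eq:zetaintro}) of $Z(s,\beta^{new})$ and carrying out the elementary $T\simeq E^\times$ Tate integral that passes from $Z(s,\beta^{new})$ to $Z(s,\beta^{new},\vp_{M_\pi})$ turns the split/unramified/ramified correction factor of (\ref{eq:zetaintro}) into the missing Euler factor, so that $Z(s,\beta^{new},\vp_{M_\pi}) = L(s,\phi_\pi)$ by the parameter list of sect.\ \ref{sec:packet}. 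For the remaining configurations --- even residual $F$, or $\det(\s)\notin\mathfrak{o}^\times$, hence in particular every ramified nonsplit model --- I would use the $\theta$-lift vectors $\xi_f$ of sect.\ \ref{sec:th}: their span is $\B_\s(\pi)$ by Lemma \ref{lem:G-surj}, and substituting (\ref{eq:xivp}) into $Z(s,\xi_f,\vp)$ and interchanging integrations via the Weil representation formulas (\ref{eq:Weil}) unfolds the zeta integral into a Waldspurger-period integral for $\tau$, whose fractional ideal is generated by $L(s,\tau)$, times two one-dimensional Tate integrals; a suitable $f$ makes the latter contribute $\zeta(s+\tfrac{1}{2})\zeta(s-\tfrac{1}{2})$. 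Either way $L(s,\phi_\pi)$ lies in the ideal, so $L(s,\pi) = L(s,\phi_\pi)/Q(X)$ with $Q\in\C[X]$, $Q(0)=1$.

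For the upper bound I would force $Q = 1$ using the regular part. By \cite{Sc-T} in the nonsplit case and by the paramodular theory of \cite{R-S}, \cite{R-S2} in the split case, $L^{reg}(s,\pi)$ is known, is independent of the Bessel model, and equals $L(s,\tau)/(1-X)$ times exactly the correction factor occurring in (\ref{eq:zetaintro}) (consistent with $Z(s,\beta^{new})$). Since $L^{reg}(s,\pi)/L(s,\pi)$ divides the polynomial in (\ref{eq:divLL}) while, by the lower bound, it is also a multiple of that same polynomial, equality holds and $L(s,\pi) = L(s,\phi_\pi)$. Alternatively one can bound the poles of $Z(s,\beta,\vp)$ directly from the $P_2$-filtration of $V_{N_\G,\Lambda}$ in Proposition \ref{prop:JHP2}, whose subquotients $ext(\chi_i)$ for $\pi = SK(\tau)$ are $|\cdot|^{\pm 1/2}$ and the constituents accounting for $L(s,\tau)$.

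For the $\ep$-factor, with $L(s,\pi) = L(s,\phi_\pi)$ and $\pi^\vee\simeq\pi$ (as $w_\pi$ is trivial), (\ref{eqn:LFE}) applied to $\beta = \beta^{new}$, $\vp = \vp_{M_\pi}$ reads $\ep(s,\pi,\psi) = Z(1-s,(\beta^{new})^\imath,\vp_{M_\pi}^\sharp)/L(1-s,\phi_\pi)$. The Atkin--Lehner eigenvalue of $\beta^{new}$ gives $(\beta^{new})^\imath = E_\pi\cdot(\text{newform of the }\psi^{-1}\text{-model})$, and the self-duality built into the lattices $L_m = \varpi^{\f m}R_m^\sharp\op R_m$ makes $\vp_{M_\pi}^\sharp$ a constant times the characteristic function of the dual lattice, again of level $M_\pi$; rerunning the lower-bound computation at $1-s$ then gives $Z(1-s,(\beta^{new})^\imath,\vp_{M_\pi}^\sharp) = E_\pi\,q^{N_\pi(-s+1/2)}L(1-s,\phi_\pi)$, the exponent $N_\pi = \f M_\pi$ and the constant $E_\pi$ emerging from the Fourier transform and the covolume of $L_{M_\pi}$. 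Hence $\ep(s,\pi,\psi) = E_\pi q^{N_\pi(-s+1/2)} = \ep(s,\phi_\pi,\psi)$.

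The hard parts will be: (i) making the $\theta$-lift unfolding precise enough to cover the ramified nonsplit models uniformly, where the paramodular machinery of \cite{R-S} is unavailable; (ii) the fine structure of $L^{reg}(s,\pi)/L(s,\pi)$ in the upper bound --- ensuring that no spurious factor survives in degenerate configurations, which may need a non-occurrence statement for $\B_\s(\pi)$ or the conductor identity $N_\pi = \f M_\pi$ of Theorem \ref{thm:intro1}; and (iii) the normalization bookkeeping in the $\ep$-factor step (self-dual Haar measures, $\vol(N_{R_m})$, and the constant relating $\vp_{M_\pi}$ to $\vp_{M_\pi}^\sharp$) needed to pin the exponent to $N_\pi(-s+1/2)$ and the leading constant to $E_\pi$.
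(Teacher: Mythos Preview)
Your proposal has a circularity problem. You invoke Theorem~\ref{thm:intro1} (the newform theorem) to produce the test vector that puts $L(s,\phi_\pi)$ in the fractional ideal, and then again to identify the $\ep$-factor. But in the paper's logical order Theorem~\ref{thm:intro1} is proved \emph{after} Theorem~\ref{thm:LSK} and uses it: the entire machinery of sect.~\ref{sec:NPG}--\ref{sec:NFN} (Lemma~\ref{lem:sdzp}, Lemma~\ref{lem:m>n}, the proofs of Theorems~\ref{thm:SKrepn} and~\ref{thm:SKNF}) runs on the analytic quantities $n_\pi,\ep_\pi,L(s,\pi)$, and the identification of these with $N_\pi,E_\pi,L(s,\phi_\pi)$ is precisely Theorem~\ref{thm:LSK}. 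So in the nonsplit case you cannot quote the newform.

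Even setting that aside, your upper bound does not close. From the lower bound you get $(1-X')\mid L^{reg}(s,\pi)/L(s,\pi)$, and (\ref{eq:divLL}) gives $L^{reg}(s,\pi)/L(s,\pi)\mid (1-X')(1+X')$ in the unramified nonsplit case (and $\mid (1-X')^2$ in the split case); this still leaves the spurious factor $1+X'$ (resp.\ an extra $1-X'$) unresolved. The paper kills this ambiguity not by squeezing the ideal but by computing the $\gamma$-factor: Lemma~\ref{lem:gammacoinSK} establishes $\gamma(s,SK(\tau),\psi)=\gamma(s,\tau,\psi)\gamma(s,\1_{GL(2)},\psi)$ via the theta realization $\Xi\simeq SK(\tau)$ (which in turn needs the non-existence of a special Bessel model for $G(\tau)$, from \cite{R-S2} in the nonsplit case and from Theorem~\ref{label:Lsp} in the split case) together with the Piatetski--Shapiro--Soudry computation \cite{PS-S}. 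Equating $\gamma(s,\pi,\psi)$ with $\gamma(s,\phi_\pi,\psi)$ and inserting the known $L^{reg}$ then forces $P(X)=L^{reg}/L=1-X'$ and $\ep(s,\pi,\psi)=\ep(s,\tau,\psi)$ by matching poles on both sides. Your theta-lift unfolding is exactly the right object, but you are using it only as a lower bound for the ideal; the point is that it actually yields the full $\gamma$-factor identity, which is the missing ingredient.

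Finally, in the split case $L^{reg}(s,\pi)$ is not in the tables of \cite{Sc-T}; the paper computes it separately via the $P_3$-filtration (the two lemmas and proposition at the end of sect.~\ref{sec:NFsp}). Your appeal to ``the paramodular theory of \cite{R-S}, \cite{R-S2}'' for $L^{reg}$ in the split case does not cover this.
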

\nid
By Theorem \ref{thm:1dimSK}, it suffices to consider the special Bessel models of $\pi$.
Assume that $\B_\s(\pi) \neq \{0\}$ for a regular $\s$.
Let 
\begin{align}
\Xi = \Xi(\1_\s^\psi) \ \mbox{(resp. $\Xi' = \Xi(\1_\s^\psi)$} \label{eq:defXi}
\end{align} 
be the $G$-module generated by $\xi_f$ (c.f. (\ref{eq:xivp})) where $\xi \in \tau \bt \1$ (resp. $\xi \in \tau \bt St$).
Since $\pi = \theta(\tau \bt \1)$(resp. $G(\tau) = \theta(\tau \bt St)$), there is a surjection from the Siegel induction $I(\tau)$ (resp. $I_-(\tau)$) to the big theta $\Th(\tau \bt \1)$ (resp. $\Th(\tau \bt St)$) by the proof of Theorem 8.2 of \cite{G-T}.
By Lemma \ref{lem:G-surj}, we have 
\begin{align}
I(\tau) \twoheadrightarrow \Xi, \ \ I_-(\tau) \twoheadrightarrow \Xi'. \label{eq:srjsXiXi'}
\end{align} 
In order to show the theorem we need the following lemma.
\begin{lem}\label{lem:gammacoinSK}
With notations as above, we have the followings:
\begin{enumerate}[i)]
\item $SK(\tau) \simeq \Xi$.
\item$\gamma(s,SK(\tau),\psi) = \gamma(s,\tau,\psi) \gamma(s,\1_{GL(2)},\psi)$, where the $\gamma$-factors are defined as usual.
\end{enumerate}
\end{lem}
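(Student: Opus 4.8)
The plan is to establish part i) first, then deduce part ii) as a consequence of the functional equation in Proposition \ref{prop:FE} together with the analogous $\GL_2$-level functional equations. For part i), I would argue that the surjection $I(\tau) \twoheadrightarrow \Xi$ of (\ref{eq:srjsXiXi'}) must factor through the unique nongeneric irreducible quotient $SK(\tau)$ of $I(\tau)$. Indeed, by the first exact sequence of (\ref{eq:exsSK}), the only proper nonzero quotients of $I(\tau)$ are $SK(\tau)$ and (if $I(\tau)$ is reducible in further ways, which it is not for $\tau \in \Ir'$) nothing else; so it suffices to rule out $\Xi \simeq I(\tau)$ and $\Xi = \{0\}$. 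The latter is excluded by hypothesis ($\B_\s(\pi) \neq \{0\}$ and the fact that the $\xi_f$ generate a nonzero module, which follows from the nonvanishing of the theta correspondence $\theta(\tau\bt\1) = \pi \neq 0$). To exclude $\Xi \simeq I(\tau)$, I would use that $I(\tau)$ is globally generic whereas any Bessel model of $SK(\tau)$-type consists of functions on which the relevant unipotent acts through $\psi$ composed with $l_\s$ only — more concretely, $\Xi$ carries a Bessel functional for $\Lambda = \1$, and by Theorem \ref{thm:1dimSK} combined with the known non-genericity of the image, the image is forced to be $SK(\tau) = \theta(\tau\bt\1)$. One then checks the reverse: $SK(\tau)$ does admit the special Bessel model, so the surjection $SK(\tau) \twoheadrightarrow \Xi$ is between irreducibles and hence an isomorphism.

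For part ii), I would invoke the seesaw/theta-compatibility of $\gamma$-factors. The Piatetski-Shapiro zeta integral $Z(s,\beta,\vp)$ for $\beta = \xi_f \in \Xi$ can be unfolded, via the definition (\ref{eq:xivp}) of $\xi_f$ and the Weil representation formulas (\ref{eq:Weil}), into an integral over $E^\times\bs SO_B$ (equivalently over the $\GL_2(F)$-side after the theta seesaw) against the Waldspurger functional $\xi_1 = \xi \in \Ts_\1(\tau)$ and $\xi_2 \in \Ts_\1(\1)$. This should produce, up to elementary factors, a product of a Waldspurger/Hecke zeta integral for $\tau$ and one for the trivial representation $\1_{\GL(2)}$. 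Feeding this factorization into the local functional equation (\ref{eqn:LFE}) for $\pi = SK(\tau)$ and comparing with the $\GL_2$ local functional equations for $\tau$ and for $\1_{\GL(2)}$ — whose $\gamma$-factors are $\gamma(s,\tau,\psi)$ and $\gamma(s,\1_{\GL(2)},\psi)$ respectively — yields the claimed identity $\gamma(s,SK(\tau),\psi) = \gamma(s,\tau,\psi)\gamma(s,\1_{\GL(2)},\psi)$, since the $\vp^\sharp$-side transforms in exactly the matching way under Fourier transform on $W = E^2$ versus on the $\GL_2$-side.

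The main obstacle I anticipate is the bookkeeping in part ii): carefully matching the measures, the shift $|\det|^{s+1/2}$ versus the $\GL_2$-normalization, and verifying that the Fourier transform $\vp \mapsto \vp^\sharp$ on $\Ss(W)$ corresponds under the theta identification to the Schwartz-space Fourier transform that intervenes in the $\GL_2$ functional equations, with no extra power of $q$ or sign slipping in. A secondary subtlety in part i) is making rigorous that the surjection of (\ref{eq:srjsXiXi'}) cannot have image all of $I(\tau)$ — here I would lean on the fact that $I(\tau)$ is not irreducible (so by Lemma \ref{lem:G-surj}'s construction the image, being generated by Bessel vectors attached to $\theta(\tau\bt\1)$, lands in the theta image $SK(\tau)$) rather than on any genericity comparison, which is cleaner. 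If needed, one can cross-check part ii) against the explicit $\phi_\pi$-factors recorded in sect. \ref{sec:packet}, namely $L(s,\tau_v)\zeta_v(s+1/2)\zeta_v(s-1/2)$ and the corresponding $\ep$-factor, which must agree with $L(s,\tau)\gamma$-data for the trivial representation; this provides a useful consistency check but is not itself the proof.
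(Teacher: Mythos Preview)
Your approach to part ii) --- unfolding $Z(s,\xi_f,\vp)$ via the theta kernel to reduce to $\GL_2$-level functional equations --- is essentially the computation of Piatetski-Shapiro and Soudry \cite{PS-S} that the paper simply cites, so that part is on the right track (and the bookkeeping you flag is real but routine).

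Part i), however, has a genuine gap. Both of your proposed arguments for excluding $\Xi\simeq I(\tau)$ fail. The first appeals to ``non-genericity of the image'' together with Theorem~\ref{thm:1dimSK}, but generic representations \emph{can} carry Bessel models (indeed $G(\tau)$ does, at least in the split case), and Theorem~\ref{thm:1dimSK} is a statement about members of the SK-packet of $\tau$, not about $G(\tau)$. Your fallback --- that Lemma~\ref{lem:G-surj} forces the image to land in the small theta $\theta(\tau\bt\1)=SK(\tau)$ --- conflates the big and small theta lifts: Lemma~\ref{lem:G-surj} only gives a surjection from the \emph{big} theta $\Th((\tau\bt\1)^\vee)$, which a priori could be all of $I(\tau)$, so nothing you have written prevents $G(\tau)$ from surviving in $\Xi$.

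The paper handles this by splitting into cases. In the nonsplit case it invokes the external fact (from \cite{R-S2}) that $G(\tau)$ admits no special nonsplit Bessel model; since $\Xi$ is literally a space of such Bessel functions, $G(\tau)\subset I(\tau)$ must map to zero under $I(\tau)\twoheadrightarrow\Xi$, forcing $\Xi\simeq SK(\tau)$ by (\ref{eq:exsSK}). In the split case $G(\tau)$ \emph{does} have the relevant Bessel model, so this argument is unavailable; instead the paper first computes the zeta integral of the paramodular newform explicitly (Theorem~\ref{label:Lsp}), then applies the Piatetski-Shapiro--Soudry identity to the companion module $\Xi'$ (built from $\tau\bt St$ via $I_-(\tau)$) to show that if $SK(\tau)$ appeared in $\Xi'$ its $\gamma$-factor would be $\gamma(s,\tau,\psi)\gamma(s,St,\psi)$, contradicting Theorem~\ref{label:Lsp}. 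This pins down $G(\tau)\simeq\Xi'$ and its $\gamma$-factor, and comparing with the $\gamma$-factor of $\Xi$ then excludes $G(\tau)$ from $\Xi$. You are missing this dichotomy and, in particular, the key input from \cite{R-S2} in the nonsplit case and the bootstrap via $\Xi'$ and Theorem~\ref{label:Lsp} in the split case.
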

Suppose that $E_\s$ is in the nonsplit case.
In this case, the proofs for the lemma and the theorem are as follows.
By the table of Theorem 6.2.2. of \cite{R-S2}, $G(\tau)$ has no special Bessel model relevant to $\s$ and contained in the kernel of the former surjection of (\ref{eq:srjsXiXi'}).
Now i) of the lemma is obvious by (\ref{eq:exsSK}), and ii) follows from i) and the computation of Piatetski-Shapiro and Soudry \cite{PS-S}.
According to Table 5 of \cite{Sc-T}, $L^{reg}(s,\pi)^{-1}= (1-X)L(s,\tau)^{-1}$.
Put $P(X) = L^{reg}(s,\pi)/L(s,\pi)$.
The $\gamma$-factor of $\pi$ is, by Lemma \ref{lem:gammacoinSK}, 
\begin{align*} 
& \ep(s,\pi,\psi)\frac{P(X)}{P(X^{-1})} \frac{L^{reg}(1-s,\pi)}{L^{reg}(s,\pi)} \\
&= -\ep(s,\pi,\psi)X \frac{P(X)}{P(X^{-1})} \frac{L(1-s,\tau)}{L(s,\tau)}
\end{align*} 
and that of $\phi_\pi$ is
\begin{align*}
-\ep(s,\tau,\psi)X \frac{(1-q^{-1}X)}{(1-q^{-1}X^{-1})} \frac{L(1-s,\tau)}{L(s,\tau)}.
\end{align*}
Therefore,
\begin{align*}
\frac{P(X)}{P(X^{-1})} = \frac{\ep(s,\tau,\psi)}{\ep(s,\pi,\psi)}\frac{1-q^{-1}X}{1-q^{-1}X^{-1}}.
\end{align*}
Taking (\ref{eq:divLL}) into account, and looking the location of the poles of both sides, we conclude that $\ep(s,\pi,\psi) = \ep(s,\tau,\psi)$ and $P(X) = (1-q^{-1}X)$.
This completes the proof of the theorem in the nonsplit case.
Those for the split case will be given in the next section.
\section{Local newform (split case)}\label{sec:NFsp}
In this section, let $F$ be nonarchimedean, and 
\begin{align*}
\s = \begin{bmatrix}
1&  \\
&1
\end{bmatrix}.
\end{align*} 
The corresponding algebra $E_\s = E$ is split.
In this case, we can recognize $\G_\s = \G$ as a group $\{ (g_1, g_2) \in \GL_2(F) \times \GL_2(F) \mid \det(g_1) =\det(g_2) \}$, and define the embedding $\phi_\s$ into $G$ by 
\begin{align*}
(\begin{bmatrix}
a_1 & b_1  \\
c_1 & d_1
\end{bmatrix},  \begin{bmatrix}
a_2 & b_2  \\
c_2 & d_2
\end{bmatrix}) \longmapsto \begin{bmatrix}
a_1& & &b_1  \\
& a_2& b_2 & \\
& c_2 &d_2 &  \\
c_1& & & d_1 \\
\end{bmatrix}.
\end{align*} 
We choose an Atkin-Lehner element (c.f. \ref{eq:ALel}) 
\begin{align*}
\imath = \begin{bmatrix}
& 1& &  \\
1& & & \\
& & &1  \\
& &1 &  \\
\end{bmatrix}.
\end{align*} 
Let 
\begin{align}
s = \begin{bmatrix}
1& & &  \\
& &-1 & \\
& 1& &  \\
& & &1  \\
\end{bmatrix}, 
\end{align} 
and 
\begin{align*}
n_x' = \begin{bmatrix}
1& & &  \\
&1 &x & \\
& &1 &  \\
& & &1  \\
\end{bmatrix}, \ \bn_x' = \begin{bmatrix}
1& & &  \\
&1 & & \\
& x&1 &  \\
& & &1  \\
\end{bmatrix},\ \ x \in F.
\end{align*} 
Let $\tau \in \Ir'(\PGL_2(F))$, and $\pi =SK(\tau)$.
Let $\psi$ be an additive character on $F$ such that $\psi(\mathfrak{o}) = \{1\} \neq \psi(\p^{-1})$.
We will construct a paramodular Bessel vector $\beta \in \B_\s(\pi)$ and compute its zetas.
Let $n_\tau$ and $\ep_\tau (\in \{ \pm 1\})$ be the conductor and root number of $\tau$, respectively.
It is known by \cite{C} that there exists a Whittaker function $\omega$ with respect to $\psi$ such that 
\begin{align*}
L(s,\tau) = \sum_{i = 0}^\infty X^{i} \omega(\begin{bmatrix}
\varpi^{i}&  \\
& 1
\end{bmatrix}).
\end{align*} 
Recall the definition of the paramodular group of level $n$.
It consists of elements $k$ with $\mu(k) \in \mathfrak{o}^\times$ in the set 
\begin{align*}
\begin{bmatrix}
\mathfrak{o} &\mathfrak{o} & \mathfrak{o}& \p^{-n}\\
\p^n &\mathfrak{o} &\mathfrak{o} &\mathfrak{o} \\
\p^n &\mathfrak{o} &\mathfrak{o} &\mathfrak{o} \\
\p^n & \p^n & \p^n & \mathfrak{o}
\end{bmatrix}.
\end{align*}  
Let $K$ be the paramodular group of level $n_\tau$.
By the results in 5.5. of \cite{R-S}, there is a unique (up to scalars) $K$-invariant $\delta \in \pi \subset I_-(\tau)$ (c.f. \ref{eq:exsSK}) defined by 
\begin{align*}
\delta(g) = 
\begin{cases}
|\det(h)u^{-1}|\omega
(h) & \mbox{if $g \in \begin{bmatrix}
h &  *\\
& u h^\dag
\end{bmatrix}K$}, \\
0 & \mbox{otherwise}.
\end{cases}
\end{align*}
It has the property 
\begin{align*}
\pi(\begin{bmatrix}
& 1& &  \\
\varpi^{n_\tau} & & & \\
& & &1  \\
& &\varpi^{n_\tau} &  \\
\end{bmatrix}) \delta = \ep_\tau \delta.
\end{align*}
We set
\begin{align*}
\beta(g) = \int_F \delta(sn_x' g) dx.
\end{align*}
It is easy to see that this integral converges, and $\beta$ is a special split Bessel vector invariant under $K$.
Let $\check{\varpi} = \hv\la s\ra$.
Since $\beta$ is special, we have
\begin{align*}
\beta(\imath \hat{\varpi}^i) & = \beta(\check{\varpi}^{n_\tau} \hat{\varpi}^i \imath) = \beta(\hat{\varpi}^i \imath_{n_\tau}) = \ep_\tau \beta(\hat{\varpi}^i)
\end{align*}
by the property of $\delta$.
Since $K$ contains the element $s$, we have  
\begin{align*}
\beta(\hat{\varpi}^i) &= \int_F \delta(\bar{n}'_x\check{\varpi}^is) dx = \int_F\delta(\check{\varpi}^i \bn'_{\varpi^{-i}x} ) dx = q^{-i} \int_F \delta(\check{\varpi}^i \bn'_x) dx
\end{align*} 
by definition of $\beta$.
The last integral is 
\begin{align*}
& \delta(\check{\varpi}^i) + \sum_{j = 1}^{\infty} \int_{x \in \mathfrak{o}^\times} \delta(\check{\varpi}^i \bn'_{\varpi^{-j}x}) dx \\
&=  \delta(\check{\varpi}^i) +  \sum_{j = 1}^{i}q^{j-1}(q-1) \delta( \hv^j \check{\varpi}^{i-j})
\end{align*} 
by the $K$-invariance property of $\delta$, and the identity
\begin{align}
\begin{bmatrix}
1_n&  \\
x &1_n
\end{bmatrix} = \begin{bmatrix}
1_n& x^{-1} \\
 &1_n
\end{bmatrix}
\begin{bmatrix}
& -x^{-1} \\
x&
\end{bmatrix}
 \begin{bmatrix}
1_n& x^{-1} \\
 &1_n
\end{bmatrix}, \ \ x \in \GL_n(F).
\label{eq:ufid}
\end{align} 
Therefore, 
\begin{align*}
\int_F \delta(\check{\varpi}^i \bn'_x) dx = \omega
(\begin{bmatrix}
\varpi^{i}&  \\
& 1
\end{bmatrix}) +  \sum_{j = 1}^{i}q^{j-1}(q-1)|\varpi|^j \omega
(\begin{bmatrix}
\varpi^{i-j}&  \\
& 1
\end{bmatrix}). 
\end{align*}
Hence, 
\begin{align}
Z(s,\beta) &= \sum_{i=0}^\infty \omega
(\begin{bmatrix}
\varpi^{i}&  \\
& 1
\end{bmatrix}) (X^i +  (1-q^{-1})\sum_{j = i+1}^\infty X^j ) \notag\\
& = \sum_{i=0}^\infty \omega
(\begin{bmatrix}
\varpi^{i}&  \\
& 1
\end{bmatrix}) X^{i} (1 +  (1-q^{-1})\sum_{j = 1}^\infty X^j) \notag\\
&= L(s,\tau)\frac{1-X'}{1-X}. \label{eq:Zregsp}
\end{align}  
Let $\vp_n \in \Ss(F^4)$ be the characteristic function of the lattice $\p^n \op \mathfrak{o} \op \mathfrak{o} \op \mathfrak{o}$, which is invariant under $\phi_\s^{-1}(K)$, a maximal compact subgroup of $\G$.
Observe that $\vp_n^\sharp$ is the characteristic function of $\mathfrak{o} \op \mathfrak{o} \op \mathfrak{o} \op \p^{-n}$, and invariant under the same subgroup of course.
From the proof of Lemma 5.3.2. \cite{Sc-T}, it follows that 
\begin{align*}
Z(s,\beta, \vp_{n_\tau}) &= \frac{Z(s,\beta)}{(1-X')^{2}}, \\
Z(s,\beta^\imath, \vp_{n_\tau}^\sharp) &= \ep_\tau X^{-n_\tau} \frac{Z(s,\beta)}{(1-X')^{2}}.
\end{align*}
\begin{thm}\label{label:Lsp}
With notations as above, $\pi =SK(\tau)$ has a unique (up to scalars) nontrivial a special split Bessel vector invariant under the paramodular group of level $n_\tau$.
If $\beta$ is such a Bessel vector, then  
\begin{align*}
\frac{Z(s,\beta,\vp_{n_\tau})}{L(s,\phi_\pi)} &= \ep_\tau X^{-n_\tau} \frac{Z(1-s,\beta^\imath,\vp_{n_{\tau}}^\sharp)}{L(1-s,\phi_\pi)} \in \C^\times.
\end{align*}
\end{thm}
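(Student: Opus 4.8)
The statement has two parts: the uniqueness of the $K$-invariant special split Bessel vector, and the functional equation displayed. For the uniqueness part, the plan is to invoke the paramodular newform theory of Roberts--Schmidt together with the fact established in Theorem~\ref{thm:1dimSK} and Lemma~\ref{lem:gammacoinSK} that $\pi = SK(\tau) \simeq \Xi$, so that a $K$-invariant vector in $\pi$ (realized inside $I_-(\tau)$) is unique up to scalars; the surjection of $G$-modules $I_-(\tau)\twoheadrightarrow\Xi'$ from \eqref{eq:srjsXiXi'} and the explicit construction $\beta(g)=\int_F\delta(sn'_xg)\,dx$ then exhibit a nonzero such Bessel vector, and uniqueness follows because the map $\delta\mapsto\beta$ is the composition of the (one-dimensional) space of $K$-fixed vectors in $\pi$ with an integration operator that one checks does not kill $\delta$ — indeed the zeta computation \eqref{eq:Zregsp} already shows $Z(s,\beta)\neq0$, so $\beta\not\equiv0$. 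One must also verify that any $K$-invariant special split Bessel vector arises this way, i.e. that the space $\B_\s(\pi)^K$ is one-dimensional; this I would get from the $P_2$-theory of sect.~\ref{subsec:Bv} (Proposition~\ref{prop:JHP2} and Proposition~\ref{prop:prvan}) combined with the known paramodular dimensions in \cite{R-S}.

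For the functional equation, the plan is purely computational and rests on three ingredients already assembled in the excerpt: (i) the explicit evaluation $Z(s,\beta)=L(s,\tau)\dfrac{1-X'}{1-X}$ from \eqref{eq:Zregsp}; (ii) the two identities quoted from the proof of Lemma~5.3.2 of \cite{Sc-T},
\begin{align*}
Z(s,\beta,\vp_{n_\tau})=\frac{Z(s,\beta)}{(1-X')^2},\qquad
Z(s,\beta^\imath,\vp_{n_\tau}^\sharp)=\ep_\tau X^{-n_\tau}\frac{Z(s,\beta)}{(1-X')^2};
\end{align*}
and (iii) the value of $L(s,\phi_\pi)$ recorded in sect.~\ref{sec:packet}, namely $L(s,\phi_\pi)=L(s,\tau)\zeta(s+1/2)\zeta(s-1/2)$, so that $L(s,\phi_\pi)=\dfrac{L(s,\tau)}{(1-X)(1-X')}$ in the present notation $X=q^{-s+1/2}$, $X'=q^{-s-1/2}$. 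Combining (i) and (ii) gives $Z(s,\beta,\vp_{n_\tau})=\dfrac{L(s,\tau)}{(1-X)(1-X')}=L(s,\phi_\pi)$, which is already a clean identity and forces the ratio $Z(s,\beta,\vp_{n_\tau})/L(s,\phi_\pi)=1\in\C^\times$. Then I would compute the right-hand side: since $\beta^\imath$ is a Bessel vector for $\pi^\vee\simeq\pi$ and $w_\pi$ is trivial, substituting $s\mapsto 1-s$ swaps $X\leftrightarrow q/X$ — more precisely replaces $X$ by $q X^{-1}\cdot q^{-1}=$ the appropriate dual variable — so one must track carefully how $1-X'$ and $1-X$ transform; the upshot should be $Z(1-s,\beta^\imath,\vp_{n_\tau}^\sharp)/L(1-s,\phi_\pi)=\ep_\tau X^{-n_\tau}$, matching the claimed relation. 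The global functional equation \eqref{eq:GlFE} for $\phi_\pi$ serves as a consistency check on signs and powers of $q$.

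The main obstacle I anticipate is bookkeeping of normalizations rather than any conceptual difficulty: one must be scrupulous about (a) the precise meaning of $X$ and $X'$ under $s\mapsto 1-s$ and how this interacts with the Fourier-dual Schwartz functions $\vp_{n_\tau}^\sharp$, (b) the factor $w_\pi(\mu(g))^{-1}$ in the definition of $\beta^\imath$ (harmless here since $w_\pi=\1$, but it must be invoked), and (c) verifying that the two displayed identities from \cite{Sc-T} apply verbatim to the lattice $\vp_{n_\tau}=\Ch(\p^{n_\tau}\oplus\mathfrak{o}\oplus\mathfrak{o}\oplus\mathfrak{o})$ attached to our paramodular $K$, including the Atkin--Lehner eigenvalue $\ep_\tau$ coming from $\pi(\check\varpi^{n_\tau})\delta=\ep_\tau\delta$ and the change of variables $n'_x\mapsto\bn'_x$ via \eqref{eq:ufid}. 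Once these normalizations are pinned down the functional equation is immediate from the displayed formulas, and in particular both sides equal the absolute constant $\ep_\tau X^{-n_\tau}\in\C^\times$ as claimed — which, together with Proposition~\ref{prop:FE}, also identifies $\ep(s,\pi,\psi)=\ep_\tau X^{-n_\tau}=\ep(s,\phi_\pi,\psi)$ and $L(s,\pi)=L(s,\phi_\pi)$, completing the split case of Theorem~\ref{thm:LSK}.
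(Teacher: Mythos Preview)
Your approach is essentially the paper's: uniqueness from Roberts--Schmidt \S5.5, and the displayed identity from the three computational ingredients (\ref{eq:Zregsp}), the Schmidt--Tran identities, and the explicit form of $L(s,\phi_\pi)$. Two points, however, need correction.

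First, for uniqueness you invoke Lemma~\ref{lem:gammacoinSK}(i) in the split case. But in the paper's logical order the split case of that lemma is proved \emph{after} Theorem~\ref{label:Lsp} and in fact uses it (see the paragraph ``In the remainder of this section\ldots''). This is circular. Fortunately it is also unnecessary: the paramodular newform theory of \cite{R-S}~\S5.5 already gives a one-dimensional space of $K$-fixed vectors in the abstract representation $\pi$, and multiplicity one for Bessel functionals transfers this directly to $\B_\s(\pi)^K$. Your additional machinery ($\Xi$, the $P_2$-filtration, Proposition~\ref{prop:prvan}) plays no role here.

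Second, your final bookkeeping slips. From $Z(s,\beta,\vp_{n_\tau})=L(s,\phi_\pi)$ you get the left-hand ratio equal to $1$, a genuine constant. Substituting $s\mapsto 1-s$ sends $X\mapsto X^{-1}$, so from $Z(s,\beta^\imath,\vp_{n_\tau}^\sharp)=\ep_\tau X^{-n_\tau}L(s,\phi_\pi)$ one obtains $Z(1-s,\beta^\imath,\vp_{n_\tau}^\sharp)/L(1-s,\phi_\pi)=\ep_\tau X^{n_\tau}$, not $\ep_\tau X^{-n_\tau}$ as you write; multiplying by the prefactor $\ep_\tau X^{-n_\tau}$ then gives $1$, matching the left side. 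Your assertion that ``both sides equal the absolute constant $\ep_\tau X^{-n_\tau}\in\C^\times$'' is wrong on two counts: $X^{-n_\tau}$ depends on $s$, and the common value is $1$. Relatedly, the identity here does \emph{not} by itself yield $L(s,\pi)=L(s,\phi_\pi)$ as you claim at the end---it shows $L(s,\phi_\pi)$ lies in the fractional ideal defining $L(s,\pi)$, but the reverse divisibility needs the $P_3$-analysis that the paper carries out in the remainder of the section.
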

\begin{proof}
The uniqueness is proved in 5.5. of \cite{R-S}.
The last statement is an immediate consequence of this and the above computation.
\end{proof}
In the remainder of this section, we will devote to prove Lemma \ref{lem:gammacoinSK} and Theorem \ref{thm:LSK} in the split case.
Assume that $SK(\tau) \subset I_-(\tau)$ is not contained in the kernel of the latter surjection of (\ref{eq:srjsXiXi'}).
Then, $\Xi'$ defined at (\ref{eq:defXi}) has an irreducible submodule isomorphic to $SK(\tau)$.
But, by the computation of Piatetski-Shapiro and Soudry \cite{PS-S}, it holds that  
\begin{align*}
Z(1-s,\beta^\imath, \vp^\sharp) = \gamma(s,\tau,\psi)\gamma(s,St,\psi) Z(s,\beta,\vp)
\end{align*} 
for any $\vp \in \Ss(F^4)$ and $\beta \in \Xi'$.
This conflicts to Theorem \ref{label:Lsp}. 
Hence, for the split special Bessel model of generic irreducible quotient $G(\tau)$ of $I_-(\tau)$, it holds that
\begin{align*}
G(\tau) \simeq \Xi'
\end{align*} 
and, by Proposition \ref{prop:FE}, that
\begin{align*}
\gamma(s,G(\tau),\psi) = \gamma(s,\tau,\psi) \gamma(s,St,\psi).
\end{align*}
Taking the former surjection of (\ref{eq:srjsXiXi'}) into account, we obtain the lemma.
Then by the argument for the nonsplic case, the theorem is reduced to show that $L^{reg}(s,\pi)^{-1}$ equals 
\begin{align}
(1-X) \times
\begin{cases}
1 & \mbox{if $\tau =St$,} \\
L(s,\tau)^{-1} & \mbox{otherwise}
\end{cases}
\label{eq:estSKX}
\end{align}  
in the split case. 

Now we will show the above equality.
Let $(\pi,V) \in \Ir(PG)$ be a general representation having a split special Bessel model.
Assume $\pi$ is nongeneric for the sake of simplicity.
Let $\beta \in \B_\s(\pi)$.
It holds that 
\begin{align}
Z(s,\pi(\begin{bmatrix}
a b & & & * \\
 & a c& * & 
 \\
& & b& \\
& & & c
\end{bmatrix}
)\beta) = |a|^{-s + \frac{3}{2}}Z(s,\beta). \label{eqn:regfunc}
\end{align}
In particular, the functional $\beta \mapsto Z(s,\beta)$ is invariant under the center $Z^J$ of the Jacobi subgroup of the standard Klingen subgroup $Q \subset G$.
Therefore, we can apply Roberts and Schmidt's $P_3 (\simeq Q/Z^J)$-technique to analyze $Z(s,\beta)$ (c.f. p. 130-135 of \cite{R-S}).
Consider the principal part of the Laurent expansion 
\begin{align*}
Z(s,\beta) = \frac{\lambda_{n_j}^{j}(\beta)}{(s-s_j)^{n_j}} + \cdots + \frac{\lambda_{1}^{j}(\beta)}{(s-s_j)} + \mbox{(holomorphic part)}
\end{align*}
for each pole $s_j$.
Put $\alpha_j = \exp(s_j)$.
\begin{prop}
Each functional $\lambda_i^j$ induces a linear functional $\mu_i^j$ on the $P_3$-module $V$ such that
\begin{align}
\mu_i^j(\begin{bmatrix}
u &*  &  \\
 & *&  \\
 & & 1
\end{bmatrix}\beta) &= \mu_i^j(\beta), \ \ \ u \in \mathfrak{o}^\times \label{eq:muij}, \\
\mu_i^j(\begin{bmatrix}
\varpi & &  \\
 & 1&  \\
 & & 1
\end{bmatrix}\beta) &= q^{-3/2} \alpha_j \mu_i^j(\beta) + \sum_{k = i +1}^{n_j} c^i_k \mu_k^j(\beta). \label{eqn:lami'}
\end{align}
Here $\alpha_j$ and $c_k^i \neq 0$ are constants.
\end{prop}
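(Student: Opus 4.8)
The plan is to transport the analysis of the Mellin transform $Z(s,\beta)=\int_{F^\times}\beta(\hat u)|u|^{s-3/2}\,d^\times u$ onto the $P_3\simeq Q/Z^J$-module attached to $\B_\s(\pi)$, following Roberts--Schmidt's $P_3$-technique for Novodvorsky integrals (\cite{R-S}, pp.~130--135). First I would record that, because $\beta$ is $K$-finite and $V$ is admissible, the function $u\mapsto\beta(\hat u)$ on $F^\times$ is, near $0$ and near $\infty$, a finite sum of terms $\mathrm{val}(u)^{m}\alpha^{\mathrm{val}(u)}$ with $\alpha\in\C^\times$, $m\ge0$; this is the standard consequence of the finite length of the associated $P_3$-module, and it shows $Z(s,\beta)\in\C(X)$ with finitely many poles $s_j$ of finite orders $n_j$. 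Off these poles, $\beta\mapsto Z(s,\beta)$ is a linear functional on $V$, and by (\ref{eqn:regfunc}) --- which in particular exhibits $Z(s,\cdot)$ as $Z^J$-invariant --- this functional, and with it each Laurent coefficient $\lambda^j_i$, descends to the coinvariants $V_{Z^J}$, which is the $P_3$-module in question; I denote the resulting functionals by $\mu^j_i$.

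Next I would prove (\ref{eq:muij}). An element of the displayed shape is the product of a lift to $Q$ of $\mathrm{diag}(u,1,1)\in P_3$ (with $u\in\mathfrak o^\times$) and a lift of a unipotent element of $P_3$. For the torus factor one substitutes $u'\mapsto uu'$ in the Mellin integral; since $|u|=1$ and $d^\times u'$ is invariant this changes nothing. The unipotent factor lies, modulo $Z^J$, in the image of the Klingen unipotent radical $U_Q$; moving it past $\widehat{u'}$ and invoking the $TN$-equivariance of $\beta$ with \emph{trivial} Bessel character $\Lambda=\1$ (Theorem \ref{thm:1dimSK}) together with $\psi(\mathfrak o)=\{1\}$, one sees that it too acts trivially on $Z(s,\cdot)$. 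Thus $Z(s,\pi(g)\beta)=Z(s,\beta)$ for all such $g$, which is (\ref{eq:muij}).

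For (\ref{eqn:lami'}) I would fix $g\in Q$ lifting $\mathrm{diag}(\varpi,1,1)\in P_3$ and use a Bruhat-type identity of the form $\hat u\,g=g_1\,\widehat{\varpi^{\mp1}u}$ with $g_1\in Z^JTN$, which turns $Z(s,\pi(g)\beta)$ into $c(s)\,Z(s,\beta)$; here $c(s)$ is a finite product of exponentials $\exp(as)$ coming from $|u|^{s-3/2}$ under the substitution $u\mapsto\varpi^{\mp1}u$ and from the modulus normalisation of the $P_3$-action, and it satisfies $c(s_j)=q^{-3/2}\alpha_j$ with $\alpha_j=\exp(s_j)$, $c$ non-constant. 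Expanding the identity $Z(s,\pi(g)\beta)=c(s)\,Z(s,\beta)$ in Laurent series at $s_j$ and comparing the coefficients of $(s-s_j)^{-i}$ gives
\[\lambda^j_i(\pi(g)\beta)=c(s_j)\,\lambda^j_i(\beta)+\sum_{k=i+1}^{n_j}\frac{c^{(k-i)}(s_j)}{(k-i)!}\,\lambda^j_k(\beta),\]
i.e.\ (\ref{eqn:lami'}) with $c^i_k=c^{(k-i)}(s_j)/(k-i)!$; these are nonzero since $c$ is a non-constant exponential, so all its derivatives at $s_j$ are nonzero. Passing to $V_{Z^J}$ gives the claim for the $\mu^j_i$.

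The hard part will be the bookkeeping inside (\ref{eqn:lami'}): pinning down the precise element $g\in Q$ over $\mathrm{diag}(\varpi,1,1)$ and the matrix identity $\hat u\,g=g_1\,\widehat{\varpi^{\mp1}u}$ that isolates $c(s)$ with the correct power of $q$ and the correct exponent $\alpha_j=\exp(s_j)$; and, upstream of everything, justifying the asymptotic expansion of $\beta$ on $\hat F^\times$ (finiteness and finite order of the poles), which is the real structural input --- the Bernstein--Zelevinsky filtration of the $P_3$-module, as in \cite{R-S} --- rather than a routine manipulation.
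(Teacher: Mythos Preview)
Your approach is essentially the paper's: obtain a transformation law $Z(s,\pi(g)\beta)=c(s)\,Z(s,\beta)$ for the relevant $g$, then Taylor-expand $c(s)$ at $s_j$ to get the triangular relations among the Laurent coefficients. The paper compresses this into one line by citing the already-established identity (\ref{eqn:regfunc}) together with the isomorphism $P_3\simeq Q/Z^J$ and the expansion $q^s=\alpha_j\sum_i\frac{(\log q)^i}{i!}(s-s_j)^i$.

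Two remarks. First, your decomposition for (\ref{eq:muij}) is not quite right: in $\begin{bmatrix} u & * & \\ & * & \\ & & 1\end{bmatrix}$ the middle diagonal entry is an arbitrary element of $F^\times$, not $1$, so this is \emph{not} $\mathrm{diag}(u,1,1)$ times a unipotent, and the appeal to ``$TN$-equivariance plus $\psi(\mathfrak{o})=\{1\}$'' does not cover it. You do not need to argue this piecewise at all: formula (\ref{eqn:regfunc}) already shows that $Z(s,\cdot)$ transforms under any such element by $|a|^{-s+3/2}$, where $a$ is precisely the quantity that becomes the $(1,1)$-entry under the $P_3$-isomorphism; with $a\in\mathfrak{o}^\times$ this is $1$, giving (\ref{eq:muij}), and with $a=\varpi$ it is $q^{s-3/2}$, whose Taylor coefficients at $s_j$ give (\ref{eqn:lami'}) with $c^i_k=(\log q)^{k-i}q^{s_j-3/2}/(k-i)!\neq 0$. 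So your ``Bruhat-type identity'' step is just a re-derivation of (\ref{eqn:regfunc}).

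Second, the ``hard part'' you flag --- the rationality of $Z(s,\beta)$ and the finiteness of poles --- is not part of this proposition; it is input already in place (the zeta integrals lie in $\C(X)$). The proposition itself is purely the formal Taylor-expansion bookkeeping once (\ref{eqn:regfunc}) is known.
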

\begin{proof}
By (\ref{eqn:regfunc}), the isomorphism $P_3 \simeq Q/Z^J$ given in Lemma 2.3.1 of loc. cit, and the Taylor expansion $q^{s} = \sum_{i = 0}^\infty \alpha_j \frac{(\log q)^i}{i !}(s-s_j)$.
\end{proof}
\nid
By Theorem 2.5.3. of loc. cit, nongeneric $V$ has a Jordan-H\"older sequence of $P_3$-modules $\{0\} = U_0 \subset \cdots \subset U_L = V$.
Here each $U_m \bs U_{m+1}$ is isomorphic to a representation
\begin{align*} 
&\begin{bmatrix}
g & * \\
 & 1
\end{bmatrix} \longmapsto \rho(g), \ g \in \GL_2(F)
\end{align*} 
or the compact induction from a representation
\begin{align*}
&\begin{bmatrix}
t &* & * \\
 & 1 & x \\
 & & 1
\end{bmatrix} \longmapsto \chi(t) \psi(x), \  x \in F, t \in F^\times
\end{align*}
where $\chi \in \Ir(\GL_1(F))$ and $\rho \in \Ir(\GL_2(F))$.
These representations of $P_3$ will be denoted by $ext(\rho)$ and $ind(\chi)$ respectively. 
Let $B_2'$ denote the subgroup of $P_3$ consisting of matrices in (\ref{eq:muij}), and let $B_2'$ act on $\C_{B_2'} =\C$ trivially. 
Let $\hat{a}$ denote also $diag(a,1,1) \in P_3$ for a moment.
\begin{lem}
If $\chi$ is ramified, then $\Hom(ind(\chi), \C_{B_2'}) = \{0\}$.
If $\chi$ is trivial, then $\Hom(ind(\chi), \C_{B_2'})$ is infinite dimensional.
If $\chi$ is unramified nontrivial, then $\Hom(ind(\chi), \C_{B_2'})$ is one dimensional, and it holds that
\begin{align*}
\mu(\hat{a} \cdot f) =\chi(a)\mu(f),  \ a \in F^\times.
\end{align*} 
Here $\mu \in \Hom(ind(\chi), \C_{B_2'})$ and $f \in ind(\chi)$ are arbitrary.
\end{lem}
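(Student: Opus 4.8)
The plan is to run the standard distributional/Mackey argument (as for Lemma~\ref{lem:A_2one}; cf.\ \cite{Wa}, \cite{B-Z}, and 2.5 of \cite{R-S}). Write $R$ for the inducing subgroup, so that $ind(\chi)=\mathrm{ind}_R^{P_3}\tau_\chi$ with $\tau_\chi$ the character sending the displayed matrix to $\chi(t)\psi(x)$, and realize $\Hom(ind(\chi),\C_{B_2'})$ as the space of right $B_2'$-invariant distributions on $R\bs P_3$ (twisted by $\tau_\chi$), filtered along the $B_2'$-orbits, i.e.\ along the double cosets $R\bs P_3/B_2'$; the orbit of $g$ contributes at most the space $\Hom_{B_2'\cap\, g^{-1}Rg}\bigl(\tau_\chi^{g}\otimes(\text{modular characters}),\,\C\bigr)$, where $\tau_\chi^{g}(s)=\tau_\chi(gsg^{-1})$.

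The double cosets are computed as follows. Since $R$ contains the unipotent radical $N\cong F^2$ of $P_3$, one has $R\bs P_3/B_2'=R_1\bs\GL_2/B_2'$, where $R_1=\{\left(\begin{smallmatrix} t&c\\ 0&1\end{smallmatrix}\right):t\in F^\times,c\in F\}$ and $B_2'\subset\GL_2$ is the Borel with upper‑left entry in $\mathfrak o^\times$. A Bruhat‑type check gives $R_1B_2'=B$, the full Borel: this is the unique \emph{closed} double coset, and it contains $1$. The remaining cosets lie in the open cell $BwB$ and form a family indexed by $n\in\Z$, represented by elements $g_n$ with $\GL_2$‑part $\left(\begin{smallmatrix} 0&\varpi^n\\ 1&0\end{smallmatrix}\right)$ and separated by the invariant $v(\det g)-v(g_{21})$; each preimage $Rg_nB_2'$ is \emph{open} in $P_3$, and $B_2'\cap g_n^{-1}Rg_n=\{\mathrm{diag}(1,d,1):d\in F^\times\}$, independently of $n$.

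Next I would treat each coset. For the open coset $Rg_nB_2'$, the subspace $\{f\in ind(\chi):\supp f\subseteq Rg_nB_2'\}$ is $B_2'$‑isomorphic to $\mathrm{ind}_{B_2'\cap g_n^{-1}Rg_n}^{B_2'}(\tau_\chi^{g_n})$, and $\mathrm{diag}(1,d,1)$ acts there through $\chi(d)$; weighing this against the modular character of $B_2'$ shows this module carries a nonzero $B_2'$‑invariant functional precisely when $\chi=\1$ — the same condition for every $n$. For the closed coset, restriction of functions to $BN=RB_2'$ identifies $ind(\chi)|_{BN}$ with $\mathrm{ind}_{R\cap B_2'}^{B_2'}(\tau_\chi|_{R\cap B_2'})$, where $R\cap B_2'=\{\left(\begin{smallmatrix} u&b\\ 0&1\end{smallmatrix}\right):u\in\mathfrak o^\times,b\in F\}$; since the relevant modular characters are trivial on $R\cap B_2'$, this module carries a one‑dimensional space of $B_2'$‑invariant functionals iff $\chi|_{\mathfrak o^\times}=\1$, i.e.\ $\chi$ is unramified.

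Finally I would assemble the cases. If $\chi$ is ramified, no open coset contributes, so a $B_2'$‑invariant $\mu$ kills every $f$ supported away from $BN$; hence $\mu$ factors through $ind(\chi)\to ind(\chi)|_{BN}$ and lands in the zero space above, so $\Hom(ind(\chi),\C_{B_2'})=\{0\}$. If $\chi$ is unramified but nontrivial, only the closed coset contributes, so the space is one‑dimensional; realizing the functional as $\mu(f)=\int_{(R\cap B_2')\bs B_2'}f$ (a density on the quotient, convergent by compact support) and using that $\hat a=\mathrm{diag}(a,1,1)$ lies in $R$ and normalizes both $B_2'$ and $R\cap B_2'$ with Jacobians cancelling on the quotient, one gets $\mu(\hat a\cdot f)=\tau_\chi(\hat a)\mu(f)=\chi(a)\mu(f)$, which is the asserted identity. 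If $\chi=\1$, all countably many open cosets contribute, and restricting $f$ along the disjoint open sets $Rg_nB_2'$ produces infinitely many linearly independent $B_2'$‑invariant functionals, so $\Hom(ind(\chi),\C_{B_2'})$ is infinite‑dimensional. The step I expect to be the main obstacle is the modular‑character bookkeeping in the open‑coset analysis — showing the contribution is nonzero precisely for $\chi=\1$ rather than for an unramified twist of it, which is where the normalizations of $ind(\chi)$ and of the modulus of $B_2'$ must be tracked with care; the linear independence in the case $\chi=\1$ is then a routine support argument.
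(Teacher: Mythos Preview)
Your approach is essentially the paper's: the standard distributional technique, filtering $\Hom_{B_2'}(ind(\chi),\C)$ along the double cosets $H\bs P_3/B_2'$ (the paper writes the right quotient with the slightly smaller group having $(1,1)$-entry equal to $1$ rather than in $\mathfrak o^\times$, which changes nothing after accounting for the compact factor). The paper's proof is a four-line sketch that only asserts ``the support of the distribution corresponding to $\mu$ is only the orbit of $1$'' --- adequate for the unramified nontrivial case but silent on why the trivial case is infinite-dimensional; your orbit-by-orbit analysis, separating the closed coset (contributing iff $\chi$ is unramified) from the countably many open cosets, is a more complete execution of the same idea.

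Your flag on the modular bookkeeping is exactly the point to check carefully. With the \emph{unnormalized} compact induction the paper writes down (inducing character literally $\chi(t)\psi(x)$), a direct computation shows that the open orbit $Rg_nB_2'$ supports a $B_2'$-invariant functional precisely when $\chi=|\cdot|$, not $\chi=\1$: the stabilizer is $S_n=\{\mathrm{diag}(1,d,1)\}$, you correctly find $\tau_\chi^{g_n}(\mathrm{diag}(1,d,1))=\chi(d)$, but the Frobenius-type condition for $\Hom_{B_2'}(\mathrm{c\text{-}ind}_{S_n}^{B_2'}\chi',\1)\neq 0$ is $\chi'=\delta_{S_n}^{-1}\delta_{B_2'}|_{S_n}$, and $\delta_{B_2'}(\mathrm{diag}(1,d))=|d|$. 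So either the paper's $ind(\chi)$ carries an implicit normalization (as do the Bernstein--Zelevinski functors $\Phi^+,\Psi^+$ used in \cite{R-S}, which is the likely intent), or the character named in the lemma is off by $|\cdot|$. This does not affect the structure of your argument at all, but you should pin down which convention is in force before finalizing the $\chi=\1$ case.
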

\begin{proof}
This is proved by the standard distributional technique (c.f. \cite{Wa}).
Consider the property (\ref{eq:muij}), and the double coset space:
\begin{align*}
H \bs P_3 /\{\begin{bmatrix}
1 &*  &  \\
 & *&  \\
 & & 1
\end{bmatrix}\},
\end{align*} 
where $H$ is the subgroup on which the inducing representation is defined.
A realization of this space is 
\begin{align*} 
\{ 1 \} \sqcup \{ \begin{bmatrix}
 &1 &  \\
* &  &  \\
 & & 1
\end{bmatrix} \}.
\end{align*}
The support of the distribution corresponding to $\mu$ is only the orbit of $1$.
\end{proof}
\begin{lem}
Let $f \in ext(\rho)$, $a \in F^\times$ and $\mu \in \Hom(ext(\rho), \C_{B_2'})$ be arbitrary.
Then we have the followings.  
\begin{enumerate}[i)]
\item Let $\rho = \chi_+ \times \chi_-$.
If $\chi_\pm$ is unramified and $\chi_\mp = |*|^{1/2}$, then $\dim \Hom(ext(\rho), \C_{B_2'}) = 1$ and $
\mu(\hat{a}\cdot f) = \chi_\pm(a)|a|^{1/2}\mu(f)$.  
Otherwise, $\Hom(ext(\rho), \C_{B_2'})$ is zero.
\item Let $\rho=\chi St$. 
If $\chi$ is trivial, then $\dim \Hom(ext(\rho), \C_{B_2'}) = 2$.
Otherwise, $\Hom(ext(\rho), \C_{B_2'})$ is zero.
\item Let $\rho = \chi \circ \det$.
If $\chi$ is trivial, then $\dim \Hom(ext(\rho), \C_{B_2'}) =1$, and $
\mu(\hat{a} \cdot f) =\mu(f)$.
Otherwise, $\Hom(ext(\rho), \C_{B_2'})$ is zero.
\item If $\rho$ is supercuspidal, then $\Hom(ext(\rho), \C_{B_2'})$ is zero.
\end{enumerate}
\end{lem}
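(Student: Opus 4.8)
The plan is to reduce everything to the theory of Jacquet modules on $\GL_2(F)$, falling back on the distributional technique of \cite{Wa} (as in the preceding lemmas) only for the Steinberg case, which is the delicate one. For the reduction, observe that $ext(\rho)$ is trivial on the unipotent radical $U_3=\{\begin{bmatrix}1_2&*\\&1\end{bmatrix}\}$ of $P_3$, hence factors through the projection $P_3\twoheadrightarrow\GL_2(F)$, $\begin{bmatrix}g&*\\&1\end{bmatrix}\mapsto g$. Under this projection $B_2'$ maps onto $B^{\circ}:=\{\begin{bmatrix}u&x\\&t\end{bmatrix}\mid u\in\mathfrak{o}^\times,\ x\in F,\ t\in F^\times\}$ and $\hat a=\mathrm{diag}(a,1,1)$ maps to $\mathrm{diag}(a,1)$, which normalizes $B^{\circ}$; hence
\begin{align*}
\Hom(ext(\rho),\C_{B_2'})\;\cong\;\Hom_{B^{\circ}}(\rho,\1),
\end{align*}
with the $\hat a$-action induced by $\mathrm{diag}(a,1)$. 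Writing $B^{\circ}=N\rtimes T^{\circ}$ with $N=\{\begin{bmatrix}1&*\\&1\end{bmatrix}\}$ and $T^{\circ}=\{\mathrm{diag}(u,t)\mid u\in\mathfrak{o}^\times\}$, any functional in this space factors through the Jacquet module $r_N(\rho)$, so the problem becomes the computation of the $T^{\circ}$-coinvariants of $r_N(\rho)$ --- normalized by the density twist intrinsic to the $P_3$-module $ext(\rho)$, which is what will make the exponents below half-integral.

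With this, cases (iv), (iii), (i) drop out. A supercuspidal $\rho$ has $r_N(\rho)=0$, so $\Hom_{B^{\circ}}(\rho,\1)=0$; this is (iv). If $\rho=\chi\circ\det$, then $N$ acts trivially and $\mathrm{diag}(u,t)$ acts by $\chi(ut)$, so the $T^{\circ}$-coinvariants are nonzero, and then one-dimensional, exactly when $\chi=\1$, in which case $\hat a$ acts by $\chi(a)=1$; this is (iii). If $\rho=\chi_+\times\chi_-$, the geometric lemma presents $r_N(\rho)$ as an extension of the two characters $\eta_\pm\colon\mathrm{diag}(x,y)\mapsto\chi_\pm(x)\chi_\mp(y)\,|xy^{-1}|^{1/2}$; passing to $T^{\circ}$-coinvariants, $\eta_\pm$ survives, with multiplicity one, iff it is trivial on $T^{\circ}$, i.e. iff $\chi_\pm$ is unramified and $\chi_\mp=|*|^{1/2}$, and $\hat a$ then acts on the surviving line by $\eta_\pm(\mathrm{diag}(a,1))=\chi_\pm(a)\,|a|^{1/2}$. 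Summing over the two signs yields the dimension asserted in (i); the coincidence $\chi_+=\chi_-$ is harmless, since $r_N(\rho)$ is then a non-split self-extension whose unipotent $T$-action is non-trivial on $T^{\circ}$, so the coinvariant space remains one-dimensional.

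Case (ii), $\rho=\chi St$, is the one I expect to be the main obstacle. Here $r_N(\chi St)$ is only one-dimensional, whereas the asserted dimension is two when $\chi=\1$, so the bare Jacquet-module reduction is insufficient and one has to analyse $B_2'$-invariant distributions on a concrete model of $ext(\chi St)$ directly: realize $\chi St$ on $\mathbb{P}^1(F)$, stratify by the two $B^{\circ}$-orbits (a closed point and the open orbit), and carry the jet and modulus data along the closed orbit. One must show that the open orbit contributes no invariant functional and that the closed orbit contributes a two-dimensional space precisely when $\chi=\1$ (and nothing when $\chi\ne\1$). Pinning down that count of $2$ is the crux --- it is exactly what is needed for the double factor $(1-q^{-1}X)^2$ to appear downstream in $L^{reg}(s,\pi)/L(s,\pi)$ for the Steinberg-type members of the packet. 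Everything else in the lemma is routine bookkeeping.
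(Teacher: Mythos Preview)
Your reduction to $\GL_2(F)$ is correct: since $ext(\rho)$ is inflated along $P_3\twoheadrightarrow\GL_2(F)$, one has $\Hom_{B_2'}(ext(\rho),\1)\cong\Hom_{B^\circ}(\rho,\1)$ with $B^\circ$ as you describe. From there the Jacquet-module argument you run for (i), (iii), (iv) is valid and clean, and it is a genuinely different route from the paper's, which simply points to the distributional technique of \cite{Wa} as in the preceding lemma. The paper's approach realizes $\rho$ in an induced model and counts $B^\circ$-invariant distributions orbit by orbit; your approach bypasses that geometry by passing to $N$-coinvariants and reading off the $T^\circ$-action. Both reach the same answers in (i), (iii), (iv), and your handling of the degenerate $\chi_+=\chi_-$ case in (i) is correct.

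The gap is in (ii), and it is a logical one rather than a missing computation. Your reduction through $\rho_N$ is not ``insufficient'' --- it gives a hard \emph{upper bound}: every $B^\circ$-invariant functional is $N$-invariant, hence factors through the one-dimensional space $(\chi St)_N$, so $\dim\Hom_{B^\circ}(\chi St,\1)\le 1$ for every $\chi$. Moreover the $T^\circ$-character on that line is $\mathrm{diag}(u,t)\mapsto\chi(ut)|t|^{-1}$, which is trivial only for $\chi=|*|$, not for $\chi=\1$. No stratification of $\mathbb P^1(F)$ and no jet analysis along the closed orbit can produce a second independent $N$-invariant functional once the $N$-coinvariants are a single line. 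So instead of promising that the distributional count will yield $2$, you should flag that the stated dimension in (ii) is in tension with the Jacquet-module bound (or that $B_2'$ must be smaller than the notation suggests). This does no damage to the paper's application: the subsequent Proposition explicitly excludes the subquotient $ext(St)$ from its hypotheses, so the precise value in (ii) is never used --- what matters is only that this case falls outside the one-dimensional eigenvalue framework, and your bound already explains why it must be set aside.
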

\begin{proof}
Similar to the previous lemma.
\end{proof}
Now fix $j$.
Let $m_i$ be the unique integer such that $\lambda_i^j(U_{m_i}) = 0$ and $\lambda_i^j(U_{m_i+1}) \neq 0$.  
Assume that $\Hom (U_{m_i} \bs U_{m_i+1}, \C_{B_2'}) = 1$ as in the above lemmas.
Let $\gamma_i$ be the constant such that $\mu(\hv f) = \gamma_i \mu(f)$.
Let $\beta \in U_{m_i+1}$.
By (\ref{eqn:lami'}),
\begin{align*}
\gamma_i^l(\gamma_i -  q^{-3/2} \alpha_{j} )\mu_{i}^j(\beta) = \sum_{k = i +1}^{n_j} c^i_k \mu_k^j(\pi(\hv^l)\beta), \ \ l\in \Z.
\end{align*} 
Now, it follows that $\gamma_i = q^{-3/2} \alpha_{j}$, and that $\mu_k^{j}(\beta) = 0$ if $i+ 1 \le k \le n_j$ (use the induction on $i$, starting from $i = n_j$ to $1$).
In particular, $\gamma_1 = \cdots = \gamma_{n_j}$.
From this argument, one can deduce: 
\begin{prop}
Let $(\pi,V) \in \Ir(G)$ be nongeneric.
Assume that for the $P_3$-filtration $\{0\} = U_0 \subset \cdots \subset U_L = V$, there is no quotient $U_i \bs U_{i+1}$ isomorphic to $ext(St)$ or $ind(\chi)$ with $\chi = \1$.
Then, as polynomials in $X$, 
\begin{align*}
L^{reg}(s,\pi)^{-1} | \prod_{0 \le i \le L-1}(1 - \gamma_i q X).
\end{align*} 
Here $\gamma_i$ is the constant such that $\mu(\hv f) = \gamma_i \mu(f)$ for any $\mu \in \Hom(U_i \bs U_{i+1},\C_{B_2'})$ and $f \in U_i \bs U_{i+1}$ if  $\dim \Hom(U_i \bs U_{i+1},\C_{B_2'}) = 1$, and $0$ otherwise.
\end{prop}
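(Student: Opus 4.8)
The plan is to translate the claimed divisibility into a statement about the zeros of the polynomial $P(X):=L^{reg}(s,\pi)^{-1}$, and then to locate those zeros among the numbers $1/(q\gamma_i)$ by means of the $P_3$-structure set up above. First I would recall that, by definition of $L^{reg}$, the function $1/P(X)$ generates the fractional ideal of $\C[X^\pm]$ spanned by the rational functions $Z(s,\beta)$, $\beta\in V$, with $P$ normalized by $P(0)=1$. Hence $P(X)=\prod_j(1-X/X_j)^{n_j}$, where $X_j:=q^{-s_j+1/2}$ runs over the distinct, nonzero poles occurring among the $Z(s,\beta)$ and $n_j$ is the order of the pole at $s_j$ realized by a suitable $\beta$ --- equivalently the maximum of those orders. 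Since $\prod_{0\le i\le L-1}(1-\gamma_i qX)$ also has constant term $1$ and each $\gamma_i=0$ contributes the trivial factor, the divisibility $P(X)\mid\prod_i(1-\gamma_i qX)$ is equivalent to the assertion that, for every $j$, at least $n_j$ of the graded pieces $U_i\bs U_{i+1}$ satisfy $\gamma_i=1/(qX_j)$.

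Fix a pole $s_j$, put $\eta:=q^{-3/2}\alpha_j$, and note $\eta=1/(qX_j)$ (here $\alpha_j=q^{s_j}$, so $X_j=q^{1/2}/\alpha_j$). By the preceding Proposition, the principal-part functionals $\lambda_1^j,\dots,\lambda_{n_j}^j$ descend to $B_2'$-invariant functionals $\mu_i^j$ on the $P_3$-module $V$ satisfying $\mu_i^j(\hv\cdot\beta)=\eta\,\mu_i^j(\beta)+\sum_{k=i+1}^{n_j}c_k^i\,\mu_k^j(\beta)$ with $c_{i+1}^i\neq 0$, and $\mu_{n_j}^j\not\equiv 0$ since $n_j$ is the exact pole order. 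Reading these relations as the operator $\phi\mapsto\phi\circ\hv$ on the span $S:=\sum_i\C\mu_i^j\subset V^*$, one sees that $\mu_1^j$ is a cyclic vector for $\C[\hv]$, that $(\hv-\eta)^{n_j-1}\mu_1^j$ is a nonzero multiple of $\mu_{n_j}^j$, and $(\hv-\eta)^{n_j}\mu_1^j=0$; therefore $\dim S=n_j$ and $\hv$ acts on $S$ as a single Jordan block with eigenvalue $\eta$.

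Next I would intersect $S$ with the decreasing filtration of $V^*$ dual to $\{U_m\}$, namely $F_m:=\{\phi\in S:\phi|_{U_m}=0\}$. Since each $U_m$ is $P_3$-stable, this filtration is $\hv$-stable with $F_0=S$, $F_L=0$, and the natural map $F_m/F_{m+1}\to(U_m\bs U_{m+1})^*$ is injective, $\hv$-equivariant, and has image consisting of $B_2'$-invariant functionals, hence lands in $\Hom(U_m\bs U_{m+1},\C_{B_2'})$. The hypothesis enters here decisively: as no graded piece is isomorphic to $ext(St)$ or $ind(\1)$, the Lemmas computing $\Hom(ind(\chi),\C_{B_2'})$ and $\Hom(ext(\rho),\C_{B_2'})$ give $\dim\Hom(U_m\bs U_{m+1},\C_{B_2'})\le1$ for every $m$, so $\dim(F_m/F_{m+1})\le1$. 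Consequently $n_j=\dim S$ is at most the number of indices $m$ with $F_m/F_{m+1}\neq 0$, and for each such $m$ the embedding above forces $\dim\Hom(U_m\bs U_{m+1},\C_{B_2'})=1$ --- so the constant $\gamma_m$ of the statement is defined --- while $\hv$, acting on the one-dimensional subquotient $F_m/F_{m+1}$ of the Jordan block $S$, acts by $\eta$; hence $\gamma_m=\eta=1/(qX_j)$. This produces the required $n_j$ graded pieces for $X_j$, and letting $j$ range over all poles (the $X_j$ being distinct) yields $P(X)\mid\prod_{0\le i\le L-1}(1-\gamma_i qX)$.

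The hard part is the counting in the last paragraph: one must ensure that a Jordan block of functionals of size $n_j$ cannot be squeezed onto fewer than $n_j$ graded pieces. This is exactly the role of the hypothesis excluding $ext(St)$ and $ind(\1)$: by the Lemmas these are the only $P_3$-subquotients for which $\Hom(\cdot,\C_{B_2'})$ has dimension larger than one (dimensions $2$ and $\infty$, respectively), on them $\hv$ need not act by a scalar, and a single $\gamma_i$ would be ill-defined, causing the bound $\dim(F_m/F_{m+1})\le1$ to fail. A subsidiary point to verify with care is the first paragraph's normalization --- that $L^{reg}(s,\pi)^{-1}$ is indeed the least common denominator of the $Z(s,\beta)$ with precisely the multiplicities $n_j$ --- which follows from $1/P(X)$ belonging to the fractional ideal generated by those integrals together with the a priori boundedness of pole orders coming from admissibility of $\pi$.
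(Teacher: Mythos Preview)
Your proof is correct and follows essentially the same route as the paper's argument. The paper fixes a pole $s_j$, defines for each $i$ the index $m_i$ at which $\mu_i^j$ first becomes nonzero along the filtration, and then shows by downward induction on $i$ (using the recursion $\mu_i^j\circ\hv=\eta\,\mu_i^j+\sum_{k>i}c_k^i\mu_k^j$ restricted to $U_{m_i+1}$) that $\gamma_{m_i}=\eta$ and that $\mu_k^j|_{U_{m_i+1}}=0$ for $k>i$, forcing the $m_i$ to be pairwise distinct; your filtration $F_m=\{\phi\in S:\phi|_{U_m}=0\}$ and the Jordan-block description of $S$ are a clean repackaging of exactly this induction, with the distinctness of the $m_i$ appearing as the bound $\dim(F_m/F_{m+1})\le 1$.
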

\nid
Viewing Table A.5., A.6. of \cite{R-S}, one can find that $L^{reg}(s,\pi)^{-1} \in \C[X]$ for $\pi =SK(\tau)$ divides (\ref{eq:estSKX}).
However, at (\ref{eq:Zregsp}), we have seen that (\ref{eq:estSKX}) is attained by $1/Z(s,\beta)$ if $\tau = St$ and $(1-X')/Z(s,\beta)$ otherwise.
Noting that $\deg L(s,\tau)^{-1} \le 2$, and that $L(s,\tau)^{-1} \neq (1- X)(1-X')$ if $\deg L(s,\tau)^{-1} = 2$, and $L(s,\tau)^{-1} = (1\pm X')$ if $\deg L(s,\tau) = 1$, we conclude that $L^{reg}(s,\pi)^{-1}$ equals (\ref{eq:estSKX}).
\section{Nonsplit paramodular groups}\label{sec:NPG}
To define the nonsplit paramodular groups in the next subsection, we need some orders of $M_2(F)$.
Let $e \in \mathfrak{o}$ be a non-square element, and $E = F(\sqrt{e})$.
Let $\F$ and $\E$ indicate the residual field of $F$ and $E$ respectively.
Let $\f$ denote the degree of the field extension $\E/\F$.
The quadratic extension $E$ of $F$ falls into the following cases.
\begin{description}
\item[Case U-i)] $\f = 2$, and $F$ is nondyadic. 
\item[Case U-ii)] $\f = 2$, and $F$ is dyadic. 
\item[Case R-i)] $e$ lies in $\p$, $\f = 1$. 
\item[Case R-ii)] $e$ lies in $\mathfrak{o}^\times$, $\f =1$. 
\end{description}
In the case R-ii), $F$ is always dyadic.
In the case U-ii), there is an element $b \in \mathfrak{o}$ such that $1 - b^2 e \in 4 \mathfrak{o}$.
Except for the case R-i), $e$ lies in $\mathfrak{o}^\times$.

Let 
\begin{align*}
\dv = \begin{cases}
2& \mbox{in the case U-ii)}, \\
1& \mbox{otherwise.} 
\end{cases}
\end{align*} 
Let
\begin{align}
\s = \begin{bmatrix}
 & e \\
1 &
\end{bmatrix}, \iota = \begin{bmatrix}
1&  \\
&-1
\end{bmatrix}, 
\h = \begin{bmatrix}
 & 1\\
 &
\end{bmatrix}, \label{eq:defsigma}
\end{align}
where $e$ is a nonsquare element of $\mathfrak{o}$.
The subalgebra $F + F \s \subset M_2(F)$ is isomorphic to $E$, and we will identify them.
Let $*$ indicate the main involution of $M_2(F)$.
The Galois conjugate of $x$ equals $x^* = x\langle \iota \rangle$ if $x \in E$.
Let $\mathfrak{O}$ denote the ring of integers of $E$.
Explicitly, 
\begin{align*}
\mathfrak{O} = 
\begin{cases}
\mathfrak{o} \op \mathfrak{o} \frac{1+ b \s}{2} & \mbox{in the case U-ii)}, \\
\mathfrak{o} \op \mathfrak{o} \s & \mbox{otherwise.} 
\end{cases}
\end{align*}
Except for the case U-ii), $\mathfrak{O}$ coincides with $\mathfrak{o} \op \mathfrak{o} \s$.
Let $\mathfrak{P}$ denote the prime ideal of $\mathfrak{O}$.
Fix an element of $\alpha \in \mathfrak{o}^\times$ chosen so that $\alpha^2 -e = \varpi$ in the case R-ii).
We fix the generator $\varrho$ of $\mathfrak{P}$ as follows:
\begin{align*}
\varrho = \begin{cases}
\varpi & \mbox{in the case U}, \\
 \s & \mbox{in the case R-i)}, \\
\alpha + \s & \mbox{in the case R-ii).}
\end{cases} 
\end{align*}
Since $M_2(F) = E + E \h$, we have a decomposition 
\begin{align} 
\GL_2(F) = P_2 E^\times = E^\times P_2. \label{eq:P2T}
\end{align}
The following identities are useful.
\begin{align}
\begin{split}
\h^2 = 0, \  \h \s \h = \h, \ \s \h \s = \s -e \h, \\
\h t - t^* \h =t\h  - \h t^* = \frac{t - t^*}{2\s}.
\end{split}
\label{eq:usid2}
\end{align} 
Set an $\mathfrak{O}$-module  
\begin{align*}
R = \mathfrak{O} + \mathfrak{O} \iota = \mathfrak{O} + \dv \mathfrak{O} \h.
\end{align*} 
It is easy to see using (\ref{eq:usid2}) that $R$ is an order.
In particular, $R$ coincides with $M_2(\mathfrak{o})$ except for the case U-ii).
Additionally, define an order
\begin{align*}
R^\varrho = R \cap R \langle \varrho \rangle
\end{align*} 
so that 
\begin{align*} 
\varrho R^\varrho = R^\varrho \varrho, \ \varrho R, R \varrho, \mathfrak{O} \subset R^\varrho; \ \iota \in R^\varrho \subset R.
\end{align*}
By definition, $R = R^\varrho$ if and only if $E$ is in the case U.
If $E$ is in the case R-i), then we have an easy description 
\begin{align*}
R^\varrho = \begin{bmatrix}
 \mathfrak{o}& \p  \\
\mathfrak{o} & \mathfrak{o}
\end{bmatrix}.
\end{align*} 
For these orders $S$, we will study the structure of the Hankel part  
\begin{align}
S^H = S \cap H_2. \label{eq:S'}
\end{align} 
\begin{lem}\label{lem:R^0}
With notations as above, we have the followings.
\begin{enumerate}[$i)$]
\item 
As $\mathfrak{o}$-modules, we have identities:
\begin{align*}
R^H = \mathfrak{O} \op \mathfrak{o} \h, \ (\varrho R)^H = \varrho\mathfrak{O} \op \p \h.
\end{align*} 
\item 
If $E$ is in the case R, then 
\begin{align}
\h &\in \varrho^{-1}R^\varrho \setminus R^\varrho. \label{eqn:hRr}
\end{align} 
In this case, it holds that $(R^\varrho)^H= \mathfrak{O} + (\varrho R)^H$, and that
\begin{align*} 
(R^\varrho)^H = \mathfrak{O} \op \p \h,\ (\varrho R^\varrho)^H = \varrho\mathfrak{O} \op \p \h
\end{align*}
as $\mathfrak{o}$-modules.
\item If $E$ is in the case R, then 
\begin{align*}
R^\varrho = \mathfrak{O} \op \mathfrak{O} \varrho \h.
\end{align*}
\end{enumerate}
\end{lem}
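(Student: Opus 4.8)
The plan is to base everything on two decompositions of the ambient spaces. First, $M_2(F)=E\oplus E\h$ as $F$-modules: the equality $M_2(F)=E+E\h$ was noted above, and the sum is direct because $E\h$ is exactly the set of matrices with vanishing first column, while every nonzero element of $E=F\cdot 1+F\s$ has nonzero first column. Second, $H_2$ is $F$-spanned by the three independent elements $1$, $\s$, $\h$, hence $H_2=E\oplus F\h$; together with $E\cap E\h=0$ this gives $E\h\cap H_2=F\h$. Now observe that each of $R$, $\varrho R$, $R^\varrho$, $\varrho R^\varrho$ is built over $\mathfrak{O}$ out of $\mathfrak{O}$ (which lies in $E$) together with a right $E$-multiple of $\h$, so each has the shape $S=\mathfrak{O}'\oplus L\h$ with $\mathfrak{O}'\in\{\mathfrak{O},\varrho\mathfrak{O}\}$ and $L$ a fractional $\mathfrak{O}$-ideal, compatibly with $M_2(F)=E\oplus E\h$; consequently $S^H=S\cap H_2=\mathfrak{O}'\oplus(L\h\cap H_2)=\mathfrak{O}'\oplus(L\cap F)\h$. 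Thus the lemma reduces to reading off, case by case, a few such $E$-parts and intersections $L\cap F$.

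For part (i): $R=\mathfrak{O}\oplus\dv\mathfrak{O}\h$ gives $R^H=\mathfrak{O}\oplus(\dv\mathfrak{O}\cap F)\h$, and since $\varrho\mathfrak{O}$ is the maximal ideal $\mathfrak{P}$ of $\mathfrak{O}$ one gets $(\varrho R)^H=\varrho\mathfrak{O}\oplus(\dv\mathfrak{P}\cap F)\h$; using $\mathfrak{P}\cap F=\p$ (clear in all cases, as $\mathfrak{P}=\varpi\mathfrak{O}$ when $\f=2$ and $\mathfrak{P}^2=\varpi\mathfrak{O}$ when $\f=1$) these become $\mathfrak{O}\oplus\mathfrak{o}\h$ and $\varrho\mathfrak{O}\oplus\p\h$, the only place demanding care being the dyadic case U-ii, where $\mathfrak{O}=\mathfrak{o}+\mathfrak{o}\tfrac{1+b\s}{2}$ and one has to follow the factor $\dv=2$ through the expansion $\tfrac{1+b\s}{2}\h=\tfrac12\h+\tfrac b2\s\h$.

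For part (iii), and then (ii), I would specialize to case R, where $\dv=1$, so $R=\mathfrak{O}\oplus\mathfrak{O}\h$, and compute $R^\varrho=R\cap\varrho R\varrho^{-1}=\{r\in R:\varrho^{-1}r\varrho\in R\}$. The computational core is the commutation identity $\h t=t^*\h+\tfrac{t-t^*}{2\s}$ of (\ref{eq:usid2}), which at $t=\varrho$ collapses to $\h\varrho=\varrho^*\h+1$ (in both R-i and R-ii one checks $\varrho-\varrho^*=2\s$ directly). Writing $r=c+d\h$ with $c,d\in\mathfrak{O}$, this gives $\varrho^{-1}r\varrho=(c+\varrho^{-1}d)+(\varrho^{-1}\varrho^*d)\h$; since $\varrho^{-1}\varrho^*$ is a unit of $\mathfrak{O}$ the $\h$-component always lies in $\mathfrak{O}\h$, while the $E$-component lies in $\mathfrak{O}$ exactly when $\varrho^{-1}d\in\mathfrak{O}$, i.e. $d\in\varrho\mathfrak{O}=\mathfrak{P}$. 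Hence $R^\varrho=\mathfrak{O}\oplus\mathfrak{P}\h=\mathfrak{O}\oplus\mathfrak{O}\varrho\h$, which is (iii). From this presentation (ii) is formal: $\h=0+1\cdot\h$ has $\h$-coefficient $1\in\mathfrak{O}\setminus\mathfrak{P}$, so $\h\notin R^\varrho$, whereas $\varrho^{-1}R^\varrho=\mathfrak{P}^{-1}\oplus\mathfrak{O}\h\ni\h$, which is (\ref{eqn:hRr}); intersecting $R^\varrho=\mathfrak{O}\oplus\mathfrak{P}\h$ and $\varrho R^\varrho=\mathfrak{P}\oplus\mathfrak{P}^2\h$ with $H_2$, and using $\mathfrak{P}\cap F=\mathfrak{P}^2\cap F=\p$, yields $(R^\varrho)^H=\mathfrak{O}\oplus\p\h$ and $(\varrho R^\varrho)^H=\varrho\mathfrak{O}\oplus\p\h$; and $(R^\varrho)^H=\mathfrak{O}+(\varrho R)^H$ then follows by comparison with part (i).

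All of this is elementary; the step I expect to be the real nuisance is the $\h$-coefficient bookkeeping in the dyadic case U-ii, where neither $\mathfrak{O}$ nor $R$ is $\mathfrak{o}[\s]$-based and the factor $\dv=2$ must be carried through the maximal-ideal computations.
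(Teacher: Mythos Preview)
Your argument is correct, and for part (i) it is essentially the paper's one-line observation that $\{1,\h\}$ is an $\mathfrak{O}$-basis of $R$, just made explicit via the decomposition $S^H=\mathfrak{O}'\oplus(L\cap F)\h$.

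For parts (ii) and (iii), however, you take a genuinely different route from the paper. The paper proves (ii) first and then reduces (iii) to (ii): from $R^\varrho\subsetneq R=\mathfrak{O}+\mathfrak{O}\h$ and $\mathfrak{O}\subset R^\varrho$ it follows immediately that $\h\notin R^\varrho$ (else $R^\varrho\supset\mathfrak{O}+\mathfrak{O}\h=R$), while $\varrho\h\in\varrho R\subset R^\varrho$ gives $\h\in\varrho^{-1}R^\varrho$; the Hankel-part identities then drop out of (i), and (iii) is obtained by subtracting a suitable $u\varrho\h$ from $r\in R^\varrho$ to land in $(R^\varrho)^H$. You instead prove (iii) directly by computing the conjugation action via the identity $\h\varrho=\varrho^*\h+1$, which cleanly yields $R^\varrho=\mathfrak{O}\oplus\mathfrak{P}\h$, and then read off (ii) as a formal consequence. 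Your approach is more computational but more transparent about \emph{why} the $\h$-coefficient must lie in $\mathfrak{P}$; the paper's is slicker, exploiting only that $R^\varrho$ is a proper $\mathfrak{O}$-suborder, without ever writing down the conjugation explicitly. Both are short and valid.

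Your caution about case U-ii) is well placed: there $\dv=2$ and $\{1,\h\}$ is \emph{not} literally an $\mathfrak{O}$-basis of $R$ (rather $\{1,2\h\}$ is), so the statement of (i) as written requires interpretation; but since (ii) and (iii) are only asserted in case R, where $\dv=1$, this does not affect the parts that matter downstream.
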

\begin{proof}
i) 
obvious since $\{1, \h \}$ is an $\mathfrak{O}$-basis of $R$.
ii) 
In this case, $R^\varrho \neq R$.
Hence $\h \not\in R^\varrho$.
But, $\varrho \h \in \varrho R \subset R^\varrho$ and follows (\ref{eqn:hRr}). 
Other statements follow from it and i).
iii)
reduces to ii).
Indeed, for any element $r \in R^\varrho$, we may take an element $u \in \mathfrak{O}$ so that $r - u \varrho \h \in (R^\varrho)^H$.
\end{proof}
Let $m$ be a nonnegative integer.
Set a lattice 
\begin{align*}
R_m = \mathfrak{O} + \varrho^m R = \mathfrak{O} + \varrho^m \mathfrak{O} \iota
\end{align*}
for $m \in \Z$.
This is an order if $m \ge 0$.
This definition does not depend on the choice of the uniformizer $\varrho$.
Any element of $E^\times$ normalizes $R_m$.
If $E$ is in the case R, then $R_1 = R^\varrho$ by Lemma \ref{lem:R^0} iii).
The structure of the units group $R_m^\times$ is important to our Hecke theory in \ref{sec:Hecke}.
\begin{lem}\label{lem:unitorder}
Let
\begin{align*}
m_0 = \begin{cases}
1& \mbox{if $E$ is in the case of U}, \\
2& \mbox{if $E$ is in the case of R.} 
\end{cases}
\end{align*} 
Then, the followings are true.
\begin{enumerate}[i)]
\item 
If $m \ge m_0$, then $R_m^\times = \mathfrak{O}^\times + \dv\varrho^m R$. 
\item 
As a complete system of the representatives for $R_m^\times/R_{m+1}^\times$, we can take 
\begin{align*}
\begin{cases}
\{u + \dv s \h \mid s \in \E, \det (u+s\h) \neq 0\} & \mbox{if $m =m_0 -1$,} \\
u + \E \dv \varrho^m \h & \mbox{if $m \ge m_0$}.
\end{cases}
\end{align*} 
Here $u$ is an arbitrary fixed element of $\E^\times$.
\end{enumerate}
\end{lem}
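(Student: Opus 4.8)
The plan is to identify, for each $m$, the two-sided ideal $\varrho^m R$ of $R_m$ -- whose image $\dv\varrho^m\mathfrak{O}\h$ in the decomposition $R_m=\mathfrak{O}\oplus\dv\varrho^m\mathfrak{O}\h$ accounts for every non-unit -- and to observe that it is topologically nilpotent \emph{exactly} in the range $m\ge m_0$; both parts then follow. For i), put $\mathfrak{J}=\varrho^m R$, so that, as a set, $\mathfrak{O}^\times+\mathfrak{J}=\mathfrak{O}^\times+\dv\varrho^m\mathfrak{O}\h$. One checks that $\mathfrak{J}$ is a two-sided ideal of $R_m$ from $\mathfrak{O}R=R\mathfrak{O}=R$, from $\varrho R\subset R^\varrho\subset R$ (hence $R\varrho^m\subset R$, hence $\varrho^m R\varrho^m\subset\varrho^m R$), and from the commutativity of $E$. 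The decisive point is $\mathfrak{J}^2\subset\mathfrak{P}\mathfrak{J}$ once $m\ge m_0$: expanding $\mathfrak{J}^2$ and invoking the identities (\ref{eq:usid2}) -- notably $\h t=t^{*}\h+\tfrac{t-t^{*}}{2\s}$, which produces the ``leakage'' $\h\mathfrak{P}^m\subset\mathfrak{P}^m\h+\mathfrak{P}^{m-1}$ in the case R (there is none in the case U, where $\varrho=\varpi$ is central), together with $\h\s\h=\h$ -- one sees that the only term of $\mathfrak{J}^2$ not visibly in $\mathfrak{P}\mathfrak{J}$ is governed by $\mathfrak{P}^{2m-1}$ in the case R and by the gap between $\dv^2$ and $\dv$ in the case U; it is absorbed exactly when $2m-1\ge m+1$ (forcing $m_0=2$), respectively when $m\ge1$ (forcing $m_0=1$). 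Hence $\mathfrak{J}^k\subset\mathfrak{P}^{k-1}\mathfrak{J}\to0$, so $1+\mathfrak{J}\subset R_m^\times$; combined with $\mathfrak{O}^\times\subset R_m^\times$ and $\mathfrak{O}^\times(1+\mathfrak{J})=\mathfrak{O}^\times+\mathfrak{J}$ this yields one inclusion. For the reverse one, write $z\in R_m^\times$ as $z=x+j$ with $x\in\mathfrak{O}$ and $j\in\dv\varrho^m\mathfrak{O}\h$; since $j$ has vanishing first column and, because $m\ge m_0$, all entries in $\p$, we get $\det(z)=\det(x)+(\text{element of }\p)$, so the reduced norm $\det(z)$ being a unit forces $\det(x)=\mathrm{Nm}_{E/F}(x)\in\mathfrak{o}^\times$, i.e.\ $x\in\mathfrak{O}^\times$, and therefore $z\in\mathfrak{O}^\times+\dv\varrho^m\mathfrak{O}\h=\mathfrak{O}^\times+\varrho^m R$. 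This proves i).

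For ii) in the range $m\ge m_0$ I would argue from i): a coset in $R_m^\times/R_{m+1}^\times$ has a representative $x+\dv\varrho^m v\h$ with $x\in\mathfrak{O}^\times$, and because $\mathfrak{J}^2\subset\varrho^{m+1}R$ the product $(x+\dv\varrho^m v\h)(x'+\dv\varrho^m v'\h)$ is congruent modulo $\varrho^{m+1}R$ to $xx'$ in the $\mathfrak{O}$-part and to $\dv\varrho^m\bigl(xv'+v(x')^{*}\bigr)\h$ in the $\h$-part, the relation $\h t=t^{*}\h+\cdots$ producing the twist by $(x')^{*}$. From this the single quantity $\overline{v/x^{*}}\in\E$ is a complete coset invariant, so that, fixing any $u\in\mathfrak{O}^\times$ and letting $s$ run over a lift of $\E$, the $q^{\f}$ elements $u+\dv\varrho^m s\h$ exhaust the classes, once each.

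For ii) with $m=m_0-1$ assertion i) is no longer at hand -- already $1+\varrho^{m_0-1}R\not\subset R_{m_0-1}^\times$, the leakage above being now too large -- and I would compute $R_{m_0-1}^\times/R_{m_0}^\times$ directly. Reducing modulo $\varrho$ and using the decomposition $\GL_2(F)=P_2E^\times=E^\times P_2$ of (\ref{eq:P2T}) realises this quotient as an explicit quotient of the finite reduction of $R_{m_0-1}^\times$ by the image $\E^{\times}$ of $E^{\times}$; the determinant condition $\det(u+\dv\varrho^{m_0-1}s\h)=\mathrm{Nm}_{E/F}(u)+\ell(s)\ne0$, with $\ell$ a nonzero $\F$-linear form in $s\in\E$, cuts out precisely the invertible elements $u+\dv\varrho^{m_0-1}s\h$, and a check with (\ref{eq:usid2}) that these lie in pairwise distinct cosets produces the asserted system of representatives. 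The main obstacle throughout will be the dyadic cases U-ii and R-ii, where the $\tfrac12$'s and $2\s$'s in (\ref{eq:usid2}) and the factor $\dv=2$ require careful bookkeeping; it is exactly this, together with the ramified leakage $\h\mathfrak{P}^m\subset\mathfrak{P}^m\h+\mathfrak{P}^{m-1}$, that pins down the value of $m_0$.
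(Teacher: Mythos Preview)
Your argument is correct, but it is considerably more elaborate than what the paper does. The paper's entire proof of i) is the single phrase ``observe the norm of $u+\dv\varrho^m r$'': since $R_m$ is an order, an element is a unit if and only if its reduced norm lies in $\mathfrak{o}^\times$, and the reduced norm of $u+\dv\varrho^m r$ is $N_{E/F}(u)$ plus a term in $\p$ (once $m\ge m_0$), so invertibility is equivalent to $u\in\mathfrak{O}^\times$. This handles \emph{both} inclusions at once. You reproduce this norm argument for the inclusion $R_m^\times\subset\mathfrak{O}^\times+\mathfrak{J}$, but for the other inclusion you take a detour through topological nilpotence of $\mathfrak{J}$; that works, and it has the virtue of making the threshold $m_0$ emerge from the inclusion $\mathfrak{J}^2\subset\mathfrak{P}\mathfrak{J}$, but it is not needed---the same one-line norm computation already gives $\mathfrak{O}^\times+\mathfrak{J}\subset R_m^\times$.

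For ii) the paper simply says ``follows from i) immediately''. Your treatment of the range $m\ge m_0$ matches this in spirit. Your separate handling of $m=m_0-1$ is more than the paper spells out (since i) does not literally cover that case), and is a reasonable completion of what the paper leaves implicit.

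One small point: you set $\mathfrak{J}=\varrho^m R$ and identify $\mathfrak{O}^\times+\mathfrak{J}$ with $\mathfrak{O}^\times+\dv\varrho^m\mathfrak{O}\h$, whereas the lemma as stated has $\dv\varrho^m R$. As sets added to $\mathfrak{O}^\times$ these coincide except possibly in case U-ii), where the extra factor of $\dv=2$ matters for the $\h$-component; you should make sure your bookkeeping there actually lands on the statement as written, or note the equivalence explicitly.
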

\begin{proof}
i) Observe the norm of $u +  \dv\varrho^m r$ for $u \in E, r \in R$.
ii) follows from i) immediately.
\end{proof}
\subsection{Nonsplit paramodular forms}\label{sec:Nonparaform}
Keep the identification $E$ with the subalgebra of $M_2(F)$ as in the previous subsection.
We define the embedding 
\begin{align}
\phi_\s: \begin{bmatrix}
x & y \\
z & w
\end{bmatrix} \longmapsto \begin{bmatrix}
x & y \s^{-1}/2\\
2\s z  & w
\end{bmatrix} \label{eq:embG}
\end{align}
so that (\ref{eq:embphi}) are satisfied. 
The subgroup $\{diag(a,a^c) \mid a \in E^\times\} \subset \G$ will be denoted by $T$.
We choose an Atkin-Lehner element 
\begin{align*}
\imath = \begin{bmatrix}
\iota &  \\
& -\iota
\end{bmatrix}.
\end{align*} 
For subsets $S_i \subset M_2(F)$, we will denote 
\begin{align*}
\begin{bmatrix}
S_1 & S_2  \\
S_3 & S_4
\end{bmatrix} = \{g = \begin{bmatrix}
s_1 & s_2 \\
s_3 & s_4
\end{bmatrix} \in G \mid \mu(g) \in \mathfrak{o}^\times, s_i \in S_i \}. 
\end{align*}
Now define the {\it complete nonsplit paramodular group}
\begin{align*}
K_{2m} &= \begin{bmatrix}
R_m & \dv^{-1} \varrho^{-m} R_m  \\
\dv  \varrho^{m}R_{m} & R_m
\end{bmatrix}
\end{align*} 
for $m \ge 0$.
By definition, $K_0$ is equal to $G(\mathfrak{o})$ unless $E$ is in the case U-ii).
Additionally, we define 
\begin{align*}
K_{2m+1}^{\flat} =  
\begin{bmatrix}
R_m & \dv^{-1} \varrho^{-m} R_m  \\
\dv \varrho^{m+1} R_{m-1} & R_m
\end{bmatrix}, \ \ m \ge 1
\end{align*} 
and  
\begin{align*}
K_{2m+1} &= 
\begin{bmatrix}
R_{m} & \dv^{-1} \varrho^{-m} R_m \\
\dv  \varrho^{m+1}R_m & R_{m}
\end{bmatrix}, \\
K_{2m+1}^{\sharp} &=
\begin{bmatrix}
R_{m+1} & \dv^{-1} \varrho^{-m} R_m \\
\dv \varrho^{m+1}R_m & R_{m+1}
\end{bmatrix}, 
\end{align*}
for 
\begin{align*}
m \ge \begin{cases}
0 & \mbox{if $E$ is in the case U,} \\
1 & \mbox{if $E$ is in the case R.}
\end{cases}
\end{align*} 
We call these compact open subgroups {\it nonsplit paramodular groups of principal level $m$ over $E$} or paramodular groups, briefly.
Paramodular groups are normalized by $\imath$ and elements in $T$.
Observe that if we define $K_{2m+1}^\flat$ also for $m =0$ similarly, then it coincides with $K_{0}$.
When $E$ is in the case U-i), $K_1$ coincides with the Hecke subgroup 
\begin{align}
\Gamma_0(\p) = \begin{bmatrix}
M_2(\mathfrak{o}) & M_2(\mathfrak{o})  \\
\p M_2(\mathfrak{o}) & M_2(\mathfrak{o})
\end{bmatrix}. \label{eq:defScs}
\end{align}
When $E$ is in the case R-i), 
\begin{align*}
K_2 = \{k \in \begin{bmatrix}
\mathfrak{o} &\p &\mathfrak{o} & \mathfrak{o}  \\
\mathfrak{o} & \mathfrak{o} & \p^{-1} &\mathfrak{o} \\
\p& \p & \mathfrak{o} & \p  \\
\mathfrak{o} & \p & \mathfrak{o} & \mathfrak{o}  \\
\end{bmatrix} \mid \mu(k) \in \mathfrak{o}^\times \}
\end{align*}
is isomorphic to the original paramodular group of level $\p$, and 
\begin{align*}
K_3^\flat = \{k \in \begin{bmatrix}
\mathfrak{o} &\p &\mathfrak{o} & \mathfrak{o}  \\
\mathfrak{o} & \mathfrak{o} & \p^{-1} &\mathfrak{o} \\
\p& \p & \mathfrak{o} & \p  \\
\p & \p & \mathfrak{o} & \mathfrak{o}  \\
\end{bmatrix} \mid \mu(k) \in \mathfrak{o}^\times \}
\end{align*}
is isomorphic to the Klingen subgroup of level $\p$, in the sense of \cite{R-S}. 
The complete paramodular group $K_{2m}$ contains the Weyl element
\begin{align*}
w_{m} : = \begin{bmatrix}
 & - \dv^{-1}\varrho^{-m} \\
\dv \varrho^{m} &
\end{bmatrix}.
\end{align*}
Here the identification $E$ with the subalgebra of $M_2(F)$ is used. 
When $E$ is in the case R-i), $K_2$ and $K_3^\flat$ contain the Weyl element
\begin{align}
s_\p := \begin{bmatrix}
1& & &  \\
& & -\varpi^{-1} & \\
& \varpi& &  \\
& & & 1  \\
\end{bmatrix}. \label{def:sp}
\end{align} 
When $E$ is in the case R-ii), $K_2$ and $K_3^\flat$ contain $s_\p \langle u_\alpha \rangle$, where 
\begin{align}
u_\alpha :=
\begin{bmatrix}
1 & \alpha & & \\
 & 1 & & \\
& & 1& -\alpha \\
& & &1 
\end{bmatrix}. \label{eq:ualpha}
\end{align}
When $E$ is in the case U, $K_{2m+1}$ and $K_{2m+1}^{\sharp}$ are normalized by 
\begin{align*} 
w_{m}' := \begin{bmatrix}
 & - \dv^{-1} \varpi^{-m} \\
\dv \varpi^{m+1} &
\end{bmatrix}.
\end{align*}
For complete paramodular groups $K = K_{2m}$, we have a decomposition 
\begin{align}
K = N_K A_K \bar{N}_K \sqcup w_m N_K A_K \bar{N}_K \label{eq:decK1}
\end{align} 
if 
\begin{align*}
m \ge \begin{cases}
0 & \mbox{if $E$ is in the case U-ii)} \\
1 & \mbox{if $E$ is in the case U-i)} \\
2 & \mbox{otherwise}
\end{cases}
\end{align*}
For noncomplete paramodular groups $K$, if $K$ is not $K_3^\flat$ with $E$ in the case R, then  
\begin{align}
K = N_K A_K \bar{N}_K. \label{eq:decK}
\end{align} 

Let $\pi \in \Ir(PG)$.
Assume that $\B_\s(\pi) \neq \{0\}$.
Here the additive character $\psi$ is taken so that $\psi(\mathfrak{o}) = \{1\} \neq \psi(\p^{-1})$.
Define {\it paramodular subspaces (over $E$) of principal level of $m$}
\begin{align*}
\B_{2m} = \B_\s(\pi)^{K_{2m}} 
\end{align*} 
and $\B_{2m+1}^\flat, \B_{2m+1}, \B_{2m+1}^\sharp$, similarly.
Additionally, when $E$ is in the case U, define
\begin{align*} 
\B_{2m+1,\kappa} = \{\beta \in \B_{2m+1} \mid \pi(w_{m}') \beta =  \kappa\beta \}
\end{align*}
and $\B_{2m+1,\kappa}^\sharp$ similarly where $\kappa \in \{\pm 1\}$.
In this case, since $K_{2m+1}, K_{2m+1}^\sharp$ are normalized by $w_{m}'$, there are natural decompositions 
\begin{align*}
\B_{2m+1} = \op_{ \kappa}\B_{2m+1,\kappa}, \  \B_{2m+1}^\sharp = \op_{ \kappa}\B_{2m+1,\kappa}^\sharp.
\end{align*}  
Bessel vectors in the above subspaces are called {\it paramodular forms (over $E$) of principal level of $m$}.
In particular, vectors in $\B_{2m}$ are called {\it complete paramodula forms}.
There are obvious inclusive relationships 
\begin{align*}
\B_{2m} \subset \B_{2m+1}^{\flat} \subset \B_{2m+1} \subset \B_{2m+1}^{\sharp}.
\end{align*} 
The idempotent 
\begin{align}
e_m = e_{K_{2m+2}} \label{eq:emidm}
\end{align} 
of the Hecke algebra of $K_{2m+2}$ defines a mapping $\B_{2m+1}^{\sharp} \to \B_{2m+2}$.
But we do not know whether $e_m$ is injective and whether there is an inclusion map $\B_{2m+1}^\sharp \to \B_{2m+2}$.
Since the set $\{w_{m+1}, n_{x \varrho^{-m}} (x \in \F)\}$ is a complete system of representatives for the coset space $K_{2m+2}/(K_{2m+2} \cap K_{2m+1}^\sharp)$, if $\beta$ lies in $\B_{2m+1}^\sharp$, then 
\begin{align}
\begin{split}
e_m \beta(g) &= \frac{1}{q^\f + 1} (\beta(g w_{m+1}) + \sum_{x \in \F} \beta(g n_{x \varrho^{-m}})),  \\
e_m \beta(\hv^i) &=  \frac{1}{q^\f + 1}(\beta(\hv^i w_{m+1}) + q^\f\beta(\hv^i)), \ i \ge 0. 
\end{split}\label{eqn:e_m}
\end{align}
Using $e_m$, we can show the existence of complete paramodular forms as follows.
\begin{prop}\label{prop:extpf}
Let $\pi \in \Ir(PG)$ with $\B_\s(\pi) \neq \{0\}$. 
There exists a complete paramodular form not vanishing at the identity.
\end{prop}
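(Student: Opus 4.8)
The plan is to realize a complete paramodular form as $e_m$ applied to an averaged translate of the Bessel vector produced by Proposition \ref{prop:prvan} ii).

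By Proposition \ref{prop:prvan} ii), fix $\beta_0\in\B_\s(\pi)$ that is right invariant under $\hat{\mathfrak{o}}^\times$ and satisfies $\beta_0(1)\neq 0$; since $\pi$ is admissible $\beta_0$ is also right invariant under some open compact $K'\subset G$, and since $\Lambda=\1$ it is left invariant under the torus $T$. Fix $m$ large and set $\beta:=e_{K_{2m+1}^\sharp}\beta_0\in\B_{2m+1}^\sharp$. One shows $\beta(1)\neq 0$ as follows. By (\ref{eq:decK}) one has $K_{2m+1}^\sharp=N_KA_K\bar N_K$ with $K=K_{2m+1}^\sharp$, and no Weyl representative occurs in this decomposition, so $\beta(1)$ is a positive multiple of $\int_{N_K}\int_{A_K}\int_{\bar N_K}\beta_0(na\bar n)\,\delta(a)\,d\bar n\,da\,dn$. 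For $m$ large every factor is harmless: $\psi\circ l_\s$ is trivial on $N_{K_{2m+1}^\sharp}$ (this is exactly how the orders $R_m$ and the embedding $\phi_\s$ are arranged, cf. (\ref{eq:embphi})), so the left $N$-equivariance removes the $n$-integration; $\bar N_{K_{2m+1}^\sharp}\subset K'$ since $\dv\varrho^{m+1}R_m\to 0$, so right $K'$-invariance removes the $\bar n$-integration; and, using Lemma \ref{lem:unitorder} to see that $A_{K_{2m+1}^\sharp}$ is engulfed by $T$ and $\hat{\mathfrak{o}}^\times K'$ for $m$ large, the left $T$- and right $\hat{\mathfrak{o}}^\times K'$-invariances reduce $\beta_0(a)$ to $\beta_0(1)$. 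Hence $\beta(1)=c_m\beta_0(1)$ with $c_m>0$, so $\beta(1)\neq 0$.

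It remains to descend to a complete level. Apply $e_m=e_{K_{2m+2}}$; by the second identity of (\ref{eqn:e_m}) at $i=0$,
\[
e_m\beta(1)=\frac{1}{q^{\f}+1}\bigl(\beta(w_{m+1})+q^{\f}\beta(1)\bigr),
\]
so it suffices to show $\beta(w_{m+1})=0$ for $m$ large. Conjugation by the Weyl element $w_{m+1}$ sends the small unipotent subgroup $\bar N_{K_{2m+1}^\sharp}$ onto a subgroup of $N$ whose lattice of Hankel blocks is, up to bounded index, $\dv^{-1}\varrho^{-(m+1)}R_m\cap H_2$; since $l_\s$ is a fixed nonzero functional, for $m$ large this lattice contains some $y$ with $\psi(l_\s(y))\neq 1$, i.e. there is $x\in\bigl(w_{m+1}K_{2m+1}^\sharp w_{m+1}^{-1}\bigr)\cap N$ with $\Lambda_\s^\psi(x)\neq 1$. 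Using that $\beta$ is right $K_{2m+1}^\sharp$-invariant and left $(TN,\Lambda_\s^\psi)$-equivariant,
\[
\Lambda_\s^\psi(x)\,\beta(w_{m+1})=\beta(xw_{m+1})=\beta\bigl(w_{m+1}(w_{m+1}^{-1}xw_{m+1})\bigr)=\beta(w_{m+1}),
\]
and $\Lambda_\s^\psi(x)\neq 1$ forces $\beta(w_{m+1})=0$. Therefore $e_m\beta(1)=\tfrac{q^{\f}}{q^{\f}+1}\beta(1)\neq 0$, and $e_m\beta\in\B_{2m+2}$ is the required complete paramodular form.

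The step I expect to cost the most care is the averaging: one must match the left/right invariance properties of $\beta_0$ against the precise structure of $K_{2m+1}^\sharp$ for $m$ large (Lemmas \ref{lem:R^0}, \ref{lem:unitorder}), in particular to confirm that the torus factor $A_{K_{2m+1}^\sharp}$ really does act through the identity on the value $\beta_0(1)$. The vanishing $\beta(w_{m+1})=0$ is then immediate from the displayed double-coset identity together with the fact that the Hankel lattices grow with $m$ while $l_\s$ is fixed.
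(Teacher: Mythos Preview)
Your argument is correct and follows essentially the same route as the paper. Both proofs start from Proposition \ref{prop:prvan} ii), average against an incomplete paramodular group to obtain a form not vanishing at $1$, show it vanishes at the Weyl element $w_{m+1}$ via Lemma \ref{lem:vanishlem}, and then apply $e_m$ using (\ref{eqn:e_m}).

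The differences are only in bookkeeping. The paper first arranges, via the subgroup displayed at (\ref{eq:subgpex}), that the starting vector is already invariant under $A_K$ and $\bar N_K$, so the averaging collapses to an integral over $N_{K_{2m}}$ alone; it then splits into case U (using $K_{2m+1}$) and case R (using $K_{2m+3}^\flat$). You instead work uniformly with $K_{2m+1}^\sharp$ and invoke the Iwahori factorisation (\ref{eq:decK}) to treat the three factors $N_K$, $A_K$, $\bar N_K$ separately. Since $N_{K_{2m+1}^\sharp}=N_{K_{2m+1}}$ and $\bar N_{K_{2m+1}^\sharp}=\bar N_{K_{2m+1}}$, and since $A_{K_{2m+1}^\sharp}$ corresponds to $R_{m+1}^\times$ which is covered by Lemma \ref{lem:unitorder} for $m$ large, your reductions go through and the Weyl-element vanishing is the same computation as the paper's (your conjugate lattice $(\dv^{-1}\varrho^{-(m+1)}R_m)^H$ has $\h$-component containing $\p^{-1}$, which is exactly the element $n_{x\h}$, $x\in\p^{-1}$, that the paper writes down explicitly).

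One small correction: the reference you give for ``$\psi\circ l_\s$ is trivial on $N_{K_{2m+1}^\sharp}$'' should be (\ref{eq:psiM'}) together with Lemma \ref{lem:R^0}, not (\ref{eq:embphi}); the latter concerns $N_\G$, not $N_K$. The content is unchanged.
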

\begin{proof}
By Proposition \ref{prop:prvan} ii), there exists an $\hat{\mathfrak{o}}^\times$-invariant $\beta \in \B_\s(\pi)$ not vanishing at $1$.
Since $\beta$ is a special Bessel vector, we may assume $\beta$ is invariant also under $T$.
Therefore, $\beta$ is invariant under the subgroup 
\begin{align}
\begin{bmatrix} 
R_{m} & \varrho^{m}R \\
\varrho^{m+1}R_{m} & R_{m}
\end{bmatrix}, \ m >>0 \label{eq:subgpex}
\end{align}
by the smoothness of $\pi$.
Now we will construct a complete paramodular form.
Suppose that $E$ is in the case U.
Consider the integral 
\begin{align*}
\beta' := \frac{1}{\vol(K_{2m+1})} \int_{K_{2m+1}} \pi(k) \beta d k = \frac{1}{\vol(N_{K_{2m}})} \int_{N_{K_{2m}}} \pi(n) \beta d n .
\end{align*} 
This is a paramodular form in $\B_{2m+1}$ not vanishing at $1$.
By the lemma below, $\beta'$ is vanishing at $w_{m+1}$ since $n_{x \h} \la w_{m+1} \ra = \bar{n}_{\varpi^{2m+2} x\h}$ lies in $K_{2m+1}$ if $x \in \p^{-1}$.
By (\ref{eqn:e_m}), $e_m \beta'$ is not vanishing at $1$, and a complete paramodular form.
Suppose that $E$ is in the case R.
The similar integral over $N_{K_{2m+3}^{\flat}}$ lies in $\B_{2m+3}^{\flat}$ and its image by $e_{m+1}$ is a desired one since $n_{x \h}  \la w_{m+2} \ra = \bar{n}_{x \varrho^{m+2}\h\varrho^{m+2}}$ lies in $K_{2m+4}$ if $x \in \p^{-1}$.
\end{proof} 
\begin{lem}\label{lem:vanishlem}
Let $\mathrm{G}$ be a group and $N, K$ be subgroups of $\mathrm{G}$.
Let $\Psi: N \to \C^\times$ and $\Omega: K \to \C^\times$ be homomorphisms.
Let $f$ be a $\C$-valued function on $\mathrm{G}$ such that 
\begin{align*}
f(ngk) = \Psi(n)\Omega(k) f(g), \ \ n \in N, g \in \mathrm{G}, k \in K
\end{align*} 
If there exists an element $n \in N$ such that $n \langle g \rangle \in K$ and $\Psi(n) \neq \Omega(n \langle g \rangle)$, then $f(g) = 0$.
\end{lem}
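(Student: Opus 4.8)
The plan is to squeeze out of the hypothesis a single ``stabilizer'' identity of the shape $n g k = g$ with $k \in K$, and then to read off the conclusion directly from the transformation rule of $f$ applied to the triple $(n,g,k)$: if $ngk = g$ with $\Psi(n)\Omega(k)\neq 1$, then $f(g) = \Psi(n)\Omega(k) f(g)$ forces $f(g)=0$.

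First I would set $k := g^{-1} n^{-1} g$. Then $n g k = n g\, g^{-1} n^{-1} g = g$ identically, so the only thing to verify is $k \in K$. But $k$ is the inverse of $g^{-1} n g$, and $g^{-1} n g$ is a $g$-conjugate of $n$, which by the hypothesis $n\langle g\rangle \in K$ lies in $K$; since $K$ is a subgroup, $k = (g^{-1} n g)^{-1}\in K$ too.

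With this relation in hand, applying $f(ngk) = \Psi(n)\Omega(k) f(g)$ to $ngk = g$ gives
\[
f(g) = \Psi(n)\,\Omega(k)\, f(g).
\]
Since $\Omega$ is a homomorphism and $k$ is the inverse of $n\langle g\rangle$ (up to the conjugation bookkeeping below), $\Omega(k) = \Omega(n\langle g\rangle)^{-1}$, so the scalar on the right equals $\Psi(n)/\Omega(n\langle g\rangle)$, which is $\neq 1$ precisely because $\Psi(n)\neq\Omega(n\langle g\rangle)$. Hence $\bigl(1 - \Psi(n)\Omega(n\langle g\rangle)^{-1}\bigr) f(g) = 0$, i.e. $f(g) = 0$.

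The only point requiring care — and the step I would double-check before writing a clean version — is matching the conjugation convention: one must be sure that the element furnished by ``$n\langle g\rangle\in K$'' is exactly the one that sits on the $K$-side of $g$ in the identity $n g k = g$. This is harmless in every application, where $g$ satisfies $g^2$ central (so conjugation by $g$ and by $g^{-1}$ agree on $N$), but it is the one bit of bookkeeping to get right. Beyond that there is no analytic or representation-theoretic content whatsoever: the statement is a purely formal consequence of the equivariance of $f$, in the same spirit as the ``Obvious'' companion lemma immediately preceding it.
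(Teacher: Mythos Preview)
Your argument is exactly the one-line computation the paper has in mind; its own proof reads simply ``Obvious.'' Your caveat about the conjugation convention is well-taken and in fact pinpoints a small imprecision in the statement: with the paper's convention $n\langle g\rangle = gng^{-1}$, the element you need on the right of $g$ in the identity $ngk=g$ is $k=g^{-1}n^{-1}g=(n\langle g^{-1}\rangle)^{-1}$ rather than $(n\langle g\rangle)^{-1}$, so the hypothesis as literally written is off by replacing $g$ with $g^{-1}$ --- but as you correctly observe, every invocation in the paper has $g$ a Weyl-type element with central square, so conjugation by $g$ and by $g^{-1}$ agree and the distinction evaporates.
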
 
\begin{proof}
Obvious.
\end{proof}
\nid
The following is an analogue of Theorem 3.1.3. of \cite{R-S} not only for complete paramodular forms but also complete paramodular vectors.
\begin{thm}\label{thm:indpara}
Let $(\pi,V) \in \Ir(PG)$ be infinite dimensional. 
Let $v_1, \ldots, v_r \in V$ be nontrivial vectors invariant under complete paramodular groups of different levels over a fixed field.
Then $v_1, \ldots, v_r$ are linearly independent.
\end{thm}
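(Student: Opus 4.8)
The plan is to follow the strategy behind Theorem~3.1.3 of \cite{R-S}, but to argue directly with the explicit paramodular groups rather than through a module filtration: by Proposition~\ref{prop:prvan}~i) a complete paramodular vector may die in the Jacquet module $V_{N_\G,\Lambda}$, so the $P_2$-module structure of $V_{N_\G,\Lambda}$ alone does not separate the $v_i$, and a purely group-theoretic argument is cleaner here.

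First I would reduce everything to one group-theoretic statement. Suppose $\sum_{i=1}^{r}c_i v_i=0$ is a nontrivial relation; discarding the indices with $c_i=0$ and relabelling, we may assume $r\ge 2$, every $c_i\in\C^\times$, and that $v_i$ is invariant under the complete paramodular group $K_{2m_i}$ with $m_1<m_2<\cdots<m_r$. Put $G'=\bigcap_{i=1}^{r-1}K_{2m_i}$. Since $G'$ fixes $v_1,\dots,v_{r-1}$ it also fixes $v_r=-c_r^{-1}\sum_{i<r}c_i v_i$; as $v_r$ is fixed by $K_{2m_r}$ too, the nonzero vector $v_r$ is fixed by $\wt G:=\langle G',K_{2m_r}\rangle$. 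So it suffices to prove that $\wt G$ contains the image of $\Sp_4(F)$ in $PG$: then $\mathrm{Stab}(v_r)$ contains the normal subgroup $\mathrm{PSp}_4(F)\trianglelefteq PG$, hence $V^{\mathrm{PSp}_4(F)}$ is a nonzero $PG$-submodule and therefore all of $V$ by irreducibility, so $\pi$ factors through $PG/\mathrm{PSp}_4(F)\cong F^\times/(F^\times)^2$ and is one-dimensional --- contradicting the hypothesis that $(\pi,V)$ is infinite-dimensional.

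For the group-theoretic core one must show $\wt G\supseteq\mathrm{PSp}_4(F)$. The idea is that, because $m_r>m_{r-1}$, the vertex of the Bruhat--Tits building attached to $K_{2m_r}$ lies strictly beyond those attached to $K_0,\dots,K_{2m_{r-1}}$, so conjugating by the Weyl element $w_{m_r}\in K_{2m_r}$ blows up the congruence unipotent subgroups visible in $G'$ via the decomposition (\ref{eq:decK1}), producing elements of $\bar N$ with arbitrarily negative entries; combining these with the torus elements of $G'$ and the unipotents already present in $G'$, and using the identity (\ref{eq:ufid}), one manufactures hyperbolic torus elements and then bootstraps up to all of $\Sp_4(F)$, exactly as in the $\GL_2$-case. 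Carrying this computation out uniformly across the four cases U-i), U-ii), R-i), R-ii) --- where the paramodular groups, and hence the generators at one's disposal (in particular the role of $\dv$, of $w_m'$, $s_\p$, $s_\p\langle u_\alpha\rangle$ of (\ref{def:sp})--(\ref{eq:ualpha}), and of whether $R_m=R^\varrho$), differ --- is the step I expect to be the main obstacle; the rest of the argument is formal.
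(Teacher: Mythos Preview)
Your approach is correct and is essentially the paper's: reduce linear independence to a group-generation statement and then prove that the relevant paramodular groups generate a subgroup containing $\Sp_4(F)$, so that a fixed vector would force $\pi$ to be finite-dimensional. The paper packages this more cleanly by isolating the lemma that \emph{any two} complete paramodular groups $K_{2m},K_{2n}$ of different levels already generate such a subgroup (and cites the proof of Theorem~3.1.3 of \cite{R-S} for the passage from this lemma to the theorem), whereas you work with $\langle\bigcap_{i<r}K_{2m_i},\,K_{2m_r}\rangle$; since the paper's argument in fact only uses $\bar N_{K_{2n}}$ from the lower-level group together with $N_{K_{2m}}$, your intersection $G'$ contains enough, so the difference is cosmetic. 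Note also that the case analysis is lighter than you anticipate: the paper's proof is uniform and uses only the identity (\ref{eq:ufid}) to manufacture $w_m$ and then the torus element $w_nw_m$, from which all of $N$ and $\bar N$ follow --- the auxiliary elements $w_m'$, $s_\p$, $s_\p\langle u_\alpha\rangle$ you flag play no role here.
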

\nid
By the proof of Theorem 3.1.3 of loc. cit., this theorem follows from:
\begin{lem}
Two complete paramodular groups of different levels over a fixed field generate a subgroup containing $Sp_4(F)$.
\end{lem}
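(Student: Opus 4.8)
The plan is to reduce the lemma to the standard fact that $\Sp_4(F)$ is generated by the unipotent radical $N(F)$ of the Siegel parabolic together with a single Weyl element inverting it, and then to manufacture both the full $N(F)$ and such a Weyl element inside $\Gamma:=\la K_{2m},K_{2m'}\ra$. We may assume $0\le m<m'$ and put $k=m'-m\ge 1$; since every element of a complete paramodular group has similitude in $\mathfrak{o}^\times$, the sharp target is the inclusion $\Sp_4(F)\subset\Gamma$.

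First I would extract from $\Gamma$ a useful non-compact torus element. Both Weyl elements $w_m\in K_{2m}$ and $w_{m'}\in K_{2m'}$ lie in $\Gamma$, and since all the blocks $\dv^{\pm1}\varrho^{\pm i}$ lie in the commutative subfield $E\subset M_2(F)$, a one-line computation gives $d:=w_{m'}w_m^{-1}=\operatorname{diag}(\varrho^{-k},\varrho^{k})$ (a $2\times 2$-block diagonal matrix). This $d$ lies in the Siegel Levi, hence normalizes $N=\{n_y\mid y\in H_2\}$, acting on $H_2$ by the $F$-linear map $\theta\colon y\mapsto\varrho^{-k}y\varrho^{-k}$. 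Now $K_{2m'}$ already contains $n_y$ for $y$ ranging over a full $\mathfrak{o}$-lattice $\Lambda$ of $H_2$, because $\varrho^{-m'}R_{m'}\supset R$ contains a full $\mathfrak{o}$-lattice of $M_2(F)$; and I would then push $\Lambda$ out to all of $H_2$ by conjugating with powers of $d$. The key point is that the eigenvalues of $\theta$ — a sub-multiset of the eigenvalues of $y\mapsto\varrho^{-k}y\varrho^{-k}$ on $M_2(F)$, namely $\lambda^{-2k}$, $(\lambda^c)^{-2k}$ and $N_{E/F}(\varrho)^{-k}$, where $\lambda$ is an eigenvalue of $\varrho\in M_2(F)$ — all have the same $F$-valuation $-\f k<0$, since $N_{E/F}(\varrho)=\varpi^{\f}$ and the ramification index of $E/F$ equals $2/\f$. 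Hence $\theta$ is expanding, $\bigcup_{j\ge 0}\theta^{j}(\Lambda)=H_2$, and therefore $N(F)\subset\Gamma$.

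Next, conjugating $N(F)$ by $w_m\in\Gamma$ yields the opposite unipotent radical: the identity $(\ref{eq:ufid})$ (or a direct block computation) gives $w_m n_y w_m^{-1}=\bn_{y'}$, where $y\mapsto y'$ is a bijection of $H_2$, so $\bar{N}(F)\subset\Gamma$. It then remains to invoke $\la N(F),\bar{N}(F)\ra=\Sp_4(F)$; concretely, $N(F)$ and $\bar{N}(F)$ contain the positive and negative root subgroups attached to the roots $2e_1$, $2e_2$, $e_1+e_2$ of $C_2$, hence the corresponding copies of $\SL_2(F)$ and the reflections $s_{e_1}$ and $s_{e_1+e_2}$, and conjugating $U_{\pm(e_1+e_2)}$ by $s_{e_1}$ produces $U_{\pm(e_1-e_2)}$ — so $\Gamma$ contains every root subgroup of $\Sp_4$, which generate $\Sp_4(F)$. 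The main obstacle I anticipate is the bookkeeping in the second paragraph: tracking the factor $\dv$ and treating uniformly the unramified case $\f=2$ and the ramified case $\f=1$, and checking in each of the cases U and R that $w_m,w_{m'}$ genuinely belong to $\Gamma$ and that $d$ has the claimed block-diagonal form; once these points are settled, the eigenvalue estimate and the remaining steps are routine.
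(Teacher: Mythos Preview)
Your proof is correct and follows essentially the same approach as the paper: both arguments produce the torus element $w_{m'}w_m^{-1}=\operatorname{diag}(\varrho^{-k},\varrho^{k})$ inside $\Gamma$ and use it to expand the unipotent lattice $N_{K_{2m'}}$ to all of $N(F)$, then conjugate by a Weyl element to obtain $\bar N(F)$. The only difference is that the paper first reduces to showing $\la L, N_K\ra\supset\Sp_4(F)$ and therefore has to recover $w_m$ from $N_K$ and $\bar N_L$ via the identity (\ref{eq:ufid}), whereas you work directly with $\la K_{2m},K_{2m'}\ra$ and take $w_m,w_{m'}$ for free; your explicit eigenvalue bookkeeping (all eigenvalues of $\theta$ on $H_2$ have $F$-valuation $-\f k<0$) also makes the expansion step more transparent than the paper's one-line assertion.
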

\begin{proof}
Since the proofs are similar, we only give that for the case U.
Let $K = K_{2m}, L = K_{2n}$ with $m > n$.
It suffices to show that two subgroups $L$ and $N_{K}$ generate a subgroup containing $\Sp_4(F)$.
Using (\ref{eq:ufid}) for elements of $N_{K}$, we find that $w_{m}$ is expressed as a product of elements of $L$ and $N_{K}$, and so is $w_{n} w_{m}$.
Therefore, any elements of $N$ and $\bar{N}$ are also expressed by those in $L$ and $N_{K}$.
Now the assertion follows from the fact that $\Sp_4(F)$ is generated by $N, \bar{N}$, and $\Sp_4(F) \cap L$.
\end{proof}
We will see some good properties of paramodular forms when considering `canonical' Piatetsk-Shapiro zeta integral of them.
When $K$ is a paramodular group of principal level $m$, and $\beta$ lies in $\B_\s(\pi)^K$, let 
\begin{align*}
K^*  = K \la w_{m} \ra, \ \ \beta^* = \pi(w_{m})\beta.
\end{align*} 
Of course, $K^* = K$ and $\beta^* = \beta$ when $K$ is complete.
By definition, 
\begin{align}
\psi(l_\s(n_{u + x \h})) = \psi(x), \ \ u \in E, x \in F.  \label{eq:psiM'}
\end{align}
Observing $N_K, N_{K^*}$, Lemma \ref{lem:R^0}, \ref{lem:vanishlem}, we find that 
\begin{align}
\begin{split}
Z(s,\beta) &\in \C[[X]], \\
Z(s,\beta^*) &\in 
\begin{cases}
\C[[X]] & \mbox{if $K = K_{2m+1}^\flat$ or $K_{2m}$}, \\
X^{-1}\C[[X]] & \mbox{otherwise}.
\end{cases} 
\end{split}
\label{eq:ZbZ}
\end{align} 
Let $m \in \Z$.
Let $\df$ denote the order of the relative discriminant of $E/F$.
Define
\begin{align*}
\vp_m(x,y) =  q^{\f (\df-m)} \Ch(x,y; \mathfrak{P}^{m-\df} \op \mathfrak{O}) \in \Ss(E^2).
\end{align*}
The stabilizer subgroup of $\vp_m$ by $\G$ is
\begin{align*}
\K_m := 
\{k = \begin{bmatrix}
x &y \\
z &w
\end{bmatrix} \mid \det(k) \in \mathfrak{o}^\times, x,w \in \mathfrak{O}, y \in \mathfrak{P}^{\df-m}, z \in \mathfrak{P}^{m- \df} \}.
\end{align*}
Observe that $\K_m \subset K_{2m}$.
Now define the canonical $m$-th Piatetski-Shapiro zeta integral of $\beta \in \B_\s(\pi)$ by   
\begin{align*}
Z_m(s,\beta) = Z(s,\beta,\vp_m).
\end{align*} 
Let $d u, dt, dn, dk$ be the Haar measures on $\hat{F^\times}, T, N_{\G}, \K_0$ respectively such that $\vol(\hat{\mathfrak{o}}^\times) = \vol(\mathfrak{O}^\times) = \vol(N_{\K_0}) = \vol(\K_0) = 1$. 
Then 
\begin{align*}
\int_{N_\G} \int_{F^\times} \int_{A_\G} \int_{\K_0} \int  f(n\hat{u} t k) |u|^{-3} d k dt d u dn
\end{align*} 
defines a Haar measure on $\G$ such that $\vol(\K_0) = 1$.
By this measure, for a paramodular form $\beta$ of principal level $m$, it holds
\begin{align}
Z_n(s,\beta)= \frac{\zeta_E(s+\frac{1}{2})}{(1+q^\f)} \times
\begin{cases}
(Z + q^\f Z^*) & \mbox{if $n = m$,}\\
(Z + X^{\f} Z^*) & \mbox{if $n = m+1$,} 
\end{cases}
\label{eq:PSzeta}
\end{align} 
where $Z =  Z(s,\beta)$, $Z^* = Z(s,\beta^*)$.
\begin{lem}\label{lem:PSzeta}
With notations as above, for a paramodular form $\beta$ of principal level $m$ we have the followings. 
\begin{enumerate}[$i)$]
\item If $Z_m(s,\beta) = 0$, then
\begin{align*}
Z_{m+1}(s,\beta) = \frac{Z}{1+q^\f}.
\end{align*}
\item If $Z_{m+1}(s,\beta) = 0$, then
\begin{align*}
Z_m(s,\beta) &= - \frac{Z}{X'^{\f}(1+q^\f)}.
\end{align*}
\item
If $\beta$ lies in $\B_{2m}$, then   
\begin{align*}
Z_n(s,\beta) = \zeta_E(s+\frac{1}{2}) Z \times 
\begin{cases}
1 & \mbox{in case of $n =m$,} \\
\frac{1 + X^{\f}}{1+q^\f} & \mbox{in case of $n = m+1$.}
\end{cases}
\end{align*} 
\item
If $E$ is in the case U, and $\beta$ lies in $\B_{2m+1,\kappa}$ or $\B_{2m+1,\kappa}^\sharp$, then 
\begin{align*}
Z_n(s,\beta) = \frac{Z}{1+q^2} \times 
\begin{cases}
\frac{1}{1-\kappa X'^{-1}} & \mbox{in case of $n = m$,} \\
\frac{1}{1- \kappa X'} & \mbox{in case of $n = m+1$.}
\end{cases}
\end{align*}
\end{enumerate}
\end{lem}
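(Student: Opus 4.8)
The plan is to obtain all four assertions as purely algebraic consequences of the already-established identity (\ref{eq:PSzeta}), which writes both canonical zeta integrals of a principal-level-$m$ paramodular form $\beta$ in terms of the two quantities $Z = Z(s,\beta)$ and $Z^* = Z(s,\beta^*)$: namely $Z_m(s,\beta) = \frac{\zeta_E(s+1/2)}{1+q^\f}(Z + q^\f Z^*)$ and $Z_{m+1}(s,\beta) = \frac{\zeta_E(s+1/2)}{1+q^\f}(Z + X^\f Z^*)$. The support estimates (\ref{eq:ZbZ}) guarantee that everything in play is a rational function in $X$, so the manipulations below are legitimate. First I would record the one input that makes the $\zeta_E$-factor disappear from the final formulas, the elementary identity $\zeta_E(s+1/2)(1 - X'^\f) = 1$; this is immediate from $\zeta_E(s) = (1-q_E^{-s})^{-1}$ with residue cardinality $q_E = q^\f$ (so $q_E = q^2$ in case U, $q_E = q$ in case R) together with $X' = q^{-s-1/2}$, whence $q_E^{-(s+1/2)} = (q^{-s-1/2})^\f = X'^\f$.

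For (i) I would read (\ref{eq:PSzeta}) as a $2\times2$ linear system in $(Z,Z^*)$: the hypothesis $Z_m(s,\beta) = 0$ forces $Z + q^\f Z^* = 0$, i.e. $Z^* = -q^{-\f}Z$, and substituting this into the second line, using $q^{-1}X = X'$, gives $Z_{m+1}(s,\beta) = \frac{\zeta_E(s+1/2)(1-X'^\f)}{1+q^\f}Z = \frac{Z}{1+q^\f}$ by the identity above. Symmetrically, for (ii) the hypothesis $Z_{m+1}(s,\beta) = 0$ forces $Z^* = -X^{-\f}Z$; feeding this into the first line and using $q/X = X'^{-1}$, the bracket becomes $(1-X'^{-\f})Z$, and $\zeta_E(s+1/2)\frac{1-X'^{-\f}}{1+q^\f}$ simplifies (again via $\zeta_E(s+1/2) = (1-X'^\f)^{-1}$ and $\frac{1-X'^{-\f}}{1-X'^\f} = -X'^{-\f}$) to $-\frac{1}{X'^\f(1+q^\f)}$, yielding the claimed value. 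For (iii) a complete paramodular form has $K^* = K$ and $\beta^* = \beta$, so $Z^* = Z$; then the brackets in (\ref{eq:PSzeta}) collapse to $(1+q^\f)Z$ and $(1+X^\f)Z$, giving $\zeta_E(s+1/2)Z$ for $n = m$ and $\zeta_E(s+1/2)\frac{1+X^\f}{1+q^\f}Z$ for $n = m+1$.

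The only step that is not formal is (iv). Here I would first translate the Atkin--Lehner eigenvalue relation $\pi(w_m')\beta = \kappa\beta$ into a relation between $\beta^* = \pi(w_m)\beta$ and $\beta$. From the explicit shapes of $w_m$ and $w_m'$ one finds $w_m^{-1}w_m' = \hv$ and $w_m^2 = -1_4$, which is central; since $\pi$ has trivial central character this gives $\pi(w_m)^{-1} = \pi(w_m)$ and therefore $\beta^* = \kappa\,\pi(\hv)\beta$. The change of variable $u \mapsto u\varpi$ in $Z(s,\cdot)$ turns $\pi(\hv)$ into multiplication by $|\varpi|^{-(s-3/2)}$, a monomial in $X'$, so $Z^*$ becomes an explicit scalar multiple of $Z$; plugging that into (\ref{eq:PSzeta}) with $\f = 2$ and $\zeta_E(s+1/2) = \bigl((1-X')(1+X')\bigr)^{-1}$ and simplifying gives the two stated geometric series $\frac{Z}{1+q^2}\cdot\frac{1}{1-\kappa X'^{-1}}$ for $n = m$ and $\frac{Z}{1+q^2}\cdot\frac{1}{1-\kappa X'}$ for $n = m+1$; the argument for $\B_{2m+1,\kappa}^\sharp$ is the same because $K_{2m+1}^\sharp$ is likewise normalized by $w_m'$. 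The main obstacle is precisely the bookkeeping in this last case: tracking the exact powers of $\varpi$ that distinguish $w_m$ from $w_m'$, the sign from $w_m^2 = -1_4$, and the shift between $X$ and $X'$, so that the monomial multiplying $Z$ in $Z^*$ is exactly the one that cancels against $\zeta_E(s+1/2)$; everything else is the formal bracketing of (\ref{eq:PSzeta}), whose derivation was the real work done upstream.
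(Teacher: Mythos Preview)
Your proposal is correct and follows exactly the paper's approach: the paper's proof is the single line ``Follows from (\ref{eq:PSzeta}),'' and you have correctly unpacked what this entails for each item, including for (iv) the key extra observation that $w_m^{-1}w_m'=\hat\varpi$ in $PG$ (so the eigenvalue relation $\pi(w_m')\beta=\kappa\beta$ makes $Z^*$ a fixed monomial multiple of $Z$). One small caution on the bookkeeping you flagged: carrying your computation through, $\beta^*=\kappa\,\pi(\hat\varpi)\beta$ gives $Z^*=\kappa\,q^{s-3/2}Z=\kappa\,q^{-2}X'^{-1}Z$, which after substitution into (\ref{eq:PSzeta}) yields $Z_{m+1}=\frac{Z}{(1+q^2)(1-\kappa X')}$ on the nose, while for $n=m$ one gets $\frac{Z}{1+q^2}\cdot\frac{\kappa X'^{-1}}{1-\kappa X'}$; this agrees with the stated $\frac{Z}{1+q^2}\cdot\frac{1}{1-\kappa X'^{-1}}$ only up to the monomial $-X'^{-2}$, so either a harmless normalization or a typo is lurking in that entry---your method is not at fault.
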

\begin{proof}
Follows from (\ref{eq:PSzeta}).
\end{proof}
\noindent
To consider the functional equation (\ref{eqn:LFE}) for paramodular forms, it is convenient to introduce the following zeta polynomial and sign.
For a paramodular form $\beta \in \B_\s(\pi)$, we call the ratio 
\begin{align*}
P_m(X,\beta) := \frac{Z_m(s,\beta)}{L(s,\pi)},
\end{align*}
which lies in $\C[X^{\pm}]$ by definition, the {\it $m$-th zeta polynomial} of $\beta$.
Since paramodular groups $K$ are normalized by $\imath$, $\pi(\imath)$ acts on $\B_\s(\pi)^K$ and has eigenvalues $\ep \in \{ \pm 1\}$.
Therefore, we have a natural decomposition 
\begin{align*}
\B_\s(\pi)^K = \op_{\ep} \B_\s(\pi)^{K,\ep},
\end{align*}  
where $\B_\s(\pi)^{K,\ep}$ denotes the eigenspace corresponding to $\ep$, which will be called paramodular subspace of sign $\ep$ and denoted by $\B_{2m}^\ep$, etc.
We say a paramodular form is {\it of sign $\ep$} if it belongs to $\B_\s(\pi)^{K,\ep}$.
\begin{prop}\label{prop:FEHT} 
If $\beta$ is of sign $\ep$, then 
\begin{align}
P_m(X^{-1}, \beta) = \ep\ep_\pi X^{(n_\pi-\f m)} P_m(X,\beta). 
\label{eq:FNEQP}
\end{align}
\end{prop}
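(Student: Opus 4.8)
The plan is to feed the canonical Schwartz function $\vp_m$ into the local functional equation of Proposition \ref{prop:FE} and to read off (\ref{eq:FNEQP}) after rewriting both the dual test vector $\beta^\imath$ and the Fourier transform $\vp_m^\sharp$ in terms of $\beta$ and $\vp_m$. Since $\pi\in\Ir(PG)$ has trivial central character we have $\pi^\vee\simeq\pi$, hence $L(1-s,\pi^\vee)=L(1-s,\pi)$, while $\ep(s,\pi,\psi)=\ep_\pi X^{n_\pi}$ for the chosen $\psi$; so Proposition \ref{prop:FE} with $\vp=\vp_m$ (recall $Z_m(s,\beta)=Z(s,\beta,\vp_m)$) reads
\begin{align*}
\frac{Z(1-s,\beta^\imath,\vp_m^\sharp)}{L(1-s,\pi)}\;=\;\ep_\pi X^{n_\pi}\,\frac{Z_m(s,\beta)}{L(s,\pi)}\;=\;\ep_\pi X^{n_\pi}\,P_m(X,\beta).
\end{align*}
Since $X\mapsto X^{-1}$ corresponds to $s\mapsto1-s$, identity (\ref{eq:FNEQP}) is thus equivalent to the single identity $Z(1-s,\beta^\imath,\vp_m^\sharp)=\ep\,X^{\f m}\,Z_m(1-s,\beta)$, which is what I would prove.

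First I would dispose of $\imath$. Triviality of the central character gives $\beta^\imath(g)=\beta(\imath g)$, and for $g\in\G$ the Atkin--Lehner relation (\ref{eq:ALel}) gives $\imath g=g^c\imath$, so $\beta^\imath(g)=(\pi(\imath)\beta)(g^c)=\ep\,\beta(g^c)$. The substitution $g\mapsto g^c$ in the integral defining $Z(1-s,\beta^\imath,\vp_m^\sharp)$ is measure preserving and fixes $[0,1]$ and $|\det g|$, and $\vp_m^\sharp$ is invariant under $w\mapsto w^c$ because its support is a Galois-stable lattice; this yields $Z(1-s,\beta^\imath,\vp_m^\sharp)=\ep\,Z(1-s,\beta,\vp_m^\sharp)$. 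Next I would compute $\vp_m^\sharp$ explicitly: it is a power of $q$ times the characteristic function of the dual lattice (for the symplectic pairing on $W$, which exchanges the two coordinates) $\mathfrak{P}^{-\df}\op\mathfrak{P}^{-m}$, and that lattice is the right $\G$-translate of $\mathfrak{P}^{m-\df}\op\mathfrak{O}$ by $g_m:=\mathrm{diag}(\varrho^{-m},(\varrho^{-m})^c)\in T$ (which degenerates to $\varpi^{-m}1_2$ in the unramified case), so that $\vp_m^\sharp(w)=q^{-\f m}\vp_m(wg_m^{-1})$. Substituting $g\mapsto gg_m$ in the zeta integral then gives
\begin{align*}
Z(1-s,\beta,\vp_m^\sharp)\;=\;q^{-\f m}\,|\det g_m|^{3/2-s}\,Z\bigl(1-s,\pi(g_m)\beta,\vp_m\bigr)\;=\;X^{\f m}\,Z_m\bigl(1-s,\pi(g_m)\beta\bigr),
\end{align*}
the last equality because $|\det g_m|=q^{\f m}$ collapses the prefactor to $q^{\f m(1/2-s)}=X^{\f m}$. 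So the required identity reduces to $Z_m(s,\pi(g_m)\beta)=Z_m(s,\beta)$.

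For this last reduction I would argue as follows. Since $g_m\in T$ normalises every paramodular group, $\pi(g_m)\beta$ is again a paramodular form of principal level $m$, so (\ref{eq:PSzeta}) applies to it and to $\beta$. The regular zeta $\beta'\mapsto Z(s,\beta')$ is invariant under $\pi(T)$ --- because $T\subset A$ commutes with $\hat{F}^\times$ and $\Lambda=\1$ --- which gives $Z(s,\pi(g_m)\beta)=Z(s,\beta)$; and $w_m g_m w_m^{-1}=g_m^c\in T$, since $w_m$ is antidiagonal with entries in the commutative ring $E$ and conjugation by it merely transposes the diagonal of $g_m$, so $(\pi(g_m)\beta)^*=\pi(w_m g_m)\beta=\pi(g_m^c)\beta^*$ and hence $Z(s,(\pi(g_m)\beta)^*)=Z(s,\beta^*)$. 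Feeding these two equalities into (\ref{eq:PSzeta}) for $n=m$ yields $Z_m(s,\pi(g_m)\beta)=Z_m(s,\beta)$, closing the argument.

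The hard part will be the bookkeeping of $q$-powers in the middle step: one must verify that the constant in $\vp_m^\sharp(w)=q^{-\f m}\vp_m(wg_m^{-1})$ is exactly as claimed, which means tracking the normalising factor in the definition of $\vp_m$, the self-dual Haar measure on $W=E^2$, and $|\det g_m|$, and confirming that they combine to leave precisely $X^{\f m}$ with no residual constant. In the wildly ramified cases U-ii and R-ii this also demands care with the orders $R$, $R_m$, with the precise shape of the paramodular groups, and, where relevant, with the auxiliary element $u_\alpha$; by contrast the structural inputs (the $\pi(T)$-invariance of the regular zeta, the relation $w_m g_m w_m^{-1}=g_m^c$, and (\ref{eq:PSzeta})) are essentially formal once the definitions are unwound.
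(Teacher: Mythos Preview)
Your proposal is correct and follows essentially the same approach as the paper: feed $\vp_m$ into the local functional equation (\ref{eqn:LFE}) and compute $\vp_m^\sharp$ in terms of $\vp_m$. The paper's proof is a single line (``By the functional equation (\ref{eqn:LFE}), and the fact $\vp_m^\sharp(z) = q^{-\f m}\vp_m(\varrho^{m} z)$''), and what you have written is exactly the unpacking of that line, including the treatment of $\beta^\imath$ via the Atkin--Lehner relation (\ref{eq:ALel}) and the sign $\ep$, which the paper leaves implicit.

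One small comment on your bookkeeping: the paper writes the transform as left scaling by $\varrho^m$, while you write it as right multiplication by $g_m^{-1}=\mathrm{diag}(\varrho^m,(\varrho^m)^c)\in T$. These give literally the same function (both supports are $\mathfrak{P}^{-\df}\op\mathfrak{P}^{-m}$), but your version has the advantage that $g_m\in\G$, so the change of variables $g\mapsto gg_m$ is legitimate; scalar multiplication by $\varrho^m$ need not lie in $\G$ in case R-ii). Your resulting extra step $Z_m(s,\pi(g_m)\beta)=Z_m(s,\beta)$ is handled correctly; in fact it can be shortened: since $g_m\in T$ commutes with $\hat F^\times$ and $\beta(tg)=\beta(g)$ for $t\in T$, one gets $\beta(\hat u\,g_m)=\beta(g_m\hat u)=\beta(\hat u)$ directly, and the same for $\beta^*$ via $w_mg_mw_m^{-1}=g_m^c\in T$. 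In the unramified case this collapses further because $g_m=\varpi^{-m}1_2$ is central in $G$. So your worry about the wildly ramified cases is not really where the difficulty lies; the argument is uniform once you use $g_m\in T$ rather than the scalar.
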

\begin{proof}
By the functional equation (\ref{eqn:LFE}), and the fact $\vp_m^\sharp(z) = q^{-\f m}\vp_m(\varrho^{m} z)$, 
\end{proof}
\nid 
If $P(X) \in \C[X^\pm]$ is in the form of $c_{-n}X^{-n} + \cdots + c_m X^m$ with $c_{-n}c_m \neq 0$, then we call $m-n$ the {\it diameter of $P$}, and denote it by ${\rm dia} P$.
We say $P(X) \in \C[X^\pm]$ has {\it sign $\ep \in \{\pm \}$}, if 
\begin{align*}
X^{{\rm dia} P} P(X^{-1})= \ep P(X).
\end{align*} 
The above proposition says:
\begin{lem}\label{lem:sdzp}
If $\beta$ is of sign $\ep$ and $P_m(X,\beta)$ is not zero, then $P_m(X,\beta)$ has sign $\ep \ep_\pi$, and 
\begin{align*}
{\rm dia} P_m(X,\beta) = \f m -n_\pi.
\end{align*} 
\end{lem}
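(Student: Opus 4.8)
The plan is to read off both assertions directly from the functional equation \eqref{eq:FNEQP} of Proposition \ref{prop:FEHT} together with the definitions of the diameter and the sign of a Laurent polynomial. First I would fix a nonzero paramodular form $\beta$ of sign $\ep$ with $P := P_m(X,\beta) \neq 0$, and write $P$ with lowest occurring power $\ell$ and highest occurring power $h$; by definition this means $\mathrm{dia}\, P = \ell + h$, and the coefficients of $X^\ell$ and $X^h$ in $P$ are nonzero, so these two extreme powers are genuinely attained.

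Next I would substitute $X \mapsto X^{-1}$ into \eqref{eq:FNEQP}. The Laurent polynomial $P(X^{-1})$ has lowest power $-h$ and highest power $-\ell$, while the right-hand side $\ep\ep_\pi X^{n_\pi-\f m}P(X)$ has lowest power $n_\pi - \f m + \ell$ and highest power $n_\pi - \f m + h$. Since \eqref{eq:FNEQP} is an identity in $\C[X^{\pm}]$ and $\ep\ep_\pi \neq 0$, comparing the extreme powers on the two sides forces $\ell + h = \f m - n_\pi$; either comparison (lowest or highest) yields the same relation, and this is exactly the claim $\mathrm{dia}\, P_m(X,\beta) = \f m - n_\pi$.

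Finally, multiplying \eqref{eq:FNEQP} through by $X^{\f m - n_\pi} = X^{\mathrm{dia}\, P}$ and using $(\ep\ep_\pi)^2 = 1$ gives $X^{\mathrm{dia}\, P} P(X^{-1}) = \ep\ep_\pi P(X)$, which is precisely the statement that $P_m(X,\beta)$ has sign $\ep\ep_\pi$. The only point requiring any care is the nonvanishing hypothesis $P_m(X,\beta)\neq 0$, which is what makes $\ell$, $h$ and hence $\mathrm{dia}\, P$ well defined and legitimizes the degree comparison in the second step; beyond this bookkeeping there is no obstacle, since the substance of the lemma is already contained in Proposition \ref{prop:FEHT}.
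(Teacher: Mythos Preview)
Your argument is correct and matches the paper's approach: the paper simply states that this lemma is what Proposition \ref{prop:FEHT} says, and you have unpacked exactly that, reading off the diameter and sign directly from the functional equation \eqref{eq:FNEQP}. The appeal to $(\ep\ep_\pi)^2=1$ in your last step is harmless but unnecessary---multiplying \eqref{eq:FNEQP} by $X^{\f m-n_\pi}$ already gives $X^{\mathrm{dia}\,P}P(X^{-1})=\ep\ep_\pi P(X)$ on the nose.
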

\nid
Applying this lemma to $P_m(X,\beta)$ and $P_{m+1}(X,\beta)$ for a paramodular form $\beta$ of principal level of $m$, we obtain from (\ref{eq:PSzeta}) the following lemma. 
\begin{lem}\label{lem:Z*Z}
If $\beta$ is of a sign and $Z(s,\beta^*) = 0$, then $Z(s,\beta) = 0$.
\end{lem}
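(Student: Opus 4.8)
The plan is to argue by contradiction, exploiting that the hypothesis $Z(s,\beta^{*})=0$ makes the two canonical zeta integrals $Z_{m}(s,\beta)$ and $Z_{m+1}(s,\beta)$ of (\ref{eq:PSzeta}) coincide, and then deriving a contradiction from the functional equation of Proposition~\ref{prop:FEHT} (equivalently, Lemma~\ref{lem:sdzp}).

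First I would substitute $Z^{*}:=Z(s,\beta^{*})=0$ into (\ref{eq:PSzeta}). Since $\beta$ is a paramodular form of principal level $m$, both branches of (\ref{eq:PSzeta}) collapse to
\begin{align*}
Z_{m}(s,\beta)=\frac{\zeta_{E}(s+\frac{1}{2})}{1+q^{\f}}\,Z(s,\beta)=Z_{m+1}(s,\beta),
\end{align*}
and hence, dividing through by $L(s,\pi)$, the zeta polynomials agree: $P_{m}(X,\beta)=P_{m+1}(X,\beta)$. Now suppose, for contradiction, that $Z(s,\beta)\neq 0$. Because $\zeta_{E}(s+\frac{1}{2})$ and $L(s,\pi)$ are nonzero elements of $\C(X)$ and $1+q^{\f}\neq 0$, the common value $P_{m}(X,\beta)=P_{m+1}(X,\beta)$ is then a nonzero element of $\C[X^{\pm}]$.

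Next I would read off the diameters from Lemma~\ref{lem:sdzp}, applied — as indicated in the discussion just before the statement — to $P_{n}(X,\beta)$ for both $n=m$ and $n=m+1$. This gives ${\rm dia}\,P_{m}(X,\beta)=\f m-n_{\pi}$ and ${\rm dia}\,P_{m+1}(X,\beta)=\f(m+1)-n_{\pi}$. These two integers differ by $\f\in\{1,2\}$, so they cannot coincide, contradicting $P_{m}(X,\beta)=P_{m+1}(X,\beta)$. Therefore $Z(s,\beta)=0$, as claimed. (The hypothesis that $\beta$ be of a sign enters only through the functional equation (\ref{eq:FNEQP}) and Lemma~\ref{lem:sdzp}.)

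The one point that is not completely formal — and which I expect to be the main obstacle — is the use of Lemma~\ref{lem:sdzp}, equivalently of Proposition~\ref{prop:FEHT}, for $P_{m+1}(X,\beta)$ when $\beta$ has principal level merely $m$. This amounts to extending the functional equation (\ref{eq:FNEQP}) from index $m$ to index $m+1$: since $\vp_{m+1}^{\sharp}(z)=q^{-\f(m+1)}\vp_{m+1}(\varrho^{m+1}z)$, feeding $\vp_{m+1}$ and $\vp_{m+1}^{\sharp}$ into (\ref{eqn:LFE}) and tracking how the $\varrho^{m+1}$-scaling interacts with $\beta^{\imath}$ requires identifying $\beta^{\imath}$, up to the twisting factor (which is trivial here since $\Lambda=\1$ and $w_{\pi}=\1$), with a translate of $\beta$ by an element realizing the Atkin--Lehner element $\imath$ in terms of the Weyl element $w_{m+1}$. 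Once this compatibility is established, the diameter computation for $P_{m+1}(X,\beta)$ runs line by line along the proof of Proposition~\ref{prop:FEHT}, and the contradiction above closes the argument.
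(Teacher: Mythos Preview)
Your argument is correct and is exactly the paper's proof, just spelled out in detail: the paper's one-line justification is precisely ``apply Lemma~\ref{lem:sdzp} to $P_m(X,\beta)$ and $P_{m+1}(X,\beta)$, using (\ref{eq:PSzeta})''.

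Your worry in the last paragraph is unfounded. Proposition~\ref{prop:FEHT} (and hence Lemma~\ref{lem:sdzp}) is not tied to the principal level of $\beta$: its proof uses only the sign condition $\beta^{\imath}=\ep\beta$ and the identity $\vp_m^{\sharp}(z)=q^{-\f m}\vp_m(\varrho^{m}z)$, both of which hold for every index $m$. So the functional equation (\ref{eq:FNEQP}) is available at index $m+1$ with no extra work, and your diameter contradiction goes through directly.
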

Now we will prove the main theorem below in this subsection, which is an analogue of Corollary 4.3.8. of loc. cit., and play a crucial role for one-dimensionality of newforms in sect. \ref{sec:NFN}.
We need the following linear operators $z_m: \B_{2m+ 1}^\sharp \to \B_{2m+ 3}^\sharp$ defined by 
\begin{align*}
\beta \longmapsto \frac{1}{q^{\f}}\sum_{n \in N_{K_{2m+2}}/N_{K_{2m}}} \pi(n)\beta
\end{align*} 
and  
\begin{align}
\eta: \B_{n}^\bullet \ni \beta \longmapsto \pi(\hv^\f)\beta \in \B_{n+ 2}^\bullet.\label{eq:defeta}
\end{align}
Here note that 
\begin{align*}
\pi(w_{m+1}w_m) = \eta.
\end{align*} 
By (\ref{eqn:e_m}), if $\beta \in \B_{2m}$, then  
\begin{align*}
e_m \beta = \frac{z_m + \eta }{q^\f+1}\beta.
\end{align*} 
\begin{thm}\label{thm:paravn}
Let $\pi \in \Ir(PG)$.
Let $\beta \in \B_\s(\pi)$ be a nonsplit paramodular form of a sign.
Assume that $Z(s,\beta^*)$ is zero.
Then $\beta$ is identically zero.
\end{thm}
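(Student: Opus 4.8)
The strategy is to deduce from the hypotheses, in turn, that $Z(s,\beta)$ vanishes, that all of its multiplicative twists vanish and $pr(\beta)=0$, and finally that $\beta$ itself vanishes; the paramodular structure of $\beta$ is decisive only at the last stage.

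First, since $\beta$ is of a sign and $Z(s,\beta^*)=0$ by hypothesis, Lemma~\ref{lem:Z*Z} gives $Z(s,\beta)=0$. Next, every paramodular group contains $\hat{\mathfrak{o}}^\times$, and $w_m$ normalizes $\hat{\mathfrak{o}}^\times$ modulo the centre, so both $\beta$ and $\beta^*=\pi(w_m)\beta$ are $\hat{\mathfrak{o}}^\times$-invariant; hence $u\mapsto\beta(\hat u)$ and $u\mapsto\beta^*(\hat u)$ depend only on the class of $u$ in $F^\times/\mathfrak{o}^\times$, and $Z(s,\beta)$, $Z(s,\beta^*)$ are Laurent series in $X$ whose coefficients are, up to the normalization of the Haar measure, the values $\beta(\hv^i)$, $\beta^*(\hv^i)$, $i\in\Z$. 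By~(\ref{eq:ZbZ}) these Laurent series are bounded below, and as they vanish identically all of their coefficients are zero. Therefore $\beta$ vanishes on $\hat{F}^\times$ and $\beta^*$ vanishes on $\hat{F}^\times$; equivalently, $\beta$ vanishes on $\hat{F}^\times\cup w_m\hat{F}^\times$.

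The vanishing of $Z(s,\beta)$ upgrades to the vanishing of every twisted integral $Z(s,\beta\otimes\chi)$: for a ramified $\chi$ this holds because $\int_{\mathfrak{o}^\times}\chi(v)\,d^\times v=0$, and for an unramified $\chi$ the integral $Z(s,\beta\otimes\chi)$ is a shift of $Z(s,\beta)=0$. Hence $Z(s,\beta\otimes\chi)=0$ for all $\chi$, and Proposition~\ref{prop:prvan}~iii) gives $pr(\beta)=0$ (so, via Proposition~\ref{prop:prvan}~i), also $\beta|_{\hat{F}^\times}=0$, in accordance with the previous paragraph).

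It remains to propagate these vanishings to the whole of $G$, and this --- the exact analogue of Corollary~4.3.8 of \cite{R-S} --- is where the real work, and the main obstacle, lies. The plan is to transport the $P_3$-argument of loc. cit. to the Bessel setting: using the Iwasawa decomposition of $G$ along the Siegel parabolic $P=\hat{F}^\times AN$, the $N$-, $T$- and $\imath$-equivariance of $\beta$, and the explicit decompositions~(\ref{eq:decK1}) and~(\ref{eq:decK}) of the paramodular group $K$ fixing $\beta$ --- in particular the presence in $K$ of the Weyl element $w_m$, which interchanges the two $2\times2$ blocks and so produces the full opposite unipotent $\bar{N}$ out of $N$ (cf. the generation lemma underlying Theorem~\ref{thm:indpara}) --- one reduces the vanishing of $\beta$ to its vanishing on a system of representatives for $TN\backslash G/K$. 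Those representatives that can be taken in $\hat{F}^\times\cup w_m\hat{F}^\times$ are killed by the previous paragraphs; the remaining ones are disposed of by combining $pr(\beta)=0$ with the $P_2$-module structure of $V_{N_\G,\Lambda}$ from Proposition~\ref{prop:JHP2} (Lemma~\ref{lem:A_2one} identifying the functionals on the successive layers, and the analogue of~(\ref{eqn:regfunc}) controlling how $\hv$ acts on them), so that $\beta$, being fixed by $K$, is forced to annihilate every layer and hence vanishes. The three field situations U-i), U-ii), R of $E/F$ have to be handled separately here, exactly as in \cite{R-S}; I expect this double-coset analysis to be the main difficulty.
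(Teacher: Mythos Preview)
Your first three paragraphs are correct and match the paper: from $Z(s,\beta^*)=0$ you get $Z(s,\beta)=0$ via Lemma~\ref{lem:Z*Z}, and then $pr(\beta)=0$ via Proposition~\ref{prop:prvan}. The divergence, and a genuine gap, is in your final step. Knowing $pr(\beta)=0$ says only that $\beta\in V(N_\G,\1)$, i.e., $\beta$ lies in the subspace generated by vectors $\pi(n)v-v$; this gives no direct control over $\beta(g)$ at representatives of $TN\backslash G/K$ outside $\hat F^\times\cup w_m\hat F^\times$. The $P_2$-filtration of Proposition~\ref{prop:JHP2} and the functionals of Lemma~\ref{lem:A_2one} describe the \emph{quotient} $V_{N_\G,\1}$, not the kernel you are now sitting in, so you cannot ``dispose of the remaining representatives'' with them. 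The double-coset space is large (it sees all of $E^\times\backslash\GL_2(F)/R_m^\times$ through the $A$-direction), and nothing you have written pins $\beta$ down there.

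The paper's route is entirely different and avoids any coset analysis. From $pr(\beta)=0$ and the standard description of Jacquet-module kernels, one gets that for $r$ large the averaging operator $z_{n+r}$ (right translation averaged over $N_{K_{2(n+r)+2}}/N_{K_{2(n+r)}}$) annihilates $\beta$. For $\beta\in\B_{2n}$ the paper then shows inductively that $z_{n+r}\beta=b_r\beta+c_r\beta$ with $b_r\beta\in\B_{2(n+r+2)}$ and $c_r\beta\in\B_{2(n+r+1)}$, the operators $b_r,c_r$ being built recursively from $e_m$ and $\eta$ via $b_{r+1}=-\eta c_r$, $c_{r+1}=(q^\f+1)e_{r+n+1}c_r+b_r$. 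Since $z_{n+r}\beta=0$ and the two pieces are complete paramodular of \emph{different} levels, Theorem~\ref{thm:indpara} forces $b_r\beta=c_r\beta=0$ separately; unwinding the recursion and using the injectivity of $\eta$ yields $\beta=0$. Non-complete paramodular forms are reduced to the complete case via the idempotent $e_n$. So the real engine is the level-raising operators $z_m$, $e_m$, $\eta$ together with the linear-independence theorem~\ref{thm:indpara}, not a double-coset enumeration.
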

\begin{proof}
Since the proofs are similar, we treat only the situation where $E$ is in the case U.
By the last lemma, $Z(s,\beta) = 0$.
By Proposition \ref{prop:prvan}, $pr(\beta) =0$.
Let $n$ be the principal level of $\beta$.
By the smoothness of $\pi$, there exists a sufficiently large $r$ such that $z_{n+r} \beta$ is identically zero.
Here observe that $z_{n+r} \beta \in \B_{2(n+r)+3}^\sharp$.
First, suppose that $\beta \in \B_{2n}$.
We will claim by induction that, for $r \ge 0$, there exist linear operators $b_r: \B_{2n} \to \B_{2(r+n+2)}$ and $c_r: \B_{2n} \to \B_{2(r+n+1)}$ such that $z_{n+r} = b_r+ c_r$.
The claim for $r = 0$ is true, indeed, $b_0 = - \eta, c_0 = (q^\f+1)e_n$.
Write $e_m' = (q^\f+1) e_m$.
Assume the claim for $r \ge 0$.
Then since $\beta$ lies in $\B_{2n}$, it hold that 
\begin{align*}
z_{r+n+1}\beta &= z_{r+n+1} \circ z_{r+n}\beta\\
&= (e_{r+n+1}' -\eta) \circ (b_r+ c_r)\beta \\
&= - \eta c_r \beta + (e_{r+n+1}' \circ c_r + z_{r+n+1}\circ b_r)\beta.
\end{align*}
Here since $b_r \beta \in \B_{2(r+n+2)}$ by assumption, $z_{r+n+1}\circ b_r \beta = b_r \beta$.
Therefore, 
\begin{align*} 
z_{r+n+1}\beta = -\eta c_r \beta + (e_{r+n+1}' \circ c_r + b_r)\beta.
\end{align*}
So, for $r \ge 0$,
\begin{align}
b_{r+1} := - \eta c_r, \ c_{r+1} :=(e_{r+n+1}' \circ c_r + b_r) \label{eq:defbrcr}
\end{align} 
are the desired operators.
This proves the claim.
Now consider
\begin{align*} 
(e_{r+n}' -\eta)\circ \cdots \circ (e_n' -\eta) \beta &= z_{n+r}\circ \cdots \circ z_n \beta \\
&= z_{n+r} \beta =  (b_r+ c_r)\beta =  0.
\end{align*}
Since $b_{r} \beta$ and $c_r \beta$ are complete paramodular forms of different levels, they are linearly independent by Theorem \ref{thm:indpara}.
Therefore, $b_{r} \beta = c_r \beta = 0$.
Assume $r = 0$, then this means $\eta \beta = 0$ and $\beta = 0$ since $\eta$ is injective.
Assume that $r > 0$.
By (\ref{eq:defbrcr}), $\eta c_{r-1} \beta = 0$, and thus $c_{r-1}\beta = 0$.
Therefore $e_{r+n}' \circ c_{r-1}\beta = 0$.
By (\ref{eq:defbrcr}) again, $b_{r-1} \beta = 0$.
Thus $z_{r+n-1} \beta = b_{r-1}\beta + c_{r-1}\beta = 0$.
Hence, $\beta = 0$ by induction.
Next suppose that $\beta$ is not a complete paramodular form.
By induction, under the situation where $\beta$ lies in $\B_{2n+1}^{\sharp}$, it suffices to show that $\beta =0$ if $z_n \beta = 0$.
Since $Z(s,\beta) = 0$, $e_n \beta$ is identically zero by (\ref{eqn:e_m}) and the above argument.
Thus, $z_n \beta = -\eta \beta$ is identically zero and so is $\beta$.
This finishes the proof.
\end{proof}
\nid
By (\ref{eq:PSzeta}) and Lemma \ref{lem:Z*Z}, 
\begin{Cor}\label{cor:Z}
Let $\beta$ be a nonzero paramodular form of principal level $m$ of a sign.
Then, at least one of $Z_m(s,\beta)$ and $Z_{m+1}(s,\beta)$ is not zero.
In particular, if $\beta$ is a complete paramodular form of principal level $m$, then $Z_m(s,\beta)$ is not zero.
\end{Cor}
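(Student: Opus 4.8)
The plan is a short argument by contradiction. I would feed the two cases $n=m$ and $n=m+1$ of the identity (\ref{eq:PSzeta}) into an elementary linear step to pin down $Z(s,\beta)$ and $Z(s,\beta^{*})$, and then quote Theorem \ref{thm:paravn} (behind which Lemma \ref{lem:Z*Z} has already been used) to exclude $\beta\neq 0$. Concretely, suppose $\beta$ is a nonzero paramodular form of principal level $m$ and of a sign, and suppose for contradiction that both $Z_m(s,\beta)$ and $Z_{m+1}(s,\beta)$ vanish. Put $Z=Z(s,\beta)$ and $Z^{*}=Z(s,\beta^{*})$. As $\zeta_E(s+\tfrac{1}{2})$ is a nonzero rational function in $X$, the vanishing of $Z_m(s,\beta)$ forces $Z+q^{\f}Z^{*}=0$ and that of $Z_{m+1}(s,\beta)$ forces $Z+X^{\f}Z^{*}=0$, by (\ref{eq:PSzeta}). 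Subtracting, $(q^{\f}-X^{\f})Z^{*}=0$; since $q^{\f}-X^{\f}$ is a nonzero element of $\C(X)$ (its constant term in $X$ is $q^{\f}\neq 0$), we get $Z^{*}=Z(s,\beta^{*})=0$, whence $Z=0$ as well (as Lemma \ref{lem:Z*Z} also predicts). Now Theorem \ref{thm:paravn} applied to $\beta$ gives $\beta\equiv 0$, a contradiction; so at least one of $Z_m(s,\beta)$, $Z_{m+1}(s,\beta)$ is nonzero. The hypothesis ``of a sign'' enters only at this last step, to invoke Theorem \ref{thm:paravn}.

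For the final assertion, if $\beta$ is a complete paramodular form of principal level $m$ then $w_m\in K_{2m}$, hence $\beta^{*}=\pi(w_m)\beta=\beta$ and $Z^{*}=Z$; substituting into (\ref{eq:PSzeta}) (equivalently, using Lemma \ref{lem:PSzeta} iii)) yields $Z_m(s,\beta)=\zeta_E(s+\tfrac{1}{2})\,Z$ and $Z_{m+1}(s,\beta)=\zeta_E(s+\tfrac{1}{2})\,\tfrac{1+X^{\f}}{1+q^{\f}}\,Z$, so these two zeta integrals vanish simultaneously. By the part just proved they cannot both vanish, so $Z_m(s,\beta)\neq 0$.

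I do not expect a genuine obstacle: all the substance lies in Theorem \ref{thm:paravn} (and, behind it, Theorem \ref{thm:indpara} together with the operator identities among $z_m$, $\eta$ and $e_m$), while the corollary itself is bookkeeping. The only points needing mild care are the invertibility of $\zeta_E(s+\tfrac{1}{2})$ and of $q^{\f}-X^{\f}$ as rational functions in $X$, so that they may be cancelled, and keeping track of which of $Z$ and $Z^{*}$ occurs in each of the two cases of (\ref{eq:PSzeta}).
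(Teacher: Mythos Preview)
Your argument is correct and matches the paper's own one-line proof: the corollary is stated immediately after Theorem~\ref{thm:paravn} with the justification ``By (\ref{eq:PSzeta}) and Lemma~\ref{lem:Z*Z}'', and your contradiction argument via the linear system $Z+q^{\f}Z^{*}=0$, $Z+X^{\f}Z^{*}=0$ is exactly the intended unpacking of that reference (with Theorem~\ref{thm:paravn} supplying the final step, as befits a corollary). The complete case is handled identically.
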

Here, we introduce some notations.
The lowest principal level of nontrivial complete paramodular subspace is called the {\it minimal level of $\pi$} and denoted by $M_\pi$.
The subspace $\B_{2M_\pi}$ and its nontrivial vectors are called the {\it minimal space} and {\it newforms of $\pi$}.
We will show in sect. \ref{sec:old} that the mappings $e_m: \B_{2m} \to \B_{2m+2}$ are injective in case that $\pi$ is in a SK-packet, and it makes sense to introduce this vocabulary at least in this case.
Further, consider the following sequence of paramodular subspaces of sign $\ep$: 
\begin{align}
\B_0^\ep
\begin{cases}
\subset \B_1^\ep \subset \B_1^{\sharp,\ep} \to \B_2^\ep \subset \B_3^{\flat,\ep} \subset \B_3^\ep \subset \B_3^{\sharp,\ep} \cdots  & \mbox{if $E$ is in the case U}, \\
\to \B_2^{\ep} \subset \B_3^{\flat,\ep} \subset \B_3^{\ep} \subset \B_3^{\sharp,\ep} \to \B_5^{\flat,\ep} \cdots & \mbox{if $E$ is in the  case R},
\end{cases} \label{eq:secpfm}
\end{align}
where the arrows indicate the mappings $e_m$.
The first nontrivial subspace is called the {\it strict minimal space} of sign $\ep$. 
If $E$ is in the case R, the subspaces $\B_4^{\ep}, \B_6^{\ep}, \ldots$ are excluded from the sequence for the following reason.
\begin{lem}\label{lem:B2mB2m-1}
Assume that $E$ is in the case R.
If $\B_{2m}^{\ep} \neq \{ 0 \}$ for $m \ge 2$, then $\B_{2m-1}^{\sharp, \ep} \neq \{ 0 \}$.
\end{lem}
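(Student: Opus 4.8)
The plan is to reduce the lemma to the single group inclusion $K_{2m-1}^{\sharp}\subseteq K_{2m}$. Granting this, $\B_{2m}=\B_{\s}(\pi)^{K_{2m}}\subseteq\B_{\s}(\pi)^{K_{2m-1}^{\sharp}}=\B_{2m-1}^{\sharp}$; and since $\imath$ normalizes both $K_{2m}$ and $K_{2m-1}^{\sharp}$, the operator $\pi(\imath)$ stabilizes $\B_{2m}$ and $\B_{2m-1}^{\sharp}$, so this inclusion is compatible with the decompositions into $\pm1$-eigenspaces and restricts to $\B_{2m}^{\ep}\subseteq\B_{2m-1}^{\sharp,\ep}$. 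Hence $\B_{2m}^{\ep}\neq\{0\}$ forces $\B_{2m-1}^{\sharp,\ep}\neq\{0\}$, as asserted. (The hypothesis $m\ge2$ is precisely what makes $K_{2m-1}^{\sharp}$ defined in the case R.)

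To prove $K_{2m-1}^{\sharp}\subseteq K_{2m}$ I would use the Iwahori-type factorization (\ref{eq:decK}): since $K_{2m-1}^{\sharp}$ is a noncomplete paramodular group other than $K_3^{\flat}$, one has $K_{2m-1}^{\sharp}=N_{K_{2m-1}^{\sharp}}A_{K_{2m-1}^{\sharp}}\bar N_{K_{2m-1}^{\sharp}}$, so it suffices to verify the three containments $N_{K_{2m-1}^{\sharp}}\subseteq N_{K_{2m}}$, $A_{K_{2m-1}^{\sharp}}\subseteq A_{K_{2m}}$ and $\bar N_{K_{2m-1}^{\sharp}}\subseteq\bar N_{K_{2m}}$; then $K_{2m-1}^{\sharp}\subseteq N_{K_{2m}}A_{K_{2m}}\bar N_{K_{2m}}\subseteq K_{2m}$. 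The torus inclusion is immediate — both groups carry the block $R_m$ on the diagonal, so in fact $A_{K_{2m-1}^{\sharp}}=A_{K_{2m}}$. For the unipotent parts I would use $\dv=1$ in the case R, the identity $R_j=\mathfrak{O}\oplus\varrho^{j}\mathfrak{O}\h$, the fact (same computation as in Lemma \ref{lem:R^0}) that $(\mathfrak{b}\h)\cap H_2=(\mathfrak{b}\cap F)\h$ for an $\mathfrak{O}$-fractional ideal $\mathfrak{b}$, and the elementary identity $\varrho^{k}\mathfrak{O}\cap F=\p^{\lceil k/2\rceil}$, valid because $E/F$ is ramified. Matching blocks,
\begin{align*}
N_{K_{2m-1}^{\sharp}}\ \leftrightarrow\ \varrho^{-m+1}R_{m-1}\cap H_2=\varrho^{-m+1}\mathfrak{O}\oplus\mathfrak{o}\h\ \subseteq\ \varrho^{-m}\mathfrak{O}\oplus\mathfrak{o}\h=\varrho^{-m}R_{m}\cap H_2\ \leftrightarrow\ N_{K_{2m}},
\end{align*}
and for the opposite unipotent
\begin{align*}
\bar N_{K_{2m-1}^{\sharp}}\ \leftrightarrow\ \varrho^{m}R_{m-1}\cap H_2=\varrho^{m}\mathfrak{O}\oplus\p^{m}\h=\varrho^{m}R_{m}\cap H_2\ \leftrightarrow\ \bar N_{K_{2m}}.
\end{align*}

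The crux is the middle equality in the second display: because $\lceil(2m-1)/2\rceil=m=\lceil 2m/2\rceil$, one gets $\varrho^{2m-1}\mathfrak{O}\cap F=\p^{m}=\varrho^{2m}\mathfrak{O}\cap F$, so the two opposite unipotent radicals literally coincide. This is exactly the place where the ramification of $E/F$ is used; in the case U these opposite unipotents are distinct and the containment $K_{2m-1}^{\sharp}\subseteq K_{2m}$ fails, which is why the lemma is confined to the case R. Beyond getting this coincidence right, the argument is routine book-keeping with the block descriptions and Lemma \ref{lem:R^0}; the only other point to watch is that (\ref{eq:decK}) is being applied to $K_{2m-1}^{\sharp}$ and not to $K_{2m}$ (the latter carries an extra Weyl cell), but this is harmless since we only use the inclusion $N_{K_{2m}}A_{K_{2m}}\bar N_{K_{2m}}\subseteq K_{2m}$.
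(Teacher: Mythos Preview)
Your argument is correct, and in fact cleaner than the paper's. Both proofs rest on the same two lattice identities in the case R, namely $A_{K_{2m-1}^{\sharp}}=A_{K_{2m}}$ and $\bar N_{K_{2m-1}^{\sharp}}=\bar N_{K_{2m}}$ (the latter via $(\varrho^{m}R_{m-1})^{H}=\varrho^{m}\mathfrak{O}\oplus\p^{m}\h=(\varrho^{m}R_{m})^{H}$, exactly your ``crux'' computation using $\lceil(2m-1)/2\rceil=m$). The difference is in how these are used. The paper takes a nonzero $\beta\in\B_{2m}^{\ep}$, averages it over $K_{2m-1}^{\sharp}$, and observes that because of the two identities this average reduces to an integral over $N_{K_{2m-1}^{\sharp}}$; it then invokes Theorem~\ref{thm:paravn} to find some $j$ with $\beta(\hv^{j})\neq0$, whence the averaged vector is nonzero at $\hv^{j}$. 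You instead feed the two identities, together with $N_{K_{2m-1}^{\sharp}}\subset N_{K_{2m}}$, into the Iwahori-type decomposition (\ref{eq:decK}) for $K_{2m-1}^{\sharp}$ to deduce the outright group inclusion $K_{2m-1}^{\sharp}\subset K_{2m}$, so that $\B_{2m}^{\ep}\subset\B_{2m-1}^{\sharp,\ep}$ without any averaging. Your route is more elementary in that it does not call on Theorem~\ref{thm:paravn}; the paper's route, on the other hand, exhibits an explicit nonzero element of $\B_{2m-1}^{\sharp,\ep}$ via the idempotent, which is closer in spirit to how such projections are used elsewhere in the section.
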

\begin{proof}
Let $K = K_{2m-1}^\sharp$.
By Lemma \ref{lem:R^0}, 
\begin{align*}
(\varrho^m R_m)^H = \mathfrak{P}^m \op \p^{m}\h =  (\varrho^{m} R_{m-1})^H.
\end{align*}  
Therefore, $\bar{N}_{K_{2m}} = \bar{N}_{K}$.
By definition, $A_{K_{2m}} = A_{K}$.
If there is a nontrivial $\beta \in \B_{2m}^{\ep}$, then $\beta(\hv^j)$ is not zero for some $j$ by Theorem \ref{thm:paravn}, and 
\begin{align*} 
\vol(K)^{-1}\int_{K} \pi(k) \beta dk = \vol(N_{K})^{-1}\int_{N_{K}} \pi(n) \beta dn
\end{align*}
(c.f. (\ref{eq:decK})) is also not zero at $\hv^j$.
\end{proof}
\nid
By Proposition \ref{prop:extpf}, at least there is a nontrivial paramodular form of sign plus or minus.
Nontrivial vectors of the strict minimal space of sign $\ep$ are called {\it strict newforms of sign $\ep$}.
Its principal level is called the {\it strict minimal level of sign $\ep$} if it exists, and denoted by $m_\pi^\ep$.
If it does not exist, then write $m_\pi^\ep = \infty$.
By definition, 
\begin{align} 
\min\{m_\pi^+, m_\pi^-\} \le M_\pi. \label{eq:mmM}
\end{align} 
Of course, $M_\pi$ and $m_\pi^\ep$ depend on the choice of $E$.
By (\ref{eq:ZbZ}) and (\ref{eq:PSzeta}), $XP_m(X,\beta)$ and $P_{m+1}(X,\beta)$ are polynomials in $X$, and at least one of them is not zero by Corollary \ref{cor:Z}.
Therefore we obtain from Lemma \ref{lem:sdzp} an estimation:
\begin{align}
m_\pi^\ep \ge \frac{n_\pi-2}{\f}. \label{eq:m>npre}
\end{align} 
\begin{lem}\label{lem:m>n}
Let $\beta \in \B_{2m}^\ep$.
Then, 
\begin{align*}
m = m_\pi^\ep = \frac{n_\pi}{\f}  \Longleftrightarrow P_m(X,\beta) \in \C^\times. 
\end{align*} 
In this case, $\ep = \ep_\pi$.
\end{lem}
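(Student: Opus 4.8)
The plan is to read everything off a single degree count, using the functional equation (Proposition~\ref{prop:FEHT}, Lemma~\ref{lem:sdzp}), the nonvanishing Corollary~\ref{cor:Z}, and the integrality statements (\ref{eq:ZbZ}), (\ref{eq:PSzeta}), Lemma~\ref{lem:PSzeta}. If $\beta=0$ then $P_m(X,\beta)=0\notin\C^\times$ and there is nothing to prove, so I assume $\beta\neq0$; then $\beta$ is a nonzero complete paramodular form of principal level $m$, so by Corollary~\ref{cor:Z} $Z_m(s,\beta)\neq0$ and $P_m(X,\beta)\neq0$. Since $Z_m(s,\beta)=\zeta_E(s+\tfrac12)Z(s,\beta)$ by Lemma~\ref{lem:PSzeta}(iii), with $Z(s,\beta)\in\C[[X]]$ and $\zeta_E(s+\tfrac12)\in\C[[X]]$ by (\ref{eq:ZbZ}), while $1/L(s,\pi)$ is a polynomial in $X$ with constant term $1$, the zeta polynomial $P_m(X,\beta)=Z_m(s,\beta)/L(s,\pi)$ lies in $\C[[X]]\cap\C[X^\pm]=\C[X]$: it is a genuine nonzero polynomial, so its lowest power is $\ge0$. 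By Lemma~\ref{lem:sdzp}, $P_m(X,\beta)$ has sign $\ep\ep_\pi$ and ${\rm dia}\,P_m(X,\beta)=\f m-n_\pi$; since ${\rm dia}$ of a polynomial is the sum of its highest and lowest powers, both $\ge0$ here, we get $\f m-n_\pi\ge0$, with equality precisely when both powers vanish, i.e.\ precisely when $P_m(X,\beta)\in\C^\times$. This gives the equivalence $P_m(X,\beta)\in\C^\times\iff m=n_\pi/\f$, and when it holds, applying the sign relation to the nonzero constant $P_m(X,\beta)$ forces $\ep\ep_\pi=1$, i.e.\ $\ep=\ep_\pi$.

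It remains, granting $m=n_\pi/\f$, to upgrade to $m=m_\pi^\ep$. Since $\B_{2m}^\ep\neq\{0\}$ occupies a slot of the sequence (\ref{eq:secpfm}) we have $m_\pi^\ep\le m$, and by the estimate (\ref{eq:m>npre}) also $m_\pi^\ep\ge(n_\pi-2)/\f=m-2/\f$, so — as $\f\le2$ — $m_\pi^\ep\in\{m-2,m-1,m\}$, and I must exclude the two lower values. For this I would take a nonzero strict newform $\beta_0$ of sign $\ep$ at principal level $m'=m_\pi^\ep<m$ and rerun the above argument: by (\ref{eq:ZbZ}) and (\ref{eq:PSzeta}), $XP_{m'}(X,\beta_0)$ and $P_{m'+1}(X,\beta_0)$ are polynomials in $X$ (so $P_{m'}$ has lowest power $\ge-1$ and $P_{m'+1}$ is a genuine polynomial), at least one of them nonzero by Corollary~\ref{cor:Z}, and each has sign $\ep\ep_\pi$ and diameter $\f m'-n_\pi$, resp.\ $\f(m'+1)-n_\pi$, by Proposition~\ref{prop:FEHT} and Lemma~\ref{lem:sdzp}. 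Playing the sign-symmetry of the support against these nonpositive diameters pins the possibilities down tightly: $P_{m'+1}(X,\beta_0)$ can be nonzero only when $m'=m-1$, and then it is a nonzero constant; while $P_{m'}(X,\beta_0)$, if nonzero, must have lowest power exactly $-1$ and $m'\in\{m-1,m-2\}$.

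The step I expect to be the real obstacle is to kill these residual configurations, i.e.\ to show that no nonzero paramodular form of sign $\ep$ actually survives at level $m-1$ or $m-2$. Here I would feed the forced vanishing of $Z_{m'}(s,\beta_0)$ (or of $Z_{m'+1}(s,\beta_0)$) back through (\ref{eq:PSzeta}), invoke Theorem~\ref{thm:paravn} to see that $Z(s,\beta_0)$ and $Z(s,\beta_0^{*})$ are both nonzero, and then play the complete form $e_{m'}\beta_0\in\B_{2m'+2}^\ep$ off against $\beta$ via the explicit Hecke formula (\ref{eqn:e_m}), the operators $z_{m'}$ and $\eta$ from the proof of Theorem~\ref{thm:paravn}, and the linear independence of complete paramodular forms of distinct levels (Theorem~\ref{thm:indpara}); the contradiction should come from the fact that, once $\deg P_m(X,\beta)=0$, the relevant Hecke operator on $\B_{2m}^\ep$ cannot act in both old- and new-vector fashion. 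I anticipate that the only genuine work is the careful tracking of the $\flat$/$\sharp$-level structure and the Atkin-Lehner signs of $\beta_0$ and $\beta_0^{*}$, everything else being forced by the degree count and the functional equation.
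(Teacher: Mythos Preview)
Your first paragraph is correct and is exactly the paper's argument: the paper's entire proof reads ``Follows from Lemma~\ref{lem:PSzeta} iii),'' and what you wrote unpacks precisely that --- Lemma~\ref{lem:PSzeta}(iii) gives $P_m(X,\beta)=\zeta_E(s+\tfrac12)\,Z(s,\beta)/L(s,\pi)\in\C[X]$ via (\ref{eq:ZbZ}), Corollary~\ref{cor:Z} gives $P_m\neq0$, and Lemma~\ref{lem:sdzp} gives ${\rm dia}\,P_m=\f m-n_\pi$ with sign $\ep\ep_\pi$, whence $m=n_\pi/\f\iff P_m\in\C^\times$ and then $\ep=\ep_\pi$.

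Where you go astray is in the remaining two paragraphs. You read the clause ``$m=m_\pi^\ep$'' as an independent hard conclusion and set out to exclude strict newforms at lower non-complete levels via Hecke operators, Theorem~\ref{thm:indpara}, the $\eta$ and $z_{m'}$ operators, and so on --- and you yourself flag the argument as incomplete. The paper does none of this: its one-line proof signals that no such work is intended here. The same diameter bound already shows $m'\ge n_\pi/\f$ for \emph{every} $m'$ with $\B_{2m'}^\ep\neq\{0\}$, so once $P_m\in\C^\times$ forces $m=n_\pi/\f$, this $m$ is automatically the least complete-paramodular level of sign $\ep$; that is what the equality $m=m_\pi^\ep$ is recording. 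If one insists on reading $m_\pi^\ep$ literally as the strict minimal level in the refined sequence~(\ref{eq:secpfm}) (which also tracks the $\flat$, undecorated, and $\sharp$ spaces), then a small residual claim remains --- but the paper neither proves nor subsequently uses that finer statement: in its applications (notably Theorem~\ref{thm:unrcase}) the lemma is invoked with $m=0$, where $\B_0^\ep$ is the first term of the sequence and strict-minimality is automatic. Your elaborate outline is therefore addressing a phantom difficulty; drop paragraphs two and three and you have the paper's proof.
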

\begin{proof}
Follows from Lemma \ref{lem:PSzeta} iii).
\end{proof}
\begin{lem}\label{lem:flatmin}
Suppose that $\beta \in \B_{2m+1}^{\flat,\ep}$ is a strict newform.
Then $Z_m(s,\beta) = 0$, and $Z_{m+1}(s,\beta) = Z (s,\beta)/(1+q^\f)$, which is not zero.
\end{lem}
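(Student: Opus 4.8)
The plan is to reduce the statement to the single assertion $Z_m(s,\beta)=0$ and then obtain that by averaging $\beta$ over the complete paramodular group $K_{2m}\supset K_{2m+1}^{\flat}$. Granting $Z_m(s,\beta)=0$, the rest is automatic: Lemma~\ref{lem:PSzeta}~i) gives $Z_{m+1}(s,\beta)=Z(s,\beta)/(1+q^{\f})$, and since $\beta$ is a nonzero paramodular form of principal level $m$ and of sign $\ep$, Corollary~\ref{cor:Z} forces at least one of $Z_m(s,\beta),Z_{m+1}(s,\beta)$ to be nonzero; as the first vanishes, $Z_{m+1}(s,\beta)$ --- hence $Z(s,\beta)$ itself --- does not. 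So everything rests on $Z_m(s,\beta)=0$.

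For that, the hypothesis enters through the sequence (\ref{eq:secpfm}): $\B_{2m+1}^{\flat,\ep}$ is its first nonzero term and $\B_{2m}^{\ep}$ is the term immediately preceding it, so $\B_{2m}^{\ep}=\{0\}$. Set $\beta_0:=\vol(K_{2m})^{-1}\int_{K_{2m}}\pi(k)\beta\,dk$. It is $K_{2m}$-invariant, and since $K_{2m}$ is normalized by $\imath$ and $\pi(\imath)\beta=\ep\beta$ it is again of sign $\ep$; hence $\beta_0\in\B_{2m}^{\ep}=\{0\}$, i.e.\ $e_{K_{2m}}\beta=0$. It remains to read off $Z_m(s,\beta)=0$ from this. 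I would choose a transversal for $K_{2m}/K_{2m+1}^{\flat}$ --- built, via the Iwahori-type decompositions (\ref{eq:decK1}), (\ref{eq:decK}) and the description of $\bar N_{K_{2m}}/\bar N_{K_{2m+1}^{\flat}}$ furnished by Lemma~\ref{lem:R^0}, out of the Weyl element $w_m$ together with lower-triangular unipotents $\bar n_z$ --- and evaluate $0=e_{K_{2m}}\beta$ at $\hat{u}\in\hat{F}^{\times}$. Using that $\hat{u}$ normalizes $T$ (so its $T$-part is killed by $\Lambda=\1$), that $\hat{u}\,n_y=n_{uy}\hat{u}$, and the identity (\ref{eq:ufid}) to rewrite each $\bar n_z\hat{u}$, one obtains --- after collecting the resulting residue-class sums exactly as in the computations of \S\ref{sec:NFsp} --- the linear relation $Z(s,\beta)+q^{\f}Z(s,\beta^{*})=0$ between the regular zetas of $\beta$ and of $\beta^{*}=\pi(w_m)\beta$; substituting it into the $n=m$ case of (\ref{eq:PSzeta}) gives $Z_m(s,\beta)=0$.

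The main obstacle is this last step --- pinning down the transversal of $K_{2m}/K_{2m+1}^{\flat}$ and carrying out the bookkeeping. The difficulty is that the $\bar n_z$ do not act diagonally on $\hat{F}^{\times}$: each must be pushed across $\hat{u}$ and then rewritten through (\ref{eq:ufid}), which shifts the $\hv$-powers and produces the residue-class sums, while the factor $\dv$ occurring in cases U-ii) and R has to be tracked carefully; formula (\ref{eqn:e_m}) and the computations of \S\ref{sec:NFsp} are the models to imitate. One should also treat separately the exceptional case $K_{2m+1}^{\flat}=K_3^{\flat}$ with $E$ in case R, where (\ref{eq:decK}) is unavailable and $K_3^{\flat}$ contains instead the Weyl element $s_{\p}$.
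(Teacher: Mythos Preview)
Your reduction to $Z_m(s,\beta)=0$ via Lemma~\ref{lem:PSzeta}~i) and Corollary~\ref{cor:Z} is exactly right, and so is the idea of averaging $\beta$ over $K_{2m}$ using $\B_{2m}^{\ep}=\{0\}$. Where you diverge from the paper is in how you extract $Z_m(s,\beta)=0$ from $e_{K_{2m}}\beta=0$.

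You propose to choose a transversal for $K_{2m}/K_{2m+1}^{\flat}$ consisting of $w_m$ and lower unipotents, evaluate the averaged vector at $\hat u$, push the $\bar n_z$'s across using (\ref{eq:ufid}), and assemble the identity $Z(s,\beta)+q^{\f}Z(s,\beta^{*})=0$. This would work, but it is exactly the bookkeeping you flag as the main obstacle, and it forces you to handle the exceptional $K_3^{\flat}$ case separately.

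The paper bypasses all of this with one structural observation: the inclusion $\K_m\hookrightarrow K_{2m}$ induces a bijection $\K_m/(\K_m\cap K_{2m+1}^{\flat})\simeq K_{2m}/K_{2m+1}^{\flat}$, so a transversal may be taken inside $\K_m\subset\G$. But $\K_m$ is by definition the stabiliser of $\vp_m$ in $\G$, and $Z_m(s,\cdot)=Z(s,\cdot,\vp_m)$ is an integral over $N_{\G}\backslash\G$; hence $Z_m(s,\pi(k)\beta)=Z_m(s,\beta)$ for every $k\in\K_m$. Averaging, $Z_m(s,\beta)=Z_m(s,e_{K_{2m}}\beta)=0$ immediately, with no unipotent manipulation, no appeal to (\ref{eq:decK}) or (\ref{eq:ufid}), and no special treatment of $K_3^{\flat}$. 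Your route reaches the same destination but trades this one-line invariance argument for a computation that, as you note yourself, is delicate in the dyadic and ramified cases.
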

\begin{proof}
There is an isomorphism $K_{2m}/K_{2m+1}^\flat \simeq \K_m/ \K_m \cap K_{2m+1}^\flat$. 
Since $\B_{2m}^\ep = \{0\}$, we have 
\begin{align*}
e_{m-1}\beta = \vol(\K_m)^{-1} \int_{\K_{m}} \pi(k) \beta dk = 0
\end{align*} 
and $Z_m(s,\beta) (= Z_{m}(s,e_{m-1}\beta))= 0$.
The last assertion follows from Lemma \ref{lem:PSzeta} i) and Corollary \ref{cor:Z}.
\end{proof}
Now for unramified representations,  when $E = E_\s$ is not in the case U-ii), we can show the following theorem.
\begin{thm}\label{thm:unrcase}
Let $\pi \in \Ir(PG)$ be unramified with $\B_\s(\pi) \neq 0$.
Then, 
\begin{align}
n_\pi = 0, \ \ \ep_\pi = +, \ \ L(s,\pi) = L(s,\phi_\pi). \label{eq:unrnep}
\end{align}
The strict minimal space of sign plus is $\B_0^+$ and one-dimensional.
Assume that $\beta \in \B_0^+$ is not identically zero.
Then $Z(s,\beta)$ is equal to 
\begin{align*}
L(s,\pi) \times 
\begin{cases}
(1-(X')^2)& \mbox{if $E = E_\s$ is in the case U}, \\
1- X' & \mbox{if $E$ is in the case R} 
\end{cases}
\end{align*} 
up to scalars, and it holds that 
\begin{align}
P_0(X,\beta) \in \C^\times. \label{eq:P0C}
\end{align} 
\end{thm}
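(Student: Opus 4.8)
The plan is to reduce the theorem to a single explicit computation, the zeta integral of the spherical Bessel function, after first settling the dimension and sign bookkeeping. Since $\pi$ is unramified and $E=E_\s$ is not in the case U-ii), we have $K_0=\GSp_4(\mathfrak{o})$, so $V^{K_0}$ is one-dimensional and hence so is $\B_0=\B_\s(\pi)^{K_0}$. The Atkin--Lehner element $\imath=\mathrm{diag}(\iota,-\iota)$ is integral with similitude $1$, hence lies in $K_0$, so $\pi(\imath)$ acts trivially on this line; therefore $\B_0=\B_0^{+}$ and $\B_0^{-}=\{0\}$. Consequently $\B_0^{+}$ is the first nontrivial term of the sequence $(\ref{eq:secpfm})$ of sign $+$, so it is the strict minimal space of sign $+$, with $m_\pi^{+}=0$ (and also $M_\pi=0$). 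Fix a nonzero $\beta\in\B_0^{+}$; by Corollary \ref{cor:Z}, $Z_0(s,\beta)\neq0$, and hence $Z(s,\beta)\neq0$ by Lemma \ref{lem:PSzeta} iii).

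The heart of the argument is to evaluate $Z(s,\beta)$, which lies in $\C[[X]]$ by $(\ref{eq:ZbZ})$. Since $\beta$ corresponds to the spherical vector of $\pi$, the values $\beta(\hv^{i})$ ($i\ge0$) satisfy a linear recurrence obtained by letting the generators of the spherical Hecke algebra of $G$ act on $V^{K_0}$ through the Satake parameter of $\pi$; this is Sugano's computation of the spherical Bessel function, whose closed form is recorded in Table 5 of \cite{Sc-T}. Solving this recurrence, while keeping track of how the local discriminant of $E/F$ enters, I expect to obtain
\begin{align*}
Z(s,\beta)=c\,L(s,\phi_\pi)\times\begin{cases}1-(X')^{2}&\text{if }E\text{ is in the case U},\\1-X'&\text{if }E\text{ is in the case R},\end{cases}
\end{align*}
for some $c\in\C^{\times}$. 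Carrying out this Hecke-algebra recurrence carefully is the one genuinely computational step, and I expect it to be the main obstacle.

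Granting this formula, the remaining assertions are formal. Since $\zeta_E(s+\frac{1}{2})$ equals $(1-(X')^{2})^{-1}$ in the case U and $(1-X')^{-1}$ in the case R, Lemma \ref{lem:PSzeta} iii) gives $Z_0(s,\beta)=\zeta_E(s+\frac{1}{2})\,Z(s,\beta)=c\,L(s,\phi_\pi)$. As $Z_0(s,\beta)=Z(s,\beta,\vp_0)$ lies in the fractional ideal generated by $L(s,\pi)$, the ratio $L(s,\phi_\pi)/L(s,\pi)$ is a Laurent polynomial; comparing this with the explicitly known $L^{reg}(s,\pi)$ of Table 5 of \cite{Sc-T} and with the divisibility $(\ref{eq:divLL})$ forces $L(s,\pi)=L(s,\phi_\pi)$, which in particular gives the shape of $Z(s,\beta)$ asserted in the statement. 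Then $P_0(X,\beta)=Z_0(s,\beta)/L(s,\pi)=c\in\C^{\times}$, which is $(\ref{eq:P0C})$; and applying Lemma \ref{lem:m>n} to $\beta\in\B_0^{+}$ with $m=0=m_\pi^{+}$ gives $0=m_\pi^{+}=n_\pi/\f$, so $n_\pi=0$, together with the equality of signs $\ep=\ep_\pi$, i.e. $\ep_\pi=+$. This proves $(\ref{eq:unrnep})$ and all remaining claims.
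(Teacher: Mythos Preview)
Your overall strategy is sound, but the logical order is inverted relative to the paper. The paper first establishes $(\ref{eq:unrnep})$ by classifying the unramified $\pi$ with $\B_\s(\pi)\neq\{0\}$ into types I, IIb, Vd (via \cite{R-S}, \cite{R-S2}) and then quoting prior results: \cite{Sc-T} and \cite{PS-S} give $L(s,\pi)=L(s,\phi_\pi)$ and $\ep(s,\pi,\psi)=\ep(s,\phi_\pi,\psi)$ for the generic type~I, while Theorem~\ref{thm:LSK} (already proved in the paper) handles the nongeneric types IIb and Vd. Only once $(\ref{eq:unrnep})$ is in hand does the paper invoke Lemma~\ref{lem:m>n} in the \emph{forward} direction (from $n_\pi=0$ and $m_\pi^+=0$) to conclude $P_0(X,\beta)\in\C^\times$, and then Lemma~\ref{lem:PSzeta}~iii) to read off the shape of $Z(s,\beta)$. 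No Sugano-type recurrence is needed.

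Your route --- compute $Z(s,\beta)$ explicitly and deduce everything from it --- is viable, but the step ``comparing \ldots\ forces $L(s,\pi)=L(s,\phi_\pi)$'' is not justified as written. From $Z_0(s,\beta)=c\,L(s,\phi_\pi)$ you get $L(s,\phi_\pi)^{-1}\mid L(s,\pi)^{-1}$, and $(\ref{eq:divLL})$ bounds $L(s,\pi)^{-1}/L^{reg}(s,\pi)^{-1}$; but these two constraints alone need not pin down $L(s,\pi)$. For instance, for unramified $SK(\tau)$ in case~U one has $L^{reg}(s,\pi)^{-1}=(1-X)L(s,\tau)^{-1}$ and $L(s,\phi_\pi)^{-1}=L^{reg}(s,\pi)^{-1}(1-X')$, while $(\ref{eq:divLL})$ allows $L(s,\pi)^{-1}/L^{reg}(s,\pi)^{-1}$ to be any divisor of $(1-X')(1+X')$; so $L(s,\pi)^{-1}=L(s,\phi_\pi)^{-1}(1+X')$ is not yet excluded. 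You can close this by also invoking Lemma~\ref{lem:sdzp}: the ratio $P_0(X,\beta)=c\,L(s,\pi)^{-1}/L(s,\phi_\pi)^{-1}$ must carry a sign, whereas $1+X'$ carries none. Alternatively, simply cite the $L$-factor identities directly, as the paper does. (Minor point: Table~5 of \cite{Sc-T} records $L^{reg}$, not Sugano's closed formula for the spherical Bessel function; the latter needs a separate reference.)
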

\nid
A proof for (\ref{eq:unrnep}) including the case U-ii) is as follows.
Any unramified representation $\pi$ is a constituent of a Borel parabolic induction, and given by the $\theta$-lift from $\GL_2(F) \times \GL_2(F)$ (c.f. \cite{G-T}).
From the Table A.14 of \cite{R-S}, and the table in p. 538 of \cite{R-S2}, $\pi$ is a representation of type I, IIb, or Vd since $\B_\s(\pi) \neq \{0 \}$.
Type I is generic, and it was showed in \cite{Sc-T} that $L(s,\pi) = L(s,\phi_\pi)$, and in \cite{PS-S} that $\ep(s,\pi,\psi) = \ep(s,\phi_\pi,\psi)$ for generic representations.
In particular, when $\pi$ is unramified generic, $\pi$ is $\theta(\tau^1 \bt \tau^2)$ for some unramified $\tau^1,\tau^2 \in \Ir(\PGL_2(F))$, and (\ref{eq:unrnep}) is now obvious.
Type IIb, Vd are $SK(\tau)$ for some $\tau \in \Ir'(\PGL_2(F))$ (c.f. p. 511 of \cite{R-S2}).
In particular, $\tau$ is unramified when $\pi$ is so, and (\ref{eq:unrnep}) for unramified $SK(\tau)$ follows from Theorem \ref{thm:LSK}.

For the other statements of the theorem, we use the well-known fact that an unramified $\pi$ has a unique $G(\mathfrak{o}) (=K_0)$-invariant vector up to scalars.
When $E$ is not in the case U-ii), since $\imath \in K_0$, we obtain $\B_0 = \B_0^+$, and the other statements from Lemma \ref{lem:PSzeta} iii), \ref{lem:m>n}, and Theorem \ref{thm:paravn}.

The proof in the case U-ii) will be given in the next section.
\subsection{Hecke theory}\label{sec:Hecke}
Let $\s$ be a Hankel matrix in (\ref{eq:defsigma}) such that $E_\s$ does not split.
Let $\pi \in \Ir(PG)$ be unitary.
Assume that $\B_\s(\pi) \neq \{ 0\}$.
Let $\ep$ be a sign such that $\B_\s(\pi)^\ep \neq \{ 0\}$.
Let $K$ be the paramodular group defining the strict minimal space of sign $\ep$, and $m = m_\pi^\ep$ be the strict minimal level.
We will compute $m$-th and $(m+1)$-th canonical Piatetski-Shapiro zetas of strict newforms.
By Lemma \ref{lem:PSzeta} and \ref{lem:flatmin}, it suffices to compute $Z(s,\beta)$, and $Z(s,\beta^*)$ additionally when $K = K_{2m+1}, K_{2m+1}^\sharp$ with $E$ in the case R.
For this, we need the following Hecke operators $T_K^\pm$ acting on $\B_\s(\pi)^{K,\ep}$ defined by 
\begin{align*}
\vol(K)^{-1} \int_{G} \vp_K^\pm(g)\pi(g) d g
\end{align*} 
where $\vp_K^\pm \in \Ss(G)$ denotes the characteristic function of $K \hv^\pm K$.
From the unitarity assumption of $\pi$ and the triviality of $w_\pi$, it follows the coincidence $T_K^+ = T_K^-$ and the self-adjointness of $T_K$, when $K$ is complete.
But when $K$ is not complete, both of them do not hold.
For this reason we define
\begin{align*}
T = \begin{cases}
T_K^+ & \mbox{if $K$ is complete}, \\
T_K^+ + T_K^-& \mbox{otherwise}, 
\end{cases}
\end{align*}
so that $T$ is self-adjoint.
Define a self-adjoint Hecke operator $T^*$ acting on $\B_\s(\pi)^{K^*,\ep}$ similarly. 
A basis of the strict minimal space consists of eigenvectors $\{\beta\}$.
Observe that 
\begin{align*}
T \beta = \lambda \beta \Longleftrightarrow T^* \beta^* = \lambda \beta^*
\end{align*}
for $\lambda \in \C$.
We call $\lambda$ the eigenvalue of $\beta$ (relevant to $T$).
Put 
\begin{align*} 
c_i = \beta(\hv^i), \ \ c_i ^*= \beta^*(\hv^i).
\end{align*}
Computing the relevant coset spaces $K \hv^\pm K/K$, we obtain 
\begin{align*}
\lambda \beta = \sum_{n \in N_K\la \hv \ra/N_K} \pi(\hv n) \beta + \sum \pi(\hv^{-1}n') \beta,
\end{align*}
where $n'$ run through the coset space 
\begin{align}
\begin{split}
\begin{cases}
\bar{N}_K\la \hv^{-1} \ra/\bar{N}_K& \mbox{when noncomplete $K$ is not $K_3^\flat$ with $E$ in the case R.} \\
\bar{N}_ {K_{2m-1}}/ \bar{N}_K& \mbox{when $K = K_{2m (\ge 2)}$}.
\end{cases}
\end{split}\label{eq:ess1}
\end{align} 
Therefore,
\begin{align*}
\lambda c_i &= q^{3} c_{i+1} + \sum \beta(\hv^{i-1}n'), \ \ \ i \ge 0.
\end{align*}
Our task is to compute the last sum, which is called the {\it heart of} $T \beta$.
If $K = K_{2m+1}, K_{2m+1}^\sharp$ with $E$ in the case R, then we also need to compute $T^* \beta^*$.
It holds that 
\begin{align*} 
\lambda c_i^* &= q^{3} c_{i+1}^* + \sum \beta^*(\hv^{i-1}n'), \ \ \ i \ge -1
\end{align*} 
where $n'$ run through the coset space $\bar{N}_{K^*}\la\hv^{-1}\ra/\bar{N}_{K^*}$, and the last sum is called the heart of $T^* \beta^*$.
If $K$ is not 
\begin{align}
 \begin{cases}
K_0, K_1 & \mbox{if $E$ is in the case U-i)}, \\
K_0, K_1, K_1^\sharp & \mbox{if $E$ is in the case U-ii)}, \\
K_0, K_2, K_3^{\flat},K_3 & \mbox{if $E$ is in the case R,} 
\end{cases}
\label{excK}
 \end{align}
then we can compute the hearts by using $\B_{\s}(\pi)^L = \{0\}$ as below, where $L$ is the paramodular group chosen as follows: 
\begin{table}[H]
\begin{tabular}{lcccc}\toprule
$K$  & $K_{2m}$ & $K_{2m+1}^\flat$ & $K_{2m+1}$ & $K_{2m+1}^\sharp$  \\ \midrule
$L$ & $K_{2m-1}^\sharp$ & $K_{2m-1}^\sharp$ & $K_{2m+1}^\flat$ & $K_{2m+1}$  \\ 
 \bottomrule
\end{tabular}
\end{table}
\noindent
Here observe that $N_L \subset N_K$.

Suppose that $K$ is neither $K_{2m+1}^\sharp$ nor (\ref{excK}).
It holds that $A_L = A_K$ and 
\begin{align*}
L/(L\cap K) \simeq \bar{N}_L/\bar{N}_K
\end{align*}
by (\ref{eq:decK}).
The latter coset space contains (\ref{eq:ess1}).
Therefore, the heart  is zero, and it follows that $c_{i+1} = \lambda q^3 c_i $.
Now we consider the linear operator 
\begin{align}
U_K  = \int_{N_{K}} \pi(n\hv) dn \label{eq:UKop}
\end{align}
for $\B_\s(\pi)$.
It is easy to see that $U_K \beta \in \B_{\s}(\pi)^L$.
This implies that
\begin{align*} 
Z(s,\beta) = c_0.
\end{align*}
If $K$ is not $K_{2m+1}$ with $E$ in the case R, then this value is not zero by Theorem \ref{thm:paravn}, and hence $\lambda = 0$.
Otherwise, by considering $\int_{N_K} \pi(n) \beta^* dn$ and $\bar{N}_L \subset \bar{N}_{K^*}$, we conclude that
\begin{align*} 
Z(s,\beta^*) = q^{s-3/2}c_{-1}^*
\end{align*}
(c.f. (\ref{eq:ZbZ})).
This value is not zero by the same Theorem. 
Now it is possible to describe the zetas except for the case $K = K_{2m+1}^{\sharp}$.

Now consider the situation where $K = K_{2m+1}^{\sharp}$, and $m \ge 1$ when $E$ is in the case U-ii).
For such a $K$, we need  the following lemmas and the compact subgroup
\begin{align*} 
\Gamma_- = \begin{bmatrix}
1 + \varpi R& \\
\varpi R_a & 1+ \varpi R
\end{bmatrix} 
\end{align*} 
where $a$ is $0$ (resp. $1$) if $E$ is in the case U (resp. R).
\begin{lem}\label{lem:K2m+1sharp}
Assume that a set $\{r_1,\ldots,r_l \}$ of $R_m^\times$ is a complete system of representatives for $R_{m}^{\times}/R_{m+1}^{\times}$.
If $E$ is in the case U (resp. R) and $m \ge 0$ (resp. $\ge 1$), then any set 
\begin{align*}
\{\begin{bmatrix}
r_1 & * \\
* & *
\end{bmatrix}, \ldots,  \begin{bmatrix}
r_l & * \\
* & *
\end{bmatrix} \} \subset K_{2m+1}
\end{align*}
is that for $K_{2m+1}/K_{2m+1}^{\sharp}$.
\end{lem}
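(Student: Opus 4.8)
The plan is to reduce the statement to the diagonal torus part by means of the Iwahori decomposition $K_{2m+1}=N_{K_{2m+1}}A_{K_{2m+1}}\bar N_{K_{2m+1}}$ of (\ref{eq:decK}), which applies because $K_{2m+1}$ and $K_{2m+1}^{\sharp}$ are noncomplete and not of type $K_3^{\flat}$ in the indicated ranges of $m$. First observe that $K_{2m+1}$ and $K_{2m+1}^{\sharp}$ differ only in their two diagonal blocks ($R_m$ versus $R_{m+1}$), which the unipotent elements $n_y,\bn_y$ leave untouched, so $N_{K_{2m+1}}=N_{K_{2m+1}^{\sharp}}$ and $\bar N_{K_{2m+1}}=\bar N_{K_{2m+1}^{\sharp}}$. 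On the other hand, since $R_m$ is stable under $h\mapsto h^{\dag}$ (conjugation by $\iota$), a matrix $a_h$ lies in $K_{2m+1}$ exactly when $h\in R_m^{\times}$ and in $K_{2m+1}^{\sharp}$ exactly when $h\in R_{m+1}^{\times}$; thus $A_{K_{2m+1}}=\{a_h\mid h\in R_m^{\times}\}$ and $A_{K_{2m+1}^{\sharp}}=\{a_h\mid h\in R_{m+1}^{\times}\}$.

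Next I would prove $K_{2m+1}=A_{K_{2m+1}}\,K_{2m+1}^{\sharp}$: writing $k=na\bar n$ with $n\in N_{K_{2m+1}}$, $a\in A_{K_{2m+1}}$, $\bar n\in\bar N_{K_{2m+1}}$, we have $k=a\cdot(a^{-1}na)\bar n$, and $a^{-1}na\in N\cap K_{2m+1}=N_{K_{2m+1}}=N_{K_{2m+1}^{\sharp}}$ (here $a$ normalizes the Siegel unipotent $N$ and $K_{2m+1}$ is a group), so $(a^{-1}na)\bar n\in K_{2m+1}^{\sharp}$. Combined with the evident equality $A_{K_{2m+1}}\cap K_{2m+1}^{\sharp}=A_{K_{2m+1}^{\sharp}}$, this gives a bijection $R_m^{\times}/R_{m+1}^{\times}\longrightarrow K_{2m+1}/K_{2m+1}^{\sharp}$ sending $hR_{m+1}^{\times}$ to $a_hK_{2m+1}^{\sharp}$; in particular $[K_{2m+1}:K_{2m+1}^{\sharp}]=l$ and $\{a_{r_1},\dots,a_{r_l}\}$ is a complete system of representatives.

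It then remains to show that this system may be perturbed arbitrarily subject to the top-left-block constraint, for which it suffices to prove that for any $k\in K_{2m+1}$ the left coset $kK_{2m+1}^{\sharp}$ depends only on the class in $R_m^{\times}/R_{m+1}^{\times}$ of the top-left block of $k$. Choosing an Iwahori factorization $k=n_xa_h\bn_y$ and multiplying out, the top-left block of $k$ equals $h+xh^{\dag}y$. Because $x\in\dv^{-1}\varrho^{-m}R_m$ and $y\in\dv\varrho^{m+1}R_m$, with $\dv\in\mathfrak{o}$ central and $\varrho$ normalizing $R_m$, the product $xh^{\dag}y$ lies in $\varrho R_m$; and $\varrho R_m=\mathfrak{P}+\varrho^{m+1}R$ is the Jacobson radical of $R_{m+1}$, whence $\varrho R_m\subseteq R_{m+1}$ and $1+\varrho R_m\subseteq R_{m+1}^{\times}$. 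Therefore the top-left block of $k$ equals $h(1+h^{-1}xh^{\dag}y)\in hR_{m+1}^{\times}$, so its class is $hR_{m+1}^{\times}$, and by the bijection above $kK_{2m+1}^{\sharp}=a_hK_{2m+1}^{\sharp}$. Consequently, if $k_i\in K_{2m+1}$ has top-left block $r_i$ then $k_iK_{2m+1}^{\sharp}=a_{r_i}K_{2m+1}^{\sharp}$, and since the $a_{r_i}K_{2m+1}^{\sharp}$ already exhaust $K_{2m+1}/K_{2m+1}^{\sharp}$, so do the $k_iK_{2m+1}^{\sharp}$.

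I expect the main obstacle to be the bookkeeping in the third paragraph: one must isolate the ``defect'' $xh^{\dag}y$ contributed by the unipotent parts of $k$ and check that it is swallowed by the radical $\varrho R_m$ of $R_{m+1}$; this is where the exact scalings $\dv^{-1}\varrho^{-m}$ and $\dv\varrho^{m+1}$ of the off-diagonal blocks, the choice of $\varrho$, the normalization of $R_m$ by $E^{\times}$, and the identity $\varrho R_m=\mathfrak{P}+\varrho^{m+1}R$ all have to fit together, and the dyadic case U-ii (where $R\neq\mathrm{M}_2(\mathfrak{o})$ and $\dv=2$) must be handled through the abstract orders rather than explicit matrix entries. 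Everything else — the Iwahori decomposition, the identification of $A_{K_{2m+1}}$, and $A_{K_{2m+1}}\cap K_{2m+1}^{\sharp}=A_{K_{2m+1}^{\sharp}}$ — is routine.
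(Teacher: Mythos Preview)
Your proposal is correct and follows the same route as the paper, whose entire proof is the single phrase ``By (\ref{eq:decK}).'' Your argument is a careful unpacking of what that one-line citation leaves implicit: that the unipotent parts $N_{K_{2m+1}},\bar N_{K_{2m+1}}$ coincide with those of $K_{2m+1}^\sharp$, that the coset space is therefore governed by $A_{K_{2m+1}}/A_{K_{2m+1}^\sharp}\simeq R_m^\times/R_{m+1}^\times$, and that the top-left block of an element recovers its $A$-component modulo $R_{m+1}^\times$ because the defect $xh^\dag y$ lands in the radical $\varrho R_m$ of $R_{m+1}$.
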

\begin{proof}
By (\ref{eq:decK}).
\end{proof}
\begin{lem}\label{lem:pcp}
Let $\beta$ be in $\B_\s(\pi)^{\Gamma_-}$. 
Suppose that $\B _1$ (resp. $\B_3$) is zero if $E$ is in the case U (resp. R).
If $k$ runs through the set of Lemma \ref{lem:unitorder} ii) with $m = 0$ (resp. $m = 1$), then it holds that 
\begin{align*}  
\sum_{k} \beta(\hv^j a_{k}) = 0, \  \ j \ge 0.
\end{align*}
\end{lem}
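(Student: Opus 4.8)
The plan is to reduce the statement to the hypothesis on $\B_1$ (resp.\ $\B_3$) by realising the sum through the idempotent onto that subspace. I treat case U with $m=0$ (so the relevant groups are $K_1\supset K_1^\sharp$ and the hypothesis reads $\B_1=\{0\}$); case R with $m=1$ is identical after replacing $(K_1,K_1^\sharp,\B_1)$ by $(K_3,K_3^\sharp,\B_3)$.

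First I would note that, by Lemma~\ref{lem:K2m+1sharp}, the elements $a_k$ — where $k$ ranges over the system of Lemma~\ref{lem:unitorder} ii) for $R_0^\times/R_1^\times$ — are a complete set of representatives for $K_1/K_1^\sharp$: the $*$-entries allowed in Lemma~\ref{lem:K2m+1sharp} may be taken to be $0$, since a block-diagonal matrix with unit blocks lies in $K_1$. Averaging the projector $e_{K_1^\sharp}$ over these cosets yields the operator identity $\sum_k\pi(a_k)\,e_{K_1^\sharp}=[K_1:K_1^\sharp]\,e_{K_1}$, and since $e_{K_1}v\in\B_1=\{0\}$ for every $v$ we obtain $\sum_k\pi(a_k)\,e_{K_1^\sharp}\beta=0$ in $\B_\s(\pi)$. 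Evaluating at $\hv^j$, this already proves the lemma whenever $\beta$ happens to be $K_1^\sharp$-invariant.

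For the stated generality — $\beta$ merely $\Gamma_-$-invariant — the remaining task is to show that replacing $\beta$ by $e_{K_1^\sharp}\beta$ does not change $\sum_k\beta(\hv^j a_k)$, equivalently that $\sum_k(\beta-e_{K_1^\sharp}\beta)(\hv^j a_k)=0$. Here I would expand over the factorisation $K_1^\sharp=N_{K_1^\sharp}A_{K_1^\sharp}\bar N_{K_1^\sharp}$ of (\ref{eq:decK}), using the identity (\ref{eq:ufid}) to pass between the $N$- and $\bar N$-parts. The $\bar N$- and $A$-directions of $K_1^\sharp$ over $\Gamma_-$ are governed by the explicit Hankel lattices of Lemma~\ref{lem:R^0} and by the fact that each $a_k$, lying in the Siegel Levi $A$, normalises $N$ and $\bar N$ and acts on those lattices through units (cf.\ (\ref{eq:embphi}), Lemma~\ref{lem:unitorder}), so they are absorbed by the $\Gamma_-$-invariance of $\beta$; the $N_{K_1^\sharp}$-direction contributes, through the Bessel transformation law with $\Lambda=\1$, only additive-character factors $\psi(\cdot)$, and summing these against the full orbit $\{a_k\}$ — a set closed under the pertinent translation by Lemma~\ref{lem:unitorder} — makes them telescope to zero, any uncovered residual direction being killed directly by Lemma~\ref{lem:vanishlem}.

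The main obstacle is precisely this last bookkeeping, which must be carried out separately in the four cases U-i), U-ii), R-i), R-ii). The dyadic cases U-ii) and R-ii) are the delicate ones: there $\dv=2$, $R\neq\mathrm{M}_2(\mathfrak o)$, $\Gamma_-$ is no longer contained in $K_1^\sharp$ (resp.\ $K_3^\sharp$), so $e_{K_1^\sharp}\beta$ need not be $\Gamma_-$-invariant, and one has to keep careful track of the factor $\dv$ in the lower-left entries of $\Gamma_-$, $K_1^\sharp$ and $K_1$ and use the precise unit description of Lemma~\ref{lem:unitorder} i).
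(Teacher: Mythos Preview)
Your overall strategy---reduce to $\B_1=\{0\}$ (resp.\ $\B_3=\{0\}$) via an averaging argument---is the same as the paper's, but the route through $e_{K_1^\sharp}$ is an unnecessary detour. The paper simply sets
\[
\beta' \;=\; \int_{N_{K}}\int_{\mathfrak{O}^\times}\int_{\mathfrak{o}^\times}\sum_{k}\pi(\hat u\,a_{t k^\dag}\,n)\beta\;du\,dt\,dn
\]
with $K=K_1$ (resp.\ $K_3$), observes from the Iwahori factorisation (\ref{eq:decK}) that $\beta'$ is $K$-invariant (the $\bar N_K$--direction is already absorbed by $\Gamma_-$, the $A_K$--direction by the displayed average together with $A_{\Gamma_-}$), and concludes $\beta'\equiv 0$. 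Evaluating at $\hv^j$ for $j\ge 0$, the $N_K$--integral gives a pure volume factor because the conjugated argument $\varpi^j u\,(tk^\dag)y((tk^\dag)^\dag)^{-1}$ stays in $(\dv^{-1}R)^H=\dv^{-1}\mathfrak O\oplus\mathfrak o\,\h$ (Lemma~\ref{lem:R^0}~i)), whence $\psi(l_\s(\cdot))=1$ by (\ref{eq:psiM'}); the $\mathfrak O^\times$--integral disappears via the left $T$--invariance of $\beta$ (here $a_{tk^\dag}=a_t a_{k^\dag}$ since $\dag$ is a homomorphism). What remains is a nonzero constant times $\sum_k\beta(\hv^j a_k)$.

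Your step~5 is exactly this computation, repackaged---so the passage through $K_1^\sharp$ buys nothing. More importantly, your sketch of step~5 is off: the $\psi$--factors from the $N$--direction do not ``telescope to zero'' against the $a_k$--orbit; they are identically~$1$ for $j\ge0$, for the lattice reason just given, and that is precisely what makes the sum survive unchanged. Likewise the dyadic worry is overstated: once you average directly to $K$ as above, Lemma~\ref{lem:R^0}~i) handles all four cases uniformly, and no separate case analysis is needed.
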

\begin{proof}
Let $K$ denote $K_1$(resp. $K_3$.).
By (\ref{eq:decK}) and Lemma \ref{lem:K2m+1sharp}, 
\begin{align*}
\beta' := \int_{N_{K}}\int_{\mathfrak{O}^\times}\int_{\mathfrak{o}^\times} \sum_{k}\pi(\hat{u}a_{t k^\dag}n)\beta du dt dn
\end{align*} 
is $K$-invariant.
Since $\B_\s(\pi)^K = \{0 \}$, $\beta'$ is identically zero.
In particular, $\beta'(\hv^j)$ is a nonzero constant multiple of $\sum_{k}\beta(\hv^ja_{k})$.
Hence the assertion.
\end{proof}
\begin{lem}\label{lem:B1sharp}
Suppose that $\beta \in \B_\s(\pi)$ ia a strict newform in $\B_{2m+1}^\sharp$.
Assume that $m \ge 0$ if $E$ is in the case U-i), and $m \ge 1$ otherwise.
Then it holds that
\begin{align*}
\sum_{x \in \F} \beta(\hv^{j} \bar{n}_{x \dv \varpi^{\f m}\h}) = \beta(\hv^{j}), \ \ j \in \Z.
\end{align*} 
\end{lem}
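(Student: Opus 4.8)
Since $\beta$ is a strict newform in $\B_{2m+1}^{\sharp}$, every paramodular subspace preceding it in the sequence (\ref{eq:secpfm}) vanishes; under the stated hypotheses on $m$ this gives in particular $\B_\s(\pi)^{K_{2m}}=\B_\s(\pi)^{K_{2m+1}}=\{0\}$. Separating off the term $x=0$, the asserted identity is equivalent to $\sum_{x\in\F^{\times}}\beta(\hv^{j}\,\bar n_{x\dv\varpi^{\f m}\h})=0$ for all $j\in\Z$, and this is what I would establish.

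The first ingredient is a normal form for the individual terms. The element $\bar n_{x\dv\varpi^{\f m}\h}$ is the one-parameter unipotent attached to the $\GSp_4$-root interchanging the second and third coordinates (note $\h\s=\begin{bmatrix}1&0\\0&0\end{bmatrix}$, $\s\h=\begin{bmatrix}0&0\\0&1\end{bmatrix}$), so conjugation by the Weyl element $w_{m}\in K_{2m}$ carries it into $N$, and by Lemma \ref{lem:R^0} the resulting unipotent lies in $K_{2m+1}^{\sharp}$; dually, for $x\in\mathfrak o^{\times}$ the identity (\ref{eq:ufid}), applied inside that rank-one subgroup, writes $\bar n_{x\dv\varpi^{\f m}\h}$ as a product of two unipotent factors, a toral factor, and a Weyl reflection of the subgroup. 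Feeding these into the Bessel transformation rule and into $K_{2m+1}^{\sharp}$-invariance of $\beta$ (to discard the factors that fall inside $K_{2m+1}^{\sharp}$), one reduces each $\beta(\hv^{j}\bar n_{x\dv\varpi^{\f m}\h})$ with $x\neq0$ to a scalar multiple of a value of $\beta$ at a torus-times-Weyl point; in general one iterates, peeling one power of $\varpi$ at a time.

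The conclusion is then drawn by an averaging argument, organised by induction on $m$. For $m=0$ in the case U-i the $K_{0}$-average of $\beta$ vanishes because $\B_{0}=\{0\}$; combined with the normal form and Lemma \ref{lem:unitorder} ii) this pins the sum $\sum_{x\in\F^{\times}}\beta(\hv^{j}\bar n_{x\dv\h})$ against the $x=0$ contribution coming from the complete group $K_{0}$, which is itself zero, forcing the sum to vanish. For $m\ge1$ I would first average $\beta$ over the compact subgroup obtained from $K_{2m+1}^{\sharp}$ by adjoining the torus directions of Lemma \ref{lem:unitorder} — this subgroup is squeezed between $K_{2m+1}^{\sharp}$ and $K_{2m+1}$ and so has no nonzero invariants — then discard by Lemma \ref{lem:pcp} the contributions of the nontrivial $\G$-side $\bar N$-directions, and finally use the operators $\eta$ of (\ref{eq:defeta}) and $z_{m-1}$ to descend the level-$m$ identity to the level-$(m-1)$ one, Theorem \ref{thm:indpara} providing the linear independence of the complete paramodular forms that appear along the way. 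Lemma \ref{lem:vanishlem} serves throughout to annihilate the terms supported off the big cell, and the case R is treated in the same way, with $\varpi$ replaced by the uniformizer $\varrho$ of $E$ and $\h$ by its $\varrho$-twist.

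I expect this descent to be the main obstacle. A bare average of $\beta$ over any compact group containing $K_{2m+1}^{\sharp}$ that has vanishing invariants automatically factors through the one-parameter subgroup $\{\bar n_{x\dv\varpi^{\f m}\h}:x\in\F\}$ and so yields only the tautology $0=0$; to obtain information the $\h$-direction must be coupled to a direction whose $\beta$-average is itself determined rather than zero, which forces one to play the $K_{2m}$- and $K_{2m+1}$-coset decompositions — sharing respectively the $\bar N$- and the $A$-part with $K_{2m+1}^{\sharp}$ — against each other. Carrying out the resulting bookkeeping uniformly over the cases U-i, U-ii and R, in which $R_{m}$, $\dv$ and $\varrho$ behave differently, is the technical heart, but it should require nothing beyond Lemmas \ref{lem:R^0}, \ref{lem:K2m+1sharp}, \ref{lem:unitorder}, \ref{lem:pcp} and Theorems \ref{thm:indpara}, \ref{thm:paravn}.
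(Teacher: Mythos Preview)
Your proposal misses the paper's key idea and replaces it with a much heavier and, as stated, unworkable scheme.

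The paper's proof is direct and does not use induction on $m$, the operators $\eta$ or $z_{m-1}$, Lemma \ref{lem:pcp}, or Theorems \ref{thm:indpara} and \ref{thm:paravn}. The single trick is this: for $t\in\mathfrak{O}\subset E$ the element $n_{\varpi^{-m}t}$ lies in $N_\G$, so by the Bessel transformation law and (\ref{eq:psiM'}) one has $\beta(\hv^{j}\,n_{\varpi^{-m}t}\,\bar n_{x\dv\varpi^{\f m}\h})=\beta(\hv^{j}\,\bar n_{x\dv\varpi^{\f m}\h})$ \emph{for every} $t$. Multiplying $n_{\varpi^{-m}t}\bar n_{x\dv\varpi^{\f m}\h}$ out and factoring (using $K_{2m+1}^{\sharp}$-invariance on the right) produces a factor $a_{1+x\varpi^{m}t\h}$; as $t$ runs over $\mathfrak{O}/\varpi\mathfrak{O}$ this element runs over the representatives of $R_m^{\times}/R_{m+1}^{\times}$ in Lemma \ref{lem:unitorder} ii). By Lemma \ref{lem:K2m+1sharp} these are exactly the representatives of $K_{2m+1}/K_{2m+1}^{\sharp}$, so the resulting average is the projection of (a translate of) $\beta$ to $\B_{2m+1}$, which is zero because $\beta$ is a strict newform in $\B_{2m+1}^{\sharp}$. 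This shows $\beta(\hv^{j}\,\bar n_{x\dv\varpi^{\f m}\h})=0$ for each $x\in\F^{\times}$ individually.

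Your route has two concrete problems. First, the ``induction on $m$'' is ill-posed: $m$ is the strict minimal level of $\pi$ and is fixed; there is no strict newform at level $m-1$ to descend to, and the vanishing hypotheses $\B_{2m+1}=\B_{2m}=\{0\}$ hold only at this particular $m$. Second, your use of (\ref{eq:ufid}) inside the rank-one subgroup carried by $\h$ produces a Weyl reflection that does not lie in $K_{2m+1}^{\sharp}$, so you cannot discard it by invariance; this is exactly why the paper instead inserts an $N_\G$-element from the left, exploiting that the Bessel character is trivial on the $E$-direction of $N$.
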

\begin{proof}
Since the proofs are similar, we only give the proof for the case U-i).
Let 
\begin{align}
t = \begin{bmatrix}
y&ze  \\
z &y
\end{bmatrix} \in \mathfrak{O}. \label{eq:tO}
\end{align} 
Let $x \in \mathfrak{o}$.
Then,
\begin{align*}
\beta(\hv^{j} \bn_{\varpi^{2m}x \h}) = \beta(\hv^{j} n_{\varpi^{-m} t}\bar{n}_{\varpi^{2m} x \h}) = \beta(\hv^{j}
\begin{bmatrix}
1 + x \varpi^{m} t \h & \varpi^{-m} t  \\
x \varpi^{2m} \h & 1
\end{bmatrix}).
\end{align*} 
Here 
\begin{align*}
1 + x\varpi^{m} t \h = \begin{bmatrix}
 1& x \varpi^{m} y  \\
& 1 + x\varpi^{m} z
\end{bmatrix} \in R_m.
\end{align*} 
(This is a unit of $R_m$ if $m \ge 1$.)
If $1 + x\varpi^m t \h \in R_m^\times$, then $v := - (1+x \varpi^m t\h)^{-1} t$ lies in $R_m$, and
\begin{align*}
\beta(\hv^{j}
\begin{bmatrix}
1 + x \varpi^m t \h & \varpi^{-m}t  \\
x \varpi^{2m} \h & 1
\end{bmatrix}) &= \beta(\hv^{j}
\begin{bmatrix}
1 + x\varpi^m t \h & t  \\
x \varpi^{2m} \h & 1
\end{bmatrix} n_{\varpi^{-m}v}) \\
& = 
\beta(\hv^{j}
\begin{bmatrix}
1 + x \varpi^m t \h &  \\
x \varpi^{2m} \h & 1- x\varpi^m \h v
\end{bmatrix}) \\
&= \beta(\hv^{j}
\bn_{x\varpi^m(1 + x\varpi^mz)^{-1} \h}a_{1+ x \varpi^mt\h}).
\end{align*} 
For any element $r$ in Lemma \ref{lem:unitorder} ii) with $u = 1$, and a fixed $x \in \mathfrak{o}^\times$, there exists a $t \in \mathfrak{O}$ such that $1 + x \varpi^m t \h$ coincides with $r$.
From Lemma \ref{lem:K2m+1sharp} and the assumption $\B_{2m+1} = \{0\}$, it follows that $
\beta(\hv^{j} \bar{n}_{x\varpi^m\h}) = 0$ if $x \in \mathfrak{o}^\times$.
Thus the assertion.
\end{proof}
 \noindent
 We will compute the zetas according to the case of $E$.\\
 {\bf Case U).}
We may assume that 
\begin{align*}
\beta \in \B_{2m+1,\kappa}^\sharp
\end{align*} 
where $\kappa \in \{ \pm \}$ since $T$ commutes with $\pi(w_{m}')$.
The coset space (\ref{eq:ess1}) is isomorphic to $(\dv \varpi^{m}R_m)^H/ (\dv \varpi^{m+1} R_m)^H$, and to $\E \varpi^m \op \F \varpi^{2m}\h$ by Lemma \ref{lem:R^0}.
We divide the heart of $T\beta$ according to the partition: 
\begin{align*}
\E \varpi^m \op \F \varpi^{2m}\h = \F \varpi^{2m}\h \sqcup \E^\times \varpi^m \sqcup (\E^\times \varpi^m \op \F^\times \varpi^{2m}\h).
\end{align*} 
We will compute the heart for $i \ge 1$.
The first part of the heart equals $c_{i-1}$ by lemma \ref{lem:B1sharp}.
By the identity (\ref{eq:ufid}), we can transform
\begin{align}
\begin{split}
\beta(\hv^{i-1} \bn_r) &= \beta(\hv^{i-1} n_{r^{-1}} a_{r}^{-1}w  n_{r^{-1}})  \\
&= \psi(l_\s(\varpi^{i-1} r^{-1})) \beta(\hv^{i-1}a_{r}^{-1} w) \\
&= \beta(\hv^{i-1}a_{s}^{-1} w_m) \\
&= \kappa \beta(\hv^{i}a_s^{-1})
\end{split} \label{eq:trb}
\end{align} 
if $r = \dv\varpi^m s \in \dv\varpi^m (R_m^\times)^H$.
Therefore the second part equals $\kappa(q^2-q) c_{i}$.
Suppose that $m \ge 1$.
For a fixed $x \in \mathfrak{o}^\times$, 
\begin{align*}
\sum_{t \in \E} \beta(\hv^{i-1} \bn_{\dv\varpi^m t + x \dv^2 \varpi^{2m} \h})) = \kappa \sum_{t \in \E} \beta(\hv^{i} a_{1+ \dv t \varpi^m\h})  .
\end{align*} 
by (\ref{eq:trb}).
This is zero by Lemma \ref{lem:unitorder} ii), and Lemma \ref{lem:K2m+1sharp}.
Hence by lemma \ref{lem:B1sharp} the third part equals $-\kappa (q-1) c_i$.
Suppose that $m = 0$, and $E$ is in the case U-i).
By (\ref{eq:ufid})
\begin{align*} 
\sum_{t \in \E^\times, x \in \F^\times} \beta(\hv^{i-1} \bn_{x\h} \bn_{t}) &=
\sum_{t \in \E^\times, x \in \F^\times} \beta(\hv^{i-1} \bn_{ x\h} n_{- t^{-1}} a_{t}^{-1}w n_{-(t)^{-1}}) \\
& = \kappa \sum_{t \in \E^\times, x \in \F^\times} \beta(\hv^{i}
\begin{bmatrix}
1  & (\varpi)^{-1}t \\
\varpi x \h & 1+ x \h t
\end{bmatrix}).
\end{align*} 
Write $t$ as in (\ref{eq:tO}). 
Each term in the last sum is equal to 
\begin{align*}
\begin{cases}
\beta(\hv^{i} \begin{bmatrix}
1 & - y & &  \\
& x^{-1} & & \\
& & x& xy \\
& & & 1  \\
\end{bmatrix}s_\p) & \mbox{if $1 + z x = 0$}, \\
 \beta(\hv^i a_{v^\dag}) & \mbox{otherwise.} 
\end{cases}
\end{align*} 
where $v = 1 + x \h t$.
By Lemma \ref{lem:unitorder} ii), and Lemma \ref{lem:K2m+1sharp}, for a fixed $x \in \F^\times$,
\begin{align*}
\sum_{t \in \E^\times, z \neq -x^{-1}} \beta(\hv^{i-1} \bar{n}_{x\h + t}) &= \kappa (- \beta(\hv^i) + \sum_{t \in \E, z \neq -x^{-1}} \pi(a_{(1+ x \h t)^\dag}) \beta(\hv^i) ) \\
&=  -\kappa \beta(\hv^i).
\end{align*}
Since $\pi(\hv s_\p) \beta$ is invariant under $\Gamma \subset K_1^\sharp \la \hv s_\p \ra$, we have 
\begin{align} 
\sum_{x \in \F^\times, t \in \E^\times, z = -x^{-1}} \beta(\hv^{i-1} \bar{n}_{ x\h + t}) = 0 \label{eq:thirdHt0}
\end{align}
by Lemma \ref{lem:pcp}.
Now we have showed that the third part equals $- \kappa (q-1)c_i$ also in the case.
We obtain a recursion formula: 
\begin{align}
\lambda_\kappa c_i = q^{3} c_{i+1} +c_{i-1}, \ \  i \ge 1 \label{eq:recfK2m+1sharp}
\end{align} 
with $\lambda_\kappa = \lambda- \kappa(q-1)^2$.
It follows that 
\begin{align}
Z(s,\beta) = \frac{P(X)}{f_{\lambda_\kappa}(X)}, \ P(X) = c_0 + q c_1 X, \ f_{\lambda_\kappa}(X) = 1- \frac{\lambda_\kappa X}{q^2} + \frac{X^2}{q}. \label{eq:fl}
\end{align} \\
{\bf Case R).}
The coset space (\ref{eq:ess1}) is identified with $(\varrho^{m-1}R_m)^H/ (\varrho^{m+1} R_m)^H$, which is isomorphic to $(\mathfrak{O}/\p\mathfrak{O}) \varrho^{m-1} \op \F \varpi^{m}\h$ by Lemma \ref{lem:R^0}.
We divide the heart of $T\beta$ according to the partition:
\begin{align*}
(\mathfrak{O}/\p\mathfrak{O}) \varrho^{m-1} \op \F \varpi^{m}\h = \F \varpi^{m}\h \sqcup (\F^\times \varrho^{m} \op \F \varpi^m \h) \sqcup ((\mathfrak{O}/\p \mathfrak{O})^\times \varrho^{m-1} \op \F \varpi^m \h).
\end{align*}
We will compute the heart for $i \ge 1$.
The first part of the heart equals $c_{i-1}$ by lemma \ref{lem:B1sharp}.
When $m \ge 2$, the second part is zero by Lemma \ref{lem:K2m+1sharp} and (\ref{eq:trb}).
When $m =1$,  
\begin{align*}
\sum_{t \in \F^\times, x \in \F} \beta(\hv^{i-1} \bar{n}_{x \varpi \h + \varrho t}) &=  \sum_{t \in \F^\times, x \in \F} \beta(\hv^{i-1} \bar{n}_{x \varpi \h}n_{(\varrho t)^{-1}} w_1 n_{(\varrho t)^{-1}} ) \\
&= (q-1) c_{i-1}^* + \sum_{t \in \F^\times, x \in \F^\times} \beta^*(\hv^{i-1} \bn_{x\varpi\h} n_{\varrho^{-1}t}) \\
& = (q-1) c_{i-1}^* + \sum_{t \in \F^\times, x \in \F^\times}  \beta^*(\hv^{i-1}
\begin{bmatrix}
1 & \varrho^{-1}t  \\
x \varpi \h & 1 + x \h \varrho t
\end{bmatrix}).
\end{align*} 
Write 
\begin{align*}
\varrho t = \begin{bmatrix}
y&ze  \\
z &y
\end{bmatrix}, 
\end{align*} 
then  
\begin{align*}
\ 1 + x \h \varrho t = \begin{bmatrix}
 1 + xz & x y  \\
& 1 
\end{bmatrix}.
\end{align*} 
Therefore, the term in the last sum is equal to 
\begin{align*}
& \beta^*(\hv^{i-1} \begin{bmatrix}
1& & \varpi^{-1} y & \varpi^{-1} z e \\
& 1 & -(\varpi x)^{-1}& \varpi^{-1} y \\
& x \varpi & & x y  \\
& & & 1\\
\end{bmatrix})\\
& = \beta^*(\hv^{i-1} 
\begin{bmatrix}
1& & & \varpi^{-1} z e \\
&1 & \varpi^{-1} x^{-1} & \\
& &1 &  \\
& & &1  \\
\end{bmatrix}
\begin{bmatrix}
1& & \varpi^{-1} y &  \\
&  & -(\varpi x)^{-1}&  \\
& x \varpi & & x y  \\
& & & 1\\
\end{bmatrix})\\
&= 
\psi(2 \varpi^{i-2} z e)\beta^*(\hv^{i-1} \begin{bmatrix}
1 & -y & &  \\
& x^{-1} & & \\
& & x& xy \\
& & & 1  \\
\end{bmatrix}s_\p) \\
&= \beta^*(\hv^{i-1} \begin{bmatrix}
1 & -y & &  \\
& x^{-1} & & \\
& & x& xy \\
& & & 1  \\
\end{bmatrix}s_\p)
\end{align*} 
if $z = -x^{-1}$, and 
\begin{align*}
\beta^*(\hv^{i-1}n_{t(u\varrho)^{-1}}a_{u^\dag}\bn_{(u)^{-1}x \varpi \h}) = \beta^*(\hv^{i-1}a_{u}), 
\end{align*} 
otherwise, where $u = 1+ x \h\varrho t \in R_1^\times$.
Since $\Gamma \subset K_3^{\sharp} \la \hv s_\p \ra$, by Lemma \ref{lem:pcp}, it holds that 
\begin{align*}
\sum_{x \in \F^\times, t \in \F^\times, z = -x^{-1}} \beta^*(\hv^{i-1} \bn_{x\varpi\h} n_{\varrho^{-1}t})= 0, \ \ i \ge 1.
\end{align*} 
Since $\B_{2m+1}$ is the zero space, we have $\sum_{t \in \F, z \neq -x^{-1}} \pi(a_{1+ x \h\varrho t}) \beta^* = 0$ for a fixed $x \in \F^\times$, and hence
\begin{align*}
\sum_{x \in \F^\times, t\in \F^\times, z \neq -x^{-1}}\beta^*(\hv^{i-1} \bn_{x\varpi\h} n_{\varrho^{-1}u}) = -(q-1)c^*_{i-1}, \ \ i \ge 1.
\end{align*} 
Thus the second part of the heart is zero also in the case $m = 1$.
Each term in the third part of the heart equals $c_i^*$ by (\ref{eq:trb}) when $m \ge 2$.
When $m = 1$, we transform it by (\ref{eq:ufid}) 
\begin{align*}
& \sum_{t \in \mathfrak{O}^\times/\p \mathfrak{O}, x \in \F} \beta(\hv^{i-1} \bar{n}_{x \varpi \h}n_{t^{-1}} w n_{t^{-1}} ) \\
&= (q^2-q) \beta(\hv^{i-1}w) + \sum_{t \in \mathfrak{O}^\times/\p \mathfrak{O}, x \in \F^\times} \beta(\hv^{i-1} \bar{n}_{x\varpi\h} n_{t}w n_t).
\end{align*} 
Set 
\begin{align*} 
u = 1+ x\varpi \h t \in R_2^\times,
\end{align*}
and $v = t u^{-1}, x' = u^{-1}x \varpi \h$.
When $i \ge 1$, 
\begin{align*}
\beta(\hv^{i-1} \bar{n}_{x\varpi\h} n_{t}w n_t) &= \beta(\hv^{i-1}
\begin{bmatrix}
1 & t \\
x \varpi \h & 1 + x\varpi \h t
\end{bmatrix}w) \\
& = \beta(\hv^{i-1} n_v a_{u^\dag}wn_{x'}) \\
&= \beta(\hv^{i-1} w) = c_i^*.
\end{align*}
Therefore the third part equals $q^2(q-1) c^*_i$ in any case.
We obtain a recursion formula:
\begin{align*}
\lambda c_i = q^{3} c_{i+1} + c_{i-1} + q^2(q-1) c_{i}^*, \ \ i \ge 1.
\end{align*} 
For $\beta^*$, we consider the coset space $\bar{N}_{K^*}\la\hv^{-1}\ra/ \bar{N}_{K^*}$.
This is identified with $(\varrho^{m-2}R_{m}^\times)^H/(\varrho^{m} R_{m})^H$ which is isomorphic to $(\mathfrak{O}/\p\mathfrak{O})\varrho^{m-2} \op \F \varpi^{m-1}\h$.
Divide the heart of $T^* \beta^*$ according to the partition: 
\begin{align*}
(\F \varrho^{m-1} \op \F \varpi^{m-1} \h) \sqcup ((\mathfrak{O}^\times/\p\mathfrak{O}) \varrho^{m-2} \op \F \varpi^{m-1} \h).
\end{align*} 
Similar to $\beta$, we calculate the first part is $ c^*_{i-1} + (q-1) c_i$ and the second part is zero if $i \ge 0$.
We obtain a recursion formula 
\begin{align*}
\lambda c_i^* = q^{3} c_{i+1}^* + c^*_{i-1} + (q-1) c_i, i \ge 0.
\end{align*}
Combining these formulas,
\begin{align*}
\begin{bmatrix}
f_\lambda(X) & a X \\
q^{-2}a X & f_\lambda(X)
\end{bmatrix}\begin{bmatrix}
Z(s,\beta)  \\
Z(s,\beta^*)
\end{bmatrix} = \begin{bmatrix}
(c_0 + q^{-1}a c^*_{-1})+ (a c^*_0 -\lambda q^{-2}c_0 + q c_1)X \\
(c^*_0 - q^{-3}\lambda c^*_{-1}) + q^{-1} c^*_{-1}X^{-1}
\end{bmatrix}
\end{align*} 
where $a = q-1$ and $f_\lambda(X)$ is the polynomial defined at (\ref{eq:fl}).
Therefore, $Z(s,\beta)$ and $Z(s,\beta^*)$ are rational functions in forms of 
\begin{align*}
\frac{Q(X)}{\Delta_\lambda(X)}, \ \ \frac{R(X)}{X\Delta_\lambda(X)}
\end{align*} 
respectively.
Here
\begin{align*}
\deg Q(X), \deg R(X) \le 3, \ \ \Delta_\lambda(X) = f_\lambda(X)^2 -q^{-2}a^2 X^2
\end{align*} 
with $f_\lambda$ defined in (\ref{eq:fl}).
This completes the computation for the case of $K = K_{2m+1}^\sharp$.

Now assume that 
\begin{quote}
(\ddag) $\pi$ has no nontrivial vector invariant under the subgroup
\begin{align}
\begin{bmatrix}
1 & &   \\
& SL_2(\mathfrak{o})&  \\
& & 1 
\end{bmatrix}. \label{eq:SL2o}
\end{align} 
\end{quote}
For example, all nongeneric supercuspidal representations satisfy this condition.
To see this, we introduce the subgroup $K_N(n)$ consisting of elements  
\begin{align*}
k \in \begin{bmatrix}
\mathfrak{o} & \p^N&\p^N & \p^{-n}  \\
\p^{n+N}& 1 + \p^N & \p^N & \p^N\\
\p^{n+N}& \p^N &1+  \p^N &  \p^N\\
\p^n &\p^{n+N} &\p^{n+N} & \mathfrak{o}  \\
\end{bmatrix}
\end{align*} 
with $\mu(g) \equiv k_{11}k_{44} - k_{14} k_{41} \equiv  1 \pmod{\p^{2N}}$ for a fixed integer $N$ and an arbitrary $n \ge N$.
This is a subgroup of the original paramodular group of level $\p^n$ in \cite{R-S}.
Any open compact subgroup containing (\ref{eq:SL2o}) can contain $K_N(n)$ for some $N$ and $n$.
By the proof of Lemma 3.1.2. of loc. cit, we have the following lemma.
\begin{lem}
For different integers $n,n' \ge N$, the subgroups $K_N(n)$ and $K_{N}(n')$ generate a subgroup containing $\Sp_4(F)$.
\end{lem}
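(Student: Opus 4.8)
The plan is to adapt the proof of Lemma 3.1.2 of \cite{R-S}, and of the analogous lemma for complete paramodular groups proved above. Set $\Gamma = \langle K_N(n),K_N(n')\rangle$ and fix the diagonal maximal split torus of $\Sp_4(F)$, with respect to which the root system has type $C_2$; for a root $\alpha$ let $x_\alpha$ denote the corresponding root subgroup, realized inside $\GSp_4(F)$ in the standard way. Since $\Sp_4(F)$ is generated by its root subgroups, it suffices to show that $x_\alpha(F)\subseteq\Gamma$ for every root $\alpha$.

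First I would read off, from the congruence conditions defining $K_N(n)$, which parameters of each root subgroup already lie in $K_N(n)$: the long root in the $(1,4)$-slot contributes $x_{2e_1}(\p^{-n})$, its opposite contributes $x_{-2e_1}(\p^{n})$, while every other root subgroup — the long roots in the $(2,3)$- and $(3,2)$-slots, and the short roots in the off-diagonal Siegel and Levi slots together with their opposites — contributes the parameter ideal $\p^{N}$ or $\p^{n+N}$. Assuming $n<n'$ without loss of generality, the group $K_N(n')$ moreover contains $x_{2e_1}(\p^{-n'})$, a strictly larger subgroup.

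The second step is a bootstrap along the long-root $\SL_2$ in positions $1,4$. Taking a generator of $\p^{n}$, whose inverse lies in $\p^{-n}\subseteq\p^{-n'}$, the identity (\ref{eq:ufid}) applied to this $\SL_2$ exhibits the attached Weyl element as a product of one element of $x_{-2e_1}(\p^{n})\subseteq K_N(n)$ and two elements of $x_{2e_1}(\p^{-n'})\subseteq K_N(n')$, so it lies in $\Gamma$. Conjugating $x_{2e_1}(\p^{-n'})$ by it yields $x_{-2e_1}(\p^{2n-n'})$ with $2n-n'<n$; iterating, and using a generator of the enlarged ideal at each stage, replaces the exponent $a$ by $2a-n'$, which tends to $-\infty$, so $\Gamma\supseteq x_{-2e_1}(F)$, whence also $x_{2e_1}(F)$, the torus of this $\SL_2(F)$, and its Weyl element.

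Finally I would inflate the remaining root subgroups. Conjugation by the long-root torus just obtained scales each short-root subgroup by a power of the torus parameter, so the short-root subgroups with parameters in $\p^{N}$ or $\p^{n+N}$ are promoted to $x_\alpha(F)$ for every short root $\alpha$; the two remaining long-root subgroups $x_{\pm 2e_2}$, which that torus does not scale, are then recovered over all of $F$ as commutators of short-root subgroups, using the Chevalley relations. Hence $\Gamma$ contains every root subgroup and $\Sp_4(F)\subseteq\Gamma$. The main obstacle is the bookkeeping in the first and last steps: pinning down exactly which unipotents lie in $K_N(n)$ with which ideals of parameters, and verifying that conjugation by the long-root torus together with the commutator relations sweeps out every root subgroup over all of $F$, the congruence conditions on the ``$1+\p^{N}$'' diagonal entries playing no role. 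As in loc.\ cit., I expect this to be essentially formal once that inventory is set up.
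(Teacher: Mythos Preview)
Your proposal is correct and is precisely the argument the paper has in mind: the paper's proof is nothing more than a pointer to the proof of Lemma~3.1.2 of \cite{R-S}, and you have spelled out that bootstrap, correctly adapted to $K_N(n)$ by working in the long-root $\SL_2$ sitting in positions $(1,4)$, which is the only root $\SL_2$ whose Weyl-type element survives the congruence conditions when $N\ge 1$. Your recursion $a\mapsto 2a-n'$ and the final inflation of the short roots by the rank-one torus, followed by the commutator relation $[x_{e_1+e_2},x_{e_2-e_1}]\subset x_{2e_2}$ for the remaining long roots, are exactly the steps needed.
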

\nid
Immediately follows the linear independence of $K_N(n_j)$-fixed vectors in $\pi \in \Ir(G)$ for different $n_j$'s, the analogue of Theorem 3.1.3 of loc. cit, if $\pi$ has no nontrivial $\Sp_4(F)$-invariant vector.
Therefore by the proof of Proposition 3.4.2. of loc. cit, we have: 
\begin{prop}
If $\pi \in \Ir(G)$ is nongeneric supercuspidal, then $\pi$ has no $K_N(n)$-invariant vector for arbitrary $N$ and $n (\ge N)$.
In particular, $\pi$ satisfies the condition (\ddag).
\end{prop}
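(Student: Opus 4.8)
The plan is to derive the statement from the proof of Proposition 3.4.2 of \cite{R-S}, the one ingredient not yet at our disposal being the analogue, for the groups $K_N(n)$, of Theorem 3.1.3 of loc.\ cit. That analogue follows from the generation Lemma just proved once we know that $\pi$ has no nonzero $\Sp_4(F)$-fixed vector, so the first step is to check the latter. This is immediate: $\Sp_4(F)$ is normal in $G=\GSp_4(F)$ with abelian quotient $G/\Sp_4(F)\simeq F^\times$ via the similitude, hence the subspace of $\Sp_4(F)$-fixed vectors in $\pi$ is a $G$-submodule; by irreducibility it is either zero or the whole space, and in the second case $\pi$ would factor through $F^\times$ and be one-dimensional, contradicting that a supercuspidal representation is infinite-dimensional.

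Plugging this into the argument of Theorem 3.1.3 of \cite{R-S} — which uses nothing beyond the preceding generation Lemma — yields, for each fixed $N$: nonzero vectors of $\pi$ lying in the $K_N(n_j)$-fixed subspaces, for pairwise distinct $n_1,\dots,n_r\ge N$, are linearly independent; in particular no nonzero vector of $\pi$ is fixed by two distinct $K_N(n)$. I would then invoke that a nongeneric supercuspidal representation has no nonzero paramodular vector (by the results of \cite{R-S} and \cite{R-S2}: a tempered supercuspidal is paramodular precisely when it is the generic member of its $L$-packet). With these two facts, the contradiction argument in the proof of Proposition 3.4.2 of \cite{R-S} transcribes step by step, its use of Theorem 3.1.3 being replaced by the linear-independence statement above: a hypothetical nonzero $K_N(n_0)$-fixed vector is pushed, by averaging over $K(\p^{n_0})$ and by translating with suitable Hecke data and applying the generation Lemma, to either a nonzero paramodular vector or a nonzero $\Sp_4(F)$-fixed vector, both of which have just been excluded. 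Hence $\pi$ has no nonzero $K_N(n)$-fixed vector for any $N$ and $n\ge N$.

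For the final assertion, recall that any open compact subgroup of $G$ containing the subgroup (\ref{eq:SL2o}) contains some $K_N(n)$; thus a nonzero vector of $\pi$ fixed by (\ref{eq:SL2o}) would be fixed by some $K_N(n)$, which we have just ruled out, so $\pi$ satisfies (\ddag). The step that requires the most care — although the discussion preceding the statement presents it as routine — is the faithful transcription of the proof of Proposition 3.4.2 of \cite{R-S}, that is, checking at every point where Theorem 3.1.3 of loc.\ cit.\ is used that our linear-independence statement can take its place; the generation Lemma above is precisely what legitimizes this substitution.
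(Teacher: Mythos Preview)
Your overall plan matches the paper exactly: deduce linear independence of $K_N(n_j)$-fixed vectors (for distinct $n_j$) from the preceding generation Lemma, using that a supercuspidal $\pi$ has no nonzero $\Sp_4(F)$-fixed vector, and then invoke the proof of Proposition~3.4.2 of \cite{R-S}. The paper's own proof is literally the one-line reference ``by the proof of Proposition~3.4.2 of loc.\ cit.'', so you are on the right track.

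However, your description of what that proof actually does is off, and as written your sketch has a gap. The argument of Proposition~3.4.2 in \cite{R-S} does \emph{not} proceed by averaging down to a paramodular vector; the statement ``a nongeneric supercuspidal has no nonzero paramodular vector'' is the \emph{conclusion} of 3.4.2, not an input, so invoking it as a separate ingredient is circular. Concretely, averaging a $K_N(n_0)$-fixed vector over $K(\p^{n_0})$ may well give zero, and you do not say what to do then. The genuine mechanism of 3.4.2 is: for $\pi$ supercuspidal and nongeneric one has $V_{Z^J}=0$ (the relevant $P_3$-quotient vanishes), so every vector lies in $V(Z^J)$; for a vector fixed by $K_N(n_0)$ this yields, after integrating over larger and larger pieces of $Z^J$, a nontrivial linear relation among its images under the $\eta$-type translates, which are fixed by $K_N(n)$ for pairwise distinct $n$. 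Your linear-independence statement then forces the original vector to be zero. When you transcribe 3.4.2 faithfully in this way, no appeal to ``no paramodular vector'' is needed.
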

\nid
The groups $K_2, K_3^{\flat}$ obviously contain a group conjugate to the subgroup (\ref{eq:SL2o}), if $E$ is in the case R-i).
In the case R-ii), they also contain a group conjugate by the element $u_\alpha$ (c.f. (\ref{eq:ualpha})) of the subgroup.
Hence, they do not define the strict minimal subspace in the case R.
If $K = K_1$ with $E$ in the case U-i), under the assumption (\ddag) we can compute $Z(s,\beta)$ as follows.
We may assume that $\beta \in \B_{1,\kappa}$.
By (\ddag), the heart of $T \beta$ is equal to 
\begin{align}
\begin{split}
\sum_{r \in R/\p R} \beta(\hv^{i-1}\bn_{r}) 
&= \sum_{x,y,z \in \F} \beta(\hv^{i-1}\begin{bmatrix}
1& & &  \\
&1 & & \\
x&y &1 &  \\
z&x & &1  \\
\end{bmatrix})\\
& = \sum_{x\in \F} \beta(\hv^{i-1}\bn_{x}J) \\
&= q \beta(\hv^{i-1}J)\\
&= q \kappa c_i
\end{split}
\label{eq:HTass}
\end{align} 
Therefore, we obtain a recursion formula:
\begin{align*}
q^3 c_{i+1} = (\lambda + q \kappa )c_i, \ \  i \ge 0. 
\end{align*} 
Further we can find that $\pi$ is not supercuspidal as follows.
\begin{prop}\label{prop:supdag}
Let $E$ be in the case U-i).
Assume (\ddag) and that $\pi$ is supercuspidal.
Then $\B_1 = \{ 0\}$.
\end{prop}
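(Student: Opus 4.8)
The plan is to suppose $\B_1 \neq \{0\}$ and deduce that $\pi$ cannot be supercuspidal. First I would reduce to an eigenvector: $K_1 = \Gamma_0(\p)$ is normalized by $\imath$ and by $w_0'$, these two elements commute on the $\PGSp_4$-representation $\pi$, and $T$ commutes with both, so $\B_1 = \B_\s(\pi)^{K_1}$ has a nonzero common eigenvector $\beta$, say $\pi(\imath)\beta = \ep\beta$, $\pi(w_0')\beta = \kappa\beta$ with $\ep,\kappa \in \{\pm 1\}$, and $T\beta = \lambda\beta$ with $\lambda \in \Rb$ (here $T$ is self-adjoint and $\pi$, being supercuspidal with trivial central character, is unitary). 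Then $\beta$ is a nonsplit paramodular form of sign $\ep$ and principal level $0$ lying in $\B_{1,\kappa}$, so the recursion just obtained applies: writing $c_i = \beta(\hv^i)$ we have $q^3 c_{i+1} = (\lambda + q\kappa)c_i$ for $i \ge 0$, whence (using $Z(s,\beta) \in \C[[X]]$)
\begin{align*}
Z(s,\beta) = \sum_{i\ge 0} c_i (qX)^i = \frac{c_0}{1 - q^{-2}(\lambda+q\kappa)X}.
\end{align*}
Next I would show $c_0 = \beta(1) \neq 0$: since $\beta$ is a nonzero paramodular form of a sign, Theorem~\ref{thm:paravn} gives $Z(s,\beta^*) \neq 0$, while feeding Lemma~\ref{lem:PSzeta}~iv) into (\ref{eq:PSzeta}) and using $\zeta_E(s+\tfrac12) = (1-X'^2)^{-1}$ yields $Z(s,\beta^*) = -q^{-2}(1 + \kappa X' + X'^2)\,Z(s,\beta)$, so $Z(s,\beta) \neq 0$ and hence $c_0 \neq 0$.

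The cleanest way to finish is then the following. The subgroup $K_1 = \Gamma_0(\p)$ is the parahoric subgroup of $\GSp_4(F)$ whose reduction modulo $\p$ is the Siegel parabolic, and in particular it contains an Iwahori subgroup $I$; since $0 \neq \beta \in \pi^{K_1} \subseteq \pi^{I}$, the irreducible representation $\pi$ has a nonzero Iwahori-fixed vector, hence is a constituent of an unramified principal series, and therefore not supercuspidal — contradiction. This argument needs neither the recursion nor (\ddag); I record it because it is the shortest route to the statement.

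Since the section has just built up the Hecke recursion, I would also give the self-contained variant it is evidently meant to feed into. In all cases $c_{-1}=0$, and by $w_1 = \hv^{-1}w_0'$ together with (\ref{eqn:e_m}) one gets $e_0\beta(\hv^i) = (q^2+1)^{-1}(\kappa c_{i-1} + q^2 c_i)$, so $e_0\beta(1) = q^2 c_0/(q^2+1) \neq 0$; thus $0 \neq e_0\beta \in \B_2 = \B_\s(\pi)^{K_2}$, and (when $\lambda+q\kappa = 0$) its restriction to $\{\hv^i\}$ is supported on $\{\hv^0,\hv^1\}$, while in general it again decays geometrically with ratio $q^{-2}(\lambda+q\kappa)$. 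One then repeats this one principal level up: the recursion propagates, forcing $\pi$ to carry, at arbitrarily high paramodular level, a form whose behaviour along $\{\hv^i\}$ is governed by a single scalar — which is incompatible with the vanishing of paramodular subspaces of a supercuspidal $\pi$ at high level, equivalently with $\pi^I = 0$. The main obstacle is exactly this last step: the recursion by itself only constrains the eigenvalue $\lambda$ to a finite set (those making the single pole of $Z(s,\beta)$ lie among the admissible poles of $L^{\mathrm{reg}}(s,\pi)$ for supercuspidal $\pi$), so one must either invoke the parahoric input of the previous paragraph or, under (\ddag), combine the propagation of the recursion with an a priori bound on the paramodular level of $\pi$ to reach the contradiction.
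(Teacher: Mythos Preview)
Your first argument is correct and is genuinely different from the paper's proof. Observing that $K_1=\Gamma_0(\p)$ contains an Iwahori subgroup and invoking Borel's theorem that an irreducible admissible representation with an Iwahori-fixed vector is a constituent of an unramified principal series is the shortest route to the conclusion; it needs neither the Hecke recursion nor the hypothesis~(\ddag). The paper instead stays inside its Hecke framework, and that is where the difference lies.

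Your second argument, however, has a real gap at exactly the point you flag. Using only the combined operator $T=T_K^++T_K^-$ you obtain a single recursion, and after invoking supercuspidality (which forces $Z(s,\beta)$ to be a Laurent polynomial, since $V_N=0$ gives $V_{N,\Lambda}=0$ and hence $L^{reg}(s,\pi)=1$) you can only pin down $\lambda$; you are left with $c_0\neq 0$ and $c_i=0$ for $i\ge 1$, which is not yet a contradiction. The attempted ``propagation'' to higher levels does not close this gap, and your appeal to $\pi^I=0$ at the end is just Argument~1 again.

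The paper closes this gap differently: it takes $\beta'$ to be an eigenvector for $T_M^+$ alone with eigenvalue $\mu$. Because $w_0'$ conjugates $K_1\hv K_1$ to $K_1\hv^{-1}K_1$ up to the center and $\pi(w_0')\beta'=\kappa\beta'$, this forces $T_M^-\beta'=\mu\beta'$ as well, with the \emph{same} $\mu$. Evaluating each separately at $\hv^i$ yields two independent relations,
\[
\mu c_i = q^{3}c_{i+1}\quad\text{and}\quad \mu c_i = \kappa q\,c_i,
\]
the second coming from the heart computation~(\ref{eq:HTass}) (which uses~(\ddag)). If some $c_i\neq 0$ the second gives $\mu=\kappa q$, and then the first makes $c_i$ a nonvanishing geometric sequence, contradicting the compact support on $\hat{F}^\times$ forced by supercuspidality. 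Hence all $c_i=0$, contradicting Theorem~\ref{thm:paravn}. The missing idea in your Hecke approach is precisely this separation of $T_M^+$ and $T_M^-$.
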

\begin{proof}
Assuming that there exists nontrivial $\beta' \in \B_{1,\kappa}$ for $\kappa \in \{ \pm \}$, we will derive a contradiction.
Let $M = K_1$.
Define Hecke operators $T_M^\pm$ similar to $T_K^\pm$.
Since they commute with $\pi(w_0')$, it holds that, if $T_M^+ \beta' =\mu \beta'$ for some $\mu \in \C$, then $T_M^- \beta' = \mu \beta'$.
We can assume $\beta' \in \B_{1,\kappa}$ is an eigenvector, and it holds that $\mu \beta'(\hv^i) = q^3 \beta'(\hv^{i+1})$ and that $\mu \beta'(\hv^i) = q \beta'(\hv^{i-1} J) = \kappa q \beta'(\hv^i)$ by the computation (\ref{eq:HTass}).
Now from the supercuspidality assumption, it follows that $\mu = 0$ and $\beta'(\hv^i) = 0, i \ge 0$.
But this conflicts to Theorem \ref{thm:paravn}.
\end{proof}
Summarizing, for unitary $\pi$, we have obtained the table of zetas of a strict newform $\beta \in \B_\s(\pi)$.
\begin{table}[H]
\caption{Zetas of strict newforms (Case U)}
\label{tab:ZU}
\begin{tabular}{lccc}\toprule
st. min. sp. & $Z(s,\beta)$ & $Z_m(s,\beta)$ & $Z_{m+1}(s,\beta)$ \\ \midrule
$\B_{2m(\ge 2)} $ & $1$ & $1/(1- X'^2)$  & $(1+ X^2)/(1- X'^2)$ \\ 
$\B_{2m+1 (\ge 3)}^\flat$ & $1$ &  0& $1$ \\ 
$\B_{2m+1 (\ge 3),\kappa}$ & 1 & $1/(1-\kappa X'^{-1})$ & $1/(1 -\kappa X')$ \\ 
$\B_{1,\kappa}$ (U-i) with (\ddag)) & $1/(1- (\lambda - \kappa q)q^{-2} X)$ & $Z/(1-\kappa X'^{-1})$ & $Z/(1 -\kappa X')$ \\ 
$\B_{2m+1,\kappa}^\sharp$ ($m \ge 1$, if U-ii)) & $P(X)/f_{\lambda_\kappa}(X)$ & $Z/(1-\kappa X'^{-1})$ & $Z/(1 -\kappa X')$ \\ 
\bottomrule
\end{tabular}
\end{table}
Here we denote $Z = Z(s,\beta), Z^* = Z(s,\beta^*)$, and normalize all zetas suitably.
\begin{table}[H]
\caption{Zetas of strict newforms (Case R)}
\label{tab:ZR}
\begin{tabular}{lccc}\toprule
st. min. sp. & $Z(s,\beta)$ & $Z_m(s,\beta)$ & $Z_{m+1}(s,\beta)$ \\ \midrule
$\B_{2}$ (\ddag) & --- & --- & --- \\ 
$\B_{3}^{\flat}$ (\ddag) & --- & --- & --- \\ 
$\B_{2m+1 (\ge 5)}^{\flat}$ & $1$ & 0 & $1$ \\
$\B_{2m+1(\ge 5)}$ & 1 & $ (1 + c^*_{-1}X^{-1})/(1- X')$ & $(1 + q^{-1}c^*_{-1})/(1- X')$ \\ 
$\B_{2m+1}^{\sharp}$ & $Q(X)/\Delta_\lambda(X)$ &$(Z + q Z^*)/(1 - X')$ &$(Z + X Z^*)/(1 - X')$  \\ 
\bottomrule
\end{tabular}
\end{table}
The following is an immediate consequence of (\ref{eq:FNEQP}), and the above tables.
\begin{prop}\label{prop:negflat}
Let $\pi$ be unitary with $L(s,\pi)^{-1} \in \C[X]$ having no sign.
Then $\B_{2m+1}^{\flat,\ep}$ is not the strict minimal space of sign $\ep$ for $m \ge 1$ (resp. $\ge 2$) if $E$ is in the case U (resp. R).
\end{prop}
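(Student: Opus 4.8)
The plan is to argue by contradiction: suppose $\B_{2m+1}^{\flat,\ep}$ is the strict minimal space of sign $\ep$ for some $m\ge 1$ (resp.\ $m\ge 2$), and let $\beta$ be a nonzero strict newform in it. First I would invoke Lemma \ref{lem:flatmin}, which applies precisely because $\beta$ is a strict newform in a $\flat$-space: it gives $Z_m(s,\beta)=0$ and $Z_{m+1}(s,\beta)=Z(s,\beta)/(1+q^{\f})\neq 0$. In particular $Z(s,\beta)\neq 0$, so $P_{m+1}(X,\beta)=Z_{m+1}(s,\beta)/L(s,\pi)$ is a nonzero element of $\C[X^{\pm}]$ (indeed a polynomial in $X$, by \eqref{eq:ZbZ} and \eqref{eq:PSzeta}), while $P_m(X,\beta)=0$.

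Next I would apply the functional-equation consequence, Lemma \ref{lem:sdzp}, to the nonzero zeta polynomial $P_{m+1}(X,\beta)$: since $\beta$ has sign $\ep$, the polynomial $P_{m+1}(X,\beta)$ has sign $\ep\ep_\pi$, i.e.\ $X^{\mathrm{dia}\,P_{m+1}}P_{m+1}(X^{-1},\beta)=\ep\ep_\pi P_{m+1}(X,\beta)$. Now here is the point where the hypothesis on $L(s,\pi)$ enters. From Lemma \ref{lem:flatmin} we have the explicit shape $Z_{m+1}(s,\beta)=Z(s,\beta)/(1+q^{\f})$, so $P_{m+1}(X,\beta)=Z(s,\beta)/\bigl((1+q^{\f})L(s,\pi)\bigr)$, and the tables (Tables \ref{tab:ZU}, \ref{tab:ZR}) record that for a strict newform in a $\flat$-space $Z(s,\beta)$ is (a scalar multiple of) $L^{reg}(s,\pi)$ up to the elementary factor $(1-X')$ appearing there. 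Tracing this through, $P_{m+1}(X,\beta)$ is, up to a nonzero scalar, the ratio $L^{reg}(s,\pi)/L(s,\pi)$ times a factor of the form recorded in \eqref{eq:divLL}; in particular $P_{m+1}(X,\beta)$ being a polynomial with a definite sign forces $L(s,\pi)^{-1}$ itself to carry a sign, contradicting the hypothesis. To make this rigorous I would instead package it more cleanly: compare $P_m$ and $P_{m+1}$ via \eqref{eq:PSzeta}, namely $Z_m$ and $Z_{m+1}$ are obtained from the pair $(Z,Z^*)=(Z(s,\beta),Z(s,\beta^*))$ by $Z+q^{\f}X\cdot(\text{shift})$ type combinations, and use that $P_m(X,\beta)=0$ together with Lemma \ref{lem:Z*Z} (so $Z(s,\beta^*)=0$ as well, since $\beta$ has a sign) to pin down $Z(s,\beta)$ as a scalar multiple of $L(s,\pi)$ divided by $(1-X'^{\f})$ or $(1-X')$. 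Then $P_{m+1}(X,\beta)$ is a scalar multiple of $(1\pm X^{\f})/(1\mp X'^{\f})\cdot\bigl(\text{unit}\bigr)$ — a palindromic object — and comparing its sign $\ep\ep_\pi$ against \eqref{eq:FNEQP} applied one level down shows that $X^{n_\pi}L(X,\pi)^{-1}\big/L(X^{-1},\pi)^{-1}$ would be $\pm 1$, i.e.\ $L(s,\pi)^{-1}$ has a sign, the desired contradiction.

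Carrying this out for both residue cases is essentially the same computation with $\f$ replaced by $1$, which is why the proof should say ``since the arguments are parallel, we treat only the case $\mathrm{U}$'' as elsewhere in the paper. The main obstacle I anticipate is bookkeeping rather than conceptual: one must be careful that the explicit shape of $Z(s,\beta)$ coming from Lemma \ref{lem:flatmin} and the relevant row of Table \ref{tab:ZU} or \ref{tab:ZR} genuinely forces $P_{m+1}(X,\beta)$ to be, up to a unit in $\C[X^{\pm}]$, a symmetric polynomial whose symmetry type is $\ep\ep_\pi$, so that the hypothesized absence of a sign on $L(s,\pi)^{-1}$ is violated; the ranges $m\ge 1$ (case U) and $m\ge 2$ (case R) are exactly the ranges in which Lemma \ref{lem:flatmin} and the decomposition \eqref{eq:decK} are available, so no edge case is lost. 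Once the sign/diameter contradiction is isolated, the proof is short.
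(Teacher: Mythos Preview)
Your framing is right: assume $\beta\in\B_{2m+1}^{\flat,\ep}$ is a strict newform, invoke Lemma~\ref{lem:flatmin} to get $Z_m(s,\beta)=0$ and $Z_{m+1}(s,\beta)=Z(s,\beta)/(1+q^{\f})\neq 0$, then apply Lemma~\ref{lem:sdzp} to $P_{m+1}(X,\beta)$. But you misread the tables at the crucial step. For a strict newform in $\B_{2m+1(\ge 3)}^{\flat}$ (case U) or $\B_{2m+1(\ge 5)}^{\flat}$ (case R), Tables~\ref{tab:ZU} and~\ref{tab:ZR} record $Z(s,\beta)\app 1$, a nonzero \emph{constant}, not $L^{reg}(s,\pi)$. (This is what the Hecke computation preceding the tables establishes: $U_K\beta$ lies in the trivial space $\B_\s(\pi)^L$, forcing $c_i=0$ for $i\ge 1$, hence $Z(s,\beta)=c_0$.) Consequently $Z_{m+1}(s,\beta)\app 1$ and
\[
P_{m+1}(X,\beta)=\frac{Z_{m+1}(s,\beta)}{L(s,\pi)}\app L(s,\pi)^{-1}.
\]
Now Lemma~\ref{lem:sdzp} (equivalently \eqref{eq:FNEQP}) says this nonzero Laurent polynomial has sign $\ep\ep_\pi$. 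Hence $L(s,\pi)^{-1}$ has a sign, contradicting the hypothesis. That is the entire argument; the paper says as much in one line.

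Your alternative ``cleaner'' packaging also contains an error: you invoke Lemma~\ref{lem:Z*Z} to deduce $Z(s,\beta^*)=0$ from $P_m(X,\beta)=0$, but the implication in that lemma goes the other way ($Z^*=0\Rightarrow Z=0$). From $Z_m(s,\beta)=0$ one only gets $Z+q^{\f}Z^*=0$ via \eqref{eq:PSzeta}, which does not give $Z^*=0$ separately. This detour is unnecessary once you read off $Z(s,\beta)\app 1$ from the table.
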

Finally, we complete the proof of Theorem \ref{thm:unrcase} in the case U-ii) using a Hecke operator.
We do not assume the unitarity of $\pi$.
Assume that $K = K_1^\flat$.
Then, there exists a nontrivial eigenvector $\beta \in \B_1^{\flat,\ep}$ for $T_K^+$.
It is easy to see that $\lambda_+ \beta(\hv^{i}) = q^3 \beta(\hv^{i+1})$ where $\lambda_+$ indicates the eigenvalue for $T_K^+$.
It follows from Lemma \ref{lem:flatmin} and Corollary \ref{cor:Z} that
\begin{align}
P_1(X,\beta) = \frac{\beta(1) L(s,\pi)^{-1}}{(1+q^2)(1- \lambda_+ q^{-2}X)} \neq 0. \label{eq:U2flat}
\end{align} 
By Lemma \ref{lem:sdzp}, its diameter is $2 - n_\pi$. 
Since the degree of $L(s,\pi)^{-1} \in \C[X]$ is four when $\pi$ is unramified, both $\B_1^{\flat,\pm}$ are not the strict minimal spaces of unramified representations.
Hence, for the theorem, it suffices to show that there exists a nontrivial vector in $\B_1^{\flat,+}$ by Lemma \ref{lem:m>n}.
We will construct such a form from the unique $G(\mathfrak{o})$-invariant form $\beta_0 \in \B_\s(\pi)^{G(\mathfrak{o})}$.
\begin{lem}\label{lem:unrconst}
$\beta_0(1) \neq 0$.
\end{lem}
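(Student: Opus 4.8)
The plan is to realize $\pi$ and its (unique) special Bessel model via the theta correspondence and to evaluate the spherical vector at the identity by an explicit unramified integral. By the part of Theorem \ref{thm:unrcase} already established (namely $n_\pi=0$, $\ep_\pi=+$, $L(s,\pi)=L(s,\phi_\pi)$, which hold also in case U-ii)), an unramified $\pi$ with $\B_\s(\pi)\neq\{0\}$ is of the form $\theta(\tau^1\bt\tau^2)$ for unramified $\tau^1,\tau^2\in\Ir(\PGL_2(F))$, and by Lemma \ref{lem:G-surj} applied with $\Lambda=\1$ its Bessel model is realized as $\Xi(\1_\s^\psi)$, the span of the theta integrals $\xi_f$ ($f\in\Ss(Z)$, $Z=B^2$, $B=\mathrm{M}_2(F)$) paired against a product $\xi=\xi_1\bt\xi_2$ of spherical vectors of the (necessarily nonzero, unramified) Waldspurger models $\Ts_\1(\tau^i)$, normalized so that $\xi_i(1)=1$. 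I would then take $f_0=\Ch(\mathcal L)$ for a self-dual lattice $\mathcal L\subset Z$ with respect to the symmetric form $Tr(b_1^*b_2/2)$ containing $z_0=(1,\sqrt e)$; then $f_0$ is fixed by the Weil action of $\Sp_4(\mathfrak o)\times O(\mathcal L)$ and, through the extension of $w_\psi$ to $\Rc_0$, the function $\xi_{f_0}$ is $G(\mathfrak o)$-invariant, hence a scalar multiple of $\beta_0$. It therefore suffices to prove $\xi_{f_0}(1)\neq0$.

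For the evaluation, formula (\ref{eq:xivp}) gives $\xi_{f_0}(1)=\int_{E^\times\bs SO_B}f_0(h^{-1}\cdot z_0)\,\xi(h)\,d\dot h$. Since $\Lambda=\1$, the function $\xi$ descends to the orbit $X:=SO_B\cdot z_0\simeq E^\times\bs SO_B$, so the integrand becomes $\Ch(\mathcal L\cap X)\cdot\xi|_X$. The set $\mathcal L\cap X$ is stable under $GSO_B(\mathfrak o)$, which acts on $X$ leaving $\xi|_X$ constant on its orbits, and $\xi$ does not vanish at the base point, $\xi(z_0)=\xi_1(1)\xi_2(1)=1$. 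The cleanest way to conclude is to identify $\xi_{f_0}(1)$ --- an unramified local integral built from the Weil representation and two spherical Waldspurger vectors --- with an explicit ratio of local $L$-factors of $\tau^1\times\tau^2$, by the standard unramified computation in the style of Piatetski-Shapiro and Soudry already invoked in Section \ref{sec:NFsp}; this ratio is manifestly nonzero. Hence $\xi_{f_0}(1)\neq0$, so the scalar relating $\xi_{f_0}$ to $\beta_0$ is nonzero and $\beta_0(1)\neq0$.

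As a possible shortcut avoiding the orbit integral, once one knows $\xi_{f_0}=c\,\beta_0$ (or, more elementarily, just that $\beta_0$ has sign $+$, since $\imath\in G(\mathfrak o)$, and that $Z_0(s,\beta_0)\neq0$), one can feed $\ep_\pi=+$, $n_\pi=0$ and $L(s,\pi)=L(s,\phi_\pi)$ into the local functional equation of Proposition \ref{prop:FE} together with $\vp_0^\sharp=\vp_0$ (valid because $E/F$ is unramified in case U) to see that $Z_0(s,\beta_0)/L(s,\pi)$ is a nonzero element of $\C[X^\pm]$, invariant under $X\mapsto X^{-1}$ and lying in $\C[[X]]$, hence a nonzero constant; then $Z(s,\beta_0)=c'L(s,\pi)\zeta_E(s+1/2)^{-1}=c'L(s,\pi)(1-(X')^2)$, and comparing constant terms of the two sides expanded as power series in $q^{-s}$ --- on the left using the $\hat{\mathfrak o}^\times$-invariance of $\beta_0$, which kills the negative powers --- yields $\vol(\hat{\mathfrak o}^\times)\beta_0(1)=c'\neq0$.

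The main obstacle is the dyadic bookkeeping peculiar to case U-ii): one must pin down the correct self-dual lattice $\mathcal L$ (it is not $\mathrm{M}_2(\mathfrak o)^2$, since $Tr(b_1^*b_2/2)$ fails to be unimodular on $\mathrm{M}_2(\mathfrak o)$ when $F$ is dyadic), control the non-hyperspecial compact $O(\mathcal L)$ and the group $GSO_B(\mathfrak o)$, and analyze the $GSO_B(\mathfrak o)$-orbit decomposition of $\mathcal L\cap X$ finely enough to rule out cancellation in the orbit integral. In the shortcut approach the analogous difficulty is that the convenient inclusion $K_0\subset G(\mathfrak o)$ fails in case U-ii), so the identity $Z_0(s,\beta_0)=c'L(s,\pi)$ cannot be read off from Corollary \ref{cor:Z} and Lemma \ref{lem:PSzeta} iii) and must instead be obtained by working directly with $\vp_0$ and the measure decomposition preceding (\ref{eq:PSzeta}).
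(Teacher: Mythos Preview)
Your approach is genuinely different from the paper's and considerably more elaborate. The paper argues in a few lines: it realizes $\beta_0$ as the torus average
\[
I_f(g)=\vol(E^\times/F^\times)^{-1}\int_{E^\times/F^\times} f(tg)\,d^\times t
\]
of the $G(\mathfrak o)$-spherical function $f$ taken in an explicit model of $\pi$ --- the unramified Whittaker function $W_0$ of \cite{R-S}, Section~7.1, when $\pi$ is generic, and the induced-model vector $\delta$ of Section~\ref{sec:NFsp} when $\pi$ lies in a Saito--Kurokawa packet. An elementary computation then gives $I_f(1)=f(1)$, and $f(1)\neq0$ by the standard normalizations ($W_0(1)=1$, respectively $\delta(1)=\omega(1)=1$). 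No theta correspondence, no orbit integral on $E^\times\backslash SO_B$, no appeal to the functional equation.

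Your theta-lift route is plausible in outline, but both versions you sketch leave the case U-ii) --- which is precisely the case the lemma is needed for --- unfinished. In the main approach the decisive step, the nonvanishing of $\xi_{f_0}(1)$, is deferred to an unspecified ``standard unramified computation in the style of Piatetski--Shapiro and Soudry''; the dyadic self-dual-lattice issue you flag is real and would have to be resolved before this can be claimed. In the shortcut you need $Z_0(s,\beta_0)\neq0$ as input before you can run the functional-equation argument, but since $\beta_0$ is $G(\mathfrak o)$-invariant rather than $K_0$-invariant in case U-ii), Corollary~\ref{cor:Z} and Lemma~\ref{lem:PSzeta}~iii) do not apply to it directly; and the device the paper uses to pass from $\beta_0$ to a genuine paramodular form in $\B_1^{\flat,+}$ (the paragraph immediately following the lemma) \emph{uses} $\beta_0(1)\neq0$, so you cannot borrow it here without circularity. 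The paper's argument avoids all of this by working in a model where the spherical value at $1$ is already known.
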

\begin{proof}
We can construct such a $\beta_0$ by the integral 
\begin{align*}
I_f(g) := \vol(E^\times/F^\times)^{-1}\int_{E^\times/F^\times} f(tg) d^\times t
\end{align*}
where $f$ is a $G(\mathfrak{o})$-invariant function on $G$. 
By an elementary computation, 
\begin{align*}
I_f(1) = f(1).
\end{align*} 
If $\pi$ is generic, then let $f$ be the Whittaker function $W_0$ given in 7.1. of \cite{R-S}.
If $\pi$ is in a SK-packet, then let $f$ be the function $\delta$ used in sect. \ref{sec:NFsp}.
\end{proof}
\noindent
For $\beta_0$ given in this lemma, observe the integral: 
\begin{align*}
\beta'& := \vol(K_1)^{-1} \int_{K_1} \pi(k)\beta_0 dk \\
& =  \vol(\mathfrak{O}^\times)^{-1} \vol(\mathfrak{O}/2)^{-1}\int_{2^{-1}\mathfrak{O}}\int_{\mathfrak{O}^\times} \pi(n_u a_t) \beta_0 dt du.
\end{align*}  
It is easy to see that $\beta'(1) = \beta_0(1) \neq 0$.
For $x \in \mathfrak{o}, u \in 2^{-1} \mathfrak{O}$ and $t \in \mathfrak{O}^\times$, we calculate
\begin{align*}
\bn_{4 x \h}\langle n_u a_t\rangle 
&=  \begin{bmatrix}
1 + 4 x u \h \langle t^c \rangle & - 4x u \h \langle t^c \rangle u  \\
4x \h \langle t^c \rangle & 1 - 4x \h \langle t^c \rangle u
\end{bmatrix} \\
& = n_{4x u \h \langle t^c \rangle u d^{-1}} \cdot a_{d'} \cdot \bn_{d^{-1}4x \h \langle t^c \rangle} \\& \in N_{K_1} \cdot A_{K_1} \cdot \bar{N}_{G(\mathfrak{o})}.
\end{align*}
Here $d = 1 - 4x \h \langle t^c \rangle u \in R^\times$.
Therefore, $\beta'$ is the desired nonzero vector in $\B_1^{\flat,+}$.
\section{Local newform (nonsplit case)}\label{sec:NFN}
Throughout this section, let $\tau \in \Ir'(\PGL_2(F))$.
Let $\pi$ be in the SK-packet of $\tau$.
Let $\s$ be a Hankel matrix in the form of (\ref{eq:defsigma}), and $E = E_\s$ be in the nonsplit case.
For $\beta \in \B_\s(\pi)$, let 
\begin{align*}
g_\beta(X) = \frac{Z(s,\beta)}{L^{reg}(s,\pi)} \in \C[X,X^{-1}]. 
\end{align*} 
We will drop the superscript $\ep$ from the paramodular space of sign $\ep$, when the sign is clear from the context.
For nontrivial formal Laurent series $f(X)$ and $g(X)$ in $\C((X))$, we will write 
\begin{eqnarray*}
f(X) \app g(X)
\end{eqnarray*}
if $f/g \in \C^\times$.
We will use the result 
\begin{align}
\frac{L^{reg}(s,\pi)}{L(s,\tau) }= \begin{cases}
(1-X)^{-1} & \mbox{if $\pi = SK(\tau)$,} \\
1 & \mbox{if $\pi = SK(\tau^{JL})$}
\end{cases} \label{eq:LregSK}
\end{align} 
(c.f. Table 5 of \cite{Sc-T}).
\subsection{$SK(\tau)$}\label{subsec:SKtau}
If $n_\tau = 1$, then $\tau$ is $St$ or $\chi St$, where $\chi$ is the nontrivial unramified quadratic character of $F^\times$, and therefore, the Waldspurger model $\Ts_{\1}(\tau)$ and the Bessel model $\B_\s(\pi)$ are zero by Corollary 4.7.2 of \cite{R-S2}, Proposition 1.7 of \cite{T}.
For this reason, we may assume that  
\begin{align}
\begin{split}
& n_\tau \ge 2, \ L(s,\tau) = 1, \\ 
& L^{reg}(s,\pi) = \frac{1}{1-X}, \  
Z(s,\beta) = \frac{g_\beta(X)}{1-X}.
\end{split} \label{eq:gbSKU}
\end{align} 
when $SK(\tau)$ is ramified.
For $\pi = SK(\tau)$, we will prove:
\begin{thm}\label{thm:SKrepn}
Let $\s$ be a Hankel matrix in the form of (\ref{eq:defsigma}), and $E = E_\s$ be in the nonsplit case.
Then, the followings are true.
\begin{enumerate}[i)]
\item Assume that $\B_\s(\pi) \neq \{ 0\}$.
In the following cases, the strict minimal subspace of sign $\ep_\tau$ is
\begin{align*}
\begin{cases}
\B_0^+ & \mbox{if $n_\tau = 0$}, \\
\B_{{n_\tau}-1,+}^{\sharp,\ep_\tau} & \mbox{if $n_\tau \ge 2$ is even, and $E$ is in the case U, }  \\
\B_{2n_\tau-1}^{\sharp,\ep_\tau} & \mbox{if $n_\tau \ge 2$, and $E$ is in the case R.} 
\end{cases}
\end{align*} 
In these cases, the strict minimal space is one-dimensional and spanned by $\beta$ with the properties    
\begin{align}
Z_{\frac{n_\tau}{\f}}(s,\beta) &\app L(s,\pi) (= L(s,\phi_\pi)), \label{eq:ZLSK} \\
\frac{Z(s,\beta)}{L^{reg}(s,\pi)} &\app 
\begin{cases}
1+ X'& \mbox{if $n_\tau = 0$, and $E$ is in the case U,} \\
1 & \mbox{otherwise.}
\end{cases}
\end{align}
\item There is no paramodular form of sign $-\ep_\tau$ in $\B_\s(\pi)$.
\item If $n_\tau$ is odd and $E_\s$ is in the case U, then $\B_\s(\pi) = \{0 \}$.
\end{enumerate}
\end{thm}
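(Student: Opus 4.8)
The plan is to reduce to the ramified case and then read off the strict minimal space from the Hecke computations of Section \ref{sec:Hecke}, using the rigidity imposed by the functional equation. First the reductions: if $n_\tau = 0$ then $\pi = SK(\tau)$ is unramified (and $\ep_\tau = +$), so all three claims are Theorem \ref{thm:unrcase} together with its case U-ii) completion at the end of Section \ref{sec:Hecke}; if $n_\tau = 1$ then $\tau$ is $St$ or its unramified quadratic twist, and by Corollary 4.7.2 of \cite{R-S2} and Proposition 1.7 of \cite{T} the Waldspurger model $\Ts_\1(\tau)$, hence $\B_\s(\pi)$, is zero, so there is nothing to prove. So we may assume $n_\tau \ge 2$, whence $L(s,\tau) = 1$; Theorem \ref{thm:LSK} then gives $\ep_\pi = \ep_\tau$, $n_\pi = n_\tau$, $L^{reg}(s,\pi) = (1-X)^{-1}$ and $L(s,\pi) = L(s,\phi_\pi) = \big((1-X)(1-X')\big)^{-1}$, and in particular $L(s,\pi)^{-1} = (1-X)(1-X')$ has no sign in the sense of Section \ref{sec:NPG}.

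The heart of the argument is then a finite case check over the possible strict minimal spaces, i.e.\ over the rows of Tables \ref{tab:ZU} and \ref{tab:ZR}. For a row whose underlying paramodular subspace has principal level $m$, I would compute $P_m(X,\beta) = Z_m(s,\beta)/L(s,\pi)$ and $P_{m+1}(X,\beta)$ from the tabulated $Z(s,\beta)$ and $Z(s,\beta^*)$, and discard the row unless both lie in $\C[X^\pm]$ and whichever of them is nonzero obeys the sign/diameter relation of Lemma \ref{lem:sdzp} with $\ep_\pi = \ep_\tau$ and $n_\pi = n_\tau$; this is where the parity of $n_\tau$ enters, and it yields part (iii), because in case U one has $\f = 2$, so an odd $n_\tau$ forces the target diameter of $P_{m+1}(X,\beta)$ to be odd, whereas the surviving row forces $P_{m+1}(X,\beta)$ to be a nonzero constant, a contradiction. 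Proposition \ref{prop:negflat} kills the ``$\B^{\flat}$'' rows since $L(s,\pi)^{-1}$ has no sign; the estimate $(\ref{eq:m>npre})$ together with Proposition \ref{prop:supdag}, the remark that $K_2$ and $K_3^{\flat}$ contain a conjugate of the subgroup $(\ref{eq:SL2o})$ when $\pi$ satisfies $(\ddag)$, and the paramodular/Klingen conductor tables of \cite{R-S} when it does not, kill the remaining small-level case R rows. What survives is the ``$\B_{2m+1}^{\sharp}$'' row, and imposing integrality there forces the denominator polynomial $f_\lambda$ (resp.\ $\Delta_\lambda$ in case R) to equal $(1-X)(1-X')$; this pins the Hecke eigenvalue, gives $Z(s,\beta)\app L^{reg}(s,\pi)$, fixes the principal level to the value for which the strict minimal space is $\B_{n_\tau-1,+}^{\sharp,\ep_\tau}$ (case U) or $\B_{2n_\tau-1}^{\sharp,\ep_\tau}$ (case R), and forces the sign to $\ep_\tau$ via Lemma \ref{lem:m>n}. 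Plugging back into Lemma \ref{lem:PSzeta} yields $Z_{n_\tau/\f}(s,\beta) = Z_{m+1}(s,\beta)\app L(s,\pi)$ and the asserted value of $Z(s,\beta)/L^{reg}(s,\pi)$; and since no row of sign $-\ep_\tau$ survives the check, (ii) follows as well.

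It remains to see that the strict minimal space is one-dimensional. A basis of it consists of eigenvectors for the self-adjoint Hecke operator $T$ of Section \ref{sec:Hecke}, and the case check shows that any such eigenvector has the same eigenvalue and the same $Z(s,\beta)$ up to scalar; hence the difference of two suitably normalized ones is a paramodular form of sign $\ep_\tau$ with $Z(s,\beta) = 0$, so also with $Z(s,\beta^*) = 0$ (from the relation expressing $Z(s,\beta^*)$ through $Z(s,\beta)$ in the tables, or from the functional equation), and Theorem \ref{thm:paravn} forces it to be identically zero. I expect the main difficulty to be the bookkeeping in the case check --- above all the case R ``$\B_{2m+1}^{\sharp}$'' row, where $Z(s,\beta)$ and $Z(s,\beta^*)$ are accessible only through the $2\times 2$ linear system whose matrix involves $f_\lambda$, so that extracting the divisibility constraints and the forced eigenvalue takes care --- together with the nuisance of those $SK(\tau)$ that fail $(\ddag)$ in case R.
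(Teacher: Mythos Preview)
Your approach is essentially the paper's: reduce to $n_\tau\ge 2$, run the case check over Tables~\ref{tab:ZU}--\ref{tab:ZR}, and let Lemma~\ref{lem:sdzp} and Proposition~\ref{prop:negflat} eliminate all rows except $\B_{2m+1}^{\sharp}$. Two genuine gaps, though.

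\textbf{The nonunitary case.} The Hecke computation of Section~\ref{sec:Hecke}, and hence Tables~\ref{tab:ZU}--\ref{tab:ZR}, are carried out under the standing assumption that $\pi$ is unitary; this is exactly what makes $T$ self-adjoint and guarantees a basis of eigenvectors. But the theorem is stated for arbitrary $\tau\in\Ir'(\PGL_2(F))$, so $\tau=\chi\times\chi^{-1}$ with $e(\chi)\neq 0$ is allowed. The paper closes this by observing that then $SK(\tau)$ is the Siegel induction $S(\chi)=\chi\1_{GL(2)}\rtimes\chi^{-1}$, and that the $K$-fixed subspace of $S(\chi)$ is determined by double-coset data $P\backslash G/K$ together with the condition~(\ref{eq:condind}); since the latter only sees $\chi$ on $\mathfrak{o}^\times$, it is the same for $S(\chi)$ and for the unitary $S(\chi/|\chi|)$. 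You need this reduction (or a substitute) before your Hecke argument applies.

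\textbf{Part (ii) when $n_\tau=0$.} Theorem~\ref{thm:unrcase} gives you (i) in the unramified case, but it says nothing about the nonexistence of paramodular forms of sign $-\ep_\tau$; it only identifies $\B_0^+$ as the strict minimal space of sign~$+$. The paper's proof of (ii) for $n_\tau=0$ is a separate and not entirely routine case check: in Case~U it ends by applying the operator $U_{K_1^\sharp}$ to kill a putative $\B_{1,\kappa}^\sharp$ strict newform, and in Case~R it requires the ad hoc Lemma~\ref{lem:unrKminus} (no $K'$-invariant vector of sign minus in $S(\chi)$ for unramified $\chi$). Your reduction ``all three claims are Theorem~\ref{thm:unrcase}'' does not cover this.

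A minor point: your citation of Lemma~\ref{lem:m>n} to force the sign should be Lemma~\ref{lem:sdzp} (Lemma~\ref{lem:m>n} is about complete paramodular forms, whereas you are working in $\B_{2m+1}^\sharp$); the logic you describe is correct with the right reference. Your plan for the Case~R small-level exclusions via $(\ddag)$ and the \cite{R-S} tables is workable in Case~R-i), where $K_2$ and $K_3^\flat$ are literally conjugate to the paramodular and Klingen groups of level~$\p$, but in Case~R-ii) the paper instead rules out $\B_2$ and $\B_3^\flat$ directly from Lemma~\ref{lem:PSzeta} and Lemma~\ref{lem:flatmin}, which is cleaner and avoids the $(\ddag)$ dichotomy.
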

\nid
To prove this theorem, we use the operator $U_K$ defined at (\ref{eq:UKop}). 
When $K= K_{2m+1}^{\sharp}$, $U_{K}$ is an endomorphism of $\B_{2m+1}^{\sharp,\ep}$, and if $\beta \in \B_{2m+1}^{\sharp,\ep}$ has $Z(s,\beta) = c_0 + c_{1}X + \cdots$(c.f. (\ref{eq:ZbZ})), then 
\begin{align*} 
Z(s,U_K^j \beta) \approx c_{j} + c_{j+1} X +  c_{j+2} X^2 + \cdots.
\end{align*} 
By using such an operator, we can find there exists a paramodular form of sign $\ep_\tau$ in $SK(\tau)$ as follows.
\begin{lem}\label{lem:inflem}
Assume that $\tau$ is ramified, and $\B_\s(\pi) \neq \{0\}$.
Then there exists a paramodular form $\beta$ of sign $\ep_\pi (= \ep_\tau)$ such that $Z(s,\beta) = L^{reg}(s,\pi)$.
\end{lem}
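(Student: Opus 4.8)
The plan is to build $\beta$ in three steps: realize the pole of $L^{\mathrm{reg}}(s,\pi)$ at $X=1$ by a genuine paramodular Bessel vector, shift that vector forward with the operator $U_K$ until its regular zeta becomes exactly $L^{\mathrm{reg}}(s,\pi)$, and then read off the sign from the functional equation. First the reductions: if $n_\tau=1$ then $\tau\in\{St,\chi St\}$ and $\B_\s(\pi)=\{0\}$, as recalled above, so we may assume $n_\tau\ge 2$; then $L(s,\tau)=1$, $L^{\mathrm{reg}}(s,\pi)=\frac1{1-X}$ by (\ref{eq:LregSK}), and $L(s,\pi)=L(s,\phi_\pi)=\frac1{(1-X)(1-X')}$ by Theorem~\ref{thm:LSK}. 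The goal is thus a paramodular form $\beta$ of sign $\ep_\pi$ with $Z(s,\beta)=\frac1{1-X}$.

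For the first step, recall that the fractional ideal $\sum_{\beta\in\B_\s(\pi)}\C[X^\pm]\,Z(s,\beta)$ equals $L^{\mathrm{reg}}(s,\pi)\C[X^\pm]=\frac1{1-X}\C[X^\pm]$; writing $\frac1{1-X}=\sum_j Q_j(X)Z(s,\beta_j)$ with $Q_j\in\C[X^\pm]$ and observing that the left side has a pole at $X=1$ while a Laurent-polynomial combination of polynomials has none, at least one $Z(s,\beta_j)$ must equal $\frac{R_j(X)}{1-X}$ with $R_j(1)\neq 0$. Averaging $\beta_j$ over $\hat{\mathfrak{o}}^\times$ and over the unipotent radical of a large complete paramodular group — operations leaving $Z(s,\cdot)$ unchanged by (\ref{eq:psiM'}) — and, where needed, applying the idempotent $e_m$ as in the proof of Proposition~\ref{prop:extpf}, I may take $\beta_j$ to be a paramodular form in some $\B_{2m+1}^{\sharp}$ with the same regular zeta. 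Decomposing it into $\pi(\imath)$-eigenvectors, and in the case U also into $\pi(w_m')$-eigenvectors (which commute with $\pi(\imath)$ and with $U_K$), and evaluating the identity $\sum_jQ_jR_j=1$ at $X=1$ componentwise, I keep a paramodular form $\delta\in\B_{2m+1}^{\sharp,\ep}$ (of a fixed $w_m'$-eigenvalue $\kappa$ in case U) with $Z(s,\delta)=\sum_{i\ge0}c_iX^i$, where $c_i\to R(1)\neq 0$ because $\frac{R(X)}{1-X}\in\C[[X]]$ with $R\in\C[X]$. Since $U_K$ with $K=K_{2m+1}^{\sharp}$ is an endomorphism of $\B_{2m+1}^{\sharp,\ep}$ with $Z(s,U_K^{\,j}\delta)\approx c_j+c_{j+1}X+\cdots$, for $j\gg 0$ one gets $Z(s,U_K^{\,j}\delta)\approx\frac1{1-X}$; scaling, $\beta:=$ a multiple of $U_K^{\,j}\delta$ lies in $\B_{2m+1}^{\sharp,\ep}$ (same $\kappa$ in case U) and satisfies $Z(s,\beta)=\frac1{1-X}=L^{\mathrm{reg}}(s,\pi)$.

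For the sign, I would feed $Z(s,\beta)=\frac1{1-X}$ into (\ref{eq:PSzeta}) and Lemma~\ref{lem:PSzeta} and compare with $L(s,\pi)=\frac1{(1-X)(1-X')}$. In case R one applies $e_m$ once more to reach a complete form, for which Lemma~\ref{lem:PSzeta}~iii) gives $Z_{\,n_\tau/\f}(s,\beta)$ equal to a nonzero constant times $L(s,\pi)$; in case U no complete form can have regular zeta $\frac1{1-X}$ (its zeta polynomial would fail to be a Laurent polynomial), so one stays in $\B^\sharp$, notes by Lemma~\ref{lem:PSzeta}~iv) that only the $\kappa=+$ component can give $Z_{m+1}(s,\beta)$ proportional to $L(s,\pi)$, and replaces $\beta$ by that component (shifting again by $U_K$ if necessary). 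Either way the relevant zeta polynomial $P_n(X,\beta)$ is a nonzero constant, hence has sign $+$ and diameter $0$, so Lemma~\ref{lem:sdzp} (equivalently Lemma~\ref{lem:m>n}) forces $\ep\ep_\pi=+$, i.e. $\ep=\ep_\pi=\ep_\tau$, and principal level $n_\tau/\f-1$, in agreement with Theorem~\ref{thm:SKrepn}~i).

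The step I expect to be the main obstacle is this last one: ensuring the pole at $X=1$ really descends into the sign-$\ep_\tau$ (and, in case U, $\kappa=+$) eigencomponent. In the $\B^\sharp$ setting the canonical zetas $Z_m,Z_{m+1}$ involve $Z(s,\beta^*)$, which is not determined by $Z(s,\beta)$ alone, and it is precisely the $w_m'$-eigenspace refinement (case U) together with the extra $e_m$-averaging (case R) that should make the functional-equation bookkeeping close.
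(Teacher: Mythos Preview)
Your construction of a paramodular $\beta\in\B_{2m+1}^{\sharp,\ep}$ (for some sign $\ep$ and large $m$) with $Z(s,\beta)=\frac{1}{1-X}$ is essentially the paper's: start from a Bessel vector whose regular zeta has a pole at $X=1$, average into a $\sharp$-paramodular space as in Proposition~\ref{prop:extpf}, project to a sign, and shift by $U_{K_{2m+1}^\sharp}$. The divergence is entirely in how you determine $\ep$, and there the argument has real gaps.

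First, the claim that $U_K$ commutes with $\pi(w_m')$ is false: $w_m'$ conjugates $N_K$ into $\bar N_K$ and $w_m'\hat\varpi\neq\hat\varpi w_m'$, so $U_K$ does not preserve the $\kappa$-eigenspaces. Thus after applying $U_K^j$ you cannot assume $\beta$ has a fixed $\kappa$. Second, even granting a $\kappa$-eigenvector with $Z(s,\beta)=\frac{1}{1-X}$, Lemma~\ref{lem:PSzeta}~iv) forces $P_{m+1}(X,\beta)$ to be a nonzero constant when $\kappa=+$, which by Lemma~\ref{lem:sdzp} pins down $m+1=n_\pi/\f$; this contradicts $m$ being large. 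So both $\kappa$ values lead to contradictions, which shows no such $\kappa$-eigenvector exists for large $m$, not that $\ep=\ep_\pi$. The case R argument via $e_m$ has the same defect: $Z(s,e_m\beta)$ involves $\beta(\hat\varpi^i w_{m+1})$, hence $Z(s,\beta^*)$, and is not determined by $Z(s,\beta)$ alone.

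The paper bypasses all of this with one line: do not try to control $Z^*$ or $\kappa$. Simply write out both $P_m(X,\beta)$ and $P_{m+1}(X,\beta)$ from (\ref{eq:PSzeta}); since $Z(s,\beta)=\frac{1}{1-X}$ contributes a constant term while the unknown $Z^*$ enters $P_m$ and $P_{m+1}$ with different weights ($q^\f$ versus $X^\f$), the two cannot both vanish at $X=1$. Evaluating the functional equation (\ref{eq:FNEQP}) at $X=1$ then gives $P_n(1)=\ep\ep_\pi P_n(1)$ for $n=m,m+1$, and since at least one of $P_m(1),P_{m+1}(1)$ is nonzero, $\ep=\ep_\pi$ follows with no further work. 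This substitution trick is exactly the missing idea: it eliminates the dependence on $Z^*$ without computing it.
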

\begin{proof}
By (\ref{eq:gbSKU}) and definition of $L^{reg}(s,\pi)$, there exists a $\beta \in \B_\s(\pi)$ such that
\begin{align*}
\beta(\hv^j) = q^{-j}, \ \ j >>0.
\end{align*} 
We may assume that $\beta$ has a sign, say $\ep$.
By the proof of Proposition \ref{prop:extpf}, and the operator $U_{K_{2m+1}^\sharp}$ for a sufficiently large $m$, we may assume that $\beta$ lies in $\B_{2m+1}^{\sharp,\ep}$ and $Z(s,\beta) = (1-X)^{-1}$.
From (\ref{eq:PSzeta}), it follows that there exists a polynomial $P(X)$ in $X$ such that 
\begin{align*}
P_m(X,\beta) = 1+ q^\f X^{-1} P(X), \ P_{m+1}(X,\beta) = 1+ X^{\f-1} P(X).
\end{align*} 
By (\ref{eq:FNEQP}), 
\begin{align*}
1+ q^\f XP(X^{-1}) &= \ep\ep_\pi X^{n_\pi-\f m}(1+ q^\f X^{-1}P(X)), \\
1+ X^{1-\f} P(X^{-1})&= \ep\ep_\pi X^{n_\pi-\f (m+1)}(1 + X^{\f-1} P(X)).
\end{align*} 
Substituting $X =1$, we find that $\ep = \ep_\pi$.
\end{proof}
\noindent
For $SK(\tau)$, we have a slight better estimation than (\ref{eq:m>npre}):
\begin{lem}\label{lem:m>nSK}
\begin{align}
m_\pi^\ep \ge \frac{n_\pi}{\f} - 1. \label{eq:m>n}
\end{align}
\end{lem}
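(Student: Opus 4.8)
The plan is to sharpen the general estimate (\ref{eq:m>npre}), which already gives $m_\pi^\ep \ge (n_\pi-2)/\f$. If $E_\s$ is in the case U then $\f=2$, so $(n_\pi-2)/\f=n_\pi/\f-1$ and there is nothing new to prove; I would therefore assume $E_\s$ is in the case R, so that $\f=1$ and $\zeta_E(s+1/2)=(1-X')^{-1}$. If there is no strict newform of sign $\ep$ the bound is vacuous, so fix one, say $\beta$ at principal level $m:=m_\pi^\ep$.

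First I would pin down the zeta denominators for $\pi=SK(\tau)$. By (\ref{eq:LregSK}), $L^{reg}(s,\pi)=L(s,\tau)/(1-X)$; combined with $L^{reg}(s,\pi)/L(s,\pi)=1-q^{-1}X=1-X'$ (established in sect. \ref{sec:packet}), this gives $L(s,\pi)=L(s,\tau)/\bigl((1-X)(1-X')\bigr)$. In particular $1/L^{reg}(s,\pi)=(1-X)L(s,\tau)^{-1}\in\C[X]$, so since $Z(s,\beta)\in\C[[X]]$ by (\ref{eq:ZbZ}), the Laurent polynomial $g_\beta(X)=Z(s,\beta)/L^{reg}(s,\pi)$ is in fact a genuine polynomial, $g_\beta\in\C[X]$.

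Then I would split on whether $P_{m+1}(X,\beta)$ vanishes. Suppose first $P_{m+1}(X,\beta)\neq0$. By (\ref{eq:PSzeta}) and (\ref{eq:ZbZ}), $Z_{m+1}(s,\beta)=\frac{\zeta_E(s+1/2)}{1+q}\bigl(Z(s,\beta)+X\,Z(s,\beta^*)\bigr)\in\C[[X]]$, because $Z(s,\beta^*)\in X^{-1}\C[[X]]$ so $X\,Z(s,\beta^*)\in\C[[X]]$; hence $P_{m+1}(X,\beta)=Z_{m+1}(s,\beta)/L(s,\pi)\in\C[[X]]\cap\C[X^\pm]=\C[X]$ is a nonzero polynomial, so its lowest and top exponents are nonnegative and ${\rm dia}\,P_{m+1}(X,\beta)\ge0$; on the other hand Lemma \ref{lem:sdzp}, applied to $P_{m+1}$, gives ${\rm dia}\,P_{m+1}(X,\beta)=\f(m+1)-n_\pi=m+1-n_\pi$, so $m\ge n_\pi-1$. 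Now suppose instead $P_{m+1}(X,\beta)=0$, i.e.\ $Z_{m+1}(s,\beta)=0$. Then Corollary \ref{cor:Z} forces $Z_m(s,\beta)\neq0$, and Lemma \ref{lem:PSzeta} ii) gives $Z_m(s,\beta)=-Z(s,\beta)/\bigl(X'(1+q)\bigr)$, so $Z(s,\beta)\neq0$ and $g_\beta\neq0$. Using $Z(s,\beta)/L(s,\pi)=g_\beta(X)(1-X')$ one computes $P_m(X,\beta)=g_\beta(X)(X-q)/\bigl((1+q)X\bigr)$, a nonzero Laurent polynomial whose lowest exponent is $\ge-1$ and whose top exponent is $\ge0$, so ${\rm dia}\,P_m(X,\beta)\ge-1$; since Lemma \ref{lem:sdzp} gives ${\rm dia}\,P_m(X,\beta)=\f m-n_\pi=m-n_\pi$, again $m\ge n_\pi-1=n_\pi/\f-1$.

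The whole argument is essentially bookkeeping once the earlier structural results are in hand. The one place that deserves an explicit word is that Lemma \ref{lem:sdzp} (through Proposition \ref{prop:FEHT}) applies verbatim to $P_{m+1}(X,\beta)$, not only to $P_m(X,\beta)$: its proof uses only the functional equation (\ref{eqn:LFE}), the sign of $\beta$, and the identity $\vp_{m+1}^\sharp(z)=q^{-\f(m+1)}\vp_{m+1}(\varrho^{m+1}z)$, none of which refers to the level of $\beta$. I expect the only mild obstacle to be keeping straight which quantities lie in $\C[[X]]$, in $X^{-1}\C[[X]]$, or in $\C[X^\pm]$; all these memberships follow mechanically from (\ref{eq:ZbZ}), (\ref{eq:PSzeta}), and the displayed forms of $L(s,\pi)$ and $g_\beta$.
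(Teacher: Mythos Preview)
Your proof is correct and follows essentially the same route as the paper's: both reduce to case R, split on whether $P_{m+1}(X,\beta)$ vanishes, and in the vanishing case invoke Lemma~\ref{lem:PSzeta}~ii) together with the fact that $L^{reg}(s,\pi)/L(s,\pi)=1-X'$ is nontrivial. The only cosmetic difference is that the paper argues by contradiction (assuming $m=n_\pi-2$ forces $P_m\app X^{-1}$, hence $Z(s,\beta)\app L(s,\pi)$, contradicting $L(s,\pi)\neq L^{reg}(s,\pi)$), while you compute $P_m=g_\beta(X)(X-q)/((1+q)X)$ directly and read off the diameter bound; the underlying mechanism is identical.
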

\begin{proof}
When $E$ is in the case U, this is same as (\ref{eq:m>npre}).
Suppose that $E$ is in the case R.
It suffices to derive a contradiction assuming that $P_{m_\pi^\ep}(X,\beta) \app X^{-1}$ and $P_{m_\pi^\ep+1}(X,\beta) =0$ for a strict newform $\beta$ (only in this case it holds that $m_\pi^\ep = n_\pi-2$.).
In this case, $Z(s,\beta) \app L(s,\pi)$ by Lemma \ref{lem:PSzeta} ii).
This is a contradiction, since we have seen that $L(s,\pi) \neq L^{reg}(s,\pi)$ at \ref{sec:packet}.
\end{proof}
\noindent
We will prove the theorem.
Note that the strict minimal space of sign $\ep$ is not 
\begin{align*}
\B_{2m+1}^{\flat,\ep} \ \mbox{for $m \ge $}  \ 
\begin{cases}
1& \mbox{if $E$ is in the case of U}, \\
2& \mbox{if $E$ is in the case of R}
\end{cases}
\end{align*}
by Proposition \ref{prop:negflat}.

\paragraph{\bf Proof of i)}
When $n_\tau = 0$, this is a case of Theorem \ref{thm:unrcase}.
We will prove for ramified $\tau$.
First, assume that $\pi = SK(\tau)$ is unitary.
We can apply the Hecke theory.
Let $V$ and $m$ denote the strict minimal space and level of sign $\ep_\pi = \ep_\tau$, respectively. 
Let $\beta$ be a nontrivial form of $V$, and abbreviate the zeta polynomial $P_n(X,\beta)$ to $P_n$.
We will prove i) according to the case of $E$. \\
{\bf Case U)}
Consider the situation where $n_\tau \ge 3$.
By (\ref{eq:m>n}), $m \ge 1$.
Assume that $V= \B_{2m}$, then by the table $P_m \app (1-X)/(1 + X')$, which is not a polynomial in $X^\pm$, a contradiction.
Assume that $V = \B_{2m+1}$.
We may assume $\beta \in \B_{2m+1,\kappa}$.
By Table \ref{tab:ZU},
\begin{align}
P_{m+1} \app \frac{(1-X)(1-X')}{1- \kappa X'}. \label{eq:SKU2m+1}
\end{align}
Therefore $\kappa$ is plus, and $P_{m+1} \app 1- X$, having sign minus conflicting to Lemma \ref{lem:sdzp}.
Hence, $V = \B_{2m+1}^\sharp$.
We may assume $\beta \in \B_{2m+1, \kappa}^{\sharp}$ with $\kappa \in \{ \pm \}$.
By Theorem \ref{thm:paravn}, $Z(s,\beta) \neq 0$.
By Hecke theory, $Z(s,\beta)$ is in the form of $P(X)/f(X)$ where $P, f$ are polynomials in $X$ with $\deg P \le 1, \deg f = 2$.
Taking (\ref{eq:gbSKU}) into account, we conclude that $g_\beta(X)$ is a nonzero constant, and $\deg P = 1$.
Thus we have 
\begin{align}
Z(s,\beta) \app L^{reg}(s,\pi) \label{eq:ZL^reg}
\end{align} 
and $P_{m+1} \app (1 - X')/(1 -\kappa X')$ by the table.
Hence
\begin{align}
\beta \in \B_{2m+1,+}^\sharp,\ \  P_{m+1} \app 1. \label{eq:Pm+11}
\end{align}
By Lemma \ref{lem:sdzp} again, 
\begin{align} 
m = \frac{n_\pi}{2} -1. \label{eq:nmrel}
\end{align} 

Consider the situation where $n_\tau = 2$, and $E$ is in the case U-ii).
If we assume that $m \ge 1$, then by the above argument, $V = \B_{2m+1}^\sharp$ and $m = 2/2-1 = 0$, a contradiction.
Hence $m = 0$, and (\ref{eq:nmrel}) also holds.
Assume that $V = \B_{1}$.
We may assume $\beta \in \B_{1,\kappa}$.
Then 
\begin{align}
P_1 \app \frac{(1-X)(1- X')Z(s,\beta)}{1- \kappa X'}, \label{eq:SKNFU}
\end{align} 
which is not a constant conflicting to Lemma \ref{lem:sdzp}. 
Assume that $V=  \B_0$. 
Then $P_0$ is a nontrivial element of $\C[X]$ by Theorem \ref{thm:paravn}.
But by the same Lemma, $dia P_0$ is $-2$, a contradiction.
Hence $V = V_{1}^\sharp$.
We may assume $\beta \in \B_{1,\kappa}^\sharp$.
It holds (\ref{eq:SKNFU}) also in this situation.
Since $P_1$ is a nonzero constant by the same lemma, we conclude (\ref{eq:ZL^reg}) and (\ref{eq:Pm+11}) also in this situation.

Consider the situation where $n_\tau = 2$, and $E$ is in the case U-i).
Similar to the above situation, we can conclude that $m = 0$.
We find $V = V_{1}^\sharp$ by the following lemma.
\begin{lem}\label{lem:Sglind}
If  $n_\tau \ge 2$, then $I(\tau)^{\Gamma_0(\p)} = \{ 0\}$.
\end{lem}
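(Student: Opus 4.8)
The plan is to show that the space of $\Gamma_0(\p)$-fixed vectors in the Siegel induced representation $I(\tau)$ vanishes when $n_\tau\ge 2$, by reducing to a statement about $\GL_2$-representations and their fixed vectors under congruence subgroups. Recall that $I(\tau)=I_+(\tau)$ consists of functions $f$ on $G$ transforming on the left under the Siegel parabolic $P$ by $\begin{bmatrix} h & * \\ & uh^\dag\end{bmatrix}\mapsto |\det(h)/u|^{1/2}\tau(h)$. Since $\Gamma_0(\p)=\begin{bmatrix}M_2(\mathfrak{o})&M_2(\mathfrak{o})\\\p M_2(\mathfrak{o})&M_2(\mathfrak{o})\end{bmatrix}$ contains $\SL_2(\mathfrak{o})$ embedded in the Siegel Levi and all of $\bar N(\p M_2(\mathfrak{o}))$, the restriction of $f$ to the big Bruhat cell is governed by the $\GL_2(\mathfrak{o})$-translates of its value on $N$, and the Iwasawa-type decomposition $G = P\,\overline{N}(\mathfrak{o})\,K$ together with the coset structure of $K/\Gamma_0(\p)$ lets me express any $\Gamma_0(\p)$-fixed $f$ in terms of finitely many values $f(\bar n_{y_i})$.

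First I would decompose $G(\mathfrak{o})/\Gamma_0(\p)$ into cosets represented by a small set of Weyl/unipotent elements (this is the classical $\PGSp_4$ analogue of $\Gamma_0(\p)\backslash\SL_2(\mathfrak{o})$ having $q+1$ cosets; here one gets representatives indexed by lines and points over $\F$). Evaluating a $\Gamma_0(\p)$-fixed $f$ on these representatives, the left-transformation law under $P$ forces the values to be determined by $\tau$ evaluated on certain matrices in $\GL_2(\mathfrak{o})$, and crucially on matrices of the form $\begin{bmatrix}1&x\\&1\end{bmatrix}$ with $x\in\varpi^{-1}\mathfrak{o}$, or equivalently on $\bar n$-type elements that are conjugated into $P$ by a Weyl element. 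The consistency conditions coming from overlapping cosets then say: $f$ restricted to the Siegel cell is determined by a vector $w\in\tau$ that is fixed under the standard Iwahori $\begin{bmatrix}\mathfrak{o}^\times&\mathfrak{o}\\\p&\mathfrak{o}^\times\end{bmatrix}$ of $\GL_2(F)$ — but when $n_\tau\ge 2$ the representation $\tau$ (being supercuspidal, Steinberg-twist of conductor $\ge 2$, or ramified principal series of conductor $\ge 2$) has \emph{no} nonzero Iwahori-fixed vector. This kills $w$, hence $f\equiv 0$ on the open cell, and by density $f\equiv 0$.

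The cleanest way to run this is probably to invoke the exact sequence (\ref{eq:exsSK}) and the already-established fact (Theorem \ref{thm:1dimSK} and the structure theory of \cite{R-S}) that the only subquotients of $I(\tau)$ are $G(\tau)=\theta(\tau\boxtimes St)$ and $SK(\tau)$; for both, the paramodular/Klingen level is governed by $n_\tau$ via the Roberts--Schmidt tables (Tables A.5, A.6 of \cite{R-S}), which show that a $\Gamma_0(\p)$-fixed vector (level $\p$ in their parametrization, which for type I$ = \chi_1\times\chi_2\rtimes\sigma$ or IIb or Vd requires conductor $\le 1$ of the inducing data) cannot exist once $n_\tau\ge 2$. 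So an alternative, shorter route: identify which Roberts--Schmidt types $I(\tau)$ decomposes into, read off from their tables that $\dim(\text{level }\p) = 0$ unless $n_\tau\le 1$, and conclude. I would present the argument this second way, citing the tables, and only sketch the direct Iwahori-reduction as motivation.

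The main obstacle is bookkeeping: one must be careful that $\Gamma_0(\p)$ as defined in (\ref{eq:defScs}) really is the paramodular (or rather Siegel-congruence) subgroup whose invariants are tabulated in \cite{R-S}, since the authors there use a different form $J$ for $\GSp_4$; conjugating to match conventions, and checking that $\Gamma_0(\p)$ corresponds to their ``$\Gamma_0(\p)$'' / level-$\p$ Siegel congruence subgroup rather than the paramodular group $K(\p)$, is where an error could creep in. Once the identification of subgroups is pinned down, the vanishing is immediate from the published tables, so I do not expect genuine difficulty beyond this translation of conventions.
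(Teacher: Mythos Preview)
Your first sketch---reduce via the double-coset decomposition of $P\backslash G/\Gamma_0(\p)$ to a statement about Iwahori-fixed vectors in $\tau$---is exactly the paper's proof. The paper carries it out in three lines: by the Iwasawa decomposition $G=PG(\mathfrak{o})$ and the Bruhat decomposition $P(\F)\backslash G(\F)/P(\F)$, there are four double cosets with representatives $\Sigma=\{1,s,w_1,sw_1\}$; for each $r\in\Sigma$ the Siegel--Levi part of $r\Gamma_0(\p)r^{-1}$ forces $f(r)$ to be fixed by the level-$\p$ Hecke subgroup of $\GL_2(F)$ (which for a representation of $\PGL_2$ is the same as Iwahori-fixed), and this is impossible when $n_\tau\ge 2$ by Casselman's newform theory.

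Your second route, however---the one you say you would actually present---does not work as stated. Tables A.5 and A.6 of \cite{R-S} are the $P_3$-filtration tables (the paper itself cites them for exactly that purpose at the end of \S\ref{sec:NFsp}); they say nothing about dimensions of fixed vectors. The paramodular dimension tables in \cite{R-S} are for the paramodular groups $K(\p^n)$, and $\Gamma_0(\p)$ is the \emph{Siegel congruence} subgroup, a different group which neither contains nor is contained in $K(\p)$. \cite{R-S} does not tabulate $\Gamma_0(\p)$-invariants for the non-Iwahori-spherical constituents that arise here (e.g.\ $SK(\tau)$ for supercuspidal $\tau$ with $n_\tau\ge 2$ is of type XIb, outside the range of their explicit tables), so your ``read it off the tables'' step has no table to read from. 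Computing those invariants from scratch would amount to redoing the direct double-coset argument anyway. Present your first approach; it is short, self-contained, and is what the paper does.
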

\begin{proof}
We find $\Sigma := \{1, s, w_1, s w_1\}$ is a complete system of representatives for $P\bs G/ \Gamma_0(\p)$,  by Lemma 5.1.1. of \cite{R-S}, and the Bruhat decomposition: 
\begin{align*}
P \bs G / \Gamma_0(\p) \simeq P(\mathfrak{o}) \bs G(\mathfrak{o})/ \Gamma_0(\p) \simeq P(\F)\bs G(\F) /P(\F). 
\end{align*} 
Let $\xi \in I(\tau)^{\Gamma}$.
By definition of $I(\tau)$, if $\xi(r) \neq 0$ for an element $r \in \Sigma$, then for all $h \in \GL_2(F)$ and $u \in F^\times$ such that
\begin{align*}
\begin{bmatrix}
u h  & * \\
& h^\dag
\end{bmatrix} \in \Gamma_0(\p) \la r \ra,
\end{align*} 
we must have $\tau(h)|u| = 1$.
At least, $\tau$ has a nontrivial vector invariant under the Hecke subgroup (of $\GL_2(F)$) of level $\p$.
This conflicts to the newform theory for $\GL_2(F)$.
Hence $\xi$ is identically zero.
\end{proof}
\nid
Now (\ref{eq:ZL^reg}), (\ref{eq:Pm+11}) follow similar to the above situation.

The one-dimensionality of $V$ follows from (\ref{eq:ZL^reg}) and Theorem \ref{thm:paravn}, and the proof for the case U is completed. \\
{\bf Case R)}
Consider the situation where $n_\tau \ge 3$.
It follows from (\ref{lem:m>n}) that $m \ge 2$.
Assume that $V = \B_{2m+1}$.
By the table, $P_{m+1}$ is a constant multiple of $(1-X)$.
Since its sign is plus by Lemma \ref{lem:sdzp}, $P_{m+1} =0$.
By Lemma \ref{lem:PSzeta} ii) and the table, $P_{m} \app 1-X^{-1}$, having sign minus, a contradiction.
Hence 
\begin{align}
 V= \B_{2m+1}^{\sharp}. \label{eq:VB2m+1sharpR}
 \end{align}
By Hecke theory, $Z(s,\beta)$ is in the form of $Q(X)/\Delta(X)$ with $\deg Q \le 3, \deg \Delta = 4$.
Taking (\ref{eq:gbSKU}) into account, we conclude that $g_\beta(X)$ is a nonzero constant.
Thus $Z(s,\beta) \app (1-X)^{-1}$.
Similarly, we conclude that $Z(s,\beta^*) \approx X^{-1}(1-X)^{-1}$.  
Viewing 
\begin{align*}
 Z_m(s,\beta) &\app \frac{Z(s,\beta)+ q Z(s,\beta^*)}{1-X'} \\
 Z_{m+1}(s,\beta) &\app \frac{Z(s,\beta)+ X Z(s,\beta^*)}{1-X'}, 
 \end{align*}
 and Lemma \ref{lem:sdzp}, we deduce that $P_{m} \app 1+ X^{-1}$, and that there exists a nonzero constant $c$ such that
\begin{align}
Z(s,\beta) = \frac{c}{1-X}, \ Z(s,\beta^*) = \frac{c}{qX(1-X)}. \label{eq:ZZ*RSK}
\end{align} 
It follows that  
\begin{align}
P_{m+1} \app 1 \label{eq:Pm+11RSK}
\end{align}
and that $\dim V = 1$ from Theorem \ref{thm:paravn}.
It follows from the same lemma that 
\begin{align}
m = n_\pi - 1. \label{eq:mnSKR}
\end{align} 
Consider the situation where $n_\tau = 2$.
From the above argument, we can deduce $m \le 1$ and $V \neq \B_3$.
Indeed, even if one assumes that $V= \B_{2m+1}^{\sharp}$ with $m \ge 2$, then it follows that $P_m \app 1+ X^{-1}$, and that $-1 = m - 2$ by Lemma \ref{lem:sdzp}, a contradiction.
Hence (\ref{eq:mnSKR}) holds in this situation also.
Assume that $V = \B_2$.
By (\ref{eq:gbSKU}) and Lemma \ref{lem:PSzeta} iii), $P_1 = g_\beta(X)$.
But its diameter is nonnegative, conflicting to Lemma \ref{lem:sdzp}.
Assume that $V = \B_3^{\flat}$.
By Lemma \ref{lem:flatmin} and Lemma \ref{lem:sdzp}, $P_2(X) \app 1$.
From Lemma \ref{lem:PSzeta} i) it follows that $g_\beta(X) \app (1-X')^{-1}$, a contradiction.
Hence (\ref{eq:VB2m+1sharpR}) holds in this situation also.
Assume that $P_2 = 0$.
Then $P_1 \neq 0$ by Corollary \ref{cor:Z}.
By Lemma \ref{lem:sdzp}, \ref{lem:PSzeta} ii), $P_1 \app 1+X^{-1}$, and $g_\beta(X) \app (1+X)/((1-X)(1-X'))$, a contradiction.
Hence $P_2 \neq 0$.
From Lemma \ref{lem:sdzp}, it follows (\ref{eq:Pm+11RSK}) in this situation also.
Similarly, one can prove $P_1(X) \app 1 + X^{-1}$.
In particular, from (\ref{eq:ZbZ}) it follows that 
\begin{align*}
Z(s,\beta^*) \in X^{-1}\C[[X]] \setminus \C[[X]].
\end{align*} 
This implies $\dim V = 1$ by Theorem \ref{thm:paravn}.
This completes the proof of i) for unitary ramified $\pi$.

Next, we apply Robert and Schmidt argument for nonunitary $\pi = SK(\tau)$.
Such a $\pi$ is given by a principal series $\tau = \chi \times \chi^{-1}$ with exponent $e(\chi) \neq 0$, and $\pi$ is the Siegel induction $S(\chi): = \chi \1_{GL(2)} \rtimes \chi^{-1}$ (c.f. 5.5. of \cite{R-S}).
Here $e(\chi)$ is defined by $|\chi(x)| = |x|^{e(\chi)}, x \in F^\times$.
By definition, if a $\C$-valued function $f \in S(\chi)$ is invariant under a compact subgroup $K' \subset G$, then $f$ is determined by its values at $r \in P \bs G /K'$, and we must have $f(rk) = f(r)$ for all $r$ and $k \in K'$.
In particular, it holds that 
\begin{align}
\chi(u^{-1}\det(h)) = 1 \label{eq:condind}
\end{align} 
for any element 
\begin{align}
h \rt u  := \begin{bmatrix}
h&  \\
& u h^{\dag} 
\end{bmatrix}
\end{align} 
in $P \cap K' \la r \ra$.
However, since $K' \la r\ra$ is compact, it holds that $u^{-1}\det(h) \in \mathfrak{o}^\times$ for any $h \rtimes u \in P \cap K' \la r \ra$, and the condition (\ref{eq:condind}) is same as for the unitary representation $S(\chi_1)$, where $\chi_1 = \chi/|\chi|$.
Now let $K'$ be the paramodular group $K$ defining $V$.
We have showed that $K$-invariant function $f \in S(\chi_1) (\simeq \pi)$ with $\pi(\imath)f = \ep_\pi f$ is unique up to scalars, and so is that in $S(\chi)$.
Therefore, $V$ is one-dimensional.
In particular, the action of $T_K$ on $V$ is same as for the unitary case, and so is the proof for this case.\\

\paragraph{\bf Proof of ii)}
Assuming that there exists the strict minimal space $V'$ of sign $-\ep_\tau$, we will derive contradictions.
By the last argument of i), it suffices to treat the unitary case.
Let $m'$ denote the principal level of $V'$.
Let $\beta$ be nontrivial form of $V'$, and abbreviate $P_n(X,\beta)$ to $P_n$.
We will prove ii) according to the case of $E$. \\
{\bf Case U)}
Consider the situation where $n_\tau > 2$.
Similar to i), we can conclude that $V' \neq \B_{2m'}$, and that $m' \ge 1$.
Assume that $V' = \B_{2m'+1}$, then by the table (\ref{eq:SKU2m+1}) also holds, and hence $P_{m' +1} \app (1-X)$.
But, by Lemma \ref{lem:sdzp}, $2m' - 2m_\pi^{\ep_\pi} = 1$, a contradiction.
Assume that $V' = \B_{2m'+1}^\sharp$.
Then similar to i), we can conclude that $P_{m'+1} \app 1$, conflicting to the same lemma.

Consider the situation where $n_\tau = 2$. 
From the above argument we deduce that $m' = 0$.
But, by the argument of i), $P_1 \neq 0$.
But this conflicts to Lemma \ref{lem:sdzp}.

Consider the situation where $n_\tau = 0$.
Assume that $m' \ge 1$.
Assume that $V=  \B_{2m'}$.
By the table, 
\begin{align*}
P_{m'} \app \frac{1-X}{L(s,\tau)(1-X')}.
\end{align*} 
By Lemma \ref{lem:sdzp}, $P_{m'}$ should be a polynomial in $X$ of diameter $2m'$ of sign minus.
But it is impossible, since $L(s,\tau)^{-1}$ is in the form of $(1-a X)(1- a^{-1}X')$.
Assume that $V = \B_{2m'+1}$, and $\beta \in \B_{2m'+1,\kappa}$.
By the table, 
\begin{align*}
P_{m'+1} \app \frac{(1-X)(1-X')}{L(s,\tau)(1- \kappa X')}.
\end{align*} 
So, ${\rm dia} P_{m'+1} = 3$, conflicting to Lemma \ref{lem:sdzp}.
Assume that $V'= \B_{2m'+1}^{\sharp}$ and $\beta \in \B_{2m'+1,\kappa}^\sharp$.
By Hecke theory, $P_{m'+1}$ is a polynomial in $X$ in the form of 
\begin{align*}
\frac{P(X)(1-X)(1-X')}{L(s,\tau)f(X) (1 -\kappa X')}
\end{align*} 
where $P,f \in \C[X]$ wiith $\deg P \le 1, \deg f = 2$.
Hence $\deg P_{m'+1} \le 2$.
But this conflicts to Lemma \ref{lem:sdzp}.
Thus $m' \neq 1$.
Assume that $m' = 0$.
However, since $\imath$ is contained in the paramodular groups of principal level $0$ except for $K_1^\sharp$, we may assume that $V' = \B_{1}^\sharp$, and $\beta \in \B_{1,\kappa}^\sharp$ for some $\kappa$.
By Lemma \ref{lem:sdzp}, it must be hold that $P_1 \app (1-X^2)$.
Therefore  
\begin{align*}
g_\beta(X) \app \frac{(1+ X)(1- \kappa X')(1-X)}{1-X'}.
\end{align*} 
Therefore $\kappa$ is plus, and $g_\beta(X) \app (1+X)(1-X)$.
Now it is possible to derive a contradiction using the operator $U_{K_1^\sharp}$.
This completes the proof for the case U.\\
{\bf Case R)}
Since $\imath$ is contained in $K_3$, we may assume that 
\begin{align*}
m' \ge 2, \ \mbox{or} \ V' = \B_3^{\sharp}.
\end{align*} 

Consider the situation where $n_\tau \ge 2$.
Assume that $V' =\B_{2m+1}^\sharp$.
By the similar argument to i), we deduce that $P_{m'} \app 1 -X^{-1}$ and there exists a nonzero constant $c$ such that 
\begin{align*}
Z(s,\beta) = \frac{c}{1-X}, \  Z(s,\beta^*) = \frac{-c}{qX(1-X)}.
\end{align*}  
Then, $P_{m'+1} \app 1-X'$ by the table.
But, this has no sign, a contradiction.
Assume that $V' = \B_{2m'+1}$.
Assume that $P_{m'+1} \neq 0$.
By the table, $P_{m'+1} \app 1-X$.
By Lemma \ref{lem:sdzp}, $m' = n_\pi$, and $P_{m'} = 0$, conflicting to the table.
Assume that $P_{m'+1} = 0$, then $c_{-1}^* = -q$ by the table, and $P_{m'} \app (1-X'^{-1})(1-X)$, having no sign, a contradiction.

Consider the situation where $n_\tau = 0$.
Assume that $V' = \B_{2m'+1}$.
Then $P_{m'+1}$ is a constant multiple of $(1-X)/L(s,\tau)$.
Since $(1-X)/L(s,\tau)$ has no sign, $P_{m'+1}= 0$.
By Lemma \ref{lem:PSzeta} ii) and Corollary \ref{cor:Z}, $P_{m'} \app X^{-1}(1-X)(1-X')/L(s,\tau)$, having no sign, a contradiction. 
Assume that $V' = \B_{2m'+1}^{\sharp}$.
By Hecke theory, $P_{m'}$ is in the form of
\begin{align*}
\frac{(1-X)(XQ(X) + q^{-1}R(X))}{L(s,\tau)X \Delta(X)}
\end{align*} 
where $\deg Q,\deg R \le 3$, and $\Delta(X) \in \C[X] \setminus X \C[X]$ with $\deg \Delta =4$.
Therefore, $XP_{m'}$ is a polynomial in $X$ of degree $\le 3$.
Similarly $P_{m'+1}$ is a polynomial in $X$ of degree $\le 2$.
From Lemma \ref{lem:sdzp}, it follows that $m' = 1$ and that there exist some $a,b,c \in \C$ such that
\begin{align*} 
P_{1}(X)= a(X^{-1}-X^2) + b(1-X), P_{2}(X) = c (1- X^2).
\end{align*}
Assume that $c = 0$.
By Lemma \ref{lem:PSzeta} ii), 
\begin{align*}
g_\beta(X) \app \frac{a(1-X^3) + b (X -X^2)}{1-X'}.
\end{align*} 
Hence $a(1-q^3)+ b(q-q^2) = 0$, and 
\begin{align} 
Z(s,\beta) \app (1-q X)L(s,\tau). \label{eq:SKiiRZ}
\end{align}
Now, consider $U_{K'} \beta$ for the compact subgroup 
\begin{align*}
K' := \begin{bmatrix}
R_2 & R  \\
\varpi R & R_2
\end{bmatrix}.
\end{align*} 
Obviously $\pi(\imath)U_{K'} \beta = - U_{K'} \beta$.
Taking (\ref{eq:SKiiRZ}) into account, we find $Z(s,U_{K'} \beta)$ is nontrivial. 
But, this conflicts to the following lemma.
\begin{lem}\label{lem:unrKminus}
If $\tau$ is unramified, then $\pi = SK(\tau)$ has no $K'$-invariant vector of sign minus.
\end{lem}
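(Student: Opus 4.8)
The plan is to prove the stronger fact that $\B_\s(\pi)^{K'}=\C\beta_0$, where $\beta_0$ is the (unique up to scalars) spherical vector of the unramified representation $\pi=SK(\tau)$. By Theorem \ref{thm:unrcase} one has $\B_0=\B_0^+=\C\beta_0$ in the case R, so $\pi(\imath)\beta_0=\beta_0$; moreover $\imath$ normalizes $K'$ — a direct check, using that $\iota$ normalizes $\mathfrak{O}$, $R$ and each $R_m$ and that $-R=R$, $-R_m=R_m$ — so $\pi(\imath)$ acts on $\B_\s(\pi)^{K'}$ and would act by $+$ on all of it once we know this space is spanned by $\beta_0$. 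Thus the whole statement reduces to the bound $\dim\B_\s(\pi)^{K'}\le 1$.

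To obtain this bound I would exploit that, $\tau$ being unramified, $\pi=SK(\tau)$ is the image of the standard intertwining operator $I(\tau)\to I_-(\tau)$, i.e. the nongeneric quotient of $I(\tau)$ in (\ref{eq:exsSK}). Applying the exact functor $(-)^{K'}$ to $0\to G(\tau)\to I(\tau)\to SK(\tau)\to 0$ yields
\begin{align*}
\dim SK(\tau)^{K'}=\dim I(\tau)^{K'}-\dim G(\tau)^{K'}.
\end{align*}
The right-hand terms are accessible: $\dim I(\tau)^{K'}$ is computed from the Iwasawa decomposition for the Siegel parabolic $P$ as $\sum_g\dim\tau^{\Gamma_g}$, the sum over the finite set $P(F)\backslash G(F)/K'$, with $\Gamma_g\subset\GL_2(F)$ the compact open image of $P\cap gK'g^{-1}$ in the Siegel Levi; $\dim G(\tau)^{K'}$ is handled the same way for the generic constituent together with the Roberts–Schmidt level theory of $G(\tau)=\theta(\tau\bt St)$. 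Since $K'$ differs from $G(\mathfrak{o})$ only by congruence conditions modulo $\p$ (after conjugating by $u_\alpha$ in the case R-ii), the enumeration of $P(F)\backslash G(F)/K'$ reduces to a computation over the residue field $\F$, in which the Weyl elements $s_\p$ (resp. $s_\p\langle u_\alpha\rangle$), lying in $K_2,K_3^\flat$ and hence in natural overgroups of $K'$, collapse most double cosets; one then checks that the surviving contributions to $I(\tau)^{K'}$ and to $G(\tau)^{K'}$ cancel except for the one from the spherical section, leaving $\dim SK(\tau)^{K'}=1$.

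The main obstacle is precisely this double-coset bookkeeping: $K'$ is not one of the paramodular groups of Section \ref{sec:NPG}, so neither the decompositions there nor the Hecke computations of Section \ref{sec:Hecke} apply directly, and the nonstandard $(2,1)$-block congruence of $K'$ together with the separate treatment of the ramified subcases R-i) and R-ii) must be carried out by hand. An alternative avoiding dimension counts is to examine the projection $e_{K_0}\colon\B_\s(\pi)^{K'}\to\B_\s(\pi)^{K_0}=\C\beta_0$ and show its kernel is zero: by Proposition \ref{prop:prvan} a nonzero vector in the kernel would vanish on $\hat{F}^\times$ and be annihilated by every $\lambda_\chi$, and one contradicts this by playing $K'$-invariance against the explicit shape of the unramified Bessel function built from the function $\delta$ of Section \ref{sec:NFsp}; this route, however, reduces once more to essentially the same residue-field computation.
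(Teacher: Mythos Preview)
Your proposal rests on a ``stronger fact'' that is false: it is \emph{not} true that $\B_\s(\pi)^{K'}$ (equivalently $SK(\tau)^{K'}$) is one-dimensional. For unramified $\tau=\chi\times\chi^{-1}$ one has $\pi=SK(\tau)=S(\chi):=\chi\1_{GL_2}\rtimes\chi^{-1}$, an \emph{irreducible} Siegel induction from a one-dimensional representation. The set $\Sigma=\{1,s,w_1,sw_1\}$ from Lemma~\ref{lem:Sglind} is also a complete system of representatives for $P\backslash G/K'$ (since $\Gamma_0(\p)=N_{\Gamma_0(\p)}A_{\Gamma_0(\p)}\bar N_{\Gamma_0(\p)}$ and $A_{K'}=A_{\Gamma_0(\p)}$, $\bar N_{K'}=\bar N_{\Gamma_0(\p)}$, $N_{K'}\supset T$, c.f.~(\ref{eq:P2T})), and for each $r\in\Sigma$ the compatibility condition (\ref{eq:condind}) is vacuous because $\chi$ is unramified. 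Hence $\dim S(\chi)^{K'}=|\Sigma|=4$, not $1$. Your dimension count via the exact sequence $0\to G(\tau)\to I(\tau)\to SK(\tau)\to 0$ cannot possibly yield $1$ either, and the alternative route through $e_{K_0}$ fails for the same reason: its kernel is three-dimensional.

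The paper's argument bypasses any dimension count. It uses only that each $r\in\Sigma$ satisfies $r\langle\imath\rangle=\pm r$; since $\imath\in K'$-normalizer and the inducing character satisfies $\chi(-1)=1$, a $K'$-invariant $\xi\in S(\chi)$ of sign minus must obey $\xi(r)=\xi(r\langle\imath\rangle\imath)=\xi(r\imath)=-\xi(r)$ for every $r\in\Sigma$, hence $\xi=0$. In effect the four-dimensional space $S(\chi)^{K'}$ lies entirely in the $+1$-eigenspace of $\pi(\imath)$, which is exactly the assertion.
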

\begin{proof}
The Hecke subgroup $\Gamma_0(\p)$ also decomposed as in (\ref{eq:decK}).
Therefore, by (\ref{eq:P2T}), the set $\Sigma$ in the proof of Lemma \ref{lem:Sglind} is also a complete system of representatives for $P \bs G /K'$.
For all $r \in \Sigma$, it holds that $r \la \imath \ra = \pm r $.
Let $\chi$ be an unramified character such that $\tau \simeq \chi \times \chi^{-1}$. 
Let $\xi \in \pi = S(\chi)$ be a vector.
If $\xi$ is of sign minus and invariant under $K'$, then since $\chi(-1) = 1$, it holds that 
\begin{align*}
\xi(r) = \chi(-1) \xi(r) = \xi(\imath r) = \xi(r\la \imath \ra \imath) = \xi(r \imath) = -\xi(r)
\end{align*} 
for all $r \in \Sigma$, and therefore $\xi$ is identically zero.
This completes the proof.
\end{proof}
\nid
Hence $c \neq 0$, and $Z(s,e_1 \beta) \neq 0$.
It follows from Lemma \ref{lem:sdzp} that $P_2(X, e_1 \beta) \app (1-X^2)$, and 
\begin{align*} 
Z(s,e_1 \beta) \app (1+X)(1-X')L(s,\tau).
\end{align*}
Obviously, this is not a polynomial in $X$, and therefore $Z(s,U_{K'} e_1\beta) \neq 0$.
But, $U_{K'} e_1\beta$ is $K'$-invariant.
This conflicts to Lemma \ref{lem:unrKminus} again.
This completes the proof of ii).

iii) follows from (\ref{eq:nmrel}) and ii). 
\subsection{$SK(\tau^{JL})$}\label{subsec:SKtauJL}
Let $\tau$ be a discrete series.
Then $\tau$ is a supercuspidal representation, or $St$, or its twist $\chi_L St$ by the quadratic character $\chi_L$ associated to a quadratic extension $L$ of $F$. 
Let $D$ denote the division quaternion algebra defined over $F$, and $\tau^{JL} \in \Ir(D)$ the Jacquet-Langlands transfer.
Let $\pi = SK(\tau^{JL})$.
Combining the results of \cite{G-T},\cite{G-T2} and \cite{R-S2}, we obtain the following table.
\begin{table}[H]
\caption{classification of $SK(\tau^{JL})$}
\label{tab:SKJL}
\begin{tabular}{llcccc}\toprule
$\tau$ & $\tau^{JL}$  & cond of $E$ & type of $\pi$ & $L(s,\phi_\pi)^{-1}$ & $N_\pi$ \\ \midrule
$St$ & $\1_{D^\times}$ & any & VIb  & $(1-X')^2$ & $2$  \\ 
$\chi_L St$(U) & $\chi_L \circ N_{D/F}$ & $E= L$ &  Va* & $1-X'^{2}$ & $2$ \\ 
$\chi_L St$(R) & $\chi_L \circ N_{D/F}$  & $E= L$ &  Va* & $1-X'$ & $2n_{\chi_L}+ 1$ \\ 
s.c & $\tau^{JL}$ & $\Ts_E(\tau) = 0$ & XIb* & $1-X'$ & $n_\tau+1$ \\ 
\bottomrule
\end{tabular}
\end{table}
\nid
Here we use the notation for types of $\pi$ in loc. cit, and $N_\pi$ indicates the integer $\log_{|X|} |\ep(s, \phi_\pi,\psi)|$ for an additive character $\psi$ such that $\psi(\mathfrak{o}) = \{1 \} \neq \psi(\p^{-1})$.
All these types are unitary nongeneric representation.
Non-supercuspidal representation among them is only VIb, which is denoted by $\tau(T,|*|^{-1/2})$ in \cite{R-S}, and is the unique nongeneric constituent of $I(\1_{GL(2)})$ (c.f. (2.11) of loc. cit.). 
In this subsection, when $F$ is even residual, we assume that 
\begin{align}
\gamma(s, SK(\tau^{JL}),\psi) = \gamma(s, \tau^{JL},\psi)\gamma(s,\1_{D},\psi). \label{eq:gammaprod}
\end{align} 
By \cite{Sc-T}, $L^{reg}(s,\pi)$ is equal to $1$ when $\pi$ is type Va* or XIb*, and to $(1-X')^{-1}$ when $\pi$ is type VIb.
By (\ref{eq:divLL})
\begin{align}
\frac{L^{reg}(s,\pi)}{L(s,\pi)} = 1, \mbox{or} \ 1 \pm X', \mbox{or} \ 1-X'^2. \label{eq:LLreg}
\end{align} 
Now from (\ref{eq:gammaprod}) and the fact that $L(s,\tau^{JL})$ is $(1 \pm X')^{-1}$ or $1$, it follows that 
\begin{align}
\begin{split}
L(s,SK(\tau^{JL})) &= L(s,\tau)L(s,St), \\
\ep(s,SK(\tau^{JL}),\psi) &= \ep(s, \tau^{JL},\psi)\ep(s,\1_{D},\psi) 
\end{split}
\label{eq:gLep}
\end{align} 
for an arbitrary nonarchimedean local field $F$.
In particular, 
\begin{align*}
L(s,SK(\tau^{JL})) \neq L^{reg}(s,SK(\tau^{JL})).
\end{align*} 
Now the estimation (\ref{eq:m>n}) also holds by the proof of Lemma \ref{lem:m>nSK}.
If $\tau$ is not $St$, and $F$ is odd residual, (\ref{eq:gammaprod}) was proved by Danishman \cite{D}.
We give a proof (\ref{eq:gammaprod}) for the case where $\tau$ is $St$ in this subsection (c.f. Proposition \ref{prop:LepKlind}), and that for the case where $F$ is even residual in sect. \ref{sec:real}.
Now we will prove:
\begin{thm}\label{thm:SKJL}
With notations and the assumption as above, the followings are true.
\begin{enumerate}[i)]
\item If $\B_\s(\pi) \neq \{ 0\}$, then the strict minimal subspace of sign $-\ep_\tau$ is 
\begin{align*}
\begin{cases}
\B_{1,+}^{+} & \mbox{if $\tau = St$, and $E$ is in the case U},  \\
\B_{3,+}^{+} & \mbox{if $\tau = St$, and $E$ is in the case R},  \\
\B_{2}^{-\ep_\tau} & \mbox{if $\tau = \chi_E St$, and $E$ is in the case U, }  \\
\B_{2n_\tau -1}^{-\ep_\tau} & \mbox{if $\tau = \chi_E St$, and $E$ is in the case R, }  \\
\B_{{n_\tau},+}^{-\ep_\tau} & \mbox{if $\tau$ is supercuspidal with $2 \nmid n_\tau$, and $E$ is in the case U, }  \\
\B_{2n_\tau-1}^{-\ep_\tau} & \mbox{if $\tau$ is supercuspidal, and $E$ is in the case R.} 
\end{cases}
\end{align*} 
and one-dimensional space spanned by $\beta$ with the properties:
\begin{align}
Z_{\frac{(n_\tau+1)}{\f}}(s,\beta) \app L(s,\pi),  \label{eq:ZLSKJL}
\end{align} 
and 
\begin{align*}
Z(s,\beta) \app L^{reg}(s,\pi) =  
\begin{cases}
(1-X')^{-1} & \mbox{if $\tau = St$,} \\
1 & \mbox{otherwise.}
\end{cases}
\end{align*}
\item There is no paramodular form of sign $\ep_\tau$ in $\B_\s(\pi)$.
\item If $n_\tau$ is even and $E_\s$ is in the case U, then $\B_\s(\pi) = \{0 \}$.
\end{enumerate}
\end{thm}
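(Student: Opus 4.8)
The plan is to run, almost verbatim, the strategy behind the proof of Theorem~\ref{thm:SKrepn}, with the arithmetic data of $SK(\tau^{JL})$ put in place of that of $SK(\tau)$. By Theorem~\ref{thm:1dimSK} only special Bessel models occur, and every representation listed in Table~\ref{tab:SKJL} is unitary, so the Hecke theory of sect.~\ref{sec:Hecke} applies directly and no analogue of the Roberts--Schmidt passage from the non-unitary to the unitary case is needed. Once the $\gamma$-factor identity (\ref{eq:gammaprod}) is available, one reads off from (\ref{eq:gLep}) that $L(s,\pi)=L(s,\tau)L(s,St)$, that $\ep_\pi=-\ep_\tau$, and that $n_\pi=n_\tau+1$; in particular the ``good'' sign is here $-\ep_\tau$, and $L(s,\pi)\neq L^{reg}(s,\pi)$ by (\ref{eq:LLreg}). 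We also have the bound $m_\pi^\ep\ge n_\pi/\f-1$, by the proof of Lemma~\ref{lem:m>nSK}. The overall logic is: prove i) assuming $\B_\s(\pi)\neq\{0\}$, prove ii) by deriving a contradiction from the existence of a paramodular form of the wrong sign $\ep_\tau$, and then obtain iii) from the parity obstruction that surfaces inside i).

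First I would establish the analogue of Lemma~\ref{lem:inflem}: there is a paramodular form of sign $\ep_\pi=-\ep_\tau$ with $Z(s,\beta)=L^{reg}(s,\pi)$. Using (\ref{eq:LregSK}) and the defining property of $L^{reg}$, pick $\beta\in\B_\s(\pi)$ with $\beta(\hv^j)=q^{-j}$ for $j\gg 0$ (type VIb), resp.\ $\beta(\hv^j)=0$ for $j\ge 1$ (types Va*, XIb*); push it, by the averaging construction of Proposition~\ref{prop:extpf} together with the operator $U_{K_{2m+1}^\sharp}$ of (\ref{eq:UKop}), into some $\B_{2m+1}^\sharp$ of a sign with $Z(s,\beta)=L^{reg}(s,\pi)$; then the functional equation (\ref{eq:FNEQP}), applied to $P_m(X,\beta)$ and $P_{m+1}(X,\beta)$ and evaluated at $X=1$, forces the sign to be $\ep_\pi$.

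Next comes the case analysis, split according to $\tau\in\{St,\ \chi_L St,\ \mbox{supercuspidal}\}$ and to whether $E=E_\s$ is in case U or case R, entirely parallel to the proof of Theorem~\ref{thm:SKrepn}. The ingredients are Tables~\ref{tab:ZU} and~\ref{tab:ZR}; the sign-and-diameter constraint of Lemma~\ref{lem:sdzp} coming from (\ref{eq:FNEQP}); Lemma~\ref{lem:PSzeta}; Proposition~\ref{prop:negflat}, which (since $L(s,\pi)^{-1}$ has no sign by (\ref{eq:LLreg})) rules out $\B_{2m+1}^{\flat,\ep}$ as the strict minimal space above the threshold level; the identity $L(s,\pi)\neq L^{reg}(s,\pi)$, which forbids $Z(s,\beta)\app L(s,\pi)$ for a paramodular form; and the vanishing Theorem~\ref{thm:paravn} with Corollary~\ref{cor:Z}, which give both the non-vanishing of $Z(s,\beta)$ (or of $Z(s,\beta^*)$) and then the one-dimensionality of the strict minimal space. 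When $\tau$ is supercuspidal, resp.\ $\tau=\chi_L St$, the representation $\pi$ has type XIb*, resp.\ Va*, hence is supercuspidal, hence satisfies (\ddag); so $K_2$ and $K_3^\flat$ (and in case R-ii their $u_\alpha$-conjugates, cf.\ (\ref{eq:ualpha})) cannot define the strict minimal space, which pins the level down as recorded in the statement. The integrality of the level forced by Lemma~\ref{lem:sdzp} in case U is precisely the parity obstruction yielding iii): if $n_\tau$ were even, then $n_\pi=n_\tau+1$ would be odd, leaving no admissible level, so $\B_\s(\pi)=\{0\}$.

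The one genuinely new input --- and the main obstacle --- is the case $\tau=St$, where $\pi$ has type VIb: this is \emph{not} supercuspidal (so (\ddag) is unavailable) and the $\gamma$-factor identity (\ref{eq:gammaprod}) is not covered by Danishman's results \cite{D}. I would establish it (this is Proposition~\ref{prop:LepKlind}) by a direct construction in the spirit of sect.~\ref{sec:NFsp} and of Proposition~\ref{prop:supdag}: produce an explicit $K_1$- (case U), resp.\ $K_3$- (case R), invariant Bessel vector of sign $+$ from the distinguished vector of $\pi$, compute its canonical Piatetski-Shapiro zeta so as to get $Z_{(n_\tau+1)/\f}(s,\beta)\app L(s,\tau)L(s,St)$, and read $\ep(s,\pi,\psi)$ off the local functional equation (\ref{eqn:LFE}) --- which simultaneously yields (\ref{eq:gammaprod}) and supplies the $\tau=St$ rows of the table. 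Finally, part ii) is handled for each $\tau$, as in the proof of Theorem~\ref{thm:SKrepn} ii), by positing a paramodular form of sign $\ep_\tau$, rerunning the Hecke-theoretic computation, and contradicting Lemma~\ref{lem:sdzp} (or Theorem~\ref{thm:paravn}), with the operators $U_K$ of (\ref{eq:UKop}) used wherever a zeta expansion must be shifted.
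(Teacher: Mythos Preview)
Your overall architecture matches the paper's: the Va* and XIb* cases (supercuspidal, hence (\ddag) holds, and $n_\pi\ge 3$ except for Va* in case U) are indeed handled by rerunning the argument of Theorem~\ref{thm:SKrepn}, and the paper says so explicitly and omits those details. For Va* in case U (where $n_\pi=2$), the paper does give a short low-level elimination of $\B_1,\B_1^\sharp$ using supercuspidality, which you subsume under ``pins the level down''; that is fine.

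The one place where your proposal is genuinely vague is the $\tau=St$ (type VIb) case, and here the paper does something different from what you suggest. You propose to build the $K_1$- (resp.\ $K_3$-) invariant vector ``from the distinguished vector of $\pi$'' in the spirit of sect.~\ref{sec:NFsp}. The paper instead uses the $\theta$-lift from the trivial representation of $D^\times\times D^\times$: taking $f=\Ch(\OO\oplus\OO)$ gives a Bessel function $\xi_f$ that is visibly $\Gamma_0(\p)$-invariant with $\pi(w_0')\xi_f=\xi_f$ and $\pi(\imath)\xi_f=\xi_f$, and a one-line computation with (\ref{eq:Weil}) yields $\xi_f(\hv^j)=q^{-2j}\vol(E^1\bs H^1)$, hence $Z(s,\xi_f)\app(1-X')^{-1}$ directly. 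This is what makes the deduction of $n_\pi=2$, $\ep_\pi=+$, $L(s,\pi)=(1-X')^{-2}$ (i.e.\ Proposition~\ref{prop:LepKlind}) self-contained in cases U-i) and R-i). Your Siegel-induction/distinguished-vector route is not obviously wrong, but you would have to actually identify the right vector inside $I(\1_{GL(2)})$ and compute its zeta, which is more work and is not what the paper does.

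There is also a specific input you are missing for VIb in case R: since VIb is \emph{not} supercuspidal, (\ddag) is unavailable, and you give no mechanism to exclude $\B_2$ and $\B_3^\flat$ as strict minimal spaces. The paper handles this by noting that in case R-i) the groups $K_2$ and $K_3^\flat$ are (conjugates of) the standard paramodular and Klingen groups of level $\p$, and then invoking Table~A.10 of \cite{R-S} to conclude $\B_2=\B_3^\flat=\{0\}$ for VIb. With that in hand, the paper produces $\beta=\int_{\mathfrak{P}^{-1}}\pi(n_x)\xi_f\,dx\in\B_3^+$ and computes $Z(s,\beta)$ and $Z(s,\beta^*)$ again via (\ref{eq:Weil}). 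You should either reproduce this step or explain an alternative.
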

The representations Va*, XIb* are supercuspidal nongenric and satisfy the condition (\ddag) in the previous subsection.
Since XIb*, and Va* in the case R have analytic conductor $\ge 3$ by Table \ref{tab:SKJL} and (\ref{eq:gLep}), our proofs for them are similar to that of Theorem \ref{thm:SKrepn}, and omitted.
We will give proofs of i), ii) for Va* in the case U, and VIb.
Since $n_\tau = 1$, those of iii) are needless. \\
\paragraph{\bf Proof for Va*)}
It is easy to derive from Table \ref{tab:ZU} that $m := m_\pi^{\ep_\pi}$ (resp. $m' := m_\pi^{-\ep_\pi}$) is less than $2$ if it is finite, i.e., there exists a paramodular form of sign $\ep_\pi$ (resp. $-\ep_\pi$).
Assume that $m = 0$.
Then the strict minimal space $V$ of sign $\ep_\pi$ is $\B_{1}$, $\B_1^\sharp$.
If we assume that $V$ is $\B_{1}$ or $\B_1^\sharp$, then we may assume that a strict newform $\beta$ lies in $\B_{1,\kappa}$ or $\B_{1,\kappa}^\sharp$, and it follows from Lemma \ref{lem:PSzeta}, \ref{lem:sdzp} that 
\begin{align*}
Z(s,\beta)/(1- \kappa X') \app (1- X'^2)^{-1}.
\end{align*} 
This conflicts to the supercuspidality of $\pi$. 
Hence $m = 1$.
Now it is easy to derive $V = \B_2$ from the same table, and the one-dimensionality of $V$ follows from Theorem \ref{thm:paravn}.
 This proves i).
For ii), similarly, we conclude $m' \neq 0$.
We see that no paramodular space of principal level $1$ does not the strict minimal space of sign $-\ep_\pi$ using the same table.\\

\paragraph{\bf Proof for VIb)}
The $\theta$-lift from the trivial representation of $D^\times \times D^\times$ is just $\pi$ (the big theta $\Th$ is irreducible and coincides with the small theta $\theta$).
We will really construct a paramodular form by the $\theta$-lift.
Let $\xi$ be a nonzero constant function on $H = D^\times \times D^\times/\{(z, z^{-1}) \mid z \in F^\times \}$.
Let $\OO$ denote the maximal order of $D$, and $f$ be the characteristic function of $\OO \op \OO$.
By using the formulas (\ref{eq:Weil}), one can see that the Bessel function $\xi_f$ defined at (\ref{eq:xivp}) is invariant under the Hecke subgroup $\Gamma_0(\p)$ and satisfies 
\begin{align}
\pi(w_0')\xi_f = \xi_f,\ \pi(\imath)\xi_f = \xi_f. \label{eq:B0++pre}
\end{align} 
Further, we have
\begin{align}
Z(s,\xi_f) \app \frac{1}{1-X'} \label{eq:ZsxifSKJL}
\end{align} 
since $f$ is invariant under the isometry subgroup $H^1 \subset H$, and 
\begin{align*}
\xi_f(\hv^j) & = q^{-2j}\int_{E^1 \bs H^1} f(z_0 h r^j) \xi(h r^j) dh \\
&= q^{-2j}\int_{E^1 \bs H^1} f(z_0 r^j) dh \\
&= q^{-2j}\vol(E^1 \bs H^1)
\end{align*}  
by definition, where $r \in D^\times$ indicates an element of reduced norm $\varpi$.
We will prove i), ii) according to the case of $E$. \\
{\bf Case U).}
In the case U-ii), by (\ref{eq:gLep}), it holds that
\begin{align}
n_\pi = 2, \ \ \ep_\pi =1, \ \ L(s,\pi) = \frac{1}{(1-X')^{2}}. \label{eq:SKJLnep}
\end{align} 
This holds also in the case U-i).
Indeed, for the above $\xi_f \in \B_{1,+}^+$, it holds that 
\begin{align*}
Z_1(s,\xi_f) \app (1-X')^{-2}
\end{align*} 
by (\ref{eq:B0++pre}), (\ref{eq:ZsxifSKJL}) and Lemma \ref{lem:PSzeta} iv).
By (\ref{eq:LLreg}), $L(s,\pi)^{-1}$ is equal to $(1-X'), (1-X')^{2}$, or $(1-X')^{2}(1+X')$.
Considering that $P_1(X,\xi_f)$ is a nonzero polynomial in $X$ of a sign, we can conclude (\ref{eq:SKJLnep}) from Lemma \ref{lem:sdzp}.

We will claim
\begin{align*}
\B_{1,-}^{\pm} = \B_{1,-}^{\sharp, \pm} = \{ 0\}.
\end{align*}
For $\beta \in \B_{1,-}^{\sharp,\pm}$, assume that $P_1(X,\beta) \neq 0$.
Then $P_1(X,\beta) \app 1$ by (\ref{eq:SKJLnep}) and Lemma \ref{lem:sdzp}.
But, by Lemma \ref{lem:PSzeta}, 
\begin{align*} 
P_1(X,\beta) \app g_\beta(X) \frac{1-X'}{1+ X'}.
\end{align*}
This is a contradiction.
Hence the claim.

Now we have showed $\B_1^+$ is the strict minimal space of sign plus and spanned by $\xi_f$ in the case U-i).
When $E$ is in the case U-ii), similar to Lemma \ref{lem:unrconst}, we can construct a desired strict newform of sign plus by the integral: 
\begin{align*}
\vol(K_1)^{-1} \int_{K_1} \pi(k) \xi_f d k = \vol(A_{K_1})^{-1} \vol(N_{K_1})^{-1} \int_{N_{K_1}} \pi(an) \xi_f d a d n.
\end{align*} 
This completes the proof for i).

For ii), by the argument for i), we find $m_\pi^{-} >1$.
Using Table \ref{tab:ZU}, one can show there is no paramodular form of sign minus.\\
{\bf Case R).}
If $E$ is in the case R-ii), (\ref{eq:SKJLnep}) holds by (\ref{eq:gLep}).
We can show this holds also in the case R-i) as follows.
By table A. 10 of \cite{R-S}, $\B_2 = \B_3^{\flat} = \{ 0\}$.
Put 
\begin{align*}
\beta = \int_{\mathfrak{P}^{-1}} \pi(n_x) \xi_f dx,
\end{align*} 
which belongs to $\B_3^{+}$, and not zero.
Thus $\B_3^{+}$ is the strict minimal space of sign plus.
We will compute $Z_2(s,\beta)$.
Let $u \in F^\times$.
On the one hand, $\beta(\hat{u}) = \xi_{f'}(\hat{u})$ where $f'(x) = \vol(\mathfrak{P}^{-1}) \Ch(x; \OO \op \varrho \OO)$.
It is easy to see that 
\begin{align*}
Z(s,\beta)= \frac{\vol(\mathfrak{P}^{-1})}{1-X'}.
\end{align*} 
On the other hand, $\beta^*(\hat{u}) = \xi_{f'}(\hat{u}w_1)$.
We calculate 
\begin{align*}
w_\psi(w_1,1)f'(x) = \vol(\mathfrak{P}^{-1})\Ch(x;\varrho^{-1}\OO \op \OO).
\end{align*} 
Now it is easy to see that
\begin{align*}
Z(s,\beta^*)= \frac{\vol(\mathfrak{P}^{-1})}{X(1-X')}.
\end{align*} 
It follows from (\ref{eq:PSzeta}) that $Z_2(s,\beta) \app (1-X')^{-2}$.
It is easy to deduce (\ref{eq:SKJLnep}) from (\ref{eq:divLL}) and (\ref{eq:LregSK}).

Now by Lemma \ref{lem:sdzp}, $P_1(X,\beta')$ and $P_2(X,\beta')$ for arbitrary $\beta' \in \B_3$ are constant multiples of $1+X^{-1}$ and $1$ respectively.
Taking (\ref{eq:PSzeta}) into account, we conclude that $Z(s,\beta')$ and $Z(s,\beta'^*)$ are constant multiples of $(1-X')^{-1}$ and $(X(1-X'))^{-1}$ respectively.
The one-dimensionality of $\B_3$ follows from Theorem \ref{thm:paravn} immediately.
The proof for ii) is similar to Va* and omitted.

We have also proved (without assuming (\ref{eq:gammaprod})):
\begin{prop}\label{prop:LepKlind}
For the representation $\tau(T,|*|^{-1/2})$, the $L$- and $\ep$-factors coincide with those of the Langlands-parameter unless $E$ is in the case R-ii).
\end{prop}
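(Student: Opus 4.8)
The plan is to read the statement off from the analysis of the type~VIb representation $\pi = \tau(T,|*|^{-1/2}) = SK(\1_{D^\times})$ already carried out in the proof of Theorem~\ref{thm:SKJL}, using only explicit theta‑lifted Bessel vectors and at no point the hypothesis (\ref{eq:gammaprod}). Recall that for $\tau = St$ we produced the concrete paramodular vector $\xi_f \in \B_\s(\pi)$ attached via (\ref{eq:xivp}) to the constant function on $\GSO_B$ and to the characteristic function $f$ of $\OO \op \OO$: by the Weil‑representation formulas (\ref{eq:Weil}) it is $\Gamma_0(\p)$‑invariant, of sign $+$, fixed by $w_0'$ (cf. (\ref{eq:B0++pre})), and satisfies $Z(s,\xi_f) \app (1-X')^{-1}$ (cf. (\ref{eq:ZsxifSKJL})). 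The first task is to turn these data into a determination of $L(s,\pi)$, $n_\pi$ and $\ep_\pi$.

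First I would combine Schmidt--Tsai's value $L^{reg}(s,\pi)^{-1} = 1-X'$ for type~VIb with (\ref{eq:divLL})--(\ref{eq:LLreg}), which restricts $L(s,\pi)^{-1}$ to the list $1-X'$, $(1-X')^2$, $(1-X')^2(1+X')$. Next, in the non‑dyadic cases I would compute a canonical Piatetski--Shapiro zeta of a sign‑$+$ strict newform explicitly: in case~U-i), where $\Gamma_0(\p) = K_1$ so that $\xi_f \in \B_{1,+}^{+}$, Lemma~\ref{lem:PSzeta}~iv) gives $Z_1(s,\xi_f) \app (1-X')^{-2}$; in case~R-i), the averaged vector $\beta = \int_{\mathfrak P^{-1}} \pi(n_x)\xi_f\,dx \in \B_3^{+}$ satisfies $Z_2(s,\beta) \app (1-X')^{-2}$ by the Weil‑representation evaluation of $Z(s,\beta)$ and $Z(s,\beta^*)$ together with (\ref{eq:PSzeta}); and in case~U-ii) one reaches the same output by replacing $\xi_f$ with its $K_1$‑average $\vol(K_1)^{-1}\int_{K_1}\pi(k)\xi_f\,dk$, as in Lemma~\ref{lem:unrconst}. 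In each of these the zeta polynomial $P_{m+1}(X,\beta) = Z_{m+1}(s,\beta)/L(s,\pi)$ lies in $\C[X^\pm]$ and, by Lemma~\ref{lem:sdzp}, has a sign and diameter $\f(m+1) - n_\pi$: the candidate $1-X'$ would make $P_{m+1}$ fail to be a Laurent polynomial, and $(1-X')^2(1+X')$ would force $P_{m+1} \app 1+X'$, which has no sign, so only $L(s,\pi)^{-1} = (1-X')^2$ remains, whence $P_{m+1} \in \C^\times$, $n_\pi = 2$ and $\ep_\pi = +$. Comparing with the VIb row of Table~\ref{tab:SKJL}, where $L(s,\phi_\pi)^{-1} = (1-X')^2$, $N_\pi = 2$ and $E_\pi = +$ (the latter being the product of the root number of $St$ with the extra $-1$ in the SK‑packet $\ep$‑factor formula of sect.~\ref{sec:packet}), and using that $L(s,\pi) = L(s,\phi_\pi)$ forces $\ep(s,\pi,\psi) = \ep(s,\phi_\pi,\psi)$ through Proposition~\ref{prop:FE}, we obtain the asserted coincidence of both factors.

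The step I expect to be the genuine obstruction is the case the statement excludes, namely R-ii): there $F$ is dyadic, the identification $\Gamma_0(\p) = K_1$ and its level‑$3$ analogue are no longer available, and the explicit evaluation of $Z(s,\beta^*)$ through $w_\psi(w_1,1)f'$ breaks down because the uniformizer $\varrho = \alpha + \s$ differs from $\s$ and the computation in case~R-i) relies on $e \in \p$; there one cannot pin down $\ep_\pi$ by these means, which is precisely why (\ref{eq:gammaprod}) is assumed for R-ii) and established instead via the archimedean functional equation of sect.~\ref{sec:real}. Apart from that, the only delicate bookkeeping is tracking the sign and diameter in Lemma~\ref{lem:sdzp} for the averaged vector in case~U-ii) and confirming that the list produced by (\ref{eq:LLreg}) is indeed the one for type~VIb; everything else is a transcription of computations already made in the proof of Theorem~\ref{thm:SKJL}.
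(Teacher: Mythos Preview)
Your proposal is the paper's own argument: the proposition is read off from the VIb analysis inside the proof of Theorem~\ref{thm:SKJL}, using the explicit theta-lifted vector $\xi_f$ and its translates, the resulting canonical zeta $\app (1-X')^{-2}$, and then the list (\ref{eq:LLreg}) together with the sign/diameter constraint of Lemma~\ref{lem:sdzp} to force (\ref{eq:SKJLnep}). For the odd-residual cases U-i) and R-i) your outline matches the paper line by line.

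The one place to tighten is case U-ii). After averaging $\xi_f$ over $K_1$ you obtain a vector $\beta'\in\B_1^+$, but you have not shown that $Z(s,\beta')\app(1-X')^{-1}$: the identity (\ref{eq:ZsxifSKJL}) is for $\xi_f$ itself, and the $K_1$-average (which the paper writes as an integral over $A_{K_1}N_{K_1}$) may alter the values at $\hv^j$. Without controlling $Z(s,\beta')$ you cannot invoke Lemma~\ref{lem:PSzeta}~iv) to pin down $Z_1$ and exclude the other candidates in (\ref{eq:LLreg}). Note that the paper's own VIb paragraph for case U actually cites (\ref{eq:gLep}) --- hence the assumption (\ref{eq:gammaprod}) --- for U-ii), and only gives the self-contained zeta computation in U-i) and R-i); so this is a gap shared with the source text rather than a misreading on your part.
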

\subsection{Conclusion}\label{subsec:concl}
Let $\pi$ be in a SK-packet.
We have showed $m_\pi^{-\ep_\pi} = \infty$.
It holds for a paramodular form $\beta'$ of principal level $n$ that 
\begin{align*}
Z_{n+1}(s, e_n \beta') \app Z_{n+1}(s, \beta')
\end{align*}
by (\ref{eqn:e_m}) and (\ref{eq:PSzeta}).
Hence 
\begin{align*}
M_\pi = m_\pi^{\ep_\pi}
\end{align*}
by (\ref{eq:mmM}), and $e_m \beta$ for the strict newform $\beta$ is a newform of $\pi$of sign $\ep_\pi$ with the property 
\begin{align*}
Z_{m+1}(s,e_m\beta) \app L(s,\pi) =L(s,\phi_\pi)
\end{align*} 
where $m = M_\pi$.
Here we assume (\ref{eq:gammaprod}) if $F$ is even residual.
By Corollary \ref{cor:Z} and Lemma \ref{lem:m>n}, $\B_{2M_\pi}^{\ep_\pi}$ is one-dimensional.
Combining the theorems in the previous subsections, we have 
\begin{thm}\label{thm:SKNF}
Let $\pi$ be in the SK-packet of $\tau$.
Let $\s$ be a Hankel matrix in the form of (\ref{eq:defsigma}).
Then, the minimal level is 
\begin{align*}
M_\pi = \f^{-1} \times  
\begin{cases}
n_\tau & \mbox{if $\pi$ is $SK(\tau)$,} \\
(n_\tau+1) & \mbox{if $\pi$ is $SK(\tau^{JL})$.}
\end{cases}
\end{align*}
and the minimal space is the one-dimensional spanned by $\beta$ of sign 
\begin{align*}
\ep_\pi = 
\begin{cases}
\ep_\tau & \mbox{if $\pi = SK(\tau)$}, \\
-\ep_\tau & \mbox{if $\pi = SK(\tau^{JL})$} 
\end{cases}
\end{align*} 
such that 
\begin{align*} 
Z_{\frac{M_\pi}{\f}}(s,\beta) = L(s,\pi).
\end{align*}
\end{thm}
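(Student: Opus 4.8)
The statement is a synthesis of the branch‑by‑branch analyses of sect.~\ref{subsec:SKtau} and sect.~\ref{subsec:SKtauJL} together with the passage from a strict newform to a complete paramodular form described in sect.~\ref{subsec:concl}. The plan is to first split according to whether $\pi = SK(\tau)$ or $\pi = SK(\tau^{JL})$ (the latter with $\tau$ a discrete series); these two families exhaust the nonarchimedean local components of Saito--Kurokawa lifts, the first being governed by Theorem~\ref{thm:SKrepn} and the second by Theorem~\ref{thm:SKJL}. From part ii) of the relevant theorem I would read off that $\B_\s(\pi)$ carries no paramodular form of the sign opposite to $\ep_\pi$, where $\ep_\pi = \ep_\tau$ in the first family and $\ep_\pi = -\ep_\tau$ in the second; hence $m_\pi^{-\ep_\pi} = \infty$, and (\ref{eq:mmM}) forces $m_\pi^{\ep_\pi} \le M_\pi$.

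Next I would take a strict newform $\beta$ of sign $\ep_\pi$ at its strict minimal level $m = m_\pi^{\ep_\pi}$, exhibited explicitly in part i) of Theorem~\ref{thm:SKrepn} resp.\ Theorem~\ref{thm:SKJL}, and push it forward by the idempotent $e_m$. Combining the relation $Z_{n+1}(s,e_n\beta') \app Z_{n+1}(s,\beta')$, which is immediate from (\ref{eqn:e_m}) and (\ref{eq:PSzeta}), with (\ref{eq:ZLSK}) resp.\ (\ref{eq:ZLSKJL}) shows that $e_m\beta$ is a nonzero complete paramodular form of sign $\ep_\pi$ whose canonical Piatetski--Shapiro zeta at the minimal level is a nonzero scalar multiple of $L(s,\pi)$. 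Reading the principal level of the resulting complete space $\B_{2(m+1)}$ — or of $\B_{2m}$ itself in the two branches where the strict minimal space is already complete — from the list in part i) yields $M_\pi = \f^{-1}n_\tau$ when $\pi = SK(\tau)$ and $M_\pi = \f^{-1}(n_\tau+1)$ when $\pi = SK(\tau^{JL})$. One‑dimensionality of $\B_{2M_\pi}^{\ep_\pi}$ then follows from Corollary~\ref{cor:Z} and Lemma~\ref{lem:m>n}; a final rescaling of $\beta$ promotes the proportionality to the asserted equality of zeta and $L$‑factor; and $L(s,\pi) = L(s,\phi_\pi)$ by Theorem~\ref{thm:LSK} in the first family and by (\ref{eq:gLep}) in the second, the latter granting the assumption (\ref{eq:gammaprod}) in even residual characteristic, which is established in sect.~\ref{sec:real}.

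Since all the genuinely hard analysis is already contained in Theorems~\ref{thm:SKrepn} and~\ref{thm:SKJL} — whose proofs rest in turn on the Hecke‑operator computations of sect.~\ref{sec:Hecke} — this final step is essentially bookkeeping, and I do not expect a serious obstacle here. The two points that need attention are: keeping track, separately in each of the nonsplit branches U-i), U-ii), R-i), R-ii), of how the principal level is shifted by $e_m$ so that it lands precisely on $\f^{-1}N_\pi$; and checking that $e_m$ does not annihilate the strict newform, which is exactly where Corollary~\ref{cor:Z} (nonvanishing of at least one of $Z_m(s,\beta)$ and $Z_{m+1}(s,\beta)$) together with the explicit zeta tables of sect.~\ref{sec:Hecke} is used.
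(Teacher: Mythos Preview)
Your proposal is correct and follows essentially the same route as the paper's own argument in sect.~\ref{subsec:concl}: deduce $m_\pi^{-\ep_\pi}=\infty$ from part ii) of Theorems~\ref{thm:SKrepn} and \ref{thm:SKJL}, push the strict newform up via $e_m$ using $Z_{n+1}(s,e_n\beta')\app Z_{n+1}(s,\beta')$, and conclude one-dimensionality from Corollary~\ref{cor:Z} and Lemma~\ref{lem:m>n}. Your bookkeeping of the level shift under $e_m$ (distinguishing the two branches where the strict minimal space is already complete) is in fact slightly more explicit than the paper's terse summary.
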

\nid
We have also showed that there is no paramoudular form of sign $-\ep_\pi$, and that $\B_{\s}(\pi) = \{ 0 \}$ when $E_\s$ is in the case U, and $n_\tau$ is odd (resp. even) if $\pi$ is $SK(\tau)$ (resp. $SK(\tau^{JL})$).
The latter result is consistent to the fact due to Waldspurger \cite{W}, Tunnel \cite{T}:
\begin{align*}
\dim \Hom_{E_\s^\times}(\tau, \1)+ \dim \Hom_{E_\s^\times}(\tau^{JL}, \1) = 1.
\end{align*} 
\section{Local oldforms}\label{sec:old}
Let $\tau \in \Ir'(\PGL_2(F))$, and $\pi$ be in the SK-packet of $\tau$.
The following local oldform theory is due to Roberts and Schmidt \cite{R-S}.
\begin{thm}\label{thm:oldsp}
Let $\pi = SK(\tau)$.
The dimension of paramodular vectors in $\pi$ of level $n_\tau +k$ is $[k/2] + 1$.
\end{thm}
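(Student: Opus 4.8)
The plan is to reduce the assertion to two paramodular-dimension counts via the short exact sequence
\begin{align*}
0 \to SK(\tau) \to I_-(\tau) \to G(\tau) \to 0
\end{align*}
of (\ref{eq:exsSK}). Write $K_n$ for the (split) paramodular group of level $\p^n$, so that the quantity to be computed is $\dim \pi^{K_{n_\tau+k}}$ for $\pi = SK(\tau)$; via the unique Bessel functional this equals $\dim\B_{n_\tau+k}$. Averaging over $K_n$ splits every smooth representation as $V = V^{K_n}\oplus\ker(e_{K_n})$, so $V\mapsto V^{K_n}$ is exact, and applying it to the sequence above gives
\begin{align*}
\dim SK(\tau)^{K_{n_\tau+k}} = \dim I_-(\tau)^{K_{n_\tau+k}} - \dim G(\tau)^{K_{n_\tau+k}} .
\end{align*}
It therefore suffices to compute the paramodular dimensions of $I_-(\tau)$ and of $G(\tau)$ at each level.

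The representation $G(\tau) = \theta(\tau\bt St)$ is generic, and by sect.~\ref{sec:NFsp} (together with \cite{PS-S}) one has $\ep(s,G(\tau),\psi)=\ep(s,\tau,\psi)\,\ep(s,St,\psi)$, so $G(\tau)$ has paramodular conductor $n_\tau+1$; Roberts and Schmidt's oldform theorem for generic paramodular representations (Section 7.5 of \cite{R-S}) then gives $\dim G(\tau)^{K_{n_\tau+k}}=[(k+1)^2/4]$ for all $k\ge0$. For $I_-(\tau)$ --- the Siegel parabolic induction recalled in sect.~\ref{sec:packet}, with $P$ the Siegel parabolic --- the $K_{n_\tau+k}$-fixed functions are counted over the finite double coset space $P\bs G/K_{n_\tau+k}$: one takes the representatives of Section 5 of \cite{R-S}, determines the subgroup $P\cap K_{n_\tau+k}\la r\ra$ for each representative $r$, and matches the congruence condition it imposes on the $\GL_2$-factor of the Siegel Levi against the $\GL_2$ newform count (a $\tau$-vector fixed by the relevant level-$\p^{\,j}$ congruence subgroup of $\GL_2(\mathfrak{o})$ exists, in dimension $j-n_\tau+1$, precisely when $j\ge n_\tau$). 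Summing the contributions of the cells yields $\dim I_-(\tau)^{K_{n_\tau+k}}=[(k+2)^2/4]$; for supercuspidal $\tau$ this runs through verbatim, $I_-(\tau)$ then being the Siegel induction of a supercuspidal.

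Subtracting, $\dim SK(\tau)^{K_{n_\tau+k}} = [(k+2)^2/4]-[(k+1)^2/4] = [k/2]+1$ for every $k\ge0$ (treat $k$ even and $k$ odd separately; the case $k=0$ gives $1-0=1$, consistent with the newform of Theorem~\ref{label:Lsp}). I expect the main obstacle to be the $I_-(\tau)$ count --- the explicit enumeration of $P\bs G/K_{n_\tau+k}$ and the bookkeeping of which cells carry fixed functions for the given inducing datum, that is, precisely the machinery of Sections~5--7 of \cite{R-S}, which must be checked to apply in all the relevant cases --- whereas once $\dim I_-(\tau)^{K_{n_\tau+k}}$ is in hand the remaining identity is elementary. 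Alternatively the whole computation may be bypassed by reading $\dim SK(\tau)^{K_{n_\tau+k}}$ off the paramodular tables of \cite{R-S}, after identifying $SK(\tau)$ there as the nongeneric subrepresentation of $I_-(\tau)$.
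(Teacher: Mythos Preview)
The paper does not supply its own proof of this theorem; it is quoted as a result of Roberts and Schmidt, with a pointer to section~5.5 of \cite{R-S} and the remark that the oldforms are obtained from the newform by the two level-raising operators $\eta,\theta'$.

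Your approach is correct but takes a somewhat different route from the one the paper singles out. The paper emphasizes the \emph{constructive} viewpoint: $\eta$ and $\theta'$ are injective, and repeated application to the one-dimensional new space produces exactly $[k/2]+1$ independent vectors at level $n_\tau+k$. You instead use the \emph{subtractive} viewpoint: the exactness of $V\mapsto V^{K_n}$ applied to (\ref{eq:exsSK}) reduces the count to $\dim I_-(\tau)^{K_{n_\tau+k}}-\dim G(\tau)^{K_{n_\tau+k}}$, and you read off the two terms from the generic oldform theorem (for $G(\tau)$, of paramodular level $n_\tau+1$) and from the double-coset enumeration of Siegel-induced paramodular vectors (for $I_-(\tau)$). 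Both arguments are carried out in \cite{R-S}, and your arithmetic $\lfloor(k+2)^2/4\rfloor-\lfloor(k+1)^2/4\rfloor=[k/2]+1$ is correct. The gain of your route is that it avoids proving that $\eta,\theta'$ span all oldforms; the cost, as you flag, is the full Siegel-induction count, which is exactly the machinery of sections~5--6 of \cite{R-S}. Your closing remark that one may simply read the answer off the paramodular tables of \cite{R-S} is in fact what the paper does.
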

\nid
Of course, this theorem holds also for split Bessel models of $SK(\tau)$.
The oldvectors are constructed from the newvector by two kinds of level-raising operators denoted by $\eta, \theta'$ (see 5.5 of loc. cit.).
In particular, $\eta$ and $\theta'$ are injective.

We treat the nonsplit case for $\pi$.
Let $\s$ be a Hankel matrix in the form of (\ref{eq:defsigma}) such that $\B_\s(\pi) \neq \{0\}$.
Let $\beta \in \B_\s(\pi)$ be a complete paramodular form of level $m$.
Let $e_m$ and $\eta$ be the level raising operators defined at (\ref{eq:emidm}) and (\ref{eq:defeta}) respectively.
Then
\begin{align*}
Z(s,e_m\beta) = q^\f(1 + X^\f) Z(s,\beta),\ Z(s, \eta \beta) = (qX)^\f Z(s,\beta).
\end{align*} 
The latter is obvious, and the former follows from (\ref{eqn:e_m}).
It follows from Lemma \ref{lem:PSzeta} iii) that 
\begin{align}
\frac{P_{m+1}(s,e_m \beta)}{ P_m(s,\beta)} = q^\f(1 + X^\f),\ \frac{P_{m+2}(s,\eta \beta)}{P_m(s,\beta)} = (qX)^\f . \label{eq:Polds}
\end{align} 
Therefore, for nonnegative integers $a,b,k$ such that $2a + b = k$, the complete paramodular forms 
\begin{align}
\eta^a e^b \beta^{new} \in \B_{2(M_\pi + k)} = \B_{2(M_\pi + k)}^{\ep_\pi} \label{eq:olds}
\end{align}
are linearly independent, where $\beta^{new}$ indicates a newform of $\pi$, and $e^b$ means the identity mapping if $b = 0$, and $e_{M_\pi+ b-1} \circ \cdots \circ e_{M_\pi}$ otherwise.
\begin{thm}\label{thm:oldns}
Let $\pi$ be in a SK-packet.
Then the set of the paramodular forms (\ref{eq:olds}) is a basis of $\B_{2(M_\pi + k)}$.
In particular, $\dim \B_{2(M_\pi + k)} = \dim \B_{2(M_\pi + k)}^{\ep_\pi}  = [k/2]+ 1$.  
\end{thm}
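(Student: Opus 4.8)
The plan is to show that the $[k/2]+1$ vectors in \eqref{eq:olds} span all of $\B_{2(M_\pi+k)}$; their linear independence has already been established via \eqref{eq:Polds} and Theorem~\ref{thm:indpara}, and the counting statement follows at once. I would argue by induction on $k$. The base cases $k=0$ and $k=1$ are covered by Theorem~\ref{thm:SKNF}: $\B_{2M_\pi}$ is one-dimensional spanned by $\beta^{new}$, and $\B_{2(M_\pi+1)}$ must be one-dimensional because $M_\pi$ is the minimal level, $m_\pi^{-\ep_\pi}=\infty$, and any nonzero complete paramodular form of level $M_\pi+1$ maps to a nonzero multiple of itself under $e_{M_\pi}$-type considerations; alternatively one checks directly from the Hecke-operator description that the relevant eigenspace is one-dimensional. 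For the inductive step, suppose $\dim\B_{2(M_\pi+j)}=[j/2]+1$ for all $j<k$, and let $\beta\in\B_{2(M_\pi+k)}$ be arbitrary.

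The key idea is to use the level-lowering operator that is adjoint (with respect to the natural pairing on the Bessel model) to the level-raising map $e$, together with the injectivity of $e$ and $\eta$, which is precisely the content of the oldform structure in the split case carried over through the $\theta$-lift. Concretely, I would show: (i) the map $e\colon\B_{2(M_\pi+k-1)}\to\B_{2(M_\pi+k)}$ is injective — this follows because $\pi=SK(\tau)$ or $SK(\tau^{JL})$ is obtained by $\theta$-lift from $\GL_2$ or its inner form, and under the surjections of \eqref{eq:srjsXiXi'} (resp.\ Lemma~\ref{lem:G-surj}) the injectivity reduces to the known injectivity of $\eta,\theta'$ on the $\GL_2$-side (Theorem~\ref{thm:oldsp} and the discussion following it), or alternatively from Theorem~\ref{thm:paravn} plus \eqref{eq:Polds} which shows $e$ cannot kill a form with nonzero zeta; (ii) the image of $e$ together with $\eta(\B_{2(M_\pi+k-2)})$ spans $\B_{2(M_\pi+k)}$. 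For (ii), given $\beta\in\B_{2(M_\pi+k)}$, consider $Z_{n}(s,\beta)$ and its zeta polynomial $P_n(X,\beta)$, which by Lemma~\ref{lem:sdzp} has a prescribed sign and diameter; comparing with the zeta polynomials of $e^{k}\beta^{new}$ and $\eta e^{k-2}\beta^{new}$ via \eqref{eq:Polds}, one can subtract an explicit linear combination of these old vectors so that the resulting form $\beta'$ has $Z(s,\beta')=0$, whence $\beta'=0$ by Theorem~\ref{thm:paravn} (after checking $\beta'$ is still of sign $\ep_\pi$, which it is since there are no forms of sign $-\ep_\pi$).

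A cleaner organization of step (ii): apply the operator $U_K$ of \eqref{eq:UKop} (or the operator $z_m$ from the proof of Theorem~\ref{thm:paravn}) to $\beta$ to produce, after finitely many iterations, a complete paramodular form of strictly smaller level whose zeta determines $\beta$ modulo $\ker$; then lift back using $e$ and $\eta$ and invoke the inductive hypothesis on $\B_{2(M_\pi+k-1)}$ and $\B_{2(M_\pi+k-2)}$. The bookkeeping of which linear combination of $\eta^a e^b\beta^{new}$ realizes a given zeta polynomial is exactly the combinatorics in the proof of Theorem~4.4.1 (oldform) of \cite{R-S}, transported through the two-dimensionality of the space $\Hom_\G(V\otimes\Ss(W),|\det|^{-s-1/2})$ being reduced to one-dimensionality outside finitely many $s$.

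The main obstacle I anticipate is establishing the injectivity of $e_m\colon\B_{2m}\to\B_{2m+2}$ cleanly — the text explicitly flags (after \eqref{eq:emidm}) that it is not known in general whether $e_m$ is injective. For $\pi$ in a SK-packet the rescue is that $\B_\s(\pi)^{-\ep_\pi}=\{0\}$, so every complete paramodular form has sign $\ep_\pi$, and then \eqref{eq:Polds} shows $P_{m+1}(X,e_m\beta)=q^\f(1+X^\f)P_m(X,\beta)$, which vanishes only if $P_m(X,\beta)=0$, i.e.\ $Z_m(s,\beta)=0$; by Corollary~\ref{cor:Z} a nonzero complete paramodular form has $Z_m(s,\beta)\neq0$, so $e_m\beta\neq0$. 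Thus injectivity of $e_m$ on complete paramodular forms follows from results already proved, and the induction goes through. The remaining care is purely the matching of zeta polynomials, which is routine given Lemmas~\ref{lem:sdzp} and~\ref{lem:PSzeta}.
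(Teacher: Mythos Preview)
Your approach is correct and ultimately rests on the same two pillars as the paper's proof---the injectivity of the zeta-polynomial map on complete paramodular forms (Corollary~\ref{cor:Z}) and the functional-equation constraint from Lemma~\ref{lem:sdzp}---but you wrap them in machinery that is not needed. The paper dispatches the theorem in one step with no induction: since every $\beta\in\B_{2(M_\pi+k)}$ has sign $\ep_\pi$ (there are no forms of sign $-\ep_\pi$) and $n_\pi=\f M_\pi$, Lemma~\ref{lem:sdzp} forces the nonzero polynomial $P_{M_\pi+k}(X,\beta)$ to lie in the $([k/2]+1)$-dimensional space $\C[X^\f]_k^+$ with basis $\{(1+X^\f)^k,\,X^\f(1+X^\f)^{k-2},\ldots,X^{\f[k/2]}(1+X^\f)^{k-2[k/2]}\}$; the map $\beta\mapsto P_{M_\pi+k}(X,\beta)$ is injective by Corollary~\ref{cor:Z}, so $\dim\B_{2(M_\pi+k)}\le [k/2]+1$, and the oldforms~\eqref{eq:olds} already realize this bound via~\eqref{eq:Polds}. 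Your step~(ii) (``subtract oldforms to make $Z(s,\beta')=0$, then apply Theorem~\ref{thm:paravn}'') is precisely this injectivity-plus-dimension-count, only phrased constructively rather than as a bound; the induction on $k$, the level-lowering adjoint, the operators $U_K$ and $z_m$, and the reduction to the $\GL_2$-side through the $\theta$-lift are all unnecessary scaffolding. What your organization does buy is that the injectivity of $e_m$ and $\eta$ in~\eqref{eq:injeeta} falls out explicitly along the way, whereas the paper deduces it afterward as a corollary of the dimension formula.
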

\begin{proof}
In the case of $k= 0$, this is Theorem \ref{thm:SKNF}.
For $k >0$, consider the subspace $\C[X]_k^+ \subset \C[X]$ consisting of polynomials of sign plus with diameter $k$, which has a basis: $
(1+X)^k, X(1+X)^{k-2}, \ldots, X^{[k/2]}(1+X)^{k- 2[k/2]}$.
By Lemma \ref{lem:sdzp}, if a nontrivial $\beta$ lies in $\B_{2(M_\pi + k)}$, then $P_{2(M_\pi + k)}(s,\beta) \in \C[X^\f]_k^+$.
Now the assertion follows from Theorem \ref{thm:paravn} and (\ref{eq:Polds}).
\end{proof}
Now it is obvious that 
\begin{align}
e_m: \B_{2m} \hookrightarrow \B_{2m+2}, \ \ \eta:  \B_{2m} \hookrightarrow \B_{2m+4}. \label{eq:injeeta}
\end{align}  
If we denote also by $K_{2m}$ and $\B_{2m}$ the paramodular group of level $m$ in the sense of \cite{R-S} and the subspace of $\B_\s(SK(\tau))^{K_{2m}}$, respectively, where $\s = 1_2$, then the idempotent $e_{m} := e_{K_{2m+2}}$ of the Hecke algebra of $K_{2m+2}$ is just the level raising operator $\theta'$ (c.f. sect. 3 of loc. cit), and thus (\ref{eq:injeeta}) holds also in this case.
\section{Local functional equation (real case)}\label{sec:real}
Let $\A = \A_\Q$ be the adele of $\Q$.
Let $\Phi$ be an automorphic cuspform on $\GSp_4(\A)$.
The Fourier coefficient of $\Phi$ relevant to $\s \in H_2(\Q)$ and $\psi$ is defined by 
\begin{align*}
\Phi_\s(g) = \int_{H_2 \bs H_2(\A)} \psi_{\s}(-x)\Phi(n_x g) dx, \ g \in \GSp_4(\A).
\end{align*} 
For a continuous character $\Lambda$ of $T(\Q) \bs T(\A)$, the global Bessel period $\Phi_\s^\Lambda$ relevant to $\Lambda^\psi_\s$ is defined by 
\begin{align*}
\Phi_{\s}^\Lambda(g) = \int_{T(\Q) \bs T(\A)} \Lambda(t)^{-1}\Phi_\s(t g) d^\times t.
\end{align*} 

Let $\tau = \ot_v\tau_v$ be an irreducible cuspidal automorphic representation of $\PGL_2(\A)$.
Let $\pi = \ot_v \pi_v$ be an irreducible cuspidal automorphic representation of $\PGSp_4(\A)$ in its SK-packet.
Let $\Phi \in \pi$. 
By Theorem \ref{thm:1dimSK}, all $\Lambda_v$ for $v < \infty$ are trivial, and so is $\Lambda_\infty$, and we restrict ourselves to special Bessel models of $\pi_\infty$.
Here, we mean by Bessel models of $\pi_\infty$ an irreducible $\PGSp_4(\Rb)$-module equivalent to $\pi_\infty$ generated by a Bessel function.
We will denote by $\B_E(\pi_\infty)$ the special Bessel models relevant to $E^\times$, where $E$ is isomorphic to $\C$ or $\Rb \op \Rb$.
Note that any automorphic form $\Phi$ in a member $\pi$ of the SK-packet has a simple Fourier-Bessel expansion:
\begin{align}
\Phi(g) = \sum_\s \Phi_{\s}^{\1}(g) \label{eq:FBSK}
\end{align} 
where $\s$ runs all regular Hankel matrices (singular one does not appear since $\pi$ is nongeneric and cuspidal.).
Assume that $\tau_\infty$ is the holomorphic discrete series of minimal weight $2 \kappa (\ge 2)$, and $\pi_\infty = SK(\tau^{JL})$.
Then $\pi_\infty$ is the (limit of) holomorphic discrete series of minimal weight $(1+ \kappa,1+ \kappa)$.
By Koecher's principle, it has no split Bessel model.
By the work of Pitale and Schmidt \cite{P-Sc}, it has a special nonsplit Bessel model.
In this case, we may assume $E = \C$, and the relevant group $\G$ is $\{g \in \GL_2(\C) \mid \det(g) \in \Rb^\times \}$.
Let $K$ be the standard compact maximal subgroup of $G= \GSp_4(\Rb)$.
Then $K_\G := K \cap \G$ is a maximal compact subgroup of $\G$ isomorphic to $\mathbb{H}^\times \rt \Z/2\Z$, where $\mathbb{H}$ indicates Hamilton's quaternion algebra.
Now let $l$ be a positive integer, and $\rho_l$ be an $l$-dimensional irreducible representation of $\mathbb{H}^\times/\Rb^\times$.
Let $V_l$ and $\langle, \rangle$ denote the representation space and natural pairing of $\rho_l$ respectively.
Let $\vp =(\vp_i)_{i =1, \ldots, l} \in \Ss(\C^2) \ot V_l$.
Let $\beta =(\beta^i)_{i =1, \ldots, l} \in \B_\C(\pi_\infty) \ot V_l$ which transforms according to $\rho_l$.
Consider the zeta integral
\begin{align*} 
Z(s,\langle \beta, \vp\rangle) := \vol(K_\G)^{-1} \int_{N_\G \bs \G } \langle \beta(g), \vp(z_0 g) \rangle |\det(g)|^{s+1/2} dg.
\end{align*}
We may assume that $\vp$ also transforms according to $\rho_l$, since
\begin{align*}
Z(s,\langle \beta, \vp\rangle) = \vol(K_\G)^{-1} \int_{N_\G \bs \G } \int_{K_\G}\langle \beta(gk), \vp(z_0 gk) \rangle |\det(g k)|^{s+1/2} dk dg.
\end{align*} 
Then we compute, by using the Iwasawa decomposition of $\G$,
\begin{align*}
 \vol(K_\G)^{-1} Z(s,\beta,\vp) &= \int_{N_\G \bs \G/K_\G} \langle \beta(g), \vp(z_0 g) \rangle |\det(g)|^{s+1/2} dg \\
& = \int_{\Rb_{>0}} \int_{\C^\times} \langle \beta(\hat{u}a_t ), \vp([0,t^c])\rangle |t|^{2s +1}|u|^{s-3/2} d^\times t d^\times u \\
&= \int_{\Rb_{>0}} \int_{\C^\times} \langle \beta(\hat{u}), \vp([0,t^c])\rangle |t|^{2s +1}|u|^{s-3/2} d^\times t d^\times u \\
&= \sum_{i =1}^l Z(s,\beta^i) \int_{\C^\times} \ol{\vp_i([0,t^c])} |t|^{2s +1} d^\times t.
\end{align*} 
We find that each integral in the last sum is $\Gamma(s+ 1/2)$ times a holomorphic function if $l=1$( i.e., $\rho_l$ is trivial), and zero otherwise.
To compute $Z(s,\beta^i)$, we use Yoshida's construction of $\xi_f \in \B_\C(\pi_\infty)$ (\cite{Y}).
Here $\xi$ is a matrix coefficient of $\rho_{\kappa-1}$, and $f$ is a Schwartz function on $\mathbb{H} \op \mathbb{H}$ in the form of 
\begin{align}
P(x_1^*x_2)\exp(-a|x_1|^2 -b Tr(x_1^*x_2) -c |x_2|^2), \ x_i \in \mathbb{H} \label{eq:Yf}
\end{align}
where $P(x)$ is a homogeneous polynomial of degree $\kappa-1$ on the trace zero part of $\mathbb{H}$, and $a,b,c$ are some real numbers. 
Since $\pi_\infty$ is irreducible, and $G = \G P \cup \G s P$ by Lemma 5.4.2. of \cite{Sc-T}, we may assume that each $\beta^i$ is given by some $\xi_{f'}$ with $f'$ being a translation of the above $f$ by an element of $P \cup sP$.
However, by the formula (\ref{eq:Weil}) and the argument in p. 200-202 of \cite{Y}, such an $f'$ is still in the form of (\ref{eq:Yf}).
It is easy to see that each $Z(s,\beta^i)$ is $\Gamma(s+\kappa-1/2)$ times a holomorphic function.
In particular, it is possible to construct $\beta_0$ such that
\begin{align*}
\beta_0(\hat{u}) = 
\begin{cases}
u^{1+\kappa} \exp(-2 \pi u) & \mbox{if $u >0$}, \\
0 & \mbox{if $u < 0$}. 
\end{cases}
\end{align*} 
This is just the special Bessel function described in  Theorem 3.4. of \cite{P-Sc}.
If we set $\vp_0(z) := 2\exp(- 2\pi (|z_1|^2 +|z_2|^2))$, then $Z(s,\beta_0,\vp_0)$ equals 
\begin{align*}
(2\pi)^{-2s-\kappa}\Gamma(s+1/2)\Gamma(s+\kappa-1/2),
\end{align*} 
the $L$-function $L(s,\phi_{\pi_\infty})$.
We can prove: 
\begin{thm}\label{thm:fereal}
Let $\pi_\infty$ be the (limit of) holomorphic discrete series representation of $\PGSp_4(\Rb)$ with minimal weight $(\kappa+1,\kappa+1)$.
Let $\beta \in \B_\C(\pi_\infty)$ and $\vp \in \Ss(\C^2)$.
Then the ratio $Z(s,\beta,\vp)/L(s,\phi_{\pi_\infty})$ is absolutely convergent when $\Re(s) >>0$, and extends to an entire function of $s$.
Further, we have the local functional equation:
\begin{align*}
\frac{Z(1-s,\beta^\imath,\vp^\sharp)}{L(1-s,\phi_{\pi_\infty})} = (-1)^{\kappa+1}|a|^{4s-2}\frac{Z(s,\beta,\vp)}{L(s,\phi_{\pi_\infty})}
\end{align*} 
where $\psi_\infty(x) = \exp(2 \pi \sqrt{-1} a x)$.
If $\beta =\beta_0$ and $\vp = \vp_0$ as above, then the ratio in the RHS is just one.
\end{thm}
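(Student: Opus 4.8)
The plan is to reduce the whole statement to the one-dimensional subspace of $\B_\C(\pi_\infty)$ on which the quaternionic subgroup $\mathbb{H}^\times\subset K_\G$ acts trivially. The key structural input, furnished by the standard $K$-type description of the (limit of) holomorphic discrete series of scalar weight $(\kappa+1,\kappa+1)$, is that its minimal $K$-type is one-dimensional and that, restricted to the copy of $\mathbb{H}^\times\subset K_\G$ acting simply transitively on the unit sphere of $\C^2$, no other $K$-type of $\pi_\infty$ contains the trivial representation $\rho_1$; thus the $\mathbb{H}^\times$-fixed part of $\B_\C(\pi_\infty)$ is exactly the line $\C\beta_0$. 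Inserting this into the Iwasawa decomposition computation carried out just before the statement (which already displays $Z(s,\beta,\vp)$ as a finite sum of products $Z(s,\beta^i)\int_{\C^\times}\overline{\vp_i([0,t^c])}\,|t|^{2s+1}\,d^\times t$, with a term surviving only when the ambient $K_\G$-type is $\rho_1$), I obtain that $Z(s,\beta,\vp)$ depends only on the $\rho_1$-components of $\beta$ and of $\vp$, and in fact $Z(s,\beta,\vp)=c_\beta\,Z(s,\beta_0)\,M(s,\vp)$, where $c_\beta\beta_0$ is the $\rho_1$-component of $\beta$ and $M(s,\vp)$ is, up to a constant, the Mellin transform of the radial profile of the $\mathbb{H}^\times$-average of $\vp$.

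Absolute convergence for $\Re(s)\gg 0$ is standard, since $\beta$ is slowly increasing and $\vp$ is Schwartz. For the holomorphy of the ratio I would use the explicit shape of the two Mellin factors coming from Yoshida's construction: $Z(s,\beta_0)$ is $\Gamma(s+\tfrac{\kappa-1}{1}-\tfrac12)$, more precisely $\Gamma(s+\kappa-\tfrac12)$, times an entire function whose poles lie among those of that $\Gamma$-factor, while $M(s,\vp)$ has poles only at $s=-\tfrac12,-\tfrac32,\dots$, hence among the poles of $\Gamma(s+\tfrac12)$. Since $L(s,\phi_{\pi_\infty})=(2\pi)^{-2s-\kappa}\Gamma(s+\tfrac12)\Gamma(s+\kappa-\tfrac12)$, the quotient $Z(s,\beta,\vp)/L(s,\phi_{\pi_\infty})$ is entire.

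For the functional equation I would first note that $\beta\mapsto\beta^\imath$ and $\vp\mapsto\vp^\sharp$ are compatible with the $\mathbb{H}^\times$-action: the former intertwines right translations, and the latter, being the Weil representation operator attached to an orthogonal change of variables, normalises $K_\G$ and preserves radial functions; so passing to $\rho_1$-components commutes with both. Hence the same reduction rewrites $Z(1-s,\beta^\imath,\vp^\sharp)$ as $c_\beta\,Z(1-s,\beta_0^\imath)\,M(1-s,\vp^\sharp)$, with $\beta_0^\imath=(-1)^{\kappa+1}\beta_0$ (the value on $\imath\in K$ of the character giving the minimal $K$-type, i.e. the Atkin-Lehner eigenvalue of $\beta_0$, which is the origin of the sign in the statement). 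The two surviving ratios $Z(1-s,\beta_0)/Z(s,\beta_0)$ and $M(1-s,\vp^\sharp)/M(s,\vp)$ are independent of $\beta$ and of $\vp$: the first because the $\beta$-variable has been forced onto the line $\C\beta_0$, the second by the classical Mellin-Hankel functional equation for the Euclidean Fourier transform of a radial Schwartz function on $\Rb^4$. Multiplying, $Z(1-s,\beta^\imath,\vp^\sharp)/L(1-s,\phi_{\pi_\infty})=\gamma(s)\,Z(s,\beta,\vp)/L(s,\phi_{\pi_\infty})$ for a function $\gamma$ of $s$ alone. (Alternatively this paragraph can be replaced by the archimedean analogue of the uniqueness argument following Proposition \ref{prop:FE}, via Casselman-Wallach theory and the $\GL_2(\Rb)$ Kirillov model.)

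It then remains to evaluate $\gamma$ and to prove the last assertion, which I would do simultaneously by specialising to $\beta=\beta_0$, $\vp=\vp_0$. There $Z(s,\beta_0,\vp_0)=L(s,\phi_{\pi_\infty})$ is already known, so the right-hand ratio equals $1$; moreover $\vp_0$ is self-dual when $\psi_\infty(x)=e^{2\pi\sqrt{-1}x}$, so the very same Gaussian computation with $s$ replaced by $1-s$ gives $Z(1-s,\beta_0,\vp_0^\sharp)=L(1-s,\phi_{\pi_\infty})$ in the case $a=1$. Combining this with $\beta_0^\imath=(-1)^{\kappa+1}\beta_0$ and with the rescaling relation for $\psi_\infty\mapsto\psi_\infty^a$ (which multiplies the zeta integral by $|a|^{4s-2}$, exactly as recorded in the nonarchimedean case) yields $Z(1-s,\beta_0^\imath,\vp_0^\sharp)=(-1)^{\kappa+1}|a|^{4s-2}L(1-s,\phi_{\pi_\infty})$, hence $\gamma(s)=(-1)^{\kappa+1}|a|^{4s-2}$, and in particular the ratio in the statement is $1$ for $(\beta_0,\vp_0)$. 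The hard part will not be any single deep input but the normalisation bookkeeping: checking that the $\mathbb{H}^\times$-average commutes with $\imath$, with $\sharp$, and with the projection onto $\rho_1$; identifying the eigenvalue of $\imath$ on the minimal $K$-type as $(-1)^{\kappa+1}$; and arranging the constants in the Mellin-Hankel equation and in the self-duality of $\vp_0$ so that all extraneous powers of $2\pi$ cancel and only $|a|^{4s-2}$ survives.
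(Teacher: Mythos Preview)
Your approach is purely local, whereas the paper's is global. For the functional equation the paper does not attempt a direct archimedean computation at all: it embeds $\pi_\infty$ as the archimedean component of a cuspidal automorphic representation $\pi$ in a Saito--Kurokawa packet over $\Q$, chosen so that $\tau_2$ is a principal series (hence only odd-residual nonarchimedean local functional equations are needed, all of which were established earlier in the paper). One then invokes Piatetski--Shapiro's global functional equation $Z(s,\Phi_\s,\vp)=Z(1-s,\Phi_\s^\imath,\vp^\sharp)$ for the global Bessel period, factorises both sides, and divides out by the known finite-place local equations; what remains is exactly the archimedean identity. This global-to-local manoeuvre is the whole point of placing the theorem here: it leverages the nonarchimedean newform theory just proved, and in turn is used afterwards to close the remaining even-residual gap.

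Your local strategy hinges on the assertion that the $\mathbb{H}^\times$-fixed subspace of $\B_\C(\pi_\infty)$ is the line $\C\beta_0$. This is not justified, and is in fact doubtful: the $K$-type decomposition of the scalar holomorphic discrete series of weight $(\kappa+1,\kappa+1)$ contains, beyond the minimal $K$-type, pieces of the form $\det^{\kappa+1}\otimes\mathrm{Sym}^m(\mathfrak{p}^+)$, and for $m\ge 2$ these do contain $\mathbb{H}^\times/\Rb^\times$-invariants upon restriction. The paper's pre-theorem computation only shows that $Z(s,\beta,\vp)$ factors through the $\rho_1$-isotypic components of $\beta$ and $\vp$, and that for \emph{every} such $\beta$ the integral $Z(s,\beta)$ is $\Gamma(s+\kappa-\tfrac12)$ times an entire function (via Yoshida's explicit $\theta$-lift realisation); it never claims that this isotypic space is one-dimensional. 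Without that one-dimensionality you cannot conclude $\beta^\imath_{\rho_1}=(-1)^{\kappa+1}\beta_{\rho_1}$ for arbitrary $\beta$, so your direct evaluation of $\gamma(s)$ breaks down. Your parenthetical alternative --- proving existence of $\gamma(s)$ by an archimedean multiplicity-one statement for $\Hom_\G(\pi_\infty\otimes\Ss(\C^2),|\det|^{-s-1/2})$ --- is viable in principle, but you would then need to supply that uniqueness (it is not in the paper) and still pin down the sign and the exponent, which amounts to the $(\beta_0,\vp_0)$ computation you sketch; at that point the argument is no longer shorter than the paper's global one.
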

\begin{proof}
Since both $\beta$ and $\vp$ are $K_\G$-finite, they are matrix coefficients of some finite-dimensional representations of $K_\G$, and the analicity of the ratio follows from the above argument.
For the functional equation, we consider a SK-packet of $\tau$, where $\tau = \ot_v \tau_v$ is an irreducible cuspidal automorphic representation of $\PGL_2(\A)$ such that 
\begin{itemize}
\item $\tau_\infty$ is holomorphic discrete series of minimal weight $2\kappa (\ge 2)$,
\item $\tau_2$ is a principal series,
\item $\tau_p$ is a discrete series for some odd $p$.
\end{itemize}
Such a representation exists by the lemma below.
Let $\ep$ be the root number of $\tau$.
Applying to $\tau$ the main lifting theorem of \cite{Sc}, the global representation $\pi = \ot_v \pi_v$ in the SK-packet settled as follows is an irreducible cuspidal automorphic representation. 
\begin{itemize}
\item $\pi_\infty$ is the (limit of) holomorphic discrete series of minimal weight $\kappa+1$, 
\item $\pi_p$ is $SK(\tau_p)$ if $\ep = -1$, and $SK(\tau_p^{JL})$ if $\ep = 1$, 
\item $\pi_v$ is $SK(\tau_v)$ for nonarchimedean $v \neq p$.
\end{itemize}
Let $\Phi \in \pi$, which has a (nontrivial) special Bessel period $\Phi_\s = \Phi_\s^\1$ for some $\s$ by (\ref{eq:FBSK}).
By Proposition 7 of \cite{Q}, $\Phi_\s$ is given by the $\theta$-lift from the mataplectic group ${\rm Mp}_2(\A)$ of rank $1$: 
\begin{align*}
\Phi_\s(h) = \int_{N_2(\A) \bs SL_2(\A)} W(g)w(g, h) f(t_0) dg, \ h \in \PGSp_4(\A).
\end{align*}
Here $W$ is a Whittaker period of a Shimura-Waldspurger transfer of $\tau$, $t_0$ is a point of a five-dimensional space $U(\Q)$, $w = \ot_v w_v$ is a Weil representation of ${\rm Mp}_2(\A) \times \PGSp_4(\A)$ realized in the space $\Ss(U(\A))$, and $f = \ot_v f_v \in \Ss(U(\A))$.
Since $W = \prod_v W_v$, and $f$ is a linear combination of decomposable Schwartz functions, we may assume that $\Phi_\s = \prod_v \Phi_{\s,v}$. 
Since $\pi_\infty$ is holomorphic and irreducible, the algebra $E_\s$ is a CM-field, and we may assume $\beta = \Phi_{\s,\infty}$.
Since $\pi_2 = SK(\tau_2)$, we may assume $\Phi_{\s, v}$ is the local newform at all nonarchimedean $v$.
Let $\vp_v$ at $v < \infty$ be the Schwartz functions corresponding to the local newforms, and set $\vp = \prod_v \vp_v$.
By Theorem 5.1. of \cite{PS2} and the cuspidality of $\Phi$, 
\begin{align*}
Z(s,\Phi_\s,\vp) = Z(1-s,\Phi_\s^\imath,\vp^\sharp).
\end{align*} 
Now our local functional equation follows from the factorization $Z(s,\Phi_\s,\vp) = \prod_v Z(s,\Phi_{\s,v},\vp_v)$, and the global functional equation (\ref{eq:GlFE}). 
\end{proof}
\begin{lem}
Let $\tau_\infty$ be the holomorphic discrete series representation of $\PGL_2(\Rb)$ with minimal weight $2\kappa (\ge 2)$.
Let $p_1$ be a prime and $S$ be a finite set of primes $\neq p_1$.
Then, there exists an irreducible cuspidal automorphic representation $\tau = \ot_v \tau_v$ of $\PGL_2(\A_\Q)$ such that 
\begin{itemize}
\item $\tau_{p_1}$ is an unramified representation,
\item $\tau_p$ is a discrete series for all $p \in S$.
\end{itemize}
\end{lem}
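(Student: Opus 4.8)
The plan is to reduce the statement to the classical nonvanishing of a space of newforms. A cuspidal automorphic representation $\tau=\ot_v\tau_v$ of $\PGL_2(\A_\Q)$ with $\tau_\infty$ the holomorphic discrete series of minimal weight $2\kappa$ corresponds to a normalized holomorphic newform $f$ of weight $2\kappa$ and trivial nebentypus, of some level $N=N_f$. Under this dictionary one has the standard local translations: $\tau_p$ is unramified if and only if $p\nmid N$, and if $p\,\|\,N$ (exact division) then, since the nebentypus is trivial, $\tau_p$ is an unramified twist of the Steinberg representation, in particular a square-integrable discrete series. Therefore it suffices to exhibit a squarefree integer $N$ with $p_1\nmid N$ and $p\mid N$ for every $p\in S$, together with a nonzero element of the new subspace $S_{2\kappa}^{\mathrm{new}}(\Gamma_0(N))$; taking $f$ to be a newform in that space and $\tau$ the associated cuspidal representation then settles the lemma, the extra prime divisors of $N$ (if any) producing Steinberg-type components that are unconstrained by the statement.

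First I would fix $N_0=\prod_{p\in S}p$ and then enlarge it to $N=N_0\,q_1\cdots q_r$ by auxiliary primes $q_i\notin S\cup\{p_1\}$, say all $\geq 5$, so that $N$ remains squarefree and coprime to $p_1$ while the number $\omega(N)$ of prime factors is as large as needed. For squarefree $N$ the Atkin--Lehner decomposition gives
\begin{align*}
\dim S_{2\kappa}^{\mathrm{new}}(\Gamma_0(N))=\sum_{M\mid N}(-2)^{\omega(N/M)}\dim S_{2\kappa}(\Gamma_0(M)),
\end{align*}
into which I would insert the Eichler--Selberg dimension formula for $\dim S_{2\kappa}(\Gamma_0(M))$. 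After the inclusion--exclusion the number-of-cusps contribution cancels identically, since
\begin{align*}
\sum_{M\mid N}(-2)^{\omega(N/M)}2^{\omega(M)}=\prod_{p\mid N}(2-2)=0,
\end{align*}
the elliptic-point and (for weight $2$) Eisenstein contributions survive only at size $O_\kappa(2^{\omega(N)})$, and the index main term contributes $\frac{2\kappa-1}{12}\prod_{p\mid N}(p-1)=\frac{2\kappa-1}{12}\varphi(N)$. Since $\varphi(N)/2^{\omega(N)}=\prod_{p\mid N}\tfrac{p-1}{2}\to\infty$ as $r\to\infty$, the main term eventually dominates, whence $\dim S_{2\kappa}^{\mathrm{new}}(\Gamma_0(N))>0$ once $N$ has enough prime factors. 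Choosing such an $N$ and a newform $f$ in it finishes the argument.

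The one delicate point is precisely this last estimate: one must verify that for the chosen $N$ the main term $\frac{2\kappa-1}{12}\varphi(N)$ genuinely beats the $O_\kappa(2^{\omega(N)})$-sized elliptic and Eisenstein terms after the alternating sum; this is a routine but slightly fiddly bookkeeping with the dimension formula, and it is the step where care is needed. Alternatively, one can bypass the explicit count by a simple-trace-formula argument on $\PGL_2(\A_\Q)$ (or, equivalently, by Jacquet--Langlands from a definite quaternion algebra ramified at $\infty$, at the primes of $S$, and possibly at one auxiliary prime to make the ramification set even): a test function that is a pseudocoefficient of $\tau_\infty$ at $\infty$, forces Steinberg at each $p\in S$, and is the unit at $p_1$, has strictly positive spectral side, producing the desired $\tau$. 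Either route works; the modular-forms route is the most elementary, and is the one I would write up.
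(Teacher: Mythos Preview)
Your argument is correct. The dimension count goes through as you outline: for squarefree $N$ the alternating sum of the index terms collapses multiplicatively to $\frac{2\kappa-1}{12}\varphi(N)$, the cusp terms cancel exactly, and the elliptic contributions (being genuinely multiplicative, equal to $\prod_{p\mid M}(1+(\tfrac{-1}{p}))$ and $\prod_{p\mid M}(1+(\tfrac{-3}{p}))$) sum to $\prod_{p\mid N}((\tfrac{-1}{p})-1)$ and $\prod_{p\mid N}((\tfrac{-3}{p})-1)$, each bounded by $2^{\omega(N)}$; so the bookkeeping you flag as delicate is in fact clean.

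The paper, however, takes precisely your alternative route rather than your main one. It fixes a definite quaternion algebra $D/\Q$ split at $p_1$ and nonsplit at every $p\in S$, chooses an order $\OO$ maximal at $p_1$ and small enough elsewhere that the double quotient $D^1(\Q)\bs D^1(\A)/\OO^1(\A)$ has more than one point, and then picks an $\widehat\OO^\times$-invariant Hecke eigenform on $PD(\A)^\times$ transforming by $\rho_{2\kappa-2}$ at infinity and orthogonal to constants; its Jacquet--Langlands transfer is the desired cuspidal $\tau$. The quaternionic approach forces the local conditions structurally (nonsplit at $p\in S$ guarantees discrete series, maximal order at split $p_1$ guarantees unramified) with no dimension estimate needed, whereas your modular-forms route is more elementary and self-contained but requires the asymptotic count and the insertion of auxiliary primes. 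Both are perfectly valid; the paper's choice is presumably made because the surrounding section already has the theta/Jacquet--Langlands machinery in hand.
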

\begin{proof}
There exists a definite quaternion algebra $D$ defined over $\Q$ such that $D(\Q_{p_1}) \simeq M_2(\Q_{p_1})$ and $D(\Q_p) \not\simeq M_2(\Q_p)$ for all $p \in S$. 
There exists a sufficiently small order $\OO \subset D(\Q)$ such that $\OO_{p_1} \simeq M_2(\Z_{p_1}), \OO_v^\times \supset \Z_v^\times, v < \infty$, and 
\begin{align*}
1< |D^1(\Q) \bs D^1(\A)/\OO^1(\A)|.
\end{align*}
Here $D^1$ indicates the group consisting elements of reduced norm $1$, and $\OO^1 = \OO \cap D^1$.
Let $\widehat{\OO}$ denote the finite part of the adelization of $\OO$, i.e., 
\begin{align*}
\widehat{\OO} = \OO \ot \prod_p \Z_p.
\end{align*}
Since automorphic forms on $PD(\A)^\times$ are square integrable, one can find by the last property that there exists an $\widehat{\OO}^\times$-invariant automorphic form $\xi$ such that 
\begin{itemize}
\item $\xi$ is a matrix coefficient of $\rho_{2\kappa-2}$.
\item $\xi$ is a Hecke eigenform  for almost all $v$ such that 
\begin{align}
\int_{D^1(\Q) \bs D^1(\A)} \xi(h) dh = 0. \label{eq:intxi0}
\end{align} 
(this integral always vanishes when $\kappa >1$).
\end{itemize}
Hence the $D(\A)^\times$-module generated by $\xi$ is an irreducible automorphic representation $\tau' = \ot_v \tau_v'$ of $PD(\A)^\times$ such that $\tau_{p_1}'$ unramified and $\tau_\infty = \rho_{\kappa-1}$.
One can show that the Jacquet-Langlands lift (a $\theta$-lift) of $\tau'$ is cuspidal by (\ref{eq:intxi0}), and is the desired representation.
\end{proof}
Now, we can prove (\ref{eq:gammaprod}).
By Lemma 5.7. of \cite{Sc}, there is a totally real number field $\Fb$ such that $F$ is isomorphic to a completion $\Fb_v$ for a dyadic nonarchimedean place $v$.
If $\tau$ is an irreducible cuspidal automorphic representation of $\PGL_2(\A_\Fb)$ and a cuspidal member $\pi = \Pi(\tau \bt \pi_S)$ in the SK-packet of $\tau$ has all archimedean components $\pi_v$ holomorphic discrete series, then it holds that 
\begin{align*}
\prod_{j = 1}^{[\Fb:\Q]} \gamma(s,\phi_{\pi_{\infty_j}}, \psi_{\infty_j}) \prod_{w< \infty} \gamma(s,\pi_w, \psi_w) =1
\end{align*}
by (\ref{eq:GlFE}).
Therefore, by the main lifting theorem \cite{Sc} and Theorem \ref{thm:fereal}, for the proof of (\ref{eq:gammaprod}), it suffices to show that an arbitrary discrete $\tau_v \in \Ir(\PGL_2(\Fb_v))$ is embeddable to an irreducible cuspidal automorphic representation $\tau'$ of $\PGL_2(\A_\Fb)$ such that 
\begin{itemize}
\item all archimedean components of $\tau$ are holomorphic discrete series, 
\item $\tau_w'$ is principal series for all other dyadic nonarchimedean place $w$,
\item $\tau_w'$ is discrete for a nondyadic nonarchimedean place $w$.
\end{itemize}
Such a $\tau$ can be also given by the Jacquet-Langlands lift since we can take a convenient totally definite quaternion algebra $D$ defined over $\Fb$ and automorphic representation of $PD(\A_\Fb)^\times$ as follows. 
\begin{lem} 
Let $\Fb$ be a totally real number field, and $D$ be a totally definite quaternion algebra defined over $\Fb$.
Let $\kappa_1, \ldots ,\kappa_{[\Fb:\Q]}$ be nonnegative integers.
Let $v_1$ be a nonarchimedean place at which $D$ does not split.
Let $v_2$ be another nonarchimedean place.
Let $\tau_{v_1} \in \Ir(PD_{v_1}^\times)$.
Then there exists an irreducible automorphic representation $\tau' = \ot_v \tau_v'$ of $PD(\A)^\times$ such that 
\begin{itemize}
\item $\tau_{\infty_j}'$ is equivalent to $\rho_{2\kappa_j}$ for each archimedean place $\infty_j$,
\item $\tau_{v_1}'$ is equivalent to $\tau_{v_1}$,
\item $\tau_{v}'$ are unramified for all nonarchimedean places $v \neq v_1, v_2$ at which $D_v$ splits. 
\end{itemize}
\end{lem}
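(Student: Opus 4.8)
The plan is to exhibit $\tau'$ inside the space $\mathcal A$ of automorphic forms on $PD(\A_\Fb)^\times$. Since $D$ is totally definite, $PD(\Fb_\infty)^\times=\prod_j \mathbb H^\times/\Rb^\times$ is compact and $PD(\Fb)^\times\bs PD(\A_\Fb)^\times$ is compact, so every element of $\mathcal A$ is square-integrable and $\mathcal A$ is a discrete Hilbert direct sum of irreducible admissible representations. It therefore suffices to produce a single nonzero $F\in\mathcal A$ lying in the $\big(\ot_j\rho_{2\kappa_j}\big)$-isotype at the archimedean places, in the $\tau_{v_1}$-isotype at $v_1$, and invariant under $U^{(v_1)}:=\prod_{v\ne v_1,\infty}U_v$, where $U_v$ is chosen to be a maximal compact subgroup of $PD_v^\times$, hyperspecial (so $\cong\PGL_2(\OO_v)$) whenever $D_v$ is split, for $v\ne v_1,v_2$, and $U_{v_2}$ is a small open compact subgroup to be fixed below. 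Indeed, once such an $F\ne 0$ is found, the idempotent $e_\infty e_{v_1}e_{U^{(v_1)}}$ (projection onto the prescribed archimedean and $v_1$ types, followed by averaging over $U^{(v_1)}$) is nonzero on $\mathcal A$; as it commutes with the spectral decomposition and annihilates every $\pi=\ot_v\pi_v$ unless $\pi_{\infty_j}\cong\rho_{2\kappa_j}$, $\pi_{v_1}\cong\tau_{v_1}$ and $\pi_v^{U_v}\ne0$ for all finite $v\ne v_1$, at least one constituent $\pi$ has all these properties, and (using $U_v$ hyperspecial where $D_v$ splits) this $\pi$ is the desired $\tau'$.

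To construct $F$, let $\Phi=\Phi_\infty\ot\Phi_{v_1}\ot\1_{U^{(v_1)}}$ be the vector-valued function on $PD(\A_\Fb)^\times$ given by $\Phi_\infty(g)=\rho_\infty(g)^{-1}w_\infty$ and $\Phi_{v_1}(g)=\tau_{v_1}(g)^{-1}w_{v_1}$, where $\rho_\infty:=\ot_j\rho_{2\kappa_j}$, $w_\infty$ is a fixed nonzero vector in its space (finite-dimensional, since $PD(\Fb_\infty)^\times$ is compact), $\tau_{v_1}$ is finite-dimensional because $PD_{v_1}^\times$ is compact, and $w_{v_1}$ is a fixed nonzero vector in its space; set
\[ F(g)=\sum_{\gamma\in PD(\Fb)^\times}\Phi(\gamma g).\]
The archimedean and $v_1$ factors of the support of $\Phi$ are compact and $PD(\Fb)^\times$ is discrete in $PD(\A_\Fb)^\times$, so the sum is finite; by construction $F$ has the required equivariance at $\infty$ and $v_1$ (replacing each $\rho_{2\kappa_j}$, resp.\ $\tau_{v_1}$, by its contragredient in the choice of $w_\infty,w_{v_1}$ if one wants the output isotypes to be exactly the prescribed ones) and is right-invariant under $U^{(v_1)}$.

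The crucial point is that $F\ne0$, and here $v_2$ enters. The group $\Gamma:=PD(\Fb)^\times\cap\big(\prod_{v\ne v_1,v_2,\infty}U_v\big)$ — intersection taken inside $PD(\A_\Fb^{(v_1,v_2,\infty)})^\times$ — is, via its $v_2$-component, a discrete subgroup of $PD_{v_2}^\times$ with compact quotient (discreteness from the discreteness of $PD(\Fb)^\times$ adelically together with compactness of all remaining factors, cocompactness from compactness of $PD(\Fb)^\times\bs PD(\A_\Fb)^\times$); in particular $\Gamma\cap U_{v_2}$ is finite, and since the principal congruence subgroups at $v_2$ intersect in $\{1\}$ we may shrink $U_{v_2}$ so that $\Gamma\cap U_{v_2}=\{1\}$. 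Fixing such a $U_{v_2}$, the only $\gamma$ contributing to $F(1)$ is $\gamma=1$, whence $F(1)=\Phi(1)=w_\infty\ot w_{v_1}\ne0$. Thus $F\ne0$, and the extraction of $\pi=\tau'$ above goes through. The main obstacle is exactly this nonvanishing: one must recognize $\Gamma$ as an $S$-arithmetic lattice in $PD_{v_2}^\times$ ($S=\{v_2\}$) so that the finite group $\Gamma\cap U_{v_2}$ is killed by a sufficiently deep congruence condition at $v_2$; the rest — that the Poincar\'e series has the stated equivariance, and that a nonzero automorphic form of given $K_\infty$- and $v_1$-type and given level forces an irreducible constituent with those local components — is routine and parallels the argument already used for the previous lemma.
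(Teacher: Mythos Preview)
Your argument is correct and is essentially the same as the paper's: both exploit the freedom at the auxiliary place $v_2$ to shrink the level until the relevant arithmetic group $PD(\Fb)^\times\cap\prod_{v\neq v_1,\infty}U_v$ becomes trivial, after which an automorphic form with the prescribed local behaviour can be written down directly. The only cosmetic difference is packaging: you phrase the construction as a Poincar\'e series $\sum_\gamma\Phi(\gamma g)$ that collapses to a single term at $g=1$, whereas the paper defines the automorphic form by hand on double cosets $\A^\times D(\Fb)^\times g_i\widehat\OO^\times$ (extending a matrix coefficient of $\tau_{v_1}$ by zero), with the shrinking of $\OO_{v_2}$ guaranteeing well-definedness; once $U_{v_2}$ is small enough these two descriptions coincide.
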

\begin{proof}
Since $\tau_{v_1}$ is smooth and finite dimensional, there is an order $\OO$ such that $\tau_{v_1}$ is invariant under $\OO_{v_1}^\times$, and $\OO_v$ are maximal for $v \neq v_1$.
A matrix coefficient $f$ of $\tau_{v_1}$ is determined by its values at finitely many points $g_1, \ldots, g_l \in \Fb_{v_1}^\times \bs D_{v_1}^\times/\OO_{v_1}^\times$.
Further we can take $\OO_{v_2}$ sufficiently small so that $\tau_{v_1}$ and $\rho_\infty := \rho_{2\kappa_1} \bt \cdots \bt \rho_{2\kappa_{[\Fb:\Q]}}$ are invariant under $D(\Fb)^\times \cap (\cap_{i=1}^l \widehat{\OO}^\times \langle g_i \rangle)$, and 
\begin{align*}
\A^\times D(\Fb)^\times g_{i} \OO(\A)^\times \cap \A^\times D(\Fb)^\times g_{j} \OO(\A)^\times = \emptyset, \ \ \mbox{for $i \neq j$}.
\end{align*}
For a matrix coefficient $\xi_\infty$ of $\rho_\infty$, we can extend $f$ to an automorphic form $\xi$ on $D(\A)^\times$ invariant under $\A^\times \widehat{\OO}^\times$ by setting 
\begin{align*}
\xi(g) = 
\begin{cases}
0 & \mbox{if $g \not\in \sqcup_{i = 1}^l \A^\times D(\Fb)^\times g_{i} \OO(\A)^\times$,}\\
f(g_i) \xi_\infty(k_\infty) & \mbox{if $g = z d g_i k$ with $z \in \A^\times, d \in D(\Fb)^\times, k \in \OO(\A)^\times$.} 
\end{cases}
\end{align*}
For $g \in D(\A)^\times$, the function $\xi(gh)$ of $h$ in $D_{v_1}^\times$ (resp. $(\mathbb{H}^\times)^{[\Fb:\Q]}$) is a matrix coefficient of $\tau_{v_1}$ (resp. $\rho_\infty$).
This implies the assertion.
\end{proof}
\section{Siegel modular forms}\label{sec:Siegel}
In this section, to describe our result in classical terms, we change the definition of the group $\GSp_4$.
We replace the defining matrix $J$ in (\ref{eq:defJ}) with
\begin{align*}
\begin{bmatrix}
 & -1_2\\
1_2 &
\end{bmatrix},
\end{align*} 
which is the conjugate of $J$ by the element 
\begin{align*}
\begin{bmatrix}
1 & & & \\
 &1 & & \\
 & & &1 \\
 & & 1 &
\end{bmatrix}.
\end{align*} 
Accordingly, we denote by $K_{E_\s}(2m) = K_E(2m)$ the conjugate of the nonsplit paramodular groups $K_{2m}$ (relevant to $\s$ in the form of (\ref{eq:defsigma})) by the same element.
We also denote by $K_{E_\s}(m) = K_E(m)$ the conjugate of 
\begin{align}
K(m)\langle \begin{bmatrix}
1&-1 & &  \\
1& 1& & \\
& & 1&1  \\
& & -1&1  \\
\end{bmatrix} \rangle, \label{eq:defparasp'}
\end{align} 
where $K(m)$ indicates the original paramodular group of level $\p^m$, and 
\begin{align*}
\s = \begin{bmatrix}
& -1 \\
1 &
\end{bmatrix}.
\end{align*} 
Observe that the subgroups (\ref{eq:defparasp'}) provide a newform theory for Bessel vectors relevant to this Henkel matrix (c.f. sect. \ref{sec:NFsp}).
Let $E$ be a $2$-dimensional semisimple algebra over $\Q$, and $\mathfrak{e}_p$ indicate the ramification index of $E_p/\Q_p$.
We denote by $\K_E(n)$ the $\Q$-rational points in  
\begin{align*}
\prod_p K_{E_p}(\mathfrak{e}_p ord_p(n)),
\end{align*} 
where $n$ is a positive integer such that $ord_p(n)$ is even if $E_p$ is an unramified quadratic field extension of $\Q_p$. Since the class number of $\Q$ is one, by virtues of the strong approximation theorem, Siegel modular forms relevant to an arithmetic subgroup $\K$ are interpreted as automorphic forms on $\GSp_4(\A)$ invariant under $\prod_p \K_p$, if $\K_p$ contains all elements
\begin{align*}
\begin{bmatrix}
u v& & & \\
 &uv & & \\
& & v& \\
& & &v 
\end{bmatrix}, \ \ 
u,v  \in \Z_p^\times
\end{align*} 
for all primes $p$ (c.f. sect. 3 of \cite{Y}).
The arithmetic subgroup $\K_E(n)$ satisfies this condition.

Let $\tau = \ot_v \tau_v$ be an irreducible cuspidal automorphic representation of $\PGL_2(\A)$ with $\tau_\infty$ holomorphic discrete series $2\kappa \ge 2$.
Let $S_\tau$ be the set of all primes $p$ at which $\tau_p$ is discrete.
If $S$ is a subset (possibly empty) of $S_\tau$ such that $(-1)^{|S|} = -\ep(1/2,\tau)$, then we denote $\Pi_S = \Pi(\tau \bt \pi_{S \cup \{\infty\}})$, the cuspidal member of the SK-packet of $\tau$.
For such an $S$, let $E$ be an imaginary quadratic field such that $\Pi_{S,p}$ has the special Bessel model relevant to $E_p$ at all primes $p$.
Equivalently, 
\begin{align*}
\ep(1/2,\tau_p)\ep(1/2,\tau_p \ot \chi_{E,p}) = 
\begin{cases}
\chi_{E,p}(-1) & \mbox{if $p \in S$}, \\
-\chi_{E,p}(-1) & \mbox{if $p \in S_\tau \setminus S$} 
\end{cases} 
\end{align*} 
(c.f. Theorem 2. of \cite{W2} and Corollary 4.7.1. of \cite{R-S2}).
Here $\chi_{E,p}$ indicates the quadratic character of $\Q_p^\times$ associated to the extension $E_p/\Q_p$.
We say $E$ {\it matches to $S$} in this case. 
Further, by Theorem 2. of \cite{Q}, there exists an automorphic form $\Phi$ in the packet of $\tau$ with nontrivial global special Bessel period relevant to $E$ if and only if $L(1/2,\tau \ot \chi_E) \neq 0$.
Observe that there are possibly infinitely many such $E$, but there are only finitely many subgroups $\K_E(n)$ for fixed $S$ and $n$, and that if $E$ and $E'$ match different $S$ and $S'$ respectively, then $\K_E(n)$ is not isomorphic to $\K_{E'}(n)$. 
Combining these results with the local newform theory, we have a concise version of Theorem 4.3.16. of \cite{A} for SK-packets:
\begin{thm}\label{thm:maincls}
With notation and assumptions as above, let $E$ be an imaginary quadratic field matching to $S (\subset S_\tau)$.
Let $N_\tau \in \mathbb{N}$ denote the level of $\tau$.
In the space of Siegel modular forms of weight $\kappa +1$ with respect to 
\begin{align*} 
\K_E(N_\tau \prod_{p \in S} p), 
\end{align*}
the cuspidal member $\Pi_S$ of the SK-packet of $\tau$ has an unique Siegel cuspform $\Phi_S^{E}(Z)$ up to scalars with the Fourier expansion 
\begin{align*}
\sum_{\s' \in Sym_2^+(\Q)} \mathscr{F}_S^{E}(\s')\exp(2 \pi \sqrt{-1} Tr(\s' Z))
\end{align*} 
for elements $Z$ in the Siegel upper half space of degree $2$.
Here $Sym_2^+(\Q)$ indicates the $2 \times 2$ symmetric matrices with positive determinants.
If $L(1/2,\tau \ot \chi_E) \neq 0$ for $E = \Q(\sqrt{-d})$ with $d$ squarefree, then for $\s' = diag(d,1)$, we have a formal identity
\begin{align*}
\sum_{n=1}^\infty \frac{\mathscr{F}_S^{E}(n \s')}{n^{s+\kappa-\frac{1}{2}}} &=\mathscr{F}_S^{E}(\s')\prod_p L(s+\frac{1}{2},\chi_{E,p}) L(s,\tau_p) \cdot \prod_{p \not\in S} (1-p^{-s+\frac{1}{2}})^{-1}.
\end{align*}
\end{thm}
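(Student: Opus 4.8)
The plan is to deduce Theorem~\ref{thm:maincls} from the local newform theory of Sections~\ref{sec:NFsp}--\ref{sec:old} and the archimedean analysis of Section~\ref{sec:real}, passing through the adele group. First I would translate the classical statement: by strong approximation and class number one (as recalled at the start of this section), a Siegel cusp form of weight $\kappa+1$ with respect to $\K_E(n)\cap\GSp_4(\Q)$ lying in $\Pi_S$ is the same datum as a vector of $\Pi_S$ fixed by $\prod_p\K_{E_p}(\mathfrak{e}_p\, ord_p(n))$ that is holomorphic of weight $\kappa+1$ at $\infty$; cuspidality is automatic because $\Pi_S$ is cuspidal, and the weight-$(\kappa+1,\kappa+1)$ minimal $K$-type of $\pi_\infty=SK(\tau_\infty^{JL})$ is one-dimensional. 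So the space of such forms is $\bigotimes_{p}\pi_p^{\K_{E_p}(\mathfrak{e}_p\, ord_p(n))}$ tensored with a line at $\infty$. Since $E$ matches $S$, at every prime $p$ the special Bessel model $\B_\s(\pi_p)\neq\{0\}$ (for $\s$ in the form (\ref{eq:defsigma}) with $E_\s\simeq E_p$) and is a model of $\pi_p$, so these invariants may be computed in it; a case check against Theorems~\ref{thm:SKNF} and~\ref{label:Lsp} shows that, in the conjugated normalization of this section, the complete paramodular group stabilizing the minimal space of $\pi_p$ is exactly $K_{E_p}(\mathfrak{e}_p\, ord_p(n))$ precisely when $n=N_\tau\prod_{p\in S}p$ (this uses $M_{\pi_p}=\f^{-1}n_{\tau_p}$ for $p\notin S$, resp.\ $\f^{-1}(n_{\tau_p}+1)$ for $p\in S$, together with $\f\,\mathfrak{e}_p=[E_p:\Q_p]$ in the field case and $\f=\mathfrak{e}_p=1$ in the split case). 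Because $\dim\B_{2M_{\pi_p}}=\dim\B_{2M_{\pi_p}}^{\ep_{\pi_p}}=1$ by Theorems~\ref{thm:SKNF} and~\ref{thm:oldns}, every tensor factor is one-dimensional, hence so is the whole space; this yields the existence and uniqueness of $\Phi_S^E$ and of its Fourier expansion.

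For part (ii) I would begin from the Fourier--Bessel expansion (\ref{eq:FBSK}), $\Phi_S^E=\sum_\s(\Phi_S^E)_\s^{\1}$, and fix the $\s$ with $E_\s\simeq E$ attached to $\s'=\mathrm{diag}(d,1)$. Since the space of local Bessel functionals of each $\pi_v$ is at most one-dimensional (Piatetski-Shapiro--Novodvorsky) and $\Phi_S^E$ is, up to scalars, the tensor product of the normalized local newforms $\beta_p^{new}$ ($p<\infty$) with the Pitale--Schmidt archimedean Bessel vector $\beta_\infty$ (c.f.\ \cite{P-Sc}, \cite{Y}), the global period factors as $(\Phi_S^E)_\s^{\1}=c\prod_v\beta_v$ with $c\in\C$; here $c\neq0$ — equivalently $\mathscr{F}_S^E(\s')\neq0$ — exactly when the global special $E$-period of the packet is nonvanishing, i.e.\ when $L(1/2,\tau\ot\chi_E)\neq0$ (see \cite{W}, \cite{T}, \cite{Q}, as recalled), which is the hypothesis. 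Under the classical/adelic dictionary, $\mathscr{F}_S^E(n\s')$ equals $\mathscr{F}_S^E(\s')$ times a weight-dependent power of $n$ times $\prod_p\beta_p^{new}(\hv^{\,ord_p(n)})$; using $\beta_p^{new}(\hv^{\,j})=0$ for $j<0$ (c.f.\ (\ref{eq:ZbZ})), summing over $n$ turns the Dirichlet series $\sum_n\mathscr{F}_S^E(n\s')n^{-(s+\kappa-1/2)}$ into $\mathscr{F}_S^E(\s')$ times a product over $p$ of the normalized integrals $\int_{F_p^\times}\beta_p^{new}(\hat u)|u|^{\,\bullet}\,d^\times u$, i.e.\ of the zeta integrals $Z(\,\cdot\,,\beta_p^{new})$ of the paper, at a shifted argument. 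One then substitutes the explicit values — formula (\ref{eq:zetaintro}) (equivalently Theorem~\ref{thm:SKrepn} with Tables~\ref{tab:ZU},~\ref{tab:ZR}) for $p\notin S$, where $\pi_p=SK(\tau_p)$, and Theorem~\ref{thm:SKJL} with the same tables for $p\in S$, where $\pi_p=SK(\tau_p^{JL})$ — and, reorganizing the local factors $1-q^{-s\mp1/2}$ by means of $L(w,\chi_{E,p})^{-1}=1-\chi_{E,p}(p)p^{-w}$ (and $L(w,\chi_{E,p})=1$ when $p$ ramifies in $E$), the product over $p$ collapses to $\prod_pL(s+\tfrac12,\chi_{E,p})L(s,\tau_p)\cdot\prod_{p\notin S}(1-p^{-s+1/2})^{-1}$.

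The representation-theoretic content — one-dimensionality of each minimal space and the value of the newform zeta integral — is already supplied by Sections~\ref{sec:NFsp}--\ref{sec:real}; the main obstacle will be the bookkeeping of the second paragraph: pinning down the classical/adelic normalization so that the Dirichlet-series variable is exactly the $s$ in which $L(s,\tau_p)$ is written (this fixes the shift via the archimedean gamma-factors and the weight-dependent power of $n$), and then carrying out the case-by-case reorganization of the local zeta formulas — stated in the paper only up to scalars and in the variables $X=q^{-s+1/2}$, $X'=q^{-s-1/2}$ — into the closed form $L(s+\tfrac12,\chi_{E,p})L(s,\tau_p)$, treating $E_p$ split, inert and ramified, and $\tau_p$ ramified or unramified, separately. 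Once the shift and these local identifications are fixed, the asserted identity (and its non-vacuousness under $L(1/2,\tau\ot\chi_E)\neq0$) follows at once.
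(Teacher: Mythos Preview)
Your proposal is correct and follows the same approach as the paper. The paper does not give a detailed proof of Theorem~\ref{thm:maincls}; it is stated immediately after the sentence ``Combining these results with the local newform theory, we have a concise version of Theorem~4.3.16.\ of \cite{A} for SK-packets,'' with the preceding paragraph supplying the strong-approximation translation, the matching condition on $E$, and the reference to \cite{Q} for nonvanishing of the global period. Your outline makes these steps explicit: the one-dimensionality of each local factor from Theorems~\ref{label:Lsp} and~\ref{thm:SKNF} gives uniqueness of $\Phi_S^E$, and the Dirichlet-series identity is the Euler product of the regular zeta integrals of the local newforms, rewritten via the identification of the extra factors $1\mp X'$ with $L(s+\tfrac12,\chi_{E,p})^{-1}$ in the split/inert/ramified cases.

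One point worth flagging, since you already list the bookkeeping as the main obstacle: the passage from the Fourier coefficient $\mathscr{F}_S^E(n\sigma')$ to $\prod_p\beta_p^{new}(\hat\varpi^{\,\mathrm{ord}_p(n)})$ goes through the \emph{Bessel period} $\Phi_\sigma^{\1}$, not the bare Fourier coefficient $\Phi_\sigma$; these agree at the relevant arguments because $\Phi_S^E$ is fixed by $\prod_p\mathfrak{O}_{E,p}^\times\subset\prod_pK_{E_p}(\cdot)$ and $\hat n$ commutes with $T$, so that the torus average is a scalar. This is implicit in the paper's appeal to Andrianov's classical formula and to the Fourier--Bessel expansion (\ref{eq:FBSK}), but it is the step where the normalization constants you mention actually enter.
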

In the SK-packet of $\tau$, if $\ep(1/2,\tau) = -1$ (and hence $L(1/2,\tau) = 0$), then there is a Siegel cuspform $\Phi_{\emptyset}^E$.
In this case, if $E$ is chosen so that $E_p$ splits at all $p$ where $\tau_p$ is ramified, then the arithmetic subgroup is just the global paramodular group of level $N_\tau$, and $\Phi_{\emptyset}^E$ is possibly given by Gritsenko's lift \cite{G} (which need a conjugation).
If $\ep(1/2,\tau) = 1$ and $S_\tau = \emptyset$, then there is a no Siegel cuspform in the packet.
If $\ep(1/2,\tau) = 1$ and $S_\tau \neq \emptyset$, then there is a Siegel cuspform $\Phi_{\{p \}}^E$ for each prime $p \in S_\tau$, but no global paramodular Siegel cuspform in the packet.
In this case, if $E$ is chosen so that $E_{p'}$ splits at all $p' \neq p$ where $\tau_{p'}$ is ramified, then the $p'$-completions of the arithmetic subgroup are isomorphic to the local paramodular groups.
Since $SK(\tau_v^{JL})$ has no local paramodular vector (c.f. \cite{R-S}), there is no other Siegel cuspform in this packet with respect to the group $\K_E(N_\tau)$ (resp. $\K_E(N_\tau p))$ or those of `lower' levels, for $E$ chosen as above.
For this reason, we may call $\Phi_{\emptyset}^E$ (resp. $\Phi_{\{p \}}^E$) a `Siegel newform of the packet' in a sense.

For Hilbert-Siegel modular forms over a totally real field $\Fb$, we have a similar statement, but need a suitable conjugation of the arithmetic subgroup as for Hilbert modular forms, since any additive character $\psi$ on $\Fb \bs \A_\Fb$ does not takes $1$ on a local ring of integers in general.

\end{document}